\def\inline#1:{\par\vskip 7pt\noindent{\bf #1:}\hskip 10pt}
\def\ZZ{{\mathbb Z}}
\newcommand{\bfm}{\textbf}
\newcommand{\mcal}{\mathcal}
\newcommand{\mbb}{\mathbb}
\newcommand{\E}{\mcal{E}}
\newcommand{\R}{\mcal{R}}
\renewcommand{\Re}{\mbb{R}}
\newcommand{\B}{\mcal{B}}
\def\reals{{\mathbb R}}
\newcommand{\D}{\mcal{D}}
\newcommand{\kopt}{\chi_{kCF}}
\newcommand{\chum}{{{ch_{\text{\textup{um}}}}}}
\newcommand{\chcf}{{{ch_{\text{\textup{cf}}}}}}
\newcommand{\umch}{{{ch_{\text{\textup{um}}}}}}
\newcommand{\A}{\mcal{A}}
\newcommand{\cf}{{{\chi_{\text{\textup{cf}}}}}}
\newcommand{\CF}{{{\chi_{\text{\textup{cf}}}}}}
\newcommand{\um}{{{\chi_{\text{\textup{um}}}}}}
\newcommand{\Alg}{\mathbf{A}}
\newcommand{\lemlab}[1]{\label{lemma:#1}}
\newcommand{\lemref}[1]{Lemma~\ref{lemma:#1}}
\newcommand{\seclab}[1]{\label{section:#1}}
\newcommand{\secref}[1]{Section~\ref{section:#1}}
\newcommand{\theolab}[1]{\label{theorem:#1}}
\newcommand{\theoref}[1]{Theorem~\ref{theorem:#1}}
\newcommand{\cfch}{{{ch_{cf}}}}
\newcommand{\ch}{{{ch}}}
\newtheorem{theorem}{Theorem}[section]
\newtheorem{lemma}[theorem]{Lemma}
\newtheorem{corollary}[theorem]{Corollary}
\newtheorem{definition}[theorem]{Definition}
\newtheorem{proposition}[theorem]{Proposition}
\newtheorem{problem}{Problem}
\theoremstyle{remark}
\newtheorem{remark}[theorem]{Remark}
\newcommand{\cardin}[1]{\left| {#1} \right|}
\newcommand{\UComp}{{\mathcal{U}}}
\newcommand{\Arr}{\mathop{\mathrm{\A}}}
\newcommand{\pth}[1]{\!\left({#1}\right)}
\newcommand{\eps}{\varepsilon}
\newcommand{\Rcal}{\mathcal{R}}
\newcommand{\chiopt}{\chi_{\rm opt}}
\newcommand{\calC}{\mathcal{C}}
\newcommand{\chC}{{{ch_{\calC}}}}
\newcommand{\chicf}{{{\chi_{\text{\textup{cf}}}}}}
\newcommand{\chiC}{{{\chi_{\calC}}}}
\newcommand*{\oneton}{\{1,\dots,n\}}%
\newcommand*{\Positives}{\ensuremath{\Nats^+}}%
\newcommand*{\Nats}{\mathbb{N}}%
\newcommand{\Ex}{\mathbf{E}}
\renewcommand{\L}{\mathcal{L}}
\begin{document}
\title{Conflict-Free Coloring and its Applications}
\author{
Shakhar Smorodinsky\thanks{Mathematics department, Ben-Gurion
University, Beer Sheva, Israel; {\tt
http://www.math.bgu.ac.il/$\sim$shakhar/ ;
shakhar@math.bgu.ac.il}.}}

 \maketitle

\begin{abstract}
Let $H=(V,E)$ be a hypergraph. A {\em conflict-free} coloring of
$H$ is an assignment of colors to $V$ such that, in each hyperedge
$e \in E$, there is at least one uniquely-colored vertex. This
notion is an extension of the classical graph coloring. Such
colorings arise in the context of frequency assignment to cellular
antennae, in battery consumption aspects of sensor networks, in
RFID protocols, and several other fields. Conflict-free coloring has been the focus of
many recent research papers. In this paper, we survey this notion
and its combinatorial and algorithmic aspects.
\end{abstract}

\section{Introduction}
\label{sec:intro}

\subsection{Notations and Definitions}
In order to introduce the main notion of this paper, we start
with several basic definitions:
Unless otherwise stated, the term $\log$ denotes the base $2$ logarithm.

A {\em hypergraph} is a pair
$(V,\E$) where $V$ is a set and $\E$ is a collection of subsets
of $V$. The elements of $V$ are called {\em vertices} and the
elements of $\E$ are called {\em hyperedges}. When all hyperedges
in $\E$ contain exactly two elements of $V$ then the pair
$(V,\E)$ is a {\em simple graph}. For a subset $V' \subset V$
refer to the hypergraph $H(V') = (V',\{S\cap V'| S \in \E\})$ as
the {\em sub-hypergraph} induced by $V'$. A $k$-coloring, for some
$k \in \mathbb N$, of (the vertices of) $H$ is a function
$\varphi:V \rightarrow \{1,\ldots,k\}$. Let $H=(V,\E)$ be a
hypergraph. A $k$-coloring $\varphi$ of $H$ is called {\em
proper} or {\em non-monochromatic} if every hyperedge $e \in \E$
with $\cardin{e} \geq 2$ is non-monochromatic. That is, there
exists at least two vertices $x,y \in e$ such that $\varphi(x)
\neq \varphi(y)$. Let $\chi(H)$ denote the least integer $k$ for
which $H$ admits a proper coloring with $k$ colors.

In this paper, we focus on the following colorings which are more
restrictive than proper coloring:

\begin{definition}[Conflict-Free and Unique-Maximum Colorings]
Let $H=(V,\E)$ be a hypergraph and let $C:V \rightarrow
\{1,\ldots,k\}$ be some coloring of $H$. We say that $C$ is a {\em
conflict-free} coloring ({\em CF-coloring} for short) if every
hyperedge $e \in \E$ contains at least one uniquely colored
vertex. More formally, for every hyperedge $e \in \E$ there is
a vertex $x \in e$ such that $\forall y \in e, y\neq x \Rightarrow
C(y) \neq C(x)$. We say that $C$ is a {\em unique-maximum} coloring ({\em UM-coloring} for short) if the maximum
color in every hyperedge is unique. That is, for every hyperedge $e \in \E$,
$\cardin{e \cap C^{-1}(\max_{v \in e}C(v)) }=1$.
\end{definition}
Let $\CF(H)$ (respectively, $\um(H)$) denote the least integer $k$ for which $H$ admits a
CF-coloring (respectively, a UM-coloring) with $k$ colors. Obviously, every UM-coloring of a hypergraph $H$ is also a CF-coloring of $H$, and every CF-coloring of $H$ is also a proper coloring of $H$. Hence, we have the followng inequalities:
$$
\chi(H) \leq \cf(H) \leq \um(H)
$$
Notice that for simple graphs, the three notions of coloring (non-monochromatic, CF and UM) coincide.
Also, for $3$-uniform hypergraphs (i.e., every hyperedge has cardinality $3$), the two first notions (non-monochromatic and CF) coincide. However, already for $3$-uniform hypergraphs there can be an arbitrarily large gap
 between $\cf(H)$ and $\um(H)$. Consider, for example, two sets $A$ and $B$ each of cardinality $n>1$.
 Let $H=(A \cup B, \E)$
 where $\E$ consists of all triples of elements $e$ such that $e \cap A \neq \emptyset$ and $e \cap B \neq \emptyset$.
 In other words $\E$ consists of all triples containing two elements from one of the sets $A$ or $B$ and one element from the other set. It is easily seen that $\cf(H)=2$ by simply coloring all elements of $A$ with $1$
  and all elements of $B$ with $2$. It is also not hard to verify that $\um(H) \geq n$ (in fact $\um(H)=n+1$).
  Indeed, let $C$ be a UM-coloring of $H$. If all elements of $A$ are colored with distinct colors we are done. Otherwise, there exist
   two elements $u,v$ in $A$ with the same color, say $i$. We claim that all elements of $B$ are colored with colors greater than $i$. Assume to the contrary that there is an element $w \in B$ with color $C(w)= j \leq i$. However, in that case the hyperedge $\{u,v,w\}$ does not have the unique-maximum property. Hence all colors
   of $B$ are distinct for otherwise if there are two vertices $w_1,w_2$ with the same color, again the hyperedge $\{w_1,w_2,u\}$ does not have the unique-maximum property.

Let us describe a simple yet an important example of a
hypergraph $H$ and analyze its chromatic number $\chi(H)$ and its
CF-chromatic number $\CF(H)$. The vertices of the hypergraph
consist of the first $n$ integers $[n]=\{1,\ldots,n\}$. The
hyperedge-set is the set of all (non-empty) subsets of $[n]$
consisting of consecutive elements of $[n]$, e.g., $\{2,3,4\}$,
$\{2\}$, the set $[n]$, etc. We refer to such hypergraphs as
{\em hypergraphs induced by points on the line with respect to
intervals} or as the {\em discrete intervals hypergraph}. Trivially, we
have $\chi(H) = 2$. We will prove the following proposition:

\begin{proposition}
 $\CF(H)= \um(H) =\lfloor \log n
\rfloor + 1$.
\end{proposition}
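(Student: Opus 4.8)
The plan is to prove the two bounds $\um(H) \le \lfloor \log n\rfloor + 1$ and $\CF(H) \ge \lfloor \log n\rfloor + 1$ separately, and then combine them with the general inequality $\CF(H) \le \um(H)$ already established to obtain
$$\lfloor \log n\rfloor + 1 \le \CF(H) \le \um(H) \le \lfloor \log n\rfloor + 1,$$
which forces equality throughout. Thus it suffices to exhibit a UM-coloring with $\lfloor \log n\rfloor+1$ colors (the upper bound) and to show that no CF-coloring can use fewer colors (the lower bound).

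For the \textbf{upper bound} I would use the following explicit coloring: color each $i \in [n]$ by $1$ plus the exponent of the largest power of two dividing $i$ (equivalently, the position of the least significant $1$ in the binary representation of $i$). The key combinatorial fact is that among any block of consecutive integers there is a \emph{unique} element of maximal such exponent; indeed, if two elements $x<y$ of an interval both attained the maximal exponent $a$, their difference would be divisible by $2^a$, which would force some element of the interval to be divisible by $2^{a+1}$ and hence to have a strictly larger exponent, a contradiction. Consequently, in every interval the element receiving the highest color is unique, so this is a UM-coloring. Since the maximal exponent attained on $\{1,\dots,n\}$ is that of the largest power of two not exceeding $n$, namely $\lfloor\log n\rfloor$, the number of colors used is exactly $\lfloor\log n\rfloor+1$.

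For the \textbf{lower bound} I would let $f(k)$ be the largest $n$ for which the discrete intervals hypergraph on $[n]$ admits a CF-coloring with $k$ colors, and prove the recurrence $f(k)\le 2f(k-1)+1$ together with $f(1)=1$. The base case holds because a single color makes every interval of size at least two monochromatic. For the inductive step, fix a CF-coloring of $[n]$ with $k$ colors and apply the conflict-free condition to the whole interval $[n]$ itself: it contains a vertex $p$ whose color is unique in all of $[n]$, hence used exactly once globally. Deleting $p$ splits the ground set into the two intervals $[1,p-1]$ and $[p+1,n]$; each is an induced discrete intervals sub-hypergraph, the restricted coloring stays conflict-free on each part, and the color of $p$ no longer appears there, so each part is CF-colored with at most $k-1$ colors. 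Therefore $p-1\le f(k-1)$ and $n-p\le f(k-1)$, and summing gives $n\le 2f(k-1)+1$. Solving the recurrence yields $f(k)\le 2^k-1$, so a CF-coloring with $k$ colors forces $n\le 2^k-1$, i.e. $k\ge\lceil\log(n+1)\rceil=\lfloor\log n\rfloor+1$.

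The step I expect to require the most care is the lower bound: one must notice that applying the conflict-free property to the \emph{entire} interval produces a globally unique color (not merely a locally unique one), which is precisely what licenses the removal of $p$ and the drop to $k-1$ colors on each side. The remaining pieces---verifying the uniqueness-of-maximal-power-of-two fact for the construction and checking the identity $\lceil\log(n+1)\rceil=\lfloor\log n\rfloor+1$---are routine.
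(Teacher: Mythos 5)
Your proposal is correct, and both halves rest on the same pivotal facts as the paper's proof, but each is packaged differently enough to be worth comparing. For the upper bound, the paper builds the coloring recursively --- give the median of $\{1,\dots,2^{k+1}-1\}$ the unique top color and recurse on the two halves with a shared palette --- and verifies it by induction; your ruler coloring $i \mapsto 1+\nu_2(i)$ (where $\nu_2$ is the $2$-adic valuation) is exactly the coloring that recursion produces, but you verify it directly via the number-theoretic lemma that a block of consecutive integers contains a unique element of maximal $2$-adic valuation (cleanest phrasing: between two odd multiples of $2^a$ there lies an even multiple of $2^a$, hence an element of strictly larger valuation). This buys an explicit closed form and avoids the paper's padding step of enlarging $n$ up to $2^k-1$. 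For the lower bound, both arguments turn on the same key observation that you correctly single out as the crux: the vertex whose color is unique in the hyperedge $[n]$ is \emph{globally} uniquely colored, so deleting it leaves at most $k-1$ colors available on each side. The paper then inducts on $k$ by passing to the larger of the two sides (which has at least $2^{k-1}$ vertices and so, by induction, needs at least $k$ colors, plus the deleted vertex's color), whereas you run the contrapositive as an extremal recurrence $f(k)\le 2f(k-1)+1$, $f(1)=1$, counting both sides; the two are logically equivalent, but your formulation yields the exact extremal value $f(k)=2^k-1$, which is pleasant because it is matched exactly by your construction, while the paper's version reaches the stated inequality a bit more directly. Your remaining bookkeeping --- the identity $\lceil\log(n+1)\rceil=\lfloor\log n\rfloor+1$ and the use of $\CF(H)\le\um(H)$ to close the sandwich --- is fine.
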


\begin{proof}
First we prove that $\um(H) \leq \lfloor \log n
\rfloor + 1$.
Assume without loss of generality that
$n$ is of the form $n=2^k-1$ for some integer $k$. If $n < 2^k-1$
then we can add the vertices $n+1,n+2,\ldots,2^k-1$ and this can
only increase the CF-chromatic number. In this case we will see
that $\um(H) \leq k$ and that for $n \geq 2^k$ $\CF(H) \geq k+1$. The
proof is by induction on $k$. For $k=1$ the claim holds trivially.
Assume that the claim holds for some integer $k$ and let
$n=2^{k+1}-1$. Consider the median vertex $2^k$ and color it with
a unique (maximum color), say $k+1$, not to be used again. By the induction
hypothesis, the set of elements to the right of $2^k$, namely the
set $\{2^k+1,2^k+2,\ldots,2^{k+1}-1\}$ can be colored with $k$
colors, say `1',`2'...,`$k$', so that any of its subsets of
consecutive elements has unique maximum color. The same holds
for the set of elements to the left of $2^k$. We will use the
same set of $k$ colors for the right set and the left set (and
color the median with the unique color `k+1'). It is easily verified
that this coloring is indeed a UM-coloring for $H$. Thus we use a
total of $k+1$ colors and this completes the induction step.

Next, we need to show
that for $n \geq 2^k$ we have $\CF(H) \geq k+1$. Again, the proof is by induction on $k$. The base case $k=0$ is trivial.
For the induction step, let $k > 0$ and put $n=2^k$. Let $C$ be some CF-coloring of the underlying discrete intervals hypergraph. Consider the hyperedge $[n]$. There must be a uniquely colored vertex in $[n]$.
Let $x$ be this vertex. Either to the right of $x$ or to its left we have at least $2^{k-1}$ vertices. That is, there is a hyperedge $S \subset [n]$ that does not contain $x$ such that $\cardin{S} \geq 2^{k-1}$, so, by the induction hypothesis, any CF-coloring for $S$ uses at least $k$ colors. Thus, together with the color of $x$, $C$ uses at least $k+1$ colors in total. This completes the induction step.
\end{proof}

 The notion of CF-coloring was first
introduced and studied in \cite{SmPHD} and \cite{ELRS}. This
notion attracted many researchers and has been the focus of many
research papers both in the computer science and mathematics
communities. Recently, it has been studied also in the infinite settings of the so-called {\em almost disjoint set systems} by Hajnal et al. \cite{hajnal}.
In this survey, we mostly consider hypergraphs that naturally arise in geometry. These come
in two types:

\begin{itemize}
\item
{\bf \noindent Hypergraphs induced by regions:}  Let $\R$ be a
finite collection of regions (i.e., subsets) in $\mbb{R}^d$, $d
\geq 1$. For a point $p \in \mbb{R}^d$, define $r(p) = \{R \in
\R: p \in R\}$. The hypergraph $(\R, \{r(p)\}_{p\in \mbb{R}^d})$,
denoted $H(\R)$, is called the {\em hypergraph \emph{induced} by $\R$}. Since $\R$ is finite, so is the power set $2^{\R}$. This implies that the hypergraph $H(\R)$ is finite as well.
\item
{\bf \noindent Hypergraphs induced by points with respect to
regions:} Let $P \subset \Re^d$ and let $\R$ be a family of
regions in $\Re^d$. We refer to the hypergraph $H_{\R}(P) = (P,\{ P\cap S| S
\in \R\})$ as the {\em hypergraph induced by $P$ with respect to $\R$}.
When $\R$ is clear from the context we sometimes refer to it as
{\em the hypergraph induced by $P$}. In the literature, hypergraphs
that are induced by points with respect to geometric regions of
some specific kind are sometimes referred to as {\em range
spaces}.
\end{itemize}

\begin{definition}[Delaunay-Graph]
For a hypergraph $H=(V,\E)$, denote by $G(H)$ the {\em
Delaunay-graph} of $H$ which is the graph $(V,\{S\in \E \mid
\cardin{S} = 2\})$.
\end{definition}

In most of the
coloring solutions presented in this paper we will see that, in
fact, we get the stronger UM-coloring. It is also interesting to study hypergraphs for which $\CF(H) < \um(H)$.
This line of research has been pursued
in \cite{CKP10,ChT2010ciac}

\subsection{Motivation}
We start with several motivations for studying CF-colorings and UM-colorings.
\subsubsection{Wireless Networks}
Wireless communication is used in many different situations such
as mobile telephony, radio and TV broadcasting, satellite
communication, etc. In each of these
situations a frequency assignment problem arises with
application-specific characteristics. Researchers have developed
different modeling approaches for each of the features of the
problem, such as the handling of interference among radio
signals, the availability of frequencies, and the optimization
criterion.

The work of Even et al. \cite{ELRS} and of Smorodinsky
\cite{SmPHD} proposed to model frequency assignment to cellular
antennas as  CF-coloring. In this new model, one can use a very
``small" number of distinct frequencies in total, to assign to a
large number of antennas in a wireless network. Cellular networks
are heterogeneous networks with two different types of nodes:
{\em base-stations} (that act as servers) and {\em clients}. The
base-stations are interconnected by an external fixed backbone
network. Clients are connected only to base stations; connections
between clients and base-stations are implemented by radio links.
Fixed frequencies are assigned to base-stations to enable links
to clients. Clients, on the other hand, continuously scan
frequencies in search of a base-station with good reception. This
scanning takes place automatically and enables smooth transitions
between base-stations when a client is mobile. Consider a client that is
within the reception range of two base stations. If these two
base stations are assigned the same frequency, then mutual
interference occurs, and the links between the client and each of
these conflicting base stations are rendered too noisy to be
used. A base station may serve a client provided that the
reception is strong enough and interference from other base
stations is weak enough. The fundamental problem of frequency
assignment in cellular network is to assign frequencies to base
stations so that every client is served by some base station. The
goal is to minimize the number of assigned frequencies since the
available spectrum is limited and costly.

The problem of frequency assignment was traditionally treated as a
graph coloring problem, where the vertices of the graph are the
given set of antennas and the edges are those pairs of antennas
that overlap in their reception range. Thus, if we color the
vertices of the graph such that no two vertices that are
connected by an edge have the same color, we guarantee that there
will be no conflicting base stations. However, this model is too
restrictive. In this model, if a client lies within the reception
range of say, $k$ antennas, then every pair of these antennas are
conflicting and therefore they must be assigned $k$ distinct
colors (i.e., frequencies). But note that if one of these
antennas is assigned a color (say $1$) that no other antenna is
assigned (even if all other antennas are assigned the same color,
say $2$) then we use a total of two colors and this client can
still be served. See Figure~\ref{example} for an illustration with
three antennas.

\begin{figure}[htb]
\begin{center}
\input{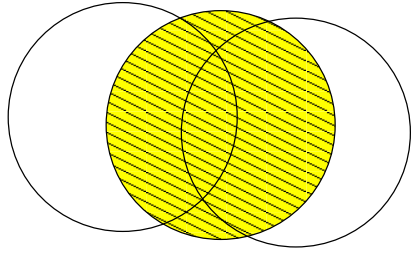}
\caption{An example of three antennas presented as discs in the
plane. In the classical model three distinct colors are needed
where as in the new model two colors are enough as depicted here.}
\label{example}
\end{center}
\end{figure}

A natural question thus arises: Suppose we are given a set of $n$
antennas. The location of each antenna (base station) and its
radius of transmission is fixed and is known (and is modeled as a
disc in the plane). We seek the least number of colors that
always suffice such that each of the discs is assigned one of the
colors and such that every covered point $p$  is also covered by
some disc $D$ whose assigned color is distinct from all the
colors of the other discs that cover $p$. This is a special case
of CF-coloring where the underlying hypergraph is induced by a
finite family of discs in the plane.

\subsubsection{RFID networks}

Radio frequency identification (RFID) is a technology where a
reader device can "sense" the presence of a close by object by
reading a tag device attached to the object. To improve coverage,
multiple RFID readers can be deployed in the given region. RFID
systems consist of readers and tags. A tag has an ID stored in
its memory. The reader is able to read the IDs of the tags in the
vicinity by using wireless protocol. In a typical RFID
application, tags are attached to objects of interest, and the
reader detects the presence of an object by using an available
mapping of IDs to objects. We focus on passive tags i.e., tags
that do not carry a battery.  The power needed for passive tags
to transmit their IDs to the reader is "supplied" by the reader
itself. Assume that we are given a set $D$ of readers where each
reader is modeled by some disc in the plane. Let $P$ be a set of
tags (modeled as points) that lie in the union of the discs in
$D$. Suppose that all readers in $D$ use the same wireless
frequency. For the sake of simplicity, suppose also that each reader is only allowed to be activated once.
The goal is to schedule for each reader $d \in D$ a
time slot $t(d)$ for which the reader $d$ will be active. That
is, at time $t(d)$ reader $d$ would initiate a `read' action. We
further assume that a given tag $p \in P$ can be read by reader
$d \in D$ at time $t$ if $p \in d$ and $d$ is initiating a `read'
action at time $t$ (namely, $t=t(d)$) and no other reader $d'$
for which $p \in d'$ is active at time $t$. We say that $P$ is
read by our schedule, if for every $p \in P$ there is at least
one $d \in D$ and a time $t$ such that $p$ is read by $d$ at time
$t$. Obviously, we would like to minimize the total time slots
used in the schedule. Thus our goal is to find a function $t:D
\rightarrow \{1,\ldots,k\}$ which is conflict-free for the
hypergraph $H(D)$. Since we want to minimize the total time slots
used, again the question of what is the minimum number of colors
that always suffice to CF-color any hypergraph induced by a
finite set of $n$ discs is of interest.

\subsubsection{Vertex ranking}
Let $G=(V,E)$ be a simple graph. An {\em ordered coloring} (also
a {\em vertex ranking}) of $G$ is a coloring of the vertices
$\chi:V \rightarrow \{1,\ldots,k\}$ such that whenever two
vertices $u$ and $v$ have the same color $i$ then every simple
path between $u$ and $v$ contains a vertex with color greater
than $i$. Such a coloring has been studied before and has several
applications. It was studied in the context of VLSI design
\cite{Sen92} and in the context of parallel Cholesky factorization
of matrices \cite{LIU86}. The vertex ranking problem is also
interesting for the Operations Research community. It has
applications in planning efficient assembly of products in
manufacturing systems \cite{optnoderanktree}. In general, it
seems that the vertex ranking problem can model situations where
inter-related tasks have to be accomplished fast in parallel, with
some constrains (assembly from parts, parallel query optimization
in databases, etc.). See also \cite{orderedcoloring,Schaffer89}

The vertex ranking coloring is yet another special
form of UM-coloring. Given a graph $G$, consider the hypergraph
$H=(V,E')$ where a subset $V' \subseteq V$ is a hyperedge in $E'$ if
and only if $V'$ is the set of vertices in some simple path of
$G$. It is easily observed that an ordered coloring of $G$ is equivalent to a UM-coloring of $H$.

\subsection{A General Conflict-Free coloring Framework}
\seclab{general} Let $P$ be a set of $n$ points in $\Re^2$ and let
$\cal D$ be the set of all planar discs. In \cite{ELRS,SmPHD} it
was proved that $\um(H_{\D}(P)) = O(\log n)$ and that this bound is asymptotically tight
since for any $n \in \mathbb N$ there exist hypergraphs induced
by sets of $n$ points in the plane (w.r.t discs) which require
$\Omega(\log n)$ in any CF-coloring. In fact, Pach and T{\' o}th
\cite{cf1} proved a stronger lower-bound by showing that for any
set $P$ of $n$ points it holds that $\cf(H_{\D}(P)) = \Omega(\log n)$. The proofs of
\cite{ELRS,SmPHD} are algorithmic and rely on two crucial
properties: The first property is that the Delaunay graph
$G(H_{\D}(P))$ always contains a ``large" independent set. The second
is the following shrinkability property of discs: For every disc
$d$ containing a set of $i \geq 2$ points of $P$ there is another
disc $d'$ such that $d'\cap P \subseteq d \cap P$ and
$\cardin{d'\cap P} = 2$.

In \cite{ELRS, SmPHD} it was also proved
that, if $D$ is a set of $n$ discs in the plane, then $\um(H(D)) =
O(\log n)$. This bound was obtained by a reduction to a
three-dimensional problem of UM-coloring a set of $n$ points in
$\Re^3$ with respect to lower half-spaces. Later, Har-Peled and
Smorodinsky \cite{HS02} generalized this result to pseudo-discs
using a probabilistic argument. Pach and Tardos \cite{CFPT09}
provided several non-trivial upper-bounds on the CF-chromatic
number of arbitrary hypergraphs. In particular they showed that for every
hypergraph $H$ with $m$ hyperedges $$\CF(H) \leq  1/2 +\sqrt{2m +
1/4}$$

Smorodinsky \cite{smoro} introduced the following general
framework for UM-coloring any hypergraph. This framework holds
for arbitrary hypergraphs and the number of colors used is
related to the chromatic number of the underlying hypergraph.
Informally, the idea is to find a proper coloring with very `few'
colors and assign to all vertices of the largest color class the
final color `1', discard all the colored elements and recursively
continue on the remaining sub-hypergraph. See
Algorithm~\ref{CF-framework} below.

\begin{algorithm}[]
\caption{UMcolor$(H)$: {\it UM-coloring of a hypergraph
$H=(V,\E)$}.}
  \label{CF-framework}
  \begin{algorithmic}[1]
    \STATE $i\gets 0$: {\it $i$ denotes an unused color}
    \WHILE{$V\neq \emptyset$}
    \STATE{\bf Increment:} $i\gets i+1$
    \STATE {\bf Auxiliary coloring:} {find a proper coloring $\chi$ of the induced sub-hypergraph $H(V)$ with
    ``few'' colors}
%
    \STATE {\bf $V' \gets$ Largest color class of $\chi$}
    \STATE {\bf Color:} $f(x)\gets i ~,~ \forall x\in V'$
    \STATE {\bf Prune:} $V\gets V\setminus V'$
\ENDWHILE
\end{algorithmic}
\end{algorithm}

\begin{theorem}[\cite{smoro}]
\theolab{CF-framework}
 Algorithm~\ref{CF-framework} outputs a
valid UM-coloring of $H$.
\end{theorem}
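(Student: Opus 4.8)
The plan is to read off the structure of the output coloring $f$ from the execution of the loop: a vertex receives color $i$ exactly when it is removed during the $i$-th pass, namely when it lies in the largest color class $V'$ of the auxiliary proper coloring $\chi$ computed in that pass. So a vertex's final color records the iteration in which it was pruned, and the maximum color on any hyperedge corresponds to its vertices pruned latest. First I would dispose of termination: whenever the loop body runs, $H(V)$ is nonempty, so every coloring $\chi$ assigns some color to some vertex, and the largest color class $V'$ is nonempty. Hence $\cardin{V}$ strictly decreases each pass, the loop halts after finitely many iterations, and every vertex ends up with exactly one color.

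For correctness, let $V_i$ be the value of $V$ at the start of the $i$-th pass (so $V_1 = V$ and $V_{i+1} = V_i \setminus V'_i$, where $V'_i$ is the class assigned color $i$), and let $\chi_i$ be the proper coloring of $H(V_i)$ used in that pass. Fix an arbitrary hyperedge $e \in \E$ and set $j = \max_{v \in e} f(v)$; I must show this maximum is attained exactly once. The key observation is that no vertex of $e$ can be pruned after pass $j$, since such a vertex would receive a color larger than $j$, contradicting the choice of $j$. Therefore every vertex of $e$ still present at the start of pass $j$, i.e.\ every vertex of $e \cap V_j$, is pruned during pass $j$ and hence lies in $V'_j$. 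This yields the identity $e \cap V_j = e \cap V'_j$.

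Now $e \cap V_j$ is, by the definition of the induced sub-hypergraph, a hyperedge of $H(V_j)$, and by the identity above it lies entirely inside the single color class $V'_j$, so it is monochromatic under $\chi_j$. Since $\chi_j$ is a proper coloring of $H(V_j)$, every hyperedge of cardinality at least $2$ must carry two distinct colors; thus a monochromatic hyperedge has at most one vertex, giving $\cardin{e \cap V_j} \leq 1$. On the other hand $e \cap V_j$ is nonempty, because some vertex of $e$ attains color $j$ and that vertex still belongs to $V_j$. Hence $\cardin{e \cap V_j} = 1$: exactly one vertex of $e$ carries the maximum color $j$, which is precisely the unique-maximum condition. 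As $e$ was arbitrary, $f$ is a valid UM-coloring.

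The step I expect to be the crux is the identity $e \cap V_j = e \cap V'_j$ and its immediate consequence through properness: one must notice that declaring $j$ the maximum color on $e$ forces all surviving vertices of $e$ into a single auxiliary color class, that a single class is exactly a monochromatic set in $H(V_j)$, and that properness of $\chi_j$ then caps its size at one. The remaining ingredients — termination and the nonemptiness of $e \cap V_j$ — are routine.
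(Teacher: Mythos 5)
Your proof is correct and takes essentially the same approach as the paper's: both arguments rest on identifying the vertices of $e$ that receive the maximum final color with $e \cap V'_j$, where $V'_j$ is the largest class of the auxiliary proper coloring at that iteration, and then invoking properness of that coloring. The only difference is presentational---you argue directly that $\cardin{e \cap V_j} = 1$ because a monochromatic hyperedge of a proper coloring has at most one vertex, whereas the paper assumes two max-colored vertices and derives a contradiction by extracting a third vertex of a different auxiliary color whose final color would exceed the maximum.
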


\begin{proof}
Formally, Algorithm~\ref{CF-framework} is not well defined as its
output depends on the auxiliary coloring of step 4 of the
algorithm. Nevertheless, we regard step 4 as given to us by some
`black' box and we treat this aspect of the algorithm later on.
For a hyperedge $e \in \E$, let $i$ be the maximal index (color)
for which there is a vertex $v \in e$ colored with $i$. We claim
that there is exactly one such vertex. Indeed, assume to the
contrary that there is another such vertex $v' \in e$. Consider
the $ith$ iteration and let $V'$ denote the set of vertices of
$V$ that are colored with color greater or equal to $i$. Namely,
$V'$ is the set of vertices that `survived' all the prune steps
up to iteration $i$ and reached iteration $i$. Let $\chi$ denote
the auxiliary proper coloring for the hypergraph $H(V')$ in
iteration $i$. Since $e' = e \cap V'$ is a hyperedge of $H(V')$
and  $v$ and $v'$ belong to the same color class of $\chi$  and
$v,v' \in e'$ and since $\chi$ is a non-monochromatic coloring,
there must exist a third vertex $v'' \in e' $ such that
$\chi(v'') \neq \chi(v)$. This means that the final color of
$v''$ is greater than $i$, a contradiction to the maximality of
$i$ in $e$. This completes the proof of the theorem.
\end{proof}

The number of colors used by Algorithm~\ref{CF-framework} is the
number of iterations that are performed (i.e., the number of prune
steps). This number depends on the `black-box' auxiliary coloring
provided in step 4 of the algorithm. If the auxiliary coloring
$\chi$ uses a total of $C_i$ colors on $\cardin{V_i}$ vertices,
where $V_i$ is the set of input vertices at iteration $i$, then
by the pigeon-hole principle one of the colors is assigned to at
least $\frac{\cardin{V_i}}{C_i}$ vertices so in the prune step of
the same iteration at least $\frac{\cardin{V_i}}{C_i}$ vertices
are discarded. Thus, after $l$ iterations of the algorithm we are
left with at most $\cardin{V}\cdot \Pi_{i=1}^l (1-\frac{1}{C_i})$
vertices. If this number is less than $1$, then the number of
colors used by the algorithm is at most $l$. If for example
$C_i=2$ for every iteration, then the
algorithm discards at least $\frac{\cardin{V_i}}{2}$ vertices in
each iteration so the number of vertices left after $l$
iterations is at most $\cardin{V}(1-\frac{1}{2})^l$ so for $l =
\lfloor \log n \rfloor+1$ this number is less than $1$. Thus the number of
iterations is bounded by $\lfloor \log n \rfloor +1$ where $n$ is the number of
vertices of the input hypergraph. In the next section we analyze
the chromatic number $\chi(H)$ for several geometrically induced
hypergraphs and use Algorithm~\ref{CF-framework} to obtain bounds
on $\um(H)$.

We note that, as observed above, for a hypergraph $H$ that admits a
proper coloring with ``few'' colors hereditarily (that is, every induced sub-hypergraph admits a proper coloring with ``few" colors), $H$ also admits
a UM-coloring with few colors. The following theorem summarizes this fact:

\begin{theorem} [\cite{smoro}]
\label{th:reduce-1} Let $H = (V, \E)$ be a hypergraph with $n$
vertices, and let $k\in \mbb{N}$ be a fixed integer, $k \geq
2$. If every induced sub-hypergraph $H'\subseteq H$ satisfies
$\chi(H')) \leq k$, then $\um(H) \leq
\log_{1+\frac{1}{k-1}} n = O(k \log n)$.
\end{theorem}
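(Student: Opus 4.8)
The plan is to run Algorithm~\ref{CF-framework} on $H$ and merely bound the number of iterations it performs, since by \theoref{CF-framework} the algorithm already outputs a valid UM-coloring, and the number of colors it uses equals the number of prune steps. The key observation is that the hereditary hypothesis supplies a good auxiliary coloring at \emph{every} iteration: at iteration $i$ the algorithm operates on the induced sub-hypergraph $H(V_i)$, and since $V_i \subseteq V$ this is itself an induced sub-hypergraph of $H$, so by assumption $\chi(H(V_i)) \leq k$. Hence step 4 can always be realized with $C_i \leq k$ colors.

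First I would apply the pigeonhole principle exactly as in the discussion following \theoref{CF-framework}: among the at most $k$ color classes of the auxiliary coloring, the largest contains at least $\cardin{V_i}/k$ vertices, and these are precisely the vertices discarded in the prune step. Therefore $\cardin{V_{i+1}} \leq \cardin{V_i}\bigl(1 - \tfrac{1}{k}\bigr)$, and by induction after $l$ iterations at most $n\bigl(1-\tfrac{1}{k}\bigr)^l$ vertices remain.

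Next I would solve for the smallest $l$ making this quantity drop below $1$. Imposing $n\bigl(1-\tfrac1k\bigr)^l < 1$ and taking logarithms gives $l > \frac{\log n}{\log \frac{k}{k-1}} = \log_{1+\frac{1}{k-1}} n$, so the algorithm halts within $\log_{1+\frac{1}{k-1}} n$ iterations, which is exactly the claimed bound on $\um(H)$. To obtain the asymptotic form $O(k\log n)$, I would note that $\log\bigl(1+\tfrac{1}{k-1}\bigr) = \Theta(1/k)$ (for instance via $\ln(1+x) \geq x/2$ for $x \in (0,1]$ with $x = \tfrac{1}{k-1}$), so the reciprocal of this logarithm is $O(k)$.

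The argument is essentially a packaging of the framework, so I do not anticipate a genuine obstacle; the one point requiring care is the verification that the auxiliary coloring is available at every iteration — that is, that the hereditary bound $\chi(H') \leq k$ indeed applies to each $H(V_i)$ — since this is exactly what permits taking $C_i \leq k$ uniformly across all iterations. The remainder is the routine logarithmic estimate converting $\log_{1+\frac{1}{k-1}} n$ into $O(k \log n)$.
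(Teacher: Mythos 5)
Your proposal is correct and follows essentially the same route as the paper: the paper states this theorem as a direct summary of the discussion following Theorem~\ref{theorem:CF-framework}, namely running Algorithm~\ref{CF-framework} with the hereditary $k$-colorability supplying the auxiliary coloring, the pigeonhole bound $\cardin{V_{i+1}} \leq \cardin{V_i}\bigl(1-\tfrac{1}{k}\bigr)$, and the resulting $\log_{1+\frac{1}{k-1}} n$ iteration count. The only (shared) imprecision is that the iteration count is really the least integer $l$ with $n\bigl(1-\tfrac1k\bigr)^l < 1$, i.e.\ $\lfloor \log_{1+\frac{1}{k-1}} n \rfloor + 1$, an off-by-one already present in the paper's own statement.
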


\begin{remark}
We note that the parameter $k$ in Theorem~\ref{th:reduce-1} can be
replaced with a non-constant function $k=k(H')$. For example, if $k(H')={(n')}^{\alpha}$ where $0 < \alpha \leq 1$ is a fixed real and
$n'$ is the number of vertices of $H'$, an easy calculation shows that $\um(H) = O(n^{\alpha})$ where $n$ is the number of vertices of $H$.
\end{remark}

As we will see, for many of the
hypergraphs that are mentioned in this survey, the two numbers $\chi(H), \um(H)$ are
only a polylogarithmic (in $\cardin{V}$) factor apart. For the proof to work, the
requirement that a hypergraph $H$ admits a proper coloring with
few colors hereditarily is necessary. One example is the $3$-uniform hypergraph $H$ with $2n$ vertices given above.
We have $\chi(H) = 2$ and $\um(H)=n+1$. Obviously $H$ does not admit a proper $2$-coloring hereditarily.

\section{Conflict-Free Coloring of Geometric Hypergraphs}
\label{sec:geom_hyper}
\subsection{Discs and Pseudo-Discs in the Plane}
\subsubsection{Discs in $\Re^2$}
In \cite{smoro} it was shown that the chromatic number of a
hypergraph induced by a family of $n$ discs in the plane is
bounded by four. That is, for a finite family $D$ of $n$ discs in
the plane we have:
\begin{theorem}[\cite{smoro}]
\label{four_colors} $\chi(H(D)) \leq 4$
\end{theorem}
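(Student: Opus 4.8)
The plan is to reduce this hypergraph-coloring statement to an ordinary graph-coloring statement and then invoke the Four Color Theorem. Recall that a vertex of $H(D)$ is a disc and that a hyperedge $r(p)$ is the set of discs containing a point $p$; a coloring is proper (non-monochromatic) exactly when every point covered by at least two discs sees two distinct colors. Since a superset of a non-monochromatic hyperedge is again non-monochromatic, it suffices to make every inclusion-minimal hyperedge of size $\ge 2$ non-monochromatic. My first claim is that for discs every such minimal hyperedge has size exactly $2$; granting this, the minimal hyperedges of size $\ge 2$ are precisely the edges of the Delaunay graph $G(H(D))$, so any proper vertex coloring of $G(H(D))$ is automatically a proper coloring of the hypergraph. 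The theorem then follows once I show that $G(H(D))$ is planar, because the Four Color Theorem $4$-colors it.

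Next I would prove the reduction lemma (minimal hyperedges have size $2$). Let $p$ be covered by a set $S=r(p)$ with $\cardin{S}\ge 2$, and consider the common intersection $K=\bigcap_{d\in S} d$, which is convex and contains $p$. Pick a boundary point $z$ of $K$ lying on the boundary circle of a single disc $d_i\in S$ and in the interior of every other disc of $S$ (a generic boundary point of a convex body bounded by circular arcs has this form). Stepping from $z$ a little way out of $d_i$ produces a point covered by exactly $S\setminus\{d_i\}$, hence a hyperedge that is a proper subset of $S$; by minimality this forces $\cardin{S\setminus\{d_i\}}\le 1$, i.e. $\cardin{S}\le 2$. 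The only thing to guard against is that the small step might enter a disc \emph{not} in $S$, which is ruled out by an arbitrarily small perturbation of $D$ into general position (this does not change the coloring question).

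The geometric heart of the argument is the planarity of $G(H(D))$, and this is where the disc structure is used in an essential way through the \emph{pseudo-disc} property: the boundary circles of any two discs cross at most twice. To draw $G(H(D))$ in the plane I would place a representative point $p_d$ in the interior of each disc $d$, and for every Delaunay edge $\{d_i,d_j\}$ choose a witness point $q_{ij}$ that is covered by $d_i$ and $d_j$ and by no other disc; the edge is then routed as a curve from $p_i$ to $q_{ij}$ staying inside $d_i$, followed by a curve from $q_{ij}$ to $p_j$ staying inside $d_j$. The claim is that these curves can be chosen pairwise non-crossing, and proving this is the main obstacle. It is exactly here that the two-intersection property of circle boundaries is needed: it limits how the lens-shaped regions $d_i\cap d_j$ can interleave and lets one re-route locally, by a standard uncrossing/exchange argument for pseudo-discs, whenever two witness curves would meet.

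Once planarity is established, the Four Color Theorem yields a proper $4$-coloring of $G(H(D))$, which by the reduction lemma is a proper coloring of $H(D)$, proving $\chi(H(D))\le 4$. I expect the two delicate points to be (i) the uncrossing argument establishing planarity, which must genuinely exploit that two disc boundaries meet at most twice (it fails for general convex regions), and (ii) the clean handling of degeneracies in the minimal-hyperedge lemma; both I would dispatch by first perturbing $D$ into general position.
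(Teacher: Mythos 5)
Your overall instinct---reduce to a planar graph on the discs and invoke the Four Color Theorem---does match the proof of \cite{smoro} that this survey cites (the remark following Theorem~\ref{cf-discs} confirms that the Four-Color Theorem is what is used; the survey itself does not reproduce the proof). But the graph you reduce to is the wrong one, and your key reduction lemma is false: it is \emph{not} true that every inclusion-minimal hyperedge of $H(D)$ of size at least $2$ has size $2$; equivalently, a hyperedge of $H(D)$ need not contain any edge of the Delaunay graph $G(H(D))$. Concretely, let $d_1,d_2,d_3$ be unit discs centered at $c_1=(0,0)$, $c_2=(1,0)$, $c_3=(1/2,\sqrt{3}/2)$, and let $g=(1/2,\sqrt{3}/6)$ be the centroid of the centers. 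The locus of points covered by exactly $d_1,d_2$ among the three is the petal $d_1\cap d_2\setminus d_3$; its boundary consists of three circular arcs, and a short computation shows that every point of these arcs lies within distance $\sqrt{5}/2$ of $m_3=(1/2,-1)$ (the maximum is attained at the corners $(0,0)$ and $(1,0)$). Since the petal is bounded, it is contained in the convex hull of its boundary, hence inside the disc $b_3$ with center $m_3$ and radius $\sqrt{5}/2+\eps$; on the other hand $g\notin b_3$, because $\mathrm{dist}(g,m_3)=1+\sqrt{3}/6>\sqrt{5}/2+\eps$ for small $\eps$. Let $b_1,b_2$ be the copies of $b_3$ rotated by $\pm120^{\circ}$ about $g$, and put $D=\{d_1,d_2,d_3,b_1,b_2,b_3\}$. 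Then $r(g)=\{d_1,d_2,d_3\}$, while every point covered by exactly two of $d_1,d_2,d_3$ lies in some $b_i$. So $\{d_1,d_2,d_3\}$ is a minimal hyperedge of size $3$; in particular $d_1,d_2,d_3$ form an independent set in $G(H(D))$, and a proper coloring of $G(H(D))$ may color all three alike, failing at $g$. This also pinpoints where your proof of the lemma breaks: the point $z\in\partial(\bigcap_{d\in S}d)$ can be covered, robustly and deeply, by discs outside $S$; this is not a general-position phenomenon, and no perturbation removes it (the configuration above is stable under small perturbations of all six discs). Your planarity claim for $G(H(D))$ is actually true, but it is not the bottleneck.

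What you have transplanted is the \emph{primal} argument, which the survey does sketch for the hypergraph of points with respect to discs (remark after Theorem~\ref{cf-discs}): there a disc containing at least two points can be shrunk to a disc containing exactly two of them, so every hyperedge contains a Delaunay edge. In the dual setting of Theorem~\ref{four_colors} this shrinking has no pointwise counterpart, and the repair is to enlarge the graph. Lift each disc $d$ with center $c$ and radius $\rho$ to the point $\hat{d}=(c_1,c_2,c_1^2+c_2^2-\rho^2)\in\Re^3$; then $q\in d$ if and only if $\hat{d}$ lies below the plane tangent to the paraboloid $z=x^2+y^2$ above $q$, so hyperedges of $H(D)$ are exactly the sets of lifted points cut off by tangent planes. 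Let $G$ be the graph on $D$ whose edges are the pairs of discs whose lifted points are cut off exactly by \emph{some} lower halfspace (equivalently, the two discs of smallest power distance at some point of the plane; this is the dual graph of the order-$2$ power diagram). This graph contains $G(H(D))$ and is strictly larger: in the example above, $\{d_1,d_2\}$ is an edge of $G$ but not of $G(H(D))$. Both required properties now hold. First, every hyperedge of size at least $2$ contains an edge of $G$: tilt the tangent plane generically, keeping the set of lifted points below it inside the given hyperedge, and translate it downward until exactly two lifted points remain below (degenerate cases need routine extra care). Second, $G$ is planar: draw each edge as the straight segment between the two disc centers; if two edges with four distinct endpoints crossed, then above the crossing point each lifted segment would have to lie weakly below its own cutting plane and strictly above the other edge's cutting plane, which is impossible. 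The Four Color Theorem applied to $G$ then gives $\chi(H(D))\leq 4$. So your outline is repairable, but only after replacing the Delaunay graph by this larger planar graph; the reduction to $G(H(D))$ itself cannot be saved.
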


Combining Theorem~\ref{th:reduce-1} and
Theorem~\ref{four_colors} we obtain the following:
\begin{theorem}[\cite{smoro}]
\label{cf-discs} Let $\D$ be a set of $n$ discs in the plane.
Then $\um(H(\D)) \leq \log_{4/3} n$.
\end{theorem}

\begin{proof}
We use Algorithm~\ref{CF-framework} and the auxiliary proper four
coloring provided by Theorem~\ref{four_colors} in each prune step.
Thus in each step $i$ we discard at least $\cardin{V_i}/4$ discs
so the total number of iterations is bounded by $ \log_{4/3} n$.
\end{proof}

{\noindent \bf Remark:} The existence of a four coloring provided
in Theorem~\ref{four_colors} is algorithmic and uses the
algorithm provided in the Four-Color Theorem \cite{AH77a,AH77b}
which runs in linear time. It is easy to see that the total
running time used by algorithm~\ref{CF-framework} for this case
is therefore $O(n \log n)$. The bound in Theorem~\ref{cf-discs} holds also for the case of hypergraphs induced by points in the plane with respect to discs. This follows from the fact that such a hypergraph $H$ satisfies $\chi(H) \leq 4$. Indeed, the Delaunay graph $G(H)$ is planar (and hence four colorable) and any disc containing at least $2$ points also contains an edge of $G(H)$ \cite{ELRS}.

Smorodinsky \cite{smoro} proved that there exists an absolute
constant $C$ such that for any family $\cal P$ of pseudo-discs in
the plane $\chi(H({\cal P})) \leq C$. Hence, by Theorem~\ref{th:reduce-1} we have $\um(H({\cal P})) = O(\log n)$. It is not known what is the
exact constant and it might be possible that it is still $4$. By
taking $4$ pair-wise (openly-disjoint) touching discs, one can
verify that it is impossible to find a proper coloring of the discs
with less than $4$ colors.

There are natural geometric hypergraphs which require $n$ distinct
colors even in any proper coloring. For example, one can place a
set $P$ of $n$ points in general position in the plane (i.e., no
three points lie on a common line) and consider those ranges that
are defined by rectangles. In any proper coloring of $P$ (w.r.t
rectangles) every two such points need distinct colors since for
any two points $p,q$ there is a rectangle containing only $p$ and
$q$.

One might wonder what makes discs more special than other shapes?
Below, we show that a key property that allows CF-coloring discs
with a ``small"  number of colors unlike rectangles is the so
called ``low" {\em union-complexity} of discs.

\begin{definition}
Let $\R$ be a family of $n$ simple Jordan regions in the plane.
The {\em union complexity} of $\R$ is the number of vertices
(i.e., intersection of boundaries of pairs of regions in $\R$)
that lie on the boundary $\partial \bigcup_{r \in \R} r$.
\end{definition}

As mentioned already, families of discs or
pseudo-discs in the plane induce hypergraphs with chromatic number
bounded by some absolute constant. The proof of \cite{smoro} uses
the fact that pseudo-discs have  ``linear union
complexity'' \cite{KLPS}.


%

The following theorem bounds the chromatic number of a hypergraph
induced by a finite family of regions $\R$ in the plane as a
function of the union complexity of $\R$:
\begin{theorem} [\cite{smoro}]
    Let $\R$ be a set of $n$ simple Jordan regions and let $\UComp: \mathbb N \rightarrow \mathbb N$
    be a function such that $U(m)$ is the maximum union complexity of any $k$ regions in $\R$ over all $k \leq m$,
    for $1 \leq m \leq n$.
    We assume that $ \frac{\UComp(m)}{m}$ is a non-decreasing function.
Then, $\chi(H(\R)) = O(\frac{\UComp(n)}{n})$. Furthermore, such a
coloring can be computed in polynomial time under a proper and
reasonable model of computation.
    \label{main}
\end{theorem}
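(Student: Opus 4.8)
The plan is to reduce the problem of properly coloring the hypergraph to bounding, hereditarily, the \emph{degeneracy} of its Delaunay graph $G(H(\R))$, and then to extract that degeneracy bound from the union complexity by a Clarkson--Shor style sampling argument. First I would recall that a simple graph of degeneracy $d$ (every subgraph has a vertex of degree at most $d$) is properly colorable with $d+1$ colors by the greedy rule on a degeneracy ordering, computable in polynomial time by repeatedly deleting a minimum-degree vertex. The goal then becomes: show that for every subfamily $\R' \subseteq \R$ of $m$ regions, the Delaunay graph $G(H(\R'))$ has average degree $O(\UComp(n)/n)$, so that a vertex of degree $O(\UComp(n)/n)$ always exists and can be peeled off.

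The combinatorial heart is the counting lemma that the number of size-$2$ hyperedges of $H(\R')$ — equivalently, the edges of $G(H(\R'))$ — is $O(\UComp(m))$. Each such edge $\{R_1,R_2\}$ witnesses a point covered by exactly $R_1$ and $R_2$, i.e.\ a cell of the arrangement at depth exactly $2$. I would bound the number of such shallow cells by the standard Clarkson--Shor technique: sample each region independently with a suitable constant probability, note that a fixed depth-$2$ cell survives as a cell on the boundary of the union of the sample with at least constant probability (both of its defining regions are kept, a constant-probability event), and compare the expected number of surviving union-boundary cells, which is at most $\expect{\UComp(\text{sample size})}=O(\UComp(m))$ by linearity and the definition of $\UComp$, against the expected number of surviving depth-$2$ cells. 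This yields $\cardin{E(G(H(\R')))}=O(\UComp(m))$, hence average degree $2\,\UComp(m)/m$. Since $\UComp(m)/m$ is non-decreasing, $\UComp(m)/m \le \UComp(n)/n$ for all $m\le n$, so every subfamily's Delaunay graph has a vertex of degree $O(\UComp(n)/n)$; that is, $G(H(\R))$ is $O(\UComp(n)/n)$-degenerate.

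The step I expect to be the main obstacle is the reduction from coloring the \emph{hypergraph} $H(\R)$ to coloring the \emph{graph} $G(H(\R))$: a proper coloring of $G(H(\R))$ makes every size-$2$ hyperedge non-monochromatic, but a priori says nothing about larger hyperedges, and in general a large hyperedge $r(p)$ need not contain any size-$2$ hyperedge. To close this gap I would establish a shrinkability property — that every hyperedge with $\cardin{r(p)}\ge 2$ contains a pair $\{R_1,R_2\}$ that is itself a hyperedge — so that any hyperedge inherits two distinct colors from a Delaunay edge it contains, making the greedy coloring of $G(H(\R))$ automatically proper for $H(\R)$. This is precisely where the geometry must be used (shallow cells being ``reachable'' by shrinking), and making both this reduction and the Clarkson--Shor estimate rigorous for arbitrary simple Jordan regions is the delicate part; the degeneracy-to-coloring conversion and its polynomial-time implementation in the stated reasonable model of computation are then routine.
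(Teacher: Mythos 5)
Your first two steps are sound and coincide with the route of the proof that this survey cites from \cite{smoro}: a Clarkson--Shor argument showing that every subfamily $\R'$ of $m$ regions spans only $O(\UComp(m))$ hyperedges of size two, and the monotonicity of $\UComp(m)/m$ converting this into a hereditary minimum-degree bound of $O(\UComp(n)/n)$ for the Delaunay graphs of all induced sub-hypergraphs. The genuine gap is your step 3: the shrinkability lemma you propose to prove is \emph{false} for general simple Jordan regions, even in generic position, so it cannot be the way to close the hypergraph-versus-graph gap. Concretely, let $A,B,C$ be three pairwise crossing discs with nonempty common intersection $T$, and for each of the six components (``ears'') of $(A\cap B)\setminus C$, $(A\cap C)\setminus B$, $(B\cap C)\setminus A$ add one further Jordan region that is a slightly enlarged copy of that ear. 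In this family of nine regions the center of $T$ is covered by exactly $\{A,B,C\}$, so $\{A,B,C\}$ is a hyperedge; but no point of the plane is covered by exactly two of $A,B,C$, since the ears are all covered by the added regions. Hence no pair inside this hyperedge is a hyperedge, $A,B,C$ are pairwise non-adjacent in the fixed Delaunay graph $G(H(\R))$, and a proper coloring of that graph may legally make them monochromatic. So the reduction ``properly color $G(H(\R))$ and invoke shrinkability'' genuinely fails; the theorem assumes nothing that excludes such families.

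The repair needs no geometry beyond what you already proved; it is the hereditary mechanism of \cite{smoro} (the same one behind Theorem~\ref{th:reduce-1} and Algorithm~\ref{CF-framework} in this survey). The key point is that the sub-hypergraph of $H(\R)$ induced by a subfamily $\R'$ is exactly $H(\R')$, so hyperedges \emph{trace down} to hyperedges. Peel off, one region at a time, a vertex of minimum degree in the Delaunay graph of the \emph{current} induced sub-hypergraph, and color greedily in the reverse order, each region avoiding the colors of its Delaunay neighbors in the sub-hypergraph induced by the prefix of regions colored so far (at most $O(\UComp(n)/n)$ of them, by your hereditary bound). For correctness, take any hyperedge $e$ with $\cardin{e}\geq 2$ and let $R_i,R_j$ be its two elements colored last, with $R_j$ after $R_i$; the trace of $e$ on the prefix ending at $R_j$ is exactly $\{R_i,R_j\}$, i.e.\ a Delaunay edge of that induced sub-hypergraph, so $R_j$ was forced to avoid the color of $R_i$ and $e$ is non-monochromatic. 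Note that this argument really uses the evolving graphs: $G(H(\R'))$ can contain edges absent from $G(H(\R))$ (in the counterexample above, $\{A,B\}$ becomes a Delaunay edge once the ear-covering regions are deleted), which is precisely why ``degeneracy of the fixed graph $G(H(\R))$'', as you phrase it, is the wrong notion even though it happens to be bounded. Finally, a smaller correction to your counting step: keeping the two defining regions of a depth-$2$ cell does not put that cell on the union boundary of the sample (a point covered by two sampled regions is interior to the union); the charging must go through the arrangement \emph{vertices} on the witness cells, which have depth at most $2$ and appear on the sample's union boundary when their two defining boundaries are sampled and their at most two covering regions are not --- a constant-probability event --- with a separate easy argument for witness cells whose closure contains no arrangement vertex.
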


As a corollary of Theorem~\ref{main}, for any family $\R$ of $n$ planar Jordan regions for which the union-complexity function $\UComp(n)$ is linear,
we have that $\chi(H(\R)) = O(1)$. Hence, combining Theorem~\ref{main} with Theorem~\ref{th:reduce-1} we have:

\begin{theorem}[\cite{smoro}]\label{th:union-UMcoloring}
 Let $\R$ be a set of $n$ simple Jordan regions and let $\UComp: \mathbb N \rightarrow \mathbb N$
    be a function such that $U(m)$ is the maximum complexity of any $k$ regions in $\R$ over all $k \leq m$,
    for $1 \leq m \leq n$. If $\R$ has linear union complexity in the sense that $\UComp(n) \leq Cn$ for some constant $C$, then $\um(H(\R)) = O(\log n)$.
\end{theorem}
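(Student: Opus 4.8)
The plan is to prove this as a clean combination of Theorem~\ref{main} with the hereditary reduction of Theorem~\ref{th:reduce-1}, so the whole argument reduces to verifying that the hypothesis of the latter is met. First I would observe that linear union complexity is inherited by subfamilies. The statement of Theorem~\ref{main} carries the standing assumption that $\UComp(m)/m$ is non-decreasing; combined with $\UComp(n) \le Cn$, this gives, for every $m \le n$, the chain $\UComp(m)/m \le \UComp(n)/n \le C$, hence $\UComp(m) \le Cm$. Thus every subfamily $\R' \subseteq \R$ of size $m$ again has linear union complexity, and crucially \emph{with the same constant} $C$.

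Next I would identify the induced sub-hypergraphs of $H(\R)$ with hypergraphs induced by subfamilies. Since the vertex set of $H(\R)$ is $\R$ itself and each hyperedge has the form $r(p) = \{R \in \R : p \in R\}$, restricting to a vertex subset $\R' \subseteq \R$ produces exactly the hyperedges $r(p) \cap \R' = \{R \in \R' : p \in R\}$, which are precisely the hyperedges of $H(\R')$. Hence every induced sub-hypergraph of $H(\R)$ has the form $H(\R')$ for some subfamily $\R'$, and there are no ``stray'' induced hyperedges to worry about.

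With these two observations, I would apply Theorem~\ref{main} to each subfamily: for $\R'$ of size $m$ we have $\UComp(m) \le Cm$, so $\chi(H(\R')) = O(\UComp(m)/m) = O(C)$, a bound by a fixed constant $k$ that is independent of the particular subfamily and of its size. This is exactly the statement that $H(\R)$ admits a proper coloring with at most $k$ colors \emph{hereditarily}. Feeding this uniform bound $\chi(H') \le k$ into Theorem~\ref{th:reduce-1} then yields $\um(H(\R)) \le \log_{1 + 1/(k-1)} n = O(k \log n) = O(\log n)$, since $k = O(C)$ is an absolute constant, completing the argument.

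The main conceptual obstacle is precisely the hereditary requirement of Theorem~\ref{th:reduce-1}: it does not suffice to bound $\chi(H(\R))$ for the full family, since the recursive framework of Algorithm~\ref{CF-framework} re-colors shrinking sub-hypergraphs at every iteration and needs a constant chromatic bound to persist throughout. The monotonicity hypothesis on $\UComp(m)/m$ is exactly the ingredient that makes this work, as it propagates the linear union-complexity bound, with the same constant, down to all subfamilies; without it one could imagine a family whose full union complexity is linear while some subfamily has much larger normalized complexity, which would break the uniform constant bound.
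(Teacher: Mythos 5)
Your proof is correct and follows exactly the paper's own route: the paper derives this theorem precisely by combining Theorem~\ref{main} (giving $\chi(H(\R')) = O(1)$ for the linear union-complexity case) with the hereditary reduction of Theorem~\ref{th:reduce-1}. Your write-up simply makes explicit the details the paper leaves implicit, namely that the monotonicity of $\UComp(m)/m$ propagates the linear bound with the same constant down to all subfamilies, and that induced sub-hypergraphs of $H(\R)$ are exactly the hypergraphs $H(\R')$ of subfamilies.
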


\subsection{Axis-Parallel rectangles}
\subsubsection{hypergraphs induced by axis-parallel rectangles}
As mentioned already, a hypergraph induced by $n$ rectangles in
the plane might need $n$ colors in any proper coloring. However,
in the special case of axis-parallel rectangles, one can obtain
non-trivial upper bounds. Notice that axis-parallel rectangles
might have quadratic union complexity so using the above
framework yields only the trivial upper bound of $n$.
Nevertheless, in \cite{smoro} it was shown that any hypergraph
that is induced by a family of $n$ axis-parallel rectangles,
admits an $O(\log n)$ proper coloring. This bound is
asymptotically tight as was shown recently by Pach and Tardos
\cite{PTrects}.

\begin{theorem}[\cite{smoro}]
\label{rectangles} Let $\R$ be a set of $n$ axis-parallel
rectangles in the plane. Then $\chi(H(\R)) \leq 8 \log n$.
\end{theorem}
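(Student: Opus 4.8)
The plan is to combine a one-dimensional divide-and-conquer on the $x$-coordinate with the union-complexity bound of Theorem~\ref{main}. First I would set up a balanced recursion (equivalently, a segment tree) on the vertical edges. Take the vertical line $\ell$ through the median of the $2n$ vertical edges of the rectangles in $\R$; then at most $n/2$ rectangles lie strictly to the left of $\ell$ (each such rectangle uses two of the $\le n$ edges left of $\ell$) and at most $n/2$ strictly to the right. Assign to the root the set $C$ of rectangles \emph{crossing} $\ell$, and recurse separately on the left and the right families. Since a rectangle handed to a child lies entirely in that child's $x$-slab while a crossing rectangle is kept at the current node, every rectangle assigned to a node $v$ has its $x$-projection contained in $v$'s slab, and the nodes of any fixed level occupy pairwise disjoint slabs. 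The recursion has depth $\lceil \log n\rceil$.

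Next I would color level by level, giving each of the $\lceil\log n\rceil$ levels its own palette, with the palettes of distinct levels pairwise disjoint, and reusing a level's palette across all nodes of that level. The slab-disjointness is exactly what makes this sound: fix a point $p$ and let $R_p$ be the set of rectangles containing it; the members of $R_p$ assigned at level $j$ all have $x_p$ inside their slab, hence all belong to a \emph{single} node of level $j$. Consequently, if $R_p$ meets two different levels it already contains two rectangles from disjoint palettes and is non-monochromatic; and if $R_p$ lives entirely in one node, it is a hyperedge of that node's crossing family. Thus it suffices to properly color, with a \emph{constant} number of colors, the hypergraph $H(C_v)$ induced by the rectangles crossing a single vertical line; the total number of colors is then that constant times $\lceil\log n\rceil$.

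The heart of the argument is therefore a constant bound for a family $C$ of rectangles all crossing a common vertical line $\ell$. For a query point $p$ on or to the left of $\ell$, the right edge of every crossing rectangle is irrelevant (it automatically lies to the right of $p$), so each rectangle behaves as a three-sided rectangle unbounded to the east; symmetrically, for $p$ to the right of $\ell$ each rectangle behaves as a three-sided rectangle unbounded to the west, and the on-line points arise as limits of left queries so are covered too. Each of these two families of three-sided rectangles has linear union complexity --- its union boundary is a monotone staircase with $O(\cardin{C})$ vertices --- so by Theorem~\ref{main} each induces a hypergraph that is properly colorable with $O(1)$ colors. Coloring each rectangle of $C$ by the \emph{pair} consisting of its color in the east-unbounded coloring and its color in the west-unbounded coloring yields a coloring that is non-monochromatic on every left query and on every right query, hence on every point, and uses only $O(1)$ colors. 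Feeding this constant into the level-by-level scheme gives $\chi(H(\R)) = O(\log n)$, and a careful count of the constants yields the stated bound $8\log n$.

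The step I expect to be the main obstacle is precisely this last constant bound for rectangles crossing a common line: one must spot the reduction to east- and west-unbounded three-sided rectangles (so that the linear union complexity needed to invoke Theorem~\ref{main} becomes available) and verify that the product of the two proper colorings is simultaneously valid for left, right, and on-line query points. The remaining ingredients --- choosing a balancing line at each step and the disjoint-palette accounting --- are routine.
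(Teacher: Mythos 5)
Your overall architecture is sound, and it is essentially the architecture of the proof in the cited source \cite{smoro} (note that this survey states Theorem~\ref{rectangles} without reproducing a proof, so the comparison is against that original argument): split by a median vertical line, keep the crossing rectangles at the node, recurse on the two sides, give each of the $\lceil\log n\rceil$ levels its own palette, and use slab-disjointness to reduce everything to properly coloring, with a constant number of colors, a family of rectangles crossing a common vertical line. Your reduction of that subproblem is also correct: for a query point $p$ with $x_p \leq \ell$, membership in a crossing rectangle coincides with membership in its east-unbounded relaxation (this is a direct equivalence, so your ``limits of left queries'' remark is unnecessary and, as stated, slightly off --- a rectangle whose left edge lies on $\ell$ is picked up by the point itself but not by nearby left queries); each relaxed family has hereditarily linear union complexity (its boundary is a lower envelope of horizontal segments); and the product of the two per-side proper colorings is proper for the crossing family. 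Two cosmetic points: Theorem~\ref{main} concerns bounded Jordan regions, so the unbounded three-sided regions should be clipped by a large box; and the product detour is avoidable, since the crossing family itself already has hereditary linear union complexity (on each side of $\ell$ its union boundary is governed by the anchored parts alone), so Theorem~\ref{main} applies to it directly.

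The genuine gap is the constant. The statement asserts $\chi(H(\R)) \leq 8\log n$, and your closing claim that ``a careful count of the constants yields the stated bound'' cannot be carried out along your route. Theorem~\ref{main} is only an $O\bigl(\UComp(n)/n\bigr)$ statement with no explicit constant, and whatever per-side constant $c$ it supplies, your product construction uses $c^2$ colors per level. Moreover $c \geq 3$ unavoidably: three rectangles anchored on $\ell$ with $y$-ranges $[0,10]$, $[0,6]$, $[4,10]$, extending to the left by $1$, $3$, $2$ respectively, have a Delaunay graph that is a triangle (each pair, and no triple, is realized by a point of depth exactly $2$), so even with the best imaginable per-side bound the product route gives at least $9$ colors per level. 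An explicit constant as small as $8$ requires arguing about the crossing family directly rather than through the big-$O$ union-complexity theorem; for instance, one can show that the ``lowest-two'' Delaunay-type graph of each anchored subfamily has fewer than $2k$ edges on any $k$ vertices (by a sweep in $y$: each insertion or deletion of a $y$-interval creates at most one new lowest-pair), so the union of the two side graphs has fewer than $4k$ edges hereditarily, is therefore $7$-degenerate, and hence $8$-colorable, which with the depth-$\log n$ recursion gives exactly $8\log n$. So your proof establishes $\chi(H(\R)) = O(\log n)$ --- the substance of the theorem, by the same decomposition as the original --- but not the stated bound $8\log n$.
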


Plugging this fact into Algorithm~\ref{CF-framework} yields:
\begin{theorem}[\cite{smoro}]
\label{cf-rectangles} Let $\R$ be a set of $n$ axis-parallel
rectangles in the plane. Then $\um(H(\R)) = O(\log^2 n)$.
\end{theorem}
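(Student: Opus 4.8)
The plan is to feed the proper-coloring bound of Theorem~\ref{rectangles} into the general UM-coloring framework of Algorithm~\ref{CF-framework}, exactly as was done for discs in Theorem~\ref{cf-discs}. The only points that need checking are that the hypothesis of Theorem~\ref{th:reduce-1} --- a \emph{hereditary} bound on the chromatic number --- is met, and that the resulting (now non-constant) value of $k$ still yields the claimed bound.

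First I would observe that the class of hypergraphs induced by axis-parallel rectangles is closed under taking induced sub-hypergraphs. Concretely, for any sub-family $\R' \subseteq \R$, the sub-hypergraph of $H(\R)$ induced by the vertex set $\R'$ is precisely $H(\R')$: for every point $p$ the set $r(p) \cap \R' = \{R \in \R' : p \in R\}$ is exactly the hyperedge of $p$ in the rectangle-family $\R'$, and conversely every hyperedge of $H(\R')$ arises in this way. Hence Theorem~\ref{rectangles} applies to every induced sub-hypergraph and gives
$$\chi(H(\R')) \leq 8 \log \cardin{\R'} \leq 8 \log n$$
for all $\R' \subseteq \R$, since $\cardin{\R'} \leq n$ and $\log$ is increasing.

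With this hereditary bound in hand, I would run Algorithm~\ref{CF-framework}, using in each prune step the proper coloring with at most $8 \log n$ colors supplied by Theorem~\ref{rectangles}. By the pigeonhole analysis given after Algorithm~\ref{CF-framework}, each iteration $i$ with $C_i \leq 8\log n$ discards at least a $\frac{1}{8\log n}$ fraction of the surviving vertices, so after $l$ iterations at most $n\left(1 - \frac{1}{8\log n}\right)^{l}$ vertices remain. Forcing this quantity below $1$ requires $l = O\!\left(\log n \cdot \log n\right) = O(\log^2 n)$ iterations, and the number of iterations is exactly the number of colors used. Equivalently, this is the instance of Theorem~\ref{th:reduce-1} with $k = 8\log n$, giving $\um(H(\R)) = O(k \log n) = O(\log^2 n)$.

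I expect no serious obstacle: the entire strength of the bound is carried by Theorem~\ref{rectangles}, and the framework converts a hereditary $\chi \leq 8\log n$ bound into a UM-coloring essentially mechanically. The one point I would be careful about is that $k = 8\log n$ is not a constant, so I cannot cite Theorem~\ref{th:reduce-1} verbatim (it fixes $k$); instead I would either invoke its non-constant extension from the remark following it or, more cleanly, note that since $\cardin{\R'} \leq n$ throughout the recursion, the uniform choice $k = \lceil 8\log n\rceil$ is a \emph{fixed} integer for the given instance, so the theorem applies as stated.
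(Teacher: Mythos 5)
Your proposal is correct and follows exactly the paper's own (one-line) proof: the paper likewise obtains Theorem~\ref{cf-rectangles} by plugging the $\chi(H(\R)) \leq 8\log n$ bound of Theorem~\ref{rectangles} into Algorithm~\ref{CF-framework}, with the iteration count $O(\log^2 n)$ following from the standard pigeonhole analysis. Your extra care about the hereditary property and the non-constant $k$ (handled via the remark after Theorem~\ref{th:reduce-1}, or by fixing $k = \lceil 8\log n \rceil$ for the given instance) fills in details the paper leaves implicit, but it is the same argument.
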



{\bf \noindent Remark:} Notice that in particular there exists a
family $\R$ of $n$ axis-parallel rectangles for which $\CF(H(\R)) =
\Omega(\log n)$. Another example of a hypergraph $H$ induced by $n$ axis-parallel squares with $\chi(H)=2$ and $\CF(H)=\Omega(\log n)$ is given in Figure~\ref{fig:rectangles-dih}. This hypergraph is, in fact, isomorphic to the discrete interval hypergraph with $n$ vertices.

\begin{figure}[htbp]
    \begin{center}
      \includegraphics[width=0.3\textwidth]{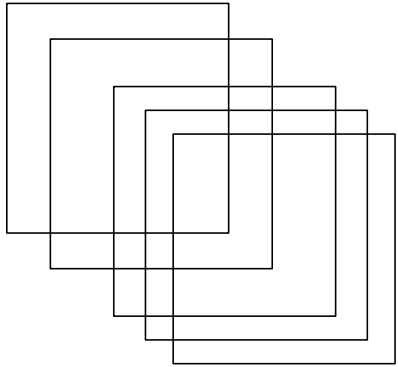}
      \caption{An example of $n$ axis-parallel squares inducing the hypergraph $H$ with $\chi(H)=2$ and $\CF(H) = \Omega(\log n)$.}
            \label{fig:rectangles-dih}
    \end{center}
  \end{figure}

\begin{problem}
Close the asymptotic gap between the best known upper bound $O(\log^2 n)$
and the lower bound $\Omega(\log n)$ on the CF-chromatic number of hypergraphs induced by $n$ axis-parallel rectangles in the plane.
\end{problem}

\subsubsection{Points with respect to axis-parallel rectangles}

Let $\R$ be the family of all axis-parallel rectangles in the
plane. For a finite set $P$ in the plane, let $H(P)$ denote the hypergraph $H_{\R}(P)$.
Let $D(P)$ denote the
Delaunay graph of $H(P)$. It is easily seen that $\chi(D(P)) =
\chi(H(P))$ since every axis-parallel rectangle containing at
least two points, also contains an edge of $D(P)$.

 The following problem
seems to be rather elusive:

\begin{problem}
Let $\R$ be the family of all axis-parallel rectangles in the
plane. Let $d=d(n)$ be the least integer such that for any set $P$
of $n$ points in the plane $\chi(D(P)) \leq d(n)$. Provide sharp
asymptotic bounds on $d(n)$.
\end{problem}

It was first observed in \cite{HS02} that $d(n) = O(\sqrt{n})$ by
a simple application of the classical Erd{\H o}s-Szekeres theorem
for a sequence of reals. This theorem states that in a sequence
of $k^2+1$ reals there is always a monotone subsequence of length
at least $k+1$ (see, e.g., \cite{w-igt-01}).

One can show that for any set $P$ of $n$ points in the plane
there is a subset $P' \subset P$ of size $\Omega(\sqrt{n})$ which
is independent in the graph $D(P)$. To see this, sort the points
$P=\{p_1,\ldots,p_n\}$ according to their $x$-coordinate. Write
the sequence of $y$-coordinates of the points in $P$
$y_1,\ldots,y_n$. By the Erd{\H o}s-Szekeres theorem, there is a
subsequence $y_{i_1},\ldots,y_{i_k}$ with $k = \Omega(\sqrt{n})$
which is monotone. We refer to the corresponding subset of $P$ as
a {\em monotone chain}. Notice that by taking every other point
in the monotone chain, the set $p_{i_1},p_{i_3},p_{i_5},\ldots$ is
a subset of size $k/2 = \Omega(\sqrt{n})$ which is independent in
$D(P)$. See Figure~\ref{erdszek} for an illustration. In order to
complete the coloring it is enough to observe that one can
iteratively partition $P$ into $O(\sqrt{n})$ independent sets of
$D(P)$.

\begin{figure}[t]
    \begin{center}
        \includegraphics[width=6cm]{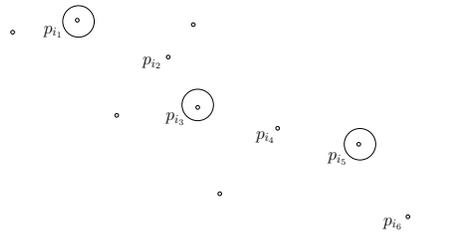}
        \caption{The circled points form an independent set in
           the Delaunay graph $D(P)$.}

        \label{erdszek}
    \end{center}
\end{figure}
The bounds on $d(n)$ were recently improved and the best known
bounds are stated below:

{\bf \noindent Upper bound: \cite{cf9}} $d(n) =
\tilde{O}(n^{0.382})$

{\bf \noindent Lower bound: \cite{cf5}} $d(n) = \Omega(\frac{\log
n}{\log^2 \log n} )$

We give a short sketch of the ideas presented in \cite{cf9} in
order to obtain the upper bound $d(n) = \tilde{O}(n^{0.382})$
where $\tilde{O}$ denotes the fact that a factor of polylog is
hiding in the big-$O$ notation. Our presentation of the ideas is
slightly different from \cite{cf4,cf9} since our aim is to bound
$d(n)$ which corresponds to coloring the Delaunay graph of $n$
points rather than CF-coloring the points themselves. However, as
mentioned above, such a bound implies also a similar bound on the
CF-chromatic number of the underlying hypergraph. Assume that $d(n) \geq c \log n$ for some fixed constant $c$. We
will show that $d(n) = O(n^{\alpha})$ for all $\alpha > \alpha_0 = \frac{3-\sqrt{5}}{2}$. The proof relies on the following
key ingredient, first proved in \cite{cf4}. For a point set $P$
in the plane, let $G_r$ be an $r\times r$ grid such that each
row of $G_r$ and each column of $G_r$ contains at most $\lceil n/r \rceil$ points of
$P$. Such a grid is easily seen to exists. A coloring of $P$ is called a {\em quasi-coloring} with
respect to $G_r$ if every rectangle that is fully contained in a
row of $G_r$ or fully contained in a column of $G_r$ is
non-monochromatic. In other words, when coloring $P$, we do not
care about rectangles that are not fully contained in a row or
fully contained in a column (or contain only one point).

\begin{lemma}[\cite{cf4,cf9}]
Let $P$ be a set of $n$ points in the plane. If $\Omega(\log n) =
d(n) = O(n^{\alpha})$ then for every $r$, $P$ admits a quasi
-coloring with respect to $G_r$ with
$\tilde{O}({(\frac{n}{r})}^{2\alpha- \alpha^2})$ colors.
\end{lemma}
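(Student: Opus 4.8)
The plan is to combine two colorings: one that destroys every rectangle lying inside a single row of $G_r$, and one that destroys every rectangle lying inside a single column, and then to argue that their product can be taken far more economically than the trivial $\tilde{O}(m^{2\alpha})$, where $m=\lceil n/r\rceil$, namely with $\tilde{O}(m^{2\alpha-\alpha^2})$ colors. The starting observation is the shrinkability of axis-parallel rectangles already used in the excerpt: any rectangle containing at least two points of a set contains a rectangle with exactly two points of that set, so a coloring is non-monochromatic on all rectangles of a family if and only if it properly colors the corresponding Delaunay graph. Hence ``breaking'' the row-rectangles of a point set $Q$ costs at most $d(|Q|)$ colors, and likewise for column-rectangles.

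First I would build the row coloring $A$. The points falling in grid-row $i$ form a set $P_i$ with $|P_i|\le m$; color its Delaunay graph properly, which by the hypothesis $d(n)=O(n^{\alpha})$ needs at most $d(m)=O(m^{\alpha})$ colors (padding, if necessary, so that each row uses exactly $K=\Theta(m^{\alpha})$ color labels). Because no rectangle contained in one grid-row can meet two different grid-rows, the same palette of $K$ colors may be reused across all rows, so $A$ uses $K=\tilde{O}(m^{\alpha})$ colors in total and is non-monochromatic on every rectangle contained in a single row.

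Next I would break the column-rectangles. A column-rectangle whose points receive two distinct $A$-colors is already non-monochromatic, so the only rectangles still to be fixed are those contained in a single grid-column $j$ and monochromatic under $A$; all their points then lie in one class $Q_{j,c}:=\{p\in \text{column } j : A(p)=c\}$. For each pair $(j,c)$ I color the Delaunay graph of $Q_{j,c}$ properly with $d(|Q_{j,c}|)$ fresh ``refinement'' colors, and I reuse a single refinement palette across all pairs $(j,c)$: two points can lie in a common, $A$-monochromatic column-rectangle only if they share both the grid-column $j$ and the color $c$, that is, only within one $Q_{j,c}$, so no cross-pair conflicts arise. Assigning each point the pair $(A(p),\text{refinement color})$ then yields a valid quasi-coloring, and the total number of colors is $K\cdot \max_{j,c} d(|Q_{j,c}|)$. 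If every class satisfies $|Q_{j,c}|=\tilde{O}(m^{1-\alpha})$, this is $\tilde{O}(m^{\alpha})\cdot \tilde{O}(m^{(1-\alpha)\alpha})=\tilde{O}(m^{2\alpha-\alpha^2})$, exactly the claimed bound.

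Everything therefore reduces to producing a valid row coloring whose classes are evenly spread over the columns, i.e. with $|Q_{j,c}|=\tilde{O}(m^{1-\alpha})$ for all $j,c$; this balancing is the heart of the matter. Since each column holds at most $m$ points and we are spreading them over $K=\Theta(m^{\alpha})$ labels, the target $m^{1-\alpha}$ is exactly the average, so one expects to reach it by relabeling: permute the $K$ color labels independently and uniformly within each row (a relabeling keeps each row coloring proper), giving $\mathbb{E}\,|Q_{j,c}|\le m/K$, and then apply a Chernoff bound together with a union bound over the $\mathrm{poly}(n)$ pairs $(j,c)$ to make all classes simultaneously $\tilde{O}(m^{1-\alpha})$. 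The assumption $d(n)=\Omega(\log n)$ enters here, guaranteeing enough labels and absorbing the additive polylogarithmic concentration terms (and trivializing the small-$m$ regime inside the $\tilde{O}$). The main obstacle is precisely this concentration step: a grid-column in which a single row contributes many equally-colored points — for instance a long monotone chain that a proper row coloring splits into only two large classes — resists naive balancing, and controlling such heavy columns, either by a more careful relabeling or by recursing the quasi-coloring on the offending classes, is the delicate part of the argument of \cite{cf4,cf9}.
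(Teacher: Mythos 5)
Your two-stage architecture is, up to swapping the roles of rows and columns, exactly the scheme the paper sketches: a stage-one proper coloring of each row with $K=\Theta(m^{\alpha})$ colors (where $m=\lceil n/r\rceil$), an independent random relabeling of each row's palette, and a refinement of the monochromatic classes inside each column, combined as a product coloring. The paper, however, gives only this sketch and explicitly omits ``the details of the proof and its probabilistic analysis,'' and it is precisely there that your proposal has a genuine gap --- one you flag yourself in your last paragraph. The balancing claim, that after the relabelings every class $Q_{j,c}$ has size $\tilde{O}(m^{1-\alpha})$ with high probability, is not merely unproven by your Chernoff-plus-union-bound argument; it is false. Relabeling never splits a color class. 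Let row $i$ consist of a monotone chain of $m/2$ points lying inside a single cell $(i,j)$; its Delaunay graph is a path, so the alternating $2$-coloring is a legitimate stage-one coloring, producing two classes of size $m/4$ inside that one cell. Whatever the row's permutation does, some label $c^{*}$ receives one of these classes, so $|Q_{j,c^{*}}|\geq m/4$ with probability $1$; your refinement of that class then costs $d(m/4)=\Theta(m^{\alpha})$ colors and the product coloring uses $\Theta(m^{2\alpha})$ colors --- no saving at all. Concentration fails because the per-row contributions to $|Q_{j,c}|$, although independent, are unbounded.

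The missing idea, which is what makes the probabilistic argument of \cite{cf4,cf9} work, is that the refinement of $Q_{j,c}$ need not be proper for \emph{all} rectangles inside the class. Since stage one is proper on every row, any rectangle contained in column $j$ that contains two class points from the same cell $(i,j)$ also contains, inside their bounding box (which lies in row $i$), a point of a different stage-one color, and hence is already non-monochromatic under the product coloring. So the refinement only has to break rectangles meeting each cell in at most one point of the class, and the quantity that must be balanced is the \emph{number of cells met} by $Q_{j,c}$, not its cardinality. That number is a sum over rows of independent $\{0,1\}$-indicators with expectation at most $m/K$, so Chernoff plus a union bound over the at most $rK\le n^{2}$ pairs $(j,c)$ does give $\tilde{O}(m^{1-\alpha}+\log n)$ cells per class with high probability (and your heavy chain becomes harmless: it occupies a single cell, so it never needs refining). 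What remains --- refining each class by a coloring that is constant on each cell-part and uses $\tilde{O}\bigl(d(m^{1-\alpha})\bigr)=\tilde{O}(m^{\alpha(1-\alpha)})$ colors to break all one-point-per-cell rectangles, yielding the claimed total $\tilde{O}(m^{2\alpha-\alpha^{2}})$ --- is the genuinely delicate step of \cite{cf4,cf9}, which neither your write-up nor the survey's sketch supplies.
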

The proof of the lemma uses a probabilistic argument. We first
color each column in $G_r$ independently with $d(n/r)$ colors.
Then for each column we permute the colors randomly and then
re-color all points in a given row that were assigned the same
initial color. We omit the details of the proof and its
probabilistic analysis.

Next, we choose an appropriate subset $P' \subset P$ which
consists of $O(r)$ monotone chains and with the following key property:
If a rectangle $S$ contains
points from at least two rows of $G_r$ and at least two columns
of $G_r$, then $S$ also contains a point of $P'$.
Note that a chain can be
colored with $2$ colors so altogether one can color $P'$ with
$O(r)$ colors, not to be used for $P\setminus P'$. Thus a rectangle that is not fully
contained in a row or a column of $G_r$ is served by the coloring. Hence, it is enough to quasi-color the
points of $P\setminus P'$ with respect to $G_r$. By the above
lemma, the total number of colors required for such a coloring is
$\tilde{O}({(\frac{n}{r})}^{2\alpha- \alpha^2} + r)$. Choosing
$r=n^{\frac{2\alpha- \alpha^2}{1+2\alpha- \alpha^2}}$ we obtain
the bound
$\tilde{O}(n^{\frac{2\alpha-\alpha^2}{1+2\alpha-\alpha^2}})$.
Thus, taking $\alpha_0$ to satisfy the equality
$$
\alpha_0 = \frac{2\alpha_0- {\alpha_0}^2}{1+2{\alpha_0}- {\alpha_0}^2}
$$

or $\alpha_0 = \frac{3-\sqrt{5}}{2}$, we have that for $\alpha > \alpha_0$ $d(n) = O(n^{\alpha})$ as asserted.

  \begin{figure}[htbp]
   \begin{center}
      \includegraphics[width=0.3\textwidth ]{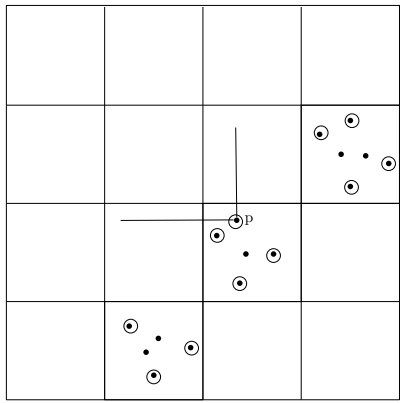}
      \caption{The grid $G_r$ (for $r = 4$) and one of its positive diagonals.
      The circled points are taken to be in $P'$ and the square
      points are in $P \setminus P'$. The point $p$ is an extreme point of type $2$ in that diagonal and is also an extreme point
      of type $1$ in the negative diagonal that contains the grid cell of $p$.}
            \label{fig:grid}
   \end{center}
  \end{figure}

To complete the proof, we need to construct the set $P'$.
Consider the diagonals of the grid $G_r$. See
Figure~\ref{fig:grid} for an illustration. In each positive
diagonal we take the subset of (extreme) points of type $2$ or
$4$, where a point $p$ is said to be of type $2$ (respectively,
$4$) if the $2$'nd quadrant (respectively, the $4$'th quadrant)
with respect to $p$ (i.e., the subset of all points above and to the left of $p$) does not contain any other point from the
diagonal. Similarly, for diagonals with negative slope we take
the points of type $1$ and $3$. If a point belongs to more than
one type (in the two diagonals that contain the point) then we
arbitrarily choose one of the colors it gets from one of the
diagonals. It is easy to see that the set $P'$ admits a
proper coloring with $O(r)$ colors, as there are only $2r-1$
positive diagonals and $2r-1$ negative diagonals, and in each
diagonal the extreme points of a fixed type form a monotone chain.

As mentioned, reducing the gap between the best known asymptotic upper and lower bounds mentioned above is a very interesting open problem.

\subsection{Shallow Regions}
As mentioned already, for every $n$ there are sets $D$ of $n$
discs in the plane such that $\CF(H(D)) = \Omega(\log n)$. For
example, one can place $n$ unit discs whose centers all lie on a
line, say the $x$-axis, such that the distance between any two
consecutive centers is less than $1/n$. It was shown in
\cite{ELRS} that, for such a family $D$, $\CF(H(D)) = \Omega(\log
n)$ since $H(D)$ is isomorphic to the discrete interval hypergraph with $n$ vertices. However, in this case there are points that are covered by
many of the discs of $D$ (in fact, by all of them). This leads to
the following fascinating problem: What happens if we have a
family of $n$ discs $D$ with the property that every point is
covered by at most $k$ discs of $D$, for some parameter $k$. It is
not hard to see that in such a case, one can color $D$ with
$O(k)$ colors such that any two intersecting discs have distinct
colors. However, we are interested only in CF-coloring of $D$.
Let us call a family of regions, with the property that no point
is covered by more than $k$ of the regions, a {\em $k$-shallow}
family.

\begin{problem}
What is the minimum integer $f=f(k)$ such that for any finite
family of $k$-shallow discs $D$, we have: $\CF(H(D)) \leq f(k)$?
\end{problem}
As mentioned already, it is easy to see that $f(k)=O(k)$.
However, it is  conjectured that the true upper bound should be
polylogarithmic in $k$.

In the further restricted case that any disc in $D$ intersects at
most $k$ other discs, Alon and Smorodinsky \cite{AS08} proved
that $\CF(H(D)) = O(\log^3 k)$ and this was recently improved by
Smorodinsky \cite{cf_shallow} to $\CF(H(D)) = O(\log^2 k)$. Both
bounds also hold for families of pseudo-discs. We sketch the
proof of the following theorem:

\begin{theorem}[\cite{cf_shallow}]
\label{shallow_main} Let $D$ be a family of $n$ discs in the plane
such that any disc in $D$ intersects at most $k$ other discs in
$D$. Then $\CF(H(D)) = O(\log^2 k)$
\end{theorem}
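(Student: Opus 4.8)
The plan is to reduce the problem of conflict-free coloring a family $D$ of $n$ discs with bounded local intersection number $k$ to a conflict-free coloring of a much smaller instance, and to iterate this reduction in a way that converts the global parameter $n$ into the local parameter $k$. The key structural observation is that if every disc intersects at most $k$ others, then $D$ decomposes locally into pieces of ``complexity $k$.'' Concretely, I would first partition $D$ into a small number of sub-families on which the intersection pattern is sparse, and then exploit Theorem~\ref{th:union-UMcoloring}: since discs (and pseudo-discs) have linear union complexity, any \emph{sub-family} of $m$ of them admits a UM-coloring, hence a CF-coloring, with $O(\log m)$ colors. The point of the shallowness hypothesis is that it should let us replace the $\log n$ factor coming from the global size by a $\log k$ factor coming from the local intersection bound.

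\medskip

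\noindent\textbf{Main reduction via a sparse partition.} First I would build an auxiliary ``intersection graph'' $G$ on $D$, joining two discs when they overlap; by hypothesis $G$ has maximum degree at most $k$, hence chromatic number at most $k+1$. I would \emph{not} simply $(k+1)$-color $G$ — that gives a proper coloring of the hypergraph but costs $O(k)$ colors and ignores the conflict-free structure. Instead, the plan is to randomly or greedily sample the discs into $O(\log k)$ classes so that within each class the surviving configuration has union complexity that is linear in the number of discs \emph{and} bounded locally, then apply the framework of Algorithm~\ref{CF-framework} with the $O(\log k)$-color auxiliary coloring guaranteed by Theorem~\ref{th:union-UMcoloring} restricted to the relevant local neighborhoods. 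Each application of the framework on a configuration of ``effective size $k$'' costs $O(\log k)$ colors; composing this with the $O(\log k)$ sub-families produced by the partition yields the target bound of $O(\log^2 k)$, with the two logarithmic factors having genuinely different origins (one from the shallow partition, one from the per-piece conflict-free coloring).

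\medskip

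\noindent\textbf{The hard part} will be controlling the interaction between different pieces of the partition: a point $p$ may be covered by discs from many classes simultaneously, and a vertex that is uniquely colored \emph{within its own class} need not be uniquely colored across the union of all classes. The delicate step is therefore to arrange the coloring so that the classes are ordered and the uniqueness is inherited ``from the top,'' much as in the proof of Theorem~\ref{th:reduce-1}, where the maximal color in each hyperedge is shown to be unique. I would resolve this by assigning disjoint color palettes to the $O(\log k)$ classes and arguing that for every point $p$, the class of lowest index meeting $r(p)$ already supplies a conflict-free witness whose color is never reused by a higher class covering $p$. Verifying that the shallow partition can be chosen so that this lowest-index witness always exists — essentially a shallowness-preserving, hereditary version of the union-complexity bound — is where the real combinatorial work lies, and it is the step I expect to require the probabilistic argument alluded to for the pseudo-disc case.

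\medskip

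\noindent Finally, since both the linear-union-complexity input (Theorem~\ref{main}) and the framework (Algorithm~\ref{CF-framework}) apply verbatim to pseudo-discs, the same proof would yield the asserted extension to families of pseudo-discs without additional ideas.
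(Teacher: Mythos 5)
Your high-level template --- partition $D$ into $O(\log k)$ classes with disjoint palettes and pay $O(\log k)$ colors per class --- has the same shape as the paper's proof, but the two steps you leave open are precisely where the proof lives, and the mechanisms you sketch for them would fail. First, your per-class budget of $O(\log k)$ colors rests on applying Theorem~\ref{th:union-UMcoloring} to configurations of ``effective size $k$.'' But a bound of $k$ on the number of discs each disc intersects does not bound the size of connected pieces of a class: a chain of $n$ unit discs, each meeting only two others, is a single connected configuration of size $n$, and Algorithm~\ref{CF-framework} run on it uses $\Theta(\log n)$ colors, not $O(\log k)$, since its cost is inherently logarithmic in the number of discs, not in the depth. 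The ingredient the paper uses instead is Lemma~\ref{shallow}: the hypergraph $(D, E_{\leq \ell}(D))$ of hyperedges of cardinality at most $\ell$ can be CF-colored (in fact colorful-colored) with $O(\ell)$ colors, \emph{independent of $n$}. This is exactly what converts a local depth bound into a color bound, and it is not derivable from the union-complexity/UM framework you invoke.

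Second, your partition and witness mechanism is underspecified and, in the one place you commit to a direction, backwards. The paper's partition is an iterated peeling: by the Lov\'asz Local Lemma (Lemma~\ref{lll}), $D$ splits into red and blue so that every face of depth at least $C\ln k$ keeps at least a $1/3$ fraction of its discs on each side; the blue discs form class $B_1$, colored with $O(\log k)$ colors so that $E_{\leq 2C\ln k}(B_1)$ is conflict-free, and one recurses on the red discs. Two things come out of this simultaneously: the maximum depth drops by a constant factor per round, so only $O(\log k)$ rounds (classes) are needed; and for any point $p$, the witness class is the \emph{highest}-index class $B_i$ meeting $d(p)$ --- if $\cardin{d(p)\cap B_i} > 2C\ln k$, then $p$ lies in a face that is still deep after removing $B_i$, so some disc of $B_{i+1}$ covers $p$, contradicting maximality of $i$. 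Your proposal instead asks the \emph{lowest}-index class meeting $r(p)$ to supply the witness; under any peeling-type construction that is exactly the class in which $p$ can be deepest (up to $\frac{2}{3}(k+1)$ discs of $B_1$ may cover $p$), so no $O(\log k)$-color coloring of that class alone can be guaranteed to serve $p$. Without the depth-halving invariant and the shallow-hyperedge coloring lemma, the $O(\log^2 k)$ bound does not follow from what you have written.
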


The proof of Theorem~\ref{shallow_main} is probabilistic and uses
the Lov{\' a}sz Local Lemma \cite{ALON00}. We start with a few
technical lemmas:

Denote by $E_{\leq \ell}(D)$ the subset of hyperedges of $H(D)$
of cardinality less than or equal to $\ell$.
\begin{lemma}
\label{at most k} Let $D$ be a finite set of $n$ planar discs.
Then $\cardin{E_{\leq k}(D)} = O(kn)$.
\end{lemma}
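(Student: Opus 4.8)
The plan is to interpret the hyperedges of $H(D)$ geometrically and then invoke the Clarkson--Shor random-sampling technique together with the linear union complexity of discs. Consider the arrangement $\A$ of the $n$ circles bounding the discs of $D$. For a point $p$, the hyperedge $r(p)=\{d\in D: p\in d\}$ depends only on the cell of $\A$ containing $p$, and its cardinality equals the \emph{depth} of that cell, namely the number of discs whose interior contains it. Consequently $\cardin{E_{\le k}(D)}$ is at most the number of cells of $\A$ of depth at most $k$, and it suffices to bound the latter.

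First I would reduce counting cells to counting vertices. A bounded cell of $\A$ whose boundary contains no vertex of $\A$ is bounded by a single circle, so there are only $O(n)$ such cells. Every other cell of depth at most $k$ can be charged to a vertex of $\A$ lying on its boundary. Since a vertex lies strictly inside exactly the discs contributing to its own depth, the depth of any vertex on the boundary of a cell is at most the depth of that cell; and each vertex is incident to at most four cells. Hence the number of cells of depth at most $k$ is $O(n) + O(1)\cdot N_{\le k}$, where $N_{\le k}$ denotes the number of vertices of $\A$ of depth at most $k$, and it remains to show $N_{\le k} = O(nk)$.

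Now I would apply Clarkson--Shor. A vertex of $\A$ is \emph{defined} by exactly the two discs whose boundaries cross there, and it is \emph{killed} by each disc whose interior contains it; a vertex of depth $0$ is precisely one lying on the boundary of the union $\bigcup D$, so the number of depth-$0$ vertices among any $m$ discs equals their union complexity, which is $O(m)$ for discs. Sampling each disc independently with probability $p=1/k$ and letting $R$ be the resulting subfamily, a vertex of depth $j$ survives as a depth-$0$ vertex of $R$ precisely when its two defining discs lie in $R$ and none of its $j$ killing discs do, which occurs with probability $p^2(1-p)^{j}$. Summing over the vertices of depth at most $k$ and using $(1-1/k)^{k}=\Omega(1)$ gives
$$
\Omega(1)\cdot p^2\, N_{\le k} \;\le\; \sum_{v}\Pr[\,v\ \text{is a depth-}0\ \text{vertex of}\ R\,] \;=\; \Ex\bigl[\#\{\text{depth-}0\ \text{vertices of}\ R\}\bigr] \;=\; O\pth{\Ex[\cardin{R}]} \;=\; O(n/k),
$$
where the penultimate estimate is the linear union-complexity bound applied to the random subsample (using that the bound $O(m)$ is linear, so $\Ex[U(\cardin{R})]=O(\Ex[\cardin{R}])$). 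Rearranging with $p=1/k$ yields $N_{\le k}=O(k^2\cdot n/k)=O(nk)$, and combined with the cell-counting reduction this gives $\cardin{E_{\le k}(D)}=O(nk)$.

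The main obstacle is really the single geometric input that drives the whole estimate: that the depth-$0$ vertices are exactly the vertices on the boundary of the union, and hence number $O(m)$ for $m$ discs. Everything else --- the cell-to-vertex charging via Euler-type bookkeeping and the sampling computation --- is routine. I would also remark that the argument is insensitive to the specific shape: it goes through verbatim for any family with linear union complexity, in particular for pseudo-discs, which is why the companion bounds quoted above hold in that generality.
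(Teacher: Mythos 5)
Your proof is correct and takes exactly the route the paper indicates---the paper's own ``proof'' merely cites the Clarkson--Shor technique and the linear union complexity of discs and omits all details, which you have filled in properly. (Two trivial patches: a vertex-free cell can be bounded by \emph{two} circles, e.g.\ an annulus, but charging each such cell to its outermost boundary circle still gives $O(n)$ of them; and the case $k=1$ needs separate handling, or sampling with probability $1/(k+1)$, since $(1-1/k)^k$ vanishes at $k=1$.)
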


\begin{proof}
This easily follows from the fact that discs have linear
union-complexity \cite{KLPS} and the Clarkson-Shor probabilistic
technique \cite{cs-arscg-89}. We omit the details of the proof.
\end{proof}

\begin{lemma}
\label{shallow} Let $D$ be a set of $n$ planar discs, and let
$\ell>1$ be an integer. Then the hypergraph $(D,E_{\leq
\ell}(D))$ can be CF-colored with $O(\ell)$ colors.
\end{lemma}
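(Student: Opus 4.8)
The plan is to reduce the conflict-free requirement to an ordinary proper coloring of an auxiliary graph, and then to bound that graph's chromatic number using exactly the Clarkson--Shor counting that underlies Lemma~\ref{at most k}. Define the \emph{$\ell$-shallow conflict graph} $G_\ell$ on vertex set $D$ by joining two discs $u,v$ with an edge whenever there is a point $p \in u \cap v$ whose depth is small, i.e. $\cardin{r(p)} \le \ell$. The crucial observation is that every hyperedge of the hypergraph we wish to color is a \emph{clique} in $G_\ell$: if $e \in E_{\le \ell}(D)$ then $e = r(p)$ for some point $p$ with $\cardin{r(p)} = \cardin{e} \le \ell$, and any two discs $u,v \in e$ both contain the shallow point $p$, hence are adjacent in $G_\ell$. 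Consequently, any proper coloring of $G_\ell$ assigns pairwise distinct colors to the discs of each $e \in E_{\le \ell}(D)$; such a coloring is in fact \emph{rainbow} on every small hyperedge, so in particular it is conflict-free for $(D, E_{\le \ell}(D))$. It therefore suffices to show $\chi(G_\ell) = O(\ell)$.

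To bound $\chi(G_\ell)$ I would establish that $G_\ell$ is $O(\ell)$-degenerate, via the hereditary edge bound that every induced subgraph of $G_\ell$ on a sub-family $D' \subseteq D$ has only $O(\ell \cardin{D'})$ edges. Given such a bound, every induced subgraph has average degree $O(\ell)$, hence minimum degree $O(\ell)$, so $G_\ell$ is $O(\ell)$-degenerate and a greedy coloring uses $O(\ell)$ colors. For the edge count itself, each edge $\{u,v\}$ of $G_\ell$ is certified by a point of depth at most $\ell$ in $u \cap v$; sliding this witness toward the boundary charges the pair $\{u,v\}$ to a feature of the arrangement of $D$ lying at depth at most $\ell$ (a vertex on $\partial u \cap \partial v$, with nested/containment pairs treated separately and absorbed into the same order of magnitude). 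The number of arrangement vertices at depth at most $\ell$ is $O(\ell n)$ for discs, by the Clarkson--Shor probabilistic technique \cite{cs-arscg-89} together with the linear union-complexity of discs \cite{KLPS} --- precisely the ingredients already invoked for Lemma~\ref{at most k}. Applying this estimate to the sub-family $D'$ (whose discs only have smaller depths) yields $\cardin{E(G_\ell[D'])} = O(\ell \cardin{D'})$ hereditarily, as required.

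The main obstacle is the last edge-counting step: one must count \emph{pairs} of discs sharing a shallow point rather than the shallow cover-sets themselves. Routing the count naively through Lemma~\ref{at most k} (summing $\binom{\cardin{e}}{2}$ over $e \in E_{\le \ell}(D)$) loses polynomial factors in $\ell$, so the degeneracy bound must be obtained directly from the complexity of the at-most-$\ell$ levels of the arrangement, which is $O(\ell n)$ for families of linear union complexity. Getting this shallow-level bound, and checking that every conflict is charged to a distinct shallow arrangement feature (including the containment case), is where the real work lies; once it is in hand, the reduction to proper coloring and the degeneracy argument are routine and deliver the claimed $O(\ell)$ colors.
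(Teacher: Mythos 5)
Your overall architecture is sound, and it is essentially the route the paper itself points to: the paper gives no inline proof of this lemma but defers to \cite{aloupis}, remarking that the proof there yields the stronger property that every hyperedge of $E_{\leq \ell}(D)$ gets pairwise distinct colors (an ``$\ell$-colorful'' coloring). Your reduction --- join two discs whenever they share a point of depth at most $\ell$, observe that every hyperedge of $E_{\leq \ell}(D)$ is a clique in this graph $G_\ell$, and properly color $G_\ell$ with $O(\ell)$ colors via an $O(\ell)$-degeneracy bound --- delivers exactly that stronger conclusion, and the hereditary edge bound $\cardin{E(G_\ell[D'])}=O(\ell\cardin{D'})$ that you reduce everything to is a true statement and is indeed the crux of the argument in the cited reference.

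The gap is in the one step you yourself flag as ``the real work,'' and the specific charging you sketch for it would fail. You propose to slide the shallow witness $p\in u\cap v$ to the boundary and charge the pair $\{u,v\}$ to a vertex of $\partial u\cap\partial v$ of depth at most $\ell$. No such vertex need exist: take two unit discs $u,v$ crossing in a lens whose two boundary vertices are $a,b$, let $p$ be the center of the lens (depth $2$), and pile $\ell$ tiny discs over $a$ and another $\ell$ tiny discs over $b$, none containing $p$. Then $\{u,v\}$ is an edge of $G_\ell$ for every $\ell\geq 2$, yet both points of $\partial u\cap\partial v$ have depth roughly $\ell+2$, so sliding the witness has nowhere shallow to land; depth is not monotone along any path from $p$ to the lens corners. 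The correct way to obtain the hereditary bound is a Clarkson--Shor sampling argument run on \emph{pairs} rather than on arrangement vertices: sample each disc of $D'$ independently with probability $1/\ell$ to get $R\subseteq D'$. A pair $\{u,v\}$ sharing a point $p$ of depth $t\leq\ell$ survives as a Delaunay edge of $R$ (both $u,v$ sampled, none of the at most $\ell-2$ other discs through $p$ sampled) with probability at least $\ell^{-2}(1-1/\ell)^{\ell-2}=\Omega(\ell^{-2})$; on the other hand the Delaunay graph of any family of discs has linearly many edges --- this is Lemma~\ref{at most k} with $k=2$, i.e., linear union complexity \cite{KLPS} plus the Clarkson--Shor technique \cite{cs-arscg-89} --- so the expected number of Delaunay edges of $R$ is $O(\cardin{D'}/\ell)$. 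Comparing the two estimates gives $O(\ell\cardin{D'})$ edges in the intrinsic shallow conflict graph of $D'$, which contains $G_\ell[D']$ since depths only decrease on subfamilies. Note also that nested pairs $u\subseteq v$ need no separate treatment here: they too become Delaunay edges of the sample with probability $\Omega(\ell^{-2})$. With this substitution for your counting step, the degeneracy argument and hence your whole proof (including the stronger rainbow property) goes through.
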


{\bf \noindent Remark:} In fact, the proof of Lemma~\ref{shallow}
which can be found in \cite{aloupis} provides a stronger
coloring. The coloring has the property that every hyperedge in
$E_{\leq \ell}(D)$ is colorful (i.e., all vertices have distinct
colors). Such a coloring is referred to as $\ell$-colorful
coloring and is discussed in more details in
Subsection~\ref{k-strong}

\begin{lemma}
\label{lll}
 Let $D$ be a set of discs such that every disc
intersects at most $k$ others. Then there is a constant $C$ such
that $D$ can be colored with two colors (red and blue) and such
that for every face $f \in \Arr(D)$ with depth at least $C \ln k$,
there are at least $\frac{\cardin{d(f)}}{3}$ red discs containing
$f$ and at least $\frac{\cardin{d(f)}}{3}$ blue discs containing
$f$, where $d(f)$ is the set of all discs containing the face $f$.
\end{lemma}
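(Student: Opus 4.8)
The plan is to color $D$ randomly and invoke the Lov{\'a}sz Local Lemma. First I would color each disc of $D$ independently red or blue, each with probability $1/2$, and aim to show that with positive probability every face of depth at least $C\ln k$ is balanced in the required sense. Two structural observations drive the argument. Since all discs of $d(f)$ contain the connected face $f$, they pairwise intersect, and as each disc meets at most $k$ others this forces $\cardin{d(f)}\le k+1$; hence depths are globally bounded by $k+1$. Moreover, for the local lemma it suffices to introduce one bad event per \emph{distinct} depth set $S=d(f)$ with $\cardin{S}\ge C\ln k$, namely the event $A_S$ that $S$ contains fewer than $\cardin{S}/3$ red discs or fewer than $\cardin{S}/3$ blue discs (two faces with the same depth set yield literally the same event).

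For the probability bound I would apply a Chernoff bound: if $\cardin{S}=m$, the number of red discs in $S$ is binomial with mean $m/2$, so the probability it falls below $m/3$ is at most $e^{-m/36}$, whence $\Pr[A_S]\le 2e^{-m/36}\le 2k^{-C/36}$ whenever $m\ge C\ln k$. (Choosing $C$ large also guarantees $\cardin{S}/3\ge 1$, so the balance condition is meaningful.)

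The crux is the dependency degree. Events $A_S$ and $A_{S'}$ are dependent only if $S\cap S'\neq\emptyset$. Fixing a disc $d_0\in S\cap S'$, every disc of $S'$ intersects $d_0$ (the discs of $S'$ pairwise intersect), so $S'$ is a subset of the at most $k+1$ discs consisting of $d_0$ together with the discs meeting it. The distinct depth sets $S'$ containing $d_0$ therefore correspond to (a subset of) the faces of the local arrangement of these $O(k)$ discs, of which there are $O(k^2)$, since an arrangement of $m$ discs has $O(m^2)$ faces. Summing over the at most $k+1$ choices of $d_0\in S$, the dependency degree is $d=O(k^3)$.

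Finally I would invoke the symmetric Lov{\'a}sz Local Lemma: with $p=2k^{-C/36}$ and degree $d=O(k^3)$, the condition $e\,p\,(d+1)\le 1$ amounts to $O(k^{3-C/36})\le 1$, which holds once $C$ is chosen large enough (e.g.\ $C/36>3$) and $k$ exceeds an absolute constant; the values of $k$ below that constant are handled by taking $C$ large enough that the threshold $C\ln k$ exceeds the maximal depth $k+1$, rendering the claim vacuous. This establishes a coloring with the desired property with positive probability. The main obstacle is precisely the dependency-degree estimate: a single disc can belong to arbitrarily many faces, so the essential point is to count \emph{distinct depth sets} rather than faces, and to combine the pairwise-intersection structure of $d(f)$ with the quadratic complexity of the local arrangement in order to keep the degree polynomial in $k$.
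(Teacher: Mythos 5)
Your proof is correct and follows essentially the same route as the paper's: a uniformly random red/blue coloring, a Chernoff bound of the form $2e^{-\Omega(|d(f)|)} \le 2k^{-\Omega(C)}$ per bad event, a dependency-degree bound of $O(k^3)$ obtained from $|d(f)| \le k+1$ together with the $O(k^2)$ bound on the number of faces in an arrangement of $O(k)$ discs, and the symmetric Lov\'asz Local Lemma. Your refinements (indexing events by distinct depth sets rather than faces, and treating small $k$ explicitly) are harmless tightenings of the same argument.
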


\begin{proof}
Consider a random coloring of the discs in $D$, where each disc $d
\in D$ is colored independently red or blue with probability
$\frac{1}{2}$. For a face $f$ of the arrangement $\Arr(D)$ with
$\cardin{d(f)} \geq C \ln k$ (for some constant $C$ to be
determined later), let $A_f$ denote the ``bad'' event that either
less than $\frac{\cardin{d(f)}}{3}$ of the discs in $d(f)$ or
more than $\frac{2\cardin{d(f)}}{3}$ of them are colored blue. By
the Chernoff inequality (see, e.g., \cite{ALON00}) we have:
$$
Pr[A_f] \leq 2e^{-\frac{\cardin{d(f)}}{72}} \leq 2e^{-\frac{C\ln
k}{72}}
$$
We claim that for every face $f$, the event $A_f$ is mutually
independent of all but at most $O(k^3)$ other events. Indeed
$A_f$ is independent of all events $A_s$ for which $d(s) \cap
d(f) = \emptyset$. By assumption, $\cardin{d(f)} \leq k+1$.
Observe also that a disc that contains $f$, can contain at most
$O(k^2)$ other faces, simply because the arrangement of $k$ discs
consists of at most $O(k^2)$ faces. Hence, the claim follows.

Let $C$ be a constant such that:
$$
e \cdot 2e^{-\frac{C\ln k}{72}}\cdot 2k^3 < 1
$$

By the Lov{\' a}sz Local Lemma, (see, e.g., \cite{ALON00}) we
have:
$$
Pr[\bigwedge_{\cardin{d(f)} \geq C \ln k} \bar{A_f}]
> 0
$$
In particular, this means that there exists a coloring for which
every face $f$ with $\cardin{d(f)} \geq C \ln k$ has at least
 $\frac{\cardin{d(f)}}{3}$ red discs containing $f$
and at least $\frac{\cardin{d(f)}}{3}$ blue discs  containing it,
as asserted. This completes the proof of the lemma.
\end{proof}

{\bf \noindent Proof of Theorem~\ref{shallow_main}:} Consider a
coloring of $D$ by two colors as in Lemma~\ref{lll}. Let $B_1$
denote the set of discs in $D$ colored blue. We will color the
discs of $B_1$ with $O(\ln k)$ colors such that $E_{\leq 2 C\ln
k}(B_1)$ is conflict-free, as guaranteed by Lemma~\ref{shallow},
and recursively color the discs in $D\setminus B_1$ with colors
disjoint from those used to color $B_1$. This is done, again, by
splitting the discs in $D \setminus B_1$ into a set of red discs
and a set $B_2$ of blue discs with the properties guaranteed by
Lemma~\ref{lll}. We repeat this process until every face of the
arrangement $\Arr(D')$ (of the set $D'$ of all remaining discs)
has depth at most $C\ln k$. At that time, we color $D'$ with
$O(\ln k)$ colors as described in Lemma~\ref{shallow}. To see that this
coloring scheme is a valid
conflict-free coloring, consider a point $p \in \bigcup_{d \in D}
d$. Let $d(p) \subset D$ denote the subset of all discs in $D$ that contain
$p$. Let $i$ be the largest index for which $d(p) \cap B_i \neq
\emptyset$. If $i$ does not exist (namely, $d(p) \cap B_i =
\emptyset$  $\forall i$) then by Lemma~\ref{lll} $\cardin{d(p)}
\leq C\ln k$. However, this means that $d(p) \in E_{\leq C\ln k}(D)$
and thus $d(p)$ is conflict-free by the coloring of the last
step. If $\cardin{d(p) \cap B_i} \leq 2 C\ln k$ then $d(p)$ is conflict free since
one of the colors in $d(p) \cap B_i$ is unique according to the coloring of $E_{\leq c \ln k}(B_i)$. Assume then, that
$\cardin{d(p) \cap B_i} > 2C\ln k$. Let $x$ denote the number of
discs containing $p$ at step $i$. By the property of the coloring
of step $i$, we have that $x \geq 3C\ln k$. This means that after
removing $B_i$, the face containing $p$ is also contained in at
least $C\ln k$ other discs. Hence, $p$ must also belong to a disc
of $B_{i+1}$, a contradiction to the maximality of $i$. To argue
about the number of colors used by the above procedure, note that
in each prune step, the depth of every face with depth
$i \geq C \ln k$ is reduced with a factor of at least $\frac{1}{3}$. We
started with a set of discs such that the maximal depth is $k+1$.
After the first step, the maximal depth is $\frac{2}{3}k$ and for
each step we used $O(\ln  k)$ colors so, in total, we have that
the maximum number of colors $f(k,r)$, needed for CF-coloring a
family of discs with maximum depth $r$ such that each disc
intersects at most $k$ others satisfies the recursion:
$$
f(k,r) \leq O(\ln  k) + f(k,\frac{2}{3}r).
$$
This gives $f(k,r) = O(\ln k \log r)$. Since, in our case $r \leq
k+1$, we obtain the asserted upper bound. This completes the
proof of the theorem. $\Box$

{\bf \noindent Remark:} Theorem~\ref{shallow_main} works almost
verbatim for any family of regions (not necessarily convex) with
linear union complexity. Thus, for example, the result applies to
families of homothetics or more generally to any family of
pseudo-discs, since pseudo-discs
have linear union complexity (\cite{KLPS}). We also note that, as in other cases mentioned so far, it is easily seen that the proof of the bound of Theorem~\ref{shallow_main} holds for UM-coloring.

The proof of Theorem~\ref{shallow_main} is non-constructive
since it uses the Lov{\' a}sz Local Lemma. However, we can use
the recently discovered algorithmic version of the Local Lemma of
Moser and Tardos \cite{MT09} to obtain a constructive proof of
Theorem~\ref{shallow_main}.

\begin{problem}
As mentioned, the only lower bound that is known for this problem
is $\Omega(\log k)$ which is obvious from taking the lower bound
construction of \cite{ELRS} with $k$ discs. It would be
interesting to close the gap between this lower bound and the
upper bound $O(\log^2 k)$.
\end{problem}

The following is a rather challenging open problem:

\begin{problem}
Obtain a CF-coloring of discs with maximum depth $k+1$ (i.e., no
point is covered by more than $k+1$ discs) with only
polylogarithmic (in $k$) many colors. Obviously, the assumption
of this subsection that a disc can intersect at most $k$ others is
much stronger and implies maximum depth $k+1$. However, the
converse is not true. Assuming only bounded depth does not imply
the former. In bounded depth, we still might have discs
intersecting many (possibly all) other discs.
\end{problem}

\section{Extensions of CF-Coloring}
\subsection{$k$-CF coloring}
We generalize the notion of CF-coloring of a hypergraph to {\em
$k$-CF-coloring}. Informally, we think of a hyperedge as being
`served' if there is a color that appears in the hyperedge (at
least once and) at most $k$ times, for some fix prescribed
parameter $k$. For example, we will see that when the underlying
hypergraph is induced by $n$ points in $\Re^3$ with respect to
the family of all balls, there are $n$ points for which any
CF-coloring needs $n$ colors but there exists a 2-CF-coloring with
$O(\sqrt{n})$ colors (and a $k$-CF-coloring with $O(n^{1/k})$
colors for any fixed $k \ge 2$).  We also show that any hypergraph
 $(V,\E)$ with a finite
VC-dimension $c$, can be $k$-CF-colored with $O(\log
{\cardin{P}})$ colors, for a reasonably large $k$. This relaxation
of the model is applicable in the wireless scenario since the
real interference between conflicting antennas (i.e., antennas
that are assigned the same frequency and overlap in their
coverage area) is a function of the number of such antennas. This
suggests that if for any given point, there is some frequency
that is assigned to at most a ``small'' number of antennas that
cover this point, then this point can still be served using that
frequency because the interference between a small number of
antennas is low. This feature is captured by the following notion
of $k$-CF-coloring.

\begin{definition}
    {\bf $k$-CF-coloring of a hypergraph}: Let $H=(V,\E)$ be
    a hypergraph. A function $\chi: V
    \rightarrow \{1,\ldots,i\}$ is a {\em $k$-CF-coloring}
    of $H$ if for every $S \in \E$ there exists a color $j$ such that $1 \leq
    \cardin{\{v \in S| \chi(v) = j\}} \leq k$; that
    is, for every hyperedge $S \in \E$ there exists
    at least one color $j$ such that $j$ appears (at least
    once and) at most $k$ times among the colors assigned to
    vertices of $S$.
\end{definition}

Let $\kopt(H)$ denote the minimum number of colors needed for a
$k$-CF-coloring of $H$.

Note that a $1$-CF-coloring of a hypergraph $H$ is simply a
CF-coloring.

Here we modify Algorithm~\ref{CF-framework} to obtain a $k$-CF
coloring of any hypergraph. We need yet another definition of the
following relaxed version of a proper coloring:

\begin{definition}
Let $H=(V,\E)$ be a hypergraph. A coloring $\varphi$ of $H$ is
called {\em $k$-weak} if every hyperedge $e \in \E$ with
$\cardin{e} \geq k$ is non-monochromatic. That is,  for every
hyperedge $e \in \E$ with $\cardin{e} \geq k$ there exists at
least two vertices $x,y \in e$ such that $\varphi(x) \neq
\varphi(y)$.
\end{definition}

Notice that a $k$-weak coloring (for $k \geq 2$) of a hypergraph $H=(V,\E)$ is simply a
proper coloring for the hypergraph $(V,\E_{\geq k})$ where $\E_{\geq k}$ is the subset of hyperedges in $\E$
with cardinality at least $k$. This notion was used implicitly in \cite{HS02,smoro} and then was explicitly defined
and studied in the Ph.D. of Keszegh \cite{Kes1,Kes2}. It is also related to the notion of
cover-decomposability and polychromatic colorings (see, e.g., \cite{gibsonvar,pach,PachToth}).

 We are ready to generalize
Algorithm~\ref{CF-framework}. See Algorithm~\ref{KCF-framework}
below.

\begin{algorithm}[htb!]
\caption{$k$-CFcolor$(H)$: {\it $k$-Conflict-Free-color a
hypergraph $H=(V,\E)$}.}
  \label{KCF-framework}
  \begin{algorithmic}[1]
    \STATE $i\gets 0$: {\it $i$ denotes an unused color}
    \WHILE{$V\neq \emptyset$}
    \STATE{\bf Increment:} $i\gets i+1$
    \STATE {\bf Auxiliary coloring:} {find a weak $k+1$-coloring $\chi$ of $H(V)$ with
    ``few'' colors}
%
    \STATE {\bf $V' \gets$ Largest color class of $\chi$}
    \STATE {\bf Color:} $f(x)\gets i ~,~ \forall x\in V'$
    \STATE {\bf Prune:} $V\gets V\setminus V'$, $H\gets H(V)$
\ENDWHILE
\end{algorithmic}
\end{algorithm}

\begin{theorem}[\cite{HS02}]
Algorithm~\ref{KCF-framework} outputs a valid $k$-CF-coloring of
$H$.
\end{theorem}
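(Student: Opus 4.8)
The plan is to mirror the proof of \theoref{CF-framework} (the $k=1$ case, where a weak $2$-coloring is exactly a proper coloring), adapting the final counting step to the relaxed weak $(k+1)$-coloring used in step 4 of Algorithm~\ref{KCF-framework}. Fix a hyperedge $S \in \E$ and let $i$ be the maximal index (final color) assigned to any vertex of $S$; such an $i$ exists and is attained at least once by definition. It then suffices to show that the color $i$ is assigned to \emph{at most} $k$ vertices of $S$, since this exhibits a color appearing at least once and at most $k$ times, which is precisely the $k$-CF requirement.

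To control the multiplicity of color $i$, I would examine iteration $i$ of the algorithm. Let $V'$ denote the set of vertices that survived all prune steps up to and including the start of iteration $i$, i.e.\ the vertices whose final color is at least $i$. Then $S' := S \cap V'$ is a hyperedge of the induced sub-hypergraph $H(V')$, and $\chi$ is the auxiliary weak $(k+1)$-coloring applied to $H(V')$ in that iteration. The crucial observation is that the vertices of $S$ colored exactly $i$ are exactly the vertices of $S'$ lying in the largest color class of $\chi$ (the class that step 5 selects and step 6 recolors as $i$), while the vertices of $S'$ in any other $\chi$-class survive the prune step and thus receive a final color strictly greater than $i$.

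Here is where maximality does the work: since $i$ is the \emph{largest} color occurring in $S$, no vertex of $S$ can have final color exceeding $i$, so $S'$ contains no vertex outside the largest $\chi$-class. Equivalently, every vertex of $S'$ lies in a single color class of $\chi$, i.e.\ $S'$ is monochromatic under the auxiliary coloring. But $\chi$ is a weak $(k+1)$-coloring, meaning every hyperedge of $H(V')$ of cardinality at least $k+1$ is non-monochromatic; since $S'$ is a monochromatic hyperedge of $H(V')$, we must have $\cardin{S'} \le k$. As $S'$ is precisely the set of vertices of $S$ colored $i$, the color $i$ appears at least once and at most $k$ times in $S$, completing the argument.

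The only genuinely delicate point is the bookkeeping of the previous paragraph: one must be careful that ``surviving to iteration $i$'' coincides with ``final color $\ge i$'' and that the vertices of $S$ colored below $i$ have already been pruned and hence do not lie in $V'$, so that $S'$ really is monochromatic rather than merely containing a large monochromatic subset. Once this identification of $S'$ with the vertices of $S$ of color exactly $i$ is pinned down, invoking the weak $(k+1)$-coloring property is immediate, and the case $k=1$ recovers the unique-maximum argument of \theoref{CF-framework} verbatim.
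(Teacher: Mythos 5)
Your proof is correct and follows essentially the same route as the paper: the paper's proof simply states that the argument for Algorithm~\ref{CF-framework} carries over and yields the stronger property that the maximal color appears at most $k$ times in every hyperedge, which is exactly what you establish. Your write-up fills in the details the paper leaves implicit --- in particular the key observation that, by maximality of $i$, the set $S' = S \cap V'$ is monochromatic under the auxiliary weak $(k+1)$-coloring and hence has cardinality at most $k$ --- so it is a faithful, fully spelled-out version of the intended proof.
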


\begin{proof}
The proof is similar to the proof provided in \secref{general}
for the validity of Algorithm~\ref{CF-framework}. In fact, again,
the coloring provided by Algorithm~\ref{KCF-framework} has the
stronger property that for any hyperedge $S \in \E$ the maximal
color appears at most $k$ times.
\end{proof}

As a corollary similar to the one mentioned in Theorem~\ref{th:reduce-1}, for a hypergraph $H$ that admit a
 $k+1$-weak coloring with ``few'' colors hereditarily, $H$ also admits
a $k$-CF-coloring with few colors. The following theorem summarizes this fact:

\begin{theorem} [\cite{HS02}]
\label{th:reduce-2} Let $H = (V, \E)$ be a hypergraph with $n$
vertices, and let $l,k \in \mbb{N}$ be two fixed integers, $k \geq
2$. Assume that every induced sub-hypergraph $H'\subseteq H$ admits a $k+1$-weak coloring with at most $l$ colors.
Then $H$ admits a $k$-CF-coloring with at most
$\log_{1+\frac{1}{l-1}} n = O(l \log n)$ colors.
\end{theorem}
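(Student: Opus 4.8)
The plan is to run Algorithm~\ref{KCF-framework} and reuse the validity argument already established for it, so that only the count of iterations remains to be analyzed. At iteration $i$ the algorithm operates on the current vertex set $V_i$ (with $V_1 = V$) and needs a $(k+1)$-weak coloring of the induced sub-hypergraph $H(V_i)$ using few colors. The key observation is that $H(V_i)$ is itself an induced sub-hypergraph of $H$, so the hereditary hypothesis of the theorem guarantees a $(k+1)$-weak coloring of $H(V_i)$ with at most $l$ colors; this is exactly what step 4 of the algorithm requires. Thus the black box in step 4 can always be filled.

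Correctness, i.e.\ that the resulting coloring is a valid $k$-CF-coloring, is inherited directly from the validity of Algorithm~\ref{KCF-framework} proved above (where it was shown that the maximal color in every hyperedge appears at most $k$ times). Hence no new work is needed on this front, and the number of colors used equals the number of iterations performed before $V$ becomes empty.

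To bound the number of iterations I would use a pigeonhole argument identical in spirit to the one following Algorithm~\ref{CF-framework} in \secref{general}. Since the auxiliary coloring at iteration $i$ uses at most $l$ colors on $\cardin{V_i}$ vertices, its largest color class $V'$ contains at least $\cardin{V_i}/l$ vertices, all of which are pruned. Therefore $\cardin{V_{i+1}} \leq \cardin{V_i}(1 - 1/l)$, and after $t$ iterations at most $n(1 - 1/l)^t$ vertices remain. Forcing this quantity below $1$ yields $t \leq \log_{1/(1-1/l)} n = \log_{1 + 1/(l-1)} n$, which is the claimed bound; the asymptotic estimate $\log_{1+1/(l-1)} n = O(l \log n)$ then follows from the fact that $\ln(1 + 1/(l-1)) \geq c/l$ for a suitable constant $c$ and all $l \geq 2$.

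I do not expect a genuine obstacle here: the whole argument is a transcription of the proof of Theorem~\ref{th:reduce-1}, with the proper auxiliary coloring replaced by a $(k+1)$-weak one and CF-validity replaced by the $k$-CF-validity of Algorithm~\ref{KCF-framework}. The one point worth stating carefully is that the hereditary assumption is applied to the pruned sub-hypergraphs $H(V_i)$, which is legitimate precisely because an induced sub-hypergraph of an induced sub-hypergraph of $H$ is again an induced sub-hypergraph of $H$.
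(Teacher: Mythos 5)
Your proposal is correct and matches the paper's approach: the paper proves Theorem~\ref{th:reduce-2} by simply noting it is analogous to Theorem~\ref{th:reduce-1}, i.e., by running Algorithm~\ref{KCF-framework} with the hereditary $(k+1)$-weak coloring as the auxiliary black box, invoking the already-established validity of that algorithm, and bounding the iterations by the same pigeonhole argument. Your additional care about applying the hereditary hypothesis to the pruned sub-hypergraphs $H(V_i)$ is exactly the (implicit) justification the paper relies on.
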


\begin{proof}
The proof is similar to the proof of Theorem~\ref{th:reduce-1}
\end{proof}

\subsubsection{CF-Coloring of Balls in Three
   Dimensions}

\begin{lemma}
    Let $\B$ be the set of balls in three dimensions. There exists a hypergraph $H$ induced
     by a finite set $P$
    of $n$ points in $\Re^3$ with respect to $\B$ such that
    $\chi_{1CF}(H)= n$.
    The same holds for the set $\cal H$ of halfspaces in
    $\Re^d$, for $d>3$.

    \lemlab{balls:3d}
\end{lemma}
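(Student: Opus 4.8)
The plan is to exhibit, for each $n$, a set $P$ of $n$ points whose induced hypergraph has \emph{every} pair of points as a hyperedge of size two; equivalently, the Delaunay graph $G(H)$ is the complete graph $K_n$. Once this is done the lower bound is immediate: since $\chi_{1CF}=\CF$ and any CF-coloring must in particular assign distinct colors to the two endpoints of every $2$-element hyperedge, a CF-coloring of $H$ restricts to a proper coloring of $G(H)=K_n$, forcing at least $n$ colors. The matching upper bound $\CF(H)\le n$ is trivial (use $n$ distinct colors). Hence it suffices to construct $P$ with $G(H)=K_n$.

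For balls in $\Re^3$ I would place the points on the moment curve, $p_i=\gamma(t_i)$ with $\gamma(t)=(t,t^2,t^3)$ and distinct reals $t_1<\dots<t_n$. Fix a pair $\{a,b\}$; I want a ball meeting $P$ exactly in $\{p_a,p_b\}$. A sphere with center $c$ and radius $\rho$ is recorded by the sign of $g(t)=\|\gamma(t)-c\|^2-\rho^2$, which expands to $g(t)=t^6+t^4+t^2-2c_3t^3-2c_2t^2-2c_1t+(\|c\|^2-\rho^2)$; thus, as $c$ and $\rho$ vary, $g$ ranges over all polynomials of the form $t^6+t^4+\alpha t^3+\beta t^2+\gamma t+\delta$ with $\alpha,\beta,\gamma,\delta$ free. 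I would then try the ansatz $g(t)=(t-t_a)^2(t-t_b)^2(t^2+pt+q)$, which automatically vanishes only at $t_a,t_b$ and has leading coefficient $1$; matching the forced vanishing of the $t^5$-coefficient and the forced value $1$ of the $t^4$-coefficient determines $p=2(t_a+t_b)$ and $q=1+3(t_a+t_b)^2-2t_at_b$.

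The crux, and the only step I expect to require genuine work, is to verify that the residual quadratic $t^2+pt+q$ is positive definite, so that $g(t_j)>0$ at every other sample and the sphere is genuinely empty. A short discriminant computation gives $p^2-4q=8\bigl(t_at_b-(t_a+t_b)^2\bigr)-4=-8(t_a^2+t_at_b+t_b^2)-4$, which is strictly negative since $t_a^2+t_at_b+t_b^2\ge 0$. Therefore $g\ge 0$ with equality exactly at $t_a,t_b$, i.e.\ the corresponding ball carries $p_a,p_b$ on its boundary and all other points strictly outside (one checks $\rho^2=\|c\|^2-\delta>0$ because it equals the squared distance from $c$ to the distinct points $p_a,p_b$). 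Thus $\{p_a,p_b\}$ is a hyperedge for every pair, so $G(H)=K_n$ and $\chi_{1CF}(H)=n$.

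For halfspaces in $\Re^d$ with $d\ge 4$ I would again take points on the moment curve $\gamma(t)=(t,t^2,\dots,t^d)$, whose convex hull is the cyclic polytope $C(n,d)$. Since $C(n,d)$ is $\lfloor d/2\rfloor$-neighborly and $d\ge 4$, it is in particular $2$-neighborly: every pair of vertices spans an edge. The supporting hyperplane of such an edge meets the polytope exactly in its two endpoints, so the associated closed halfspace meets $P$ in precisely those two points, making every pair a $2$-element hyperedge. As before this yields $G(H)=K_n$ and hence $\chi_{1CF}(H)=n$, completing the proof. (Alternatively one can mimic the $\Re^3$ argument using the nonnegative degree-$d$ polynomial $(t-t_a)^2(t-t_b)^2 r(t)$ with $r$ positive on the samples.)
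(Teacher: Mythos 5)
Your proposal is correct and follows essentially the same route as the paper: points on the moment curve $(t,t^2,t^3)$ in $\Re^3$ so that every pair is cut off by a ball (forcing $n$ colors), and vertices of the cyclic polytope $C(n,d)$ with its $2$-neighborliness for halfspaces in $\Re^d$, $d>3$. The only difference is that the paper asserts the pairwise Delaunay property as "easy to verify," whereas you carry out the verification explicitly via the polynomial $(t-t_a)^2(t-t_b)^2(t^2+pt+q)$ and its negative discriminant — a nice, correct filling-in of the omitted computation.
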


\begin{proof}
    Take $P$ to be a set of $n$ points on the positive
    portion of the moment curve
    $\gamma=\{ (t, t^2, t^3)| t \geq 0\}$ in $\Re^3$.
    It is
    easy to verify that any pair of points $p,q \in P$ are
    connected in the Delaunay triangulation of $P$
   implying that there exists a ball
    whose intersection with $P$ is $\{p,q\}$.  Thus,
    all points must be colored
    using different colors.

    The second claim follows by taking $P$ to be $n$ distinct points on the moment curve $\{(t, t^2, \ldots, t^d)\}$ in $\Re^d$ (i.e, $P$ is the set of vertices of a so-called {\em cyclic-polytope} $C(n,d)$. See, e.g., \cite{STANELY}).
\end{proof}

\begin{theorem}[\cite{HS02,SmPHD}]
    Let $P$ be a set of $n$ points in $\Re^3$. Put $H=H_{\B}(P)$.
    Then
    $\kopt(H)=O(n^{1/k})$, for any fixed constant $k
    \geq 1$.

\end{theorem}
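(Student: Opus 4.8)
The plan is to obtain the coloring from the general $k$-CF framework of Algorithm~\ref{KCF-framework}, reducing the whole statement to a single combinatorial quantity — the hereditary $(k+1)$-weak chromatic number of ball hypergraphs — and then to control that quantity geometrically. Run Algorithm~\ref{KCF-framework} on $H=H_{\B}(P)$, using in each iteration a $(k+1)$-weak coloring of the sub-hypergraph induced by the surviving points. The key structural observation is that any induced sub-hypergraph of $H_{\B}(P)$ is again a ball hypergraph $H_{\B}(P')$ for some $P'\subseteq P$, so it suffices to control $W(m)$, the largest $(k+1)$-weak chromatic number attained by an $m$-point subset. As a sanity check, for $k=1$ a $2$-weak coloring is exactly a proper coloring, and \lemref{balls:3d} shows $W(m)$ can be as large as $m$; this matches the trivial bound $\kopt(H)=m$ for $k=1$.

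The first real step is to convert a bound of the form $W(m)=O(m^{1/k})$ into the claimed color count, and here a small refinement of Theorem~\ref{th:reduce-2} is needed to avoid losing a logarithmic factor. In an iteration with a surviving set of size $m$, a $(k+1)$-weak coloring with $W(m)$ colors has a color class of size at least $m/W(m)$, which gets pruned; assuming $W(m)=O(m^{1/k})$, each iteration deletes $\Omega(m^{1-1/k})$ points. Grouping iterations by dyadic scales of $m$, the number of iterations that shrink the set from $m$ to $m/2$ is $O(m^{1/k})$, and summing the geometric series $\sum_{j\ge 0}(n/2^{j})^{1/k}=O(k\,n^{1/k})$ shows the total number of iterations, hence of colors, is $O(n^{1/k})$ for fixed $k$. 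The point worth stressing is that, because $W$ decays as the surviving set shrinks, the $\log n$ factor that a direct application of Theorem~\ref{th:reduce-2} would produce collapses to a constant.

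It remains to prove the crux, $W(m)=O(m^{1/k})$. The natural approach is to lift $P$ to $\hat P\subset\Re^4$ via $(x,y,z)\mapsto(x,y,z,x^2+y^2+z^2)$, under which each ball-edge of $H_{\B}(P)$ becomes the intersection of $\hat P$ with a halfspace; thus the $(k+1)$-weak coloring problem for balls in $\Re^3$ becomes the same problem for halfspaces in $\Re^4$. One would then build the weak coloring by a hierarchical, cutting-based construction on the $(\le k)$-level of the dual arrangement (or by a probabilistic argument in the spirit of the shallow-disc proof of Theorem~\ref{shallow_main}, via the Lov\'asz Local Lemma), arranging the color classes so that every halfspace containing at least $k+1$ points is forced to meet two classes and is therefore non-monochromatic.

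I expect this last step to be the main obstacle, precisely because matching the exponent $1/k$ is delicate. Naive colorings fail in both directions: a uniformly random $t$-coloring must survive a union bound over all ball-edges of size $k+1$, but ball hypergraphs in $\Re^3$ carry super-linearly many small edges (for instance $\Theta(m^2)$ Delaunay pairs, as witnessed by \lemref{balls:3d}), which pushes $t$ well past $m^{1/k}$; on the other hand, partitioning into $(k+1)$-nets (a polychromatic coloring) is useless for fixed $k$, since an edge of size $k+1$ has ``weight'' only $\Theta(k/m)$, so every net must contain all but a vanishing fraction of $P$. The exponent $1/k$ must therefore arise from a structured coloring in which an edge of cardinality $s$ provably meets $\Omega(s^{1/k})$ color classes; making this quantitative, and checking that the level/cutting complexity in $\Re^4$ reproduces exactly the exponent $1/k$, is the technical heart of the argument.
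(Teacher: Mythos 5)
Your first two steps are exactly the paper's: reduce via Algorithm~\ref{KCF-framework} to showing that every induced sub-hypergraph $H_{\B}(P')$ admits a $(k+1)$-weak coloring with $O(\cardin{P'}^{1/k})$ colors, and then observe that each iteration discards $\Omega(m^{1-1/k})$ of the $m$ surviving points, so the total number of iterations (colors) is $O(n^{1/k})$ rather than the $O(n^{1/k}\log n)$ a blunt application of Theorem~\ref{th:reduce-2} would give. Your dyadic summation makes this explicit and is correct. The problem is that you stop exactly where the proof has to be done: you leave the key lemma, the hereditary $(k+1)$-weak bound $W(m)=O(m^{1/k})$, as an acknowledged ``technical heart'' with only speculative routes (a cutting-based construction on the lifted point set in $\Re^4$, or an unspecified local-lemma argument). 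That lemma is the entire content of the theorem beyond the framework, so as written the proposal has a genuine gap.

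Moreover, the reasoning by which you talk yourself out of the probabilistic approach is flawed, and this matters because the paper's actual proof \emph{is} a random coloring. You argue that a uniformly random $t$-coloring ``must survive a union bound over all ball-edges of size $k+1$,'' and since there are $\Theta(m^2)$ such edges this forces $t\gg m^{1/k}$ (indeed, the union bound only gives $t=O(m^{2/k})$). But the Lov\'{a}sz Local Lemma does not pay for the global number of bad events, only for the local dependency degree. The paper first shrinks: every ball containing more than $k+1$ points contains a ball cutting off exactly $k+1$ of them, so one may assume the hypergraph is $(k+1)$-uniform. By the Clarkson--Shor technique the number of such hyperedges is $O(k^2 n^2)$, i.e., average vertex degree $O(n)$ (constants depending on $k$); a monochromaticity event for one edge has probability $t^{-k}$ and is mutually independent of all events for edges disjoint from it, so the LLL condition reads roughly $e\cdot t^{-k}\cdot O(n)\le 1$, giving $t=O(n^{1/k})$ --- exactly the exponent you believed required a ``structured'' coloring. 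So the correct conclusion from your union-bound calculation is not that random colorings fail, but that the union bound is the wrong tool; replacing it with the Local Lemma (plus the shrinking step and the Clarkson--Shor count, neither of which appears in your proposal) is what closes the argument.
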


\begin{proof}
As is easily seen by Algorithm~\ref{KCF-framework}, it is enough
to prove that $H$ admits a $k+1$-weak coloring with $O(n^{1/k})$
colors. If so, then in every iteration we discard at least
$\Omega({\cardin{P_i}}^{1-\frac{1}{k}})$ elements so the total
number of iterations (colors) used is $O(n^{1/k})$. The proof
that $H$ admits a $k+1$-weak coloring with $O(n^{1/k})$ colors
uses the probabilistic method. We provide only a brief sketch of
the proof. It is enough to consider all balls containing exactly
$k+1$ points since if a ball contains more than $k+1$ points then
by perturbation and shrinking arguments it will also contain a
subset of $k+1$ points that can be cut-off by a ball. So we may
assume that in the underlying hypergraph $H=(P,\E)$, all
hyperedges have cardinality $k+1$ (such a hypergraph is also
called a $k+1$-uniform hypergraph). So we want to color the set
$P$ with $O(n^{1/k})$ colors such that any hyperedge in $\E$ is
non-monochromatic. By the Clarkson-Shor technique, it is easy to
see that the number of hyperedges in $\E$ is $O(k^2 n^2)$. Thus the
average degree of a vertex in $H$ is $O(n)$ where the constant of
proportionality depends on $k$. It is well known that such a
hypergraph has chromatic number $O(n^{1/k})$. This is proved via
the probabilistic method. The main ingredient is the Lov{\' a}sz
Local Lemma  (see, e.g., \cite{ALON00}).
\end{proof}

In a similar way we have:
\begin{theorem}[\cite{HS02,SmPHD}]
Let $\R$ be a set of $n$ balls in $\Re^3$. Then
$\kopt(H(\R))=O(n^{1/k})$.
\end{theorem}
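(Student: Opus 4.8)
The plan is to mirror the argument just given for points in $\Re^3$ with respect to balls, reducing the balls-hypergraph to a point-hypergraph one dimension higher via the standard lifting, and then to feed a weak coloring into Algorithm~\ref{KCF-framework}. By the validity of that algorithm it suffices to show that $H(\R)$, and hence each of its induced sub-hypergraphs (which are again hypergraphs induced by balls in $\Re^3$), admits a $(k+1)$-weak coloring with $O(m^{1/k})$ colors on $m$ vertices. Given such a coloring, the pigeonhole step of Algorithm~\ref{KCF-framework} discards an $\Omega(m^{1-1/k})$ fraction of the vertices in each iteration, and a short calculation (integrating the recurrence $m \mapsto m - c\,m^{1-1/k}$) shows that the number of iterations, and hence the number of colors, is $O(n^{1/k})$, exactly as in the points case. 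So the whole task is to produce one such weak coloring for $\R$.

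First I would set up the lifting. Map each point $x \in \Re^3$ to $\ell(x) = (x, \|x\|^2) \in \Re^4$, so that for a ball $B$ the relation $x \in B$ becomes incidence of $\ell(x)$ with a fixed halfspace $h_B \subset \Re^4$ determined by the coefficients of $B$. Dualizing in $\Re^4$, each ball $B$ becomes a point $B^{\ast} \in \Re^4$ and each ground point $p \in \Re^3$ becomes a halfspace, with the incidence ``$p \in B$'' turning into ``$B^{\ast}$ lies in the halfspace dual to $p$.'' Consequently $H(\R)$ is isomorphic to a sub-hypergraph of the hypergraph induced by the $n$ points $\R^{\ast} = \{B^{\ast} : B \in \R\}$ with respect to halfspaces in $\Re^4$. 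In particular, a $(k+1)$-weak coloring of $H(\R)$ is exactly a coloring of the point set $\R^{\ast}$ in which every halfspace containing at least $k+1$ of the points is non-monochromatic; restricting to the halfspaces that actually arise from ground points only shrinks the hypergraph, so upper bounds for all halfspaces apply.

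Next I would bound the relevant hyperedges. As in the previous proof, it is enough to make every halfspace containing \emph{exactly} $k+1$ points non-monochromatic, since by a perturbation-and-shrinking argument any halfspace of depth larger than $k+1$ contains a $(k+1)$-subset that is itself cut off by a halfspace, so the resulting $(k+1)$-uniform hypergraph dominates all larger hyperedges. The number of such $(k+1)$-subsets is the number of $(\le k{+}1)$-sets of $n$ points in $\Re^4$, which by the Clarkson--Shor technique \cite{cs-arscg-89} (equivalently, the $(\le j)$-set bound in dimension four) is $O(k^2 n^2)$. Hence the $(k+1)$-uniform hypergraph on the $n$ vertices $\R^{\ast}$ has average degree $O(n)$, the hidden constant depending on $k$.

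The final step is the same probabilistic coloring as before: a $(k+1)$-uniform hypergraph on $n$ vertices with average degree $O(n)$ has chromatic number $O(n^{1/k})$, proved via the Lov{\' a}sz Local Lemma \cite{ALON00} (coloring with $c$ colors makes a fixed hyperedge monochromatic with probability $c^{-k}$, and the local-lemma condition $e\cdot c^{-k}\cdot O(n) < 1$ forces $c = O(n^{1/k})$). This furnishes the desired $(k+1)$-weak coloring with $O(n^{1/k})$ colors and completes the argument. The main obstacle, and the only place where the three-dimensional geometry is genuinely used, is the hyperedge count: one must check that after lifting, the number of minimal depth-$(k+1)$ configurations of the balls matches the $O(k^2 n^2)$ bound for $(\le k{+}1)$-sets in $\Re^4$, so that the average degree is linear and the Local Lemma yields precisely the exponent $n^{1/k}$.
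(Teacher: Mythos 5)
Your proposal is correct and is essentially the paper's own argument: the paper proves this theorem only by the phrase ``in a similar way,'' referring to the preceding proof for points in $\Re^3$ with respect to balls, and your lifting/duality reduction to points in $\Re^4$ with respect to halfspaces---followed by the Clarkson--Shor bound of $O(k^2 n^2)$ on the $(k+1)$-element hyperedges, the Lov\'asz Local Lemma coloring, and the pigeonhole iteration of Algorithm~\ref{KCF-framework}---is precisely that argument transported to the dual setting. The only caveat is that your parenthetical Local Lemma condition quietly treats $O(n)$ as a dependency (maximum-degree) bound rather than an average-degree bound, but this imprecision is shared by the paper's own sketch and is repairable by the standard split into low- and high-degree vertices.
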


\subsubsection{VC-dimension and $k$-CF coloring}
\begin{definition}
    Let $H = (V,\E$) be a hypergraph. The {\em Vapnik-Chervonenkis}
    dimension (or {\em VC-dimension}) of $H$, denoted by $VC(H)$, is
    the maximal cardinality of a subset $V'\subset V$ such that
    ${\{V'\cap r| r \in \E }\} = 2^{V'}$ (such a subset is
    said to be {\em shattered}). If there are arbitrarily large
    shattered subsets in $V$ then $VC(H)$ is defined to be $\infty$.
    See \cite{MATOUSEK} for discussion of VC-dimension and
    its applications.
\end{definition}
There are many hypergraphs with finite VC-dimension that arise
naturally in combinatorial and computational geometry. One such
example is the hypergraph $H = (\Re^d,{\cal
   H}_d)$, where ${\cal H}_d$ is the family of all (open)
halfspaces in $\Re^d$. Any set of $d+1$ affinely independent
points is shattered in this space, and, by Radon's theorem, no
set of $d+2$ points is shattered.  Therefore $VC(H)=d+1$.

\begin{definition}
Let $(V,\E)$ be a hypergraph with $|V|=n$ and let $0 < \epsilon
\leq 1$. A subset $N \subset V$ is called an {\em $\epsilon$-net}
for $(V,\E)$ if for every hyperedge $S \in \E$ with $\cardin{S}
\geq \epsilon n$ we have $S\cap N \neq \emptyset$.
\end{definition}

Thus, an $\epsilon$-net is a {\em hitting set} of all `heavy'
hyperedges, namely, those containing at least $\epsilon n$
vertices.

An important consequence of the finiteness of the VC-dimension is
the existence of small $\epsilon$-nets, as shown by Haussler and
Welzl in \cite{hw-ensrq-87}, where the notion of VC-dimension of a
hypergraph was introduced to computational geometry.

\begin{theorem}[\cite{hw-ensrq-87}]
    For any hypergraph $H=(V,\E)$ with finite VC-dimension
    $d$ and for any $\epsilon > 0$, there exists an
    $\epsilon$-net $N \subset V$ of size
    $O(\frac{d}{\epsilon}\log{\frac{d}{\epsilon}})$.
    \theolab{VC-epsilon}
\end{theorem}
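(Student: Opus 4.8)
The plan is to prove this $\eps$-net theorem by the classical random-sampling argument of Haussler and Welzl, whose engine is the Sauer--Shelah lemma: since $VC(H)=d$, for any finite $A \subseteq V$ of size $a$ the number of distinct traces $\{S \cap A : S \in \E\}$ is at most $\sum_{i=0}^{d}\binom{a}{i} = O(a^d)$. This is what converts a possibly exponential family of hyperedges into a polynomially-sized collection of combinatorially distinct ones, and it is the single place where finiteness of the VC-dimension is used. I would first record and invoke this lemma.

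Next I would fix $m = c\,\frac{d}{\eps}\log\frac{d}{\eps}$, with the constant $c$ pinned down at the end, and let $N$ be a sample of $m$ elements drawn independently and uniformly from $V$ (so the resulting net has size at most $m$). Call a hyperedge \emph{heavy} if $\cardin{S} \ge \eps n$, and let $E_1$ be the bad event that some heavy $S$ is missed, i.e.\ $S \cap N = \emptyset$; the goal is to show $\prob{E_1} < 1$. A naive union bound over all heavy hyperedges fails because there may be too many of them, so I would introduce the standard double-sampling (symmetrization) trick.

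Then I would draw a second independent sample $N'$ of size $m$ and define $E_2$ as the event that some heavy $S$ satisfies $S \cap N = \emptyset$ and $\cardin{S \cap N'} \ge \eps m/2$. The two key steps are: (i) $\prob{E_1} \le 2\,\prob{E_2}$, which holds because, conditioned on a witness $S$ for $E_1$, the count $\cardin{S \cap N'}$ is a sum of independent indicators with mean $\ge \eps m$, so by Chebyshev (or Chernoff) it exceeds $\eps m/2$ with probability at least $1/2$ once $m = \Omega(1/\eps)$; and (ii) a bound on $\prob{E_2}$ by symmetrization. For (ii) I would condition on the combined multiset $Z = N \cup N'$ of $2m$ draws and regard $N,N'$ as a uniformly random split of $Z$ into two halves. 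By Sauer--Shelah there are only $O((2m)^d)$ distinct traces of hyperedges on $Z$; a fixed trace witnessing $E_2$ has at least $\eps m/2$ elements, all of which must fall into the $N'$ half and none into $N$, an event of probability at most $2^{-\eps m/2}$. A union bound then yields $\prob{E_2} \le O((2m)^d)\,2^{-\eps m/2}$.

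Finally I would choose $c$ so that $O((2m)^d)\,2^{-\eps m/2} < 1/2$ for $m = c\,\frac{d}{\eps}\log\frac{d}{\eps}$; after taking logarithms this reduces to checking $d\log(2m) \lesssim \eps m$, which holds for $m$ of the stated order. Combined with $\prob{E_1}\le 2\,\prob{E_2} < 1$, this shows a random sample of size $m$ is an $\eps$-net with positive probability, so such a net exists. The main obstacle is getting the symmetrization steps (i)--(ii) exactly right---in particular the constant in the Chebyshev/Chernoff estimate and the careful conditioning on $Z$ that lets Sauer--Shelah apply to the fixed ground set $Z$---rather than the final calculation, which is routine once the two-sample inequality is in place.
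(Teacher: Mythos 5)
The paper does not prove this theorem at all: it is quoted as a known result from Haussler and Welzl \cite{hw-ensrq-87}, with no proof given in the text. Your sketch is a correct reconstruction of exactly the argument in that cited source---Sauer--Shelah to control traces, a random sample of size $O(\frac{d}{\epsilon}\log\frac{d}{\epsilon})$, and the double-sampling/symmetrization step with the $2^{-\epsilon m/2}$ split bound---so there is nothing to object to, beyond routine care with the constant in the Chebyshev step and the exchangeability argument when conditioning on the combined sample, both of which you flag yourself.
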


{\bf \noindent Remark:} In fact, \theoref{VC-epsilon} is valid
also in the case where $H$ is equipped with an arbitrary
probability measure $\mu$. An $\epsilon$-net in this case is a
subset $N \subset V$ that meets all hyperedges with measure at
least $\epsilon$.

Since all hypergraphs mentioned so far have finite VC-dimension,
and since some of them sometimes must be CF-colored with $n$
colors, there is no direct relationship between a finite
VC-dimension of a hypergraph and the existence of a CF-coloring of
that hypergraph with a small number of colors. In this subsection
we show that such a relationship does exist, if we are interested
in $k$-CF-coloring with a reasonably large $k$.

We first introduce a variant of the general framework for
$k$-CF-coloring of a hypergraph $H = (V,\E)$. In this framework we
modify lines $4$ and $5$ in Algorithm~\ref{KCF-framework}. In
Algorithm~\ref{KCF-framework} we first find a $k+1$-weak coloring
of the underlying hypergraph (line $4$) which is a partition of
the vertices into sets such that each set has the following
property: Every set in the partition cannot fully contain a
hyperedge with cardinality at least $k+1$. Equivalently, every
color class $V' \subset V$ has the property that every hyperedge
containing at least $k+1$ vertices of $V'$ also contain vertices
of $V\setminus V'$. We modify that framework by directly finding
a ``large" such subset in the hypergraph.

\begin{definition}
Let $H=(V,\E)$ be a hypergraph.
    A subset $V' \subset V$ is {\em $k$-admissible} if for any hyperedge $S \in \E$ with
    $\cardin{S \cap V'} > k$ we have $S\cap (V\setminus V')
    \neq \emptyset$.
\end{definition}

Assume that we are given an algorithm $\Alg$ that computes, for any
hypergraph $H=(V,\E)$, a non-empty $k$-admissible set
$V'=\Alg(H)$. We can now use algorithm $\Alg$ to $k$-CF-color the
given hypergraph (i) Compute a $k+1$-admissible set $V' =
\Alg(H)$, and assign to all the elements in $V'$ the color $1$.
(ii) Color the remaining elements in $V \setminus V'$
recursively, where in the $i$th stage we assign the color $i$ to
the vertices in the resulting $k+1$-admissible set. We denote the
resulting coloring by $C_A(H)$.

The proof of the following theorem is, yet, again, similar to that of
\theoref{CF-framework}.

\begin{theorem}[\cite{HS02,SmPHD}]
    Given a hypergraph $H=(V,\E)$, the coloring $C_A(H)$
    is a valid $k$-CF coloring of $S$.

    \theolab{kcf-range-space:layers}
\end{theorem}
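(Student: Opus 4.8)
The plan is to mirror the proof of \theoref{CF-framework} almost verbatim, establishing the slightly stronger statement that in $C_A(H)$ the \emph{maximal} color occurring in each hyperedge occurs at most $k$ times; since such a color occurs at least once, this immediately yields a valid $k$-CF coloring. First I would fix notation for the recursion defining $C_A(H)$: set $V_1 = V$ and, for $i \ge 1$, let $V_i' = \Alg(H(V_i))$ be the $k$-admissible set returned in the $i$-th stage, assign color $i$ to every vertex of $V_i'$, and put $V_{i+1} = V_i \setminus V_i'$. Since each $V_i'$ is non-empty the recursion terminates and every vertex is colored; moreover a vertex receives a color $\ge i$ precisely when it survives the first $i-1$ prune steps, i.e. precisely when it belongs to $V_i$.

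Next I would fix an arbitrary hyperedge $S \in \E$ and let $i$ be the largest color that $C_A$ assigns to a vertex of $S$. The key observation, exactly as in \theoref{CF-framework}, is that the vertices of $S$ carrying color $i$ are precisely $S \cap V_i$: every vertex of $S \cap V_i$ has final color $\ge i$, and by maximality of $i$ none has color $> i$, so all of them receive color exactly $i$; conversely any $S$-vertex of color $i$ survives into stage $i$ and hence lies in $V_i$. Thus $\{v \in S : C_A(v) = i\} = S \cap V_i$, and this set is contained in the color-$i$ class $V_i'$.

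It then remains to bound $\cardin{S \cap V_i}$, and here I would invoke the defining property of the $k$-admissible set $V_i' = \Alg(H(V_i))$. The set $e' = S \cap V_i$ is a hyperedge of the induced sub-hypergraph $H(V_i)$, and by the previous step it lies entirely inside $V_i'$, so $e' \cap (V_i \setminus V_i') = \emptyset$. By definition of $k$-admissibility, any hyperedge of $H(V_i)$ meeting $V_i'$ in more than $k$ vertices must also meet $V_i \setminus V_i'$; applying the contrapositive to $e'$ gives $\cardin{S \cap V_i} \le k$. Hence the maximal color $i$ appears at least once and at most $k$ times in $S$, which is exactly the $k$-CF condition for $S$. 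As $S$ was arbitrary, $C_A(H)$ is a valid $k$-CF coloring of $H$.

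The one genuinely delicate point I would be careful about is the identification $\{v \in S : C_A(v) = i\} = S \cap V_i$, which rests on the equivalence ``survives to stage $i$'' $\Leftrightarrow$ ``final color $\ge i$'' together with the maximality of $i$; this is the same structural step that drives \theoref{CF-framework}, and everything else is a transcription of that argument. A secondary point worth stating explicitly is that admissibility must be applied to the \emph{induced} hyperedge $e' = S \cap V_i$ of $H(V_i)$ rather than to $S$ itself, and that the index in the definition of admissibility is exactly what calibrates the final bound to $\cardin{S \cap V_i} \le k$ rather than $k+1$.
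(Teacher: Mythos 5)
Your proposal is correct and takes essentially the same approach as the paper: the paper gives no separate argument here, saying only that the proof is similar to that of \theoref{CF-framework}, and your write-up is exactly that adaptation — identifying the vertices of $S$ carrying the maximal color $i$ with $S \cap V_i \subseteq V_i'$ and then invoking $k$-admissibility of $V_i'$ in $H(V_i)$ to bound this set by $k$. It also establishes precisely the stronger property the paper itself highlights for this framework, namely that the maximal color in every hyperedge appears at least once and at most $k$ times.
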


\begin{lemma}
    Let $H=(V,\E)$ with $\cardin{V} = n$ be a hypergraph
with VC-dimension $d$. For any $k \geq d$ there
    exists a $k$-admissible set $V' \subset V$ with respect
    to $H$ of size $\Omega \pth{ n^{1-(d-1)/k} }$.

    \lemlab{kcf:admissible:general}
\end{lemma}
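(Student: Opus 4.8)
The plan is to prove the lemma by the probabilistic method, via a ``sample-and-clean'' argument. First I would restate what must be achieved: unwinding the definition, a set $V' \subseteq V$ is $k$-admissible precisely when no hyperedge $S \in \E$ with $\cardin{S} > k$ is entirely contained in $V'$. Indeed, if $S \subseteq V'$ then $S \cap (V \setminus V') = \emptyset$, so admissibility forces $\cardin{S} = \cardin{S \cap V'} \le k$; conversely any violation of admissibility is caused by such a swallowed hyperedge. Thus the task reduces to finding a large subset of $V$ that does not fully contain any hyperedge of size exceeding $k$. Call a hyperedge \emph{heavy} if its size is larger than $k$.

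Next I would take a random sample: include each vertex of $V$ in a set $R$ independently with probability $p$, a value to be fixed at the end, so that $\expect{\cardin{R}} = pn$. The heart of the argument is to bound the expected number $X$ of heavy hyperedges swallowed by $R$. Here the hypothesis $VC(H)=d$ enters through the Sauer--Shelah lemma, which bounds the total number of hyperedges by $\cardin{\E} \le \sum_{i=0}^{d}\binom{n}{i} = O(n^d)$. A fixed hyperedge $S$ with $\cardin{S}=j$ satisfies $S \subseteq R$ with probability $p^{j}$, and every heavy hyperedge has $j \ge k+1$, whence $p^j \le p^{k+1}$. Summing over all hyperedges and using linearity of expectation gives $\expect{X} \le p^{k+1}\cardin{\E} = O(n^d p^{k+1})$.

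I would then clean up the sample: for each heavy hyperedge contained in $R$, delete one of its vertices, and let $V'$ be what remains. Any heavy hyperedge surviving inside $V'$ would also lie inside $R$ and hence would have had a vertex deleted, a contradiction; so $V'$ is $k$-admissible, and since at most $X$ vertices are removed, $\cardin{V'} \ge \cardin{R} - X$. By linearity, $\expect{\cardin{V'}} \ge pn - C n^d p^{k+1}$ for a constant $C = C(d)$. Optimizing over $p$ --- the balance point is $p = \Theta\pth{n^{-(d-1)/k}}$, at which the subtracted term equals only a $\tfrac{1}{k+1}$ fraction of $pn$ --- yields $\expect{\cardin{V'}} = \Omega\pth{n^{1-(d-1)/k}}$. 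Since some outcome of the experiment attains at least the expectation, a $k$-admissible set of the claimed size exists.

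The main obstacle is controlling the loss term $\expect{X}$: one must be certain that the heavy hyperedges caught in the sample are too few to overwhelm the $pn$ vertices gained. The key realization that makes this go through is that the only structural fact needed is the crude \emph{total} count $O(n^d)$ from Sauer--Shelah, because it is multiplied by the tiny factor $p^{k+1}$ arising from the fact that we only have to kill hyperedges of size $> k$; it is precisely the arithmetic of balancing $pn$ against $n^d p^{k+1}$ that produces the exponent $1-(d-1)/k$. A secondary point to verify carefully is the cleanup step, namely that deleting one vertex per swallowed heavy hyperedge both destroys all of them and removes at most $X$ vertices in total.
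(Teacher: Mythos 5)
Your proposal is correct and follows essentially the same argument as the paper: both sample each vertex independently with probability $p$, bound the expected number of hyperedges of size greater than $k$ that are fully swallowed via Sauer's Lemma ($\cardin{\E} = O(n^d)$) together with the $p^{k+1}$ factor, delete one vertex per swallowed hyperedge, and choose $p = \Theta\pth{n^{-(d-1)/k}}$ so the loss is a $\frac{1}{k+1}$ fraction of $pn$. Your explicit verification that $k$-admissibility is equivalent to containing no hyperedge of size exceeding $k$ is a nice touch that the paper leaves implicit, but the route is the same.
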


\begin{proof}
    Any coloring of $V$ is valid as far as the small hyperedges
    of $\E$ are concerned; namely, those are the hyperedges that
    contain at most $k$ vertices. Thus, let $\E'$ be the subset
    of hyperedges of $\E$ of size larger than $k$. By Sauer's
    Lemma (see, e.g., \cite{ALON00}) we have that $\cardin{\E'} \leq \cardin{\E} \leq
    n^d$.

    Next, we randomly color $V$ by black and white, where an
    element is being colored in black with probability $p$,
    where $p$ would be specified shortly. Let $I$ be the set
    of points of $V$ colored in black. If a hyperedge $r \in
    \E'$ is colored only in black, we remove one of the vertices of
    $r$ from $I$. Let $I'$ be the resulting set. Clearly,
    $I'$ is a $k$-admissible set for $H$.

    Furthermore, by linearity of expectation, the expected
    size of $I'$ is at least
    \[
    p n - \sum_{r \in \E'} p^{\cardin{r}} \geq p n - \sum_{r
       \in \E'} p^{k+1} \geq p n - p^{k+1} n^{d}.
    \]
    Setting $p = \pth{(k+1) n^{d-1}}^{-1/k}$, we have that
    the expected size of $I'$ is at least $p n - p^{k+1}
    n^{d} = p n(1-1/(k+1)) = \Omega \pth{ n^{1-(d-1)/k} }$,
    as required.
\end{proof}

As was already seen, for geometric hypergraphs one might be able
to get better bounds than the one guaranteed by
\lemref{kcf:admissible:general}.

\begin{theorem}[\cite{HS02,SmPHD}]
    Let $H=(V,\E)$ with $\cardin{V} = n$ be a finite hypergraph
     with VC-dimension $d$. Then for $k \geq d\log{n}$
    there exists a $k$-CFcoloring of $H$ with $O(\log{n})$
    colors.

    \theolab{vc-dim}
\end{theorem}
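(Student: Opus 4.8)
The plan is to combine the $\epsilon$-net theorem (\theoref{VC-epsilon}) with the layering framework that produces the coloring $C_A(H)$ (\theoref{kcf-range-space:layers}). The crucial observation is that \emph{the complement of a net is an admissible set}: if $N\subseteq V$ is an $\epsilon$-net for $H$ with $\epsilon=(k+1)/n$, then $V':=V\setminus N$ is $(k+1)$-admissible. Indeed, suppose $S\in\E$ satisfies $\cardin{S\cap V'}>k+1$; then $\cardin{S}\ge\cardin{S\cap V'}>k+1=\epsilon n$, so by the net property $S\cap N\ne\emptyset$, i.e.\ $S\cap(V\setminus V')\ne\emptyset$, which is exactly the admissibility condition. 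I would therefore take $A(H):=V\setminus N$ for $N$ a $((k+1)/n)$-net, color $V'$ with a single fresh color, recurse on $N$, and invoke \theoref{kcf-range-space:layers} to conclude that the resulting coloring $C_A(H)$ is a valid $k$-CF-coloring.

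It then remains to bound the number of iterations, which equals the number of colors. Since VC-dimension does not increase under passing to an induced sub-hypergraph, every hypergraph met in the recursion still has VC-dimension at most $d$, so \theoref{VC-epsilon} applies at every level. In the iteration with $n_i$ remaining vertices I take $\epsilon_i=(k+1)/n_i$, so $N_i$ has size $O\!\pth{\tfrac{d}{\epsilon_i}\log\tfrac{d}{\epsilon_i}}=O\!\pth{\tfrac{d n_i}{k}\log\tfrac{d n_i}{k}}$, and the recursion continues on $N_i$, giving $n_{i+1}=\cardin{N_i}$. Hence the shrinkage ratio is $n_{i+1}/n_i=O\!\pth{\tfrac{d}{k}\log\tfrac{d n_i}{k}}=O\!\pth{\tfrac{d}{k}\log n}$ (using $n_i\le n$ and $k\ge d$). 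Once $n_i\le k$, no hyperedge exceeds $k$ vertices, so one last color finishes the coloring.

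The crux, and the step I expect to be the main obstacle, is this quantitative bound on the shrinkage ratio. For $k=\Omega(d\log n)$ with a sufficiently large absolute constant — large enough to dominate the constant hidden in the $\epsilon$-net bound of \theoref{VC-epsilon} — we obtain $n_{i+1}/n_i\le\tfrac12$, so the vertex count drops by a constant factor each round and the process halts after $O(\log n)$ rounds, yielding the claimed $O(\log n)$ colors. The delicate point is precisely that the (linear-in-$n$) net size must be made a \emph{fixed fraction} of the current vertex set; this is what forces the threshold to be $k=\Omega(d\log n)$ rather than merely $\Omega(d)$, and where the hypothesis $k\ge d\log n$ is used.

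As an alternative that relies only on earlier results, one can route through the weak-coloring reduction (Theorem~\ref{th:reduce-2}). A uniformly random $2$-coloring makes a fixed hyperedge of size $\ge k+1$ monochromatic with probability at most $2^{-k}$, while by Sauer's lemma \cite{ALON00} any induced sub-hypergraph on $n'\le n$ vertices has at most $O(n^d)$ distinct hyperedges; a union bound gives failure probability $O(n^d)\,2^{-k}<1$ whenever $k=\Omega(d\log n)$. Thus every induced sub-hypergraph admits a $(k+1)$-weak $2$-coloring, and plugging $l=2$ into Theorem~\ref{th:reduce-2} again yields an $O(\log n)$-color $k$-CF-coloring.
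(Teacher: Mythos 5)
Your proposal is correct in substance, and it takes a different route from the paper. The paper's proof is a one-liner on top of \lemref{kcf:admissible:general}: that lemma produces a $k$-admissible set of size $\Omega\pth{n^{1-(d-1)/k}}$ by the alteration method (color vertices black independently with probability $p$, delete one vertex from each all-black hyperedge of size $>k$, count hyperedges via Sauer's lemma), and for $k \geq d\log n$ this is a constant fraction of $V$ since $n^{(d-1)/k}\leq 2$; the layering of \theoref{kcf-range-space:layers} then gives $O(\log n)$ colors. Your second route is the nearest relative of this: uniform random $2$-coloring plus a union bound over the $O(n^d)$ hyperedges, then the same layering packaged as Theorem~\ref{th:reduce-2} with $l=2$. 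In other words, you replace alteration by a union bound; this is simpler and works essentially at the stated threshold, but it gives nothing in the regime $d\leq k \ll d\log n$, whereas the paper's lemma yields the color/threshold trade-off that the paper points out right after \theoref{vc-dim}. Your first route --- the complement of an $\eps$-net with $\eps=(k+1)/n$ is an admissible set, recurse on the net --- is genuinely different and a nice way to recycle \theoref{VC-epsilon}; in fact it gives $k$-admissibility, not just $(k+1)$-admissibility, since $\cardin{S\cap V'}>k$ already forces $\cardin{S}\geq k+1=\eps n$. Its drawback is exactly the one you flag: the net bound $O\pth{\frac{d}{\eps}\log\frac{d}{\eps}}$ carries both an absolute constant and a $\log\frac{d}{\eps}\approx\log n$ factor, so the shrinkage ratio drops to $\frac12$ only when $k\geq Cd\log n$ for a sufficiently large absolute constant $C$. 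Since a $k'$-CF-coloring for $k'>k$ is \emph{not} a $k$-CF-coloring, this route proves the theorem only under a constant-factor-stronger hypothesis on $k$ --- a genuinely, if mildly, weaker statement than the one claimed; your union-bound route (or the paper's alteration lemma) is what closes that gap.
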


\begin{proof}
    By \lemref{kcf:admissible:general} the hypergraph $H$
    contains a $k$-admissible set of size at least $n/2$.
    Plugging this fact to the algorithm suggested by
    \theoref{kcf-range-space:layers} completes the proof of
    the theorem.
\end{proof}

As remarked above, \theoref{vc-dim} applies to all hypergraphs
mentioned in this paper. Note also, that
\lemref{kcf:admissible:general} gives us a trade off between the
number of colors and the threshold size of the coloring. As such,
the bound of \theoref{vc-dim} is just one of a family of such
bounds implied by \lemref{kcf:admissible:general}.

\subsection{$k$-Strong CF-Coloring}\label{k-strong}
Here, we focus on the notion of \emph{$k$-strong-conflict-free}
(abbreviated, $kSCF$) which is yet another extension of the
notion of CF-coloring of hypergraphs.

\begin{definition}[$k$-strong conflict-free coloring:]
Let $H= (V, \E)$ be a hypergraph and let  $k \in \mbb{N}$ be some
fixed integer. A coloring of $V$ is called {\em
$k$-strong-conflict-free} for $H$ ($k$SCF for short) if for every hyperedge $e \in
\E$ with $|e| \geq k$ there exists at least $k$ vertices in $e$,
whose colors are unique among the colors assigned to the vertices
of $e$ and for each hyperedge $e \in \E$ with $|e| < k$ all
vertices in $e$ get distinct colors. Let $f_H(k)$ denote the
least integer $l$ such that $H$ admits a $kSCF$-coloring with $l$
colors.
\end{definition}

Abellanas et al. \cite{HURTADO} were the first to study
$k$SCF-coloring\footnote{They referred to such a coloring as
$k$-conflict-free coloring.}. They focused on the special case of
hypergraphs induced by $n$ points in $\Re^2$ with respect to
discs. They showed that in this case the hypergraph admits a
$k$SCF-coloring with $O(\frac{\log n}{\log {\frac{ck}{ck-1}}})$
($ = O(k \log n)$) colors, for some absolute constant $c$.

The following notion was recently introduced and studied by
Aloupis et al. \cite{aloupis} for the special case of hypergraphs
induced by discs:

 \begin{definition}[$k$-colorful coloring]
\label{def:colorful} Let $H= (V, \E)$ be a hypergraph, and let
$\varphi$ be a coloring of $H$. A hyperedge $e \in \E$ is said to
be \emph{$k$-colorful} with respect to $\varphi$ if there exist
$k$ vertices in $e$ that are colored distinctively under
$\varphi$. The coloring $\varphi$ is called {\em $k$-colorful} if
every hyperedge $e \in \E$ is $\min \{|e|,k\}$-colorful. Let
$c_H(k)$ denote the least integer $l$ such that $H$ admits a
$k$-colorful coloring with $l$ colors.
\end{definition}

Aloupis et al. \cite{aloupis} introduced this notion explicitly and were motivated by a problem related to battery
lifetime in sensor networks. This notion is also related to the notion of polychromatic colorings. In polychromatic colorings, the general question is to estimate the minimum number $f=f(k)$ such that one can $k$-color the hypergraph with the property that all hyperedges of cardinality at least $f(k)$ are colorful in the sense that they contain a representative color of each color class.
(see, e.g., \cite{gibsonvar,efrat,PachToth} for additional details on
the motivation and related problems).

{\bf \noindent Remark:} Every $kSCF$-coloring of a hypergraph $H$
is a $k$-colorful coloring of $H$. However, the opposite claim is
not necessarily true.

The following connection between $k$-colorful coloring and
strong-conflict-free coloring of hypergraphs was proved by Horev
et al. in \cite{hks09}. If a hypergraph $H$ admits a $k$-colorful
coloring with a ``small" number of colors (hereditarily) then it
also admits a $(k-1)$SCF-coloring with a ``small" number of
colors. This connection is analogous to the connection between
non-monochromatic coloring and CF-coloring as appear in
Theorem~\ref{th:reduce-1} and the connection between $k+1$-weak coloring and $k$-CF-coloring as appear in Theorem~\ref{th:reduce-2}. We start by introducing the general
framework of \cite{hks09} for $k$SCF-coloring a given hypergraph.

\paragraph{A Framework For Strong-Conflict-Free Coloring}

Let $H$ be a hypergraph with $n$ vertices and let $k$ and $l$ be
some fixed integers such that $H$ admits the hereditary property
that every vertex-induced sub-hypergraph $H'$ of $H$ admits a
$k$-colorful coloring with at most $l$ colors. Then $H$ admits a
$(k-1)SCF$-coloring with $O(l \log n)$ colors. For the case when
$l$ is replaced with the function $k{n(H')}^{\alpha}$ we get a
better bound without the $\log n$ factor. The proof is
constructive. The following framework (denoted as Algorithm~\ref{kscf_framework}) produces a valid $(k-1)$SCF
coloring for a hypergraph $H$.

\begin{algorithm}[h!]
 \caption{(k-1)SCF-color$(H)$: {\it
$(k-1)$-Strong Conflict-Free-color a hypergraph $H=(V,\E)$}.}
\label{kscf_framework}
\begin{algorithmic}[1]
    \STATE $i \leftarrow 1$ {\it $i$ denotes an unused color}
    \WHILE{$V \not= \emptyset$}
         \STATE{\bf Increment:} $i\gets i+1$
        \STATE \bfm{Auxiliary Coloring:} {find a $k$-colorful coloring $\varphi$ of $H(V)$ with
    ``few'' colors}
    \STATE {\bf $V' \gets$ Largest color class of $\varphi$}
    \STATE {\bf Color:} $\chi(x)\gets i ~,~ \forall x\in V'$
        \STATE \bfm{Prune:} $V \leftarrow V \setminus V'$.
        \STATE \bfm{Increment:} $i \leftarrow i+1$.
    \ENDWHILE
    \STATE \bfm{Return} $\chi$.
\end{algorithmic}
\end{algorithm}

Note that Algorithm~\ref{kscf_framework} is a generalization of
Algorithm~\ref{CF-framework}. Indeed for $k=2$ the two algorithms
become identical since a $2$-colorful coloring is equivalent to a
proper coloring. Arguing about the number of colors used by the algorithm is identical to
the arguments as in the coloring produced by Algorithm~\ref{CF-framework}. The proof
or correctness is slightly more subtle.

For a hypergraph $H=(V,\E)$, we write $n(H)$ to denote the number
of vertices of $H$. As a corollary of the framework described in
Algorithm~\ref{kscf_framework} we obtain the following theorems:

\begin{theorem} [\cite{hks09}]
\label{th:reduce} Let $H = (V, \E)$ be a hypergraph with $n$
vertices, and let $k,\ell \in \mbb{N}$ be fixed integers, $k \geq
2$. If every induced sub-hypergraph $H'\subseteq H$ satisfies
$c_{H'}(k) \leq \ell$, then $f_H(k-1) \leq
\log_{1+\frac{1}{\ell-1}} n = O(l \log n)$.
\end{theorem}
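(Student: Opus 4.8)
The plan is to derive the statement as a corollary of the framework in Algorithm~\ref{kscf_framework}, exactly parallel to the way Theorem~\ref{th:reduce-1} is obtained from Algorithm~\ref{CF-framework}. The first step is to check that the hereditary hypothesis is precisely what allows the auxiliary step (line~4) to be carried out cheaply: at iteration $i$ the surviving set $V_i$ induces a sub-hypergraph $H(V_i) \subseteq H$, so by the assumption $c_{H(V_i)}(k) \leq \ell$ it admits a $k$-colorful coloring $\varphi$ with at most $\ell$ colors, which we hand to the algorithm. Granting that Algorithm~\ref{kscf_framework} returns a valid $(k-1)$SCF coloring of $H$, it then remains only to bound the number of colors used.

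The color count is the easy half and is identical to the computation for Algorithm~\ref{CF-framework}. The number of distinct colors used equals the number of iterations of the while loop, since each iteration introduces exactly one new color. A coloring of $V_i$ with at most $\ell$ colors has, by pigeonhole, a color class of size at least $\cardin{V_i}/\ell$, and this is the class that is pruned; hence $\cardin{V_{i+1}} \leq \cardin{V_i}\left(1 - \frac{1}{\ell}\right)$. After $t$ iterations at most $n\left(\frac{\ell-1}{\ell}\right)^{t}$ vertices remain, which falls below $1$ once $t > \log_{\ell/(\ell-1)} n = \log_{1+\frac{1}{\ell-1}} n$. Since $\log_{1+\frac{1}{\ell-1}} n = O(\ell \log n)$, we obtain $f_H(k-1) \leq \log_{1+\frac{1}{\ell-1}} n = O(\ell \log n)$, as asserted.

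The step I expect to be the real obstacle is the correctness of the framework, which the count above simply assumes. The way I would argue it is to fix a hyperedge $e$ and track its surviving portion $e_j = e \cap V_j$, observing that a vertex gets a color unique within $e$ precisely when it is the only vertex of $e$ pruned in its own iteration. While $\cardin{e_j} \geq k$, the $k$-colorfulness of $\varphi$ forces at least $k$ distinct auxiliary colors on $e_j$; consequently, when $\cardin{e_j}$ first drops below $k$, at iteration $j^*$ say, the single color class removed in the previous step destroyed at most one color, so $\cardin{e_{j^*}} \geq k-1$. From iteration $j^*$ onward every surviving piece has fewer than $k$ vertices and is therefore rainbow-colored by its auxiliary coloring, so its vertices are pruned one per iteration and each receives a color unique within $e$. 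This shows that every hyperedge carries at least $\min\{\cardin{e}, k-1\}$ uniquely colored vertices. Making this chain of inequalities fully rigorous — in particular the claim that the surviving part still has at least $k-1$ elements the first time it dips below $k$ — is the delicate point, and is exactly where the argument of \cite{hks09} is required.
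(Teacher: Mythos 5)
Your proposal is correct and follows the paper's own route: run Algorithm~\ref{kscf_framework} with the hereditary $k$-colorful colorings supplying the auxiliary step, and bound the number of iterations by the same pigeonhole computation used for Algorithm~\ref{CF-framework}, giving $\log_{1+\frac{1}{\ell-1}} n$ colors. The correctness argument you sketch --- the pruned class is monochromatic under $\varphi$, so while $\cardin{e_j}\geq k$ a prune step can eliminate at most one of the $\geq k$ auxiliary colors present in $e_j$, hence the surviving part never drops below $k-1$ vertices, after which it is rainbow and loses at most one vertex of $e$ per iteration, each such vertex receiving a color unique in $e$ --- is precisely the argument that the survey defers to \cite{hks09}, and as stated it is already rigorous, so your closing hedge about the ``delicate point'' is unnecessary.
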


\begin{theorem} [\cite{hks09}]
\label{th:nonlinear} Let $H = (V, \E)$ be a hypergraph with $n$
vertices, and let $k \geq 2$ be a fixed integer. let $0 < \alpha
\leq 1$ be a fixed real. If every induced sub-hypergraph
$H'\subseteq H$ satisfies $c_{H'}(k) = O(k{n(H')}^{\alpha})$,
then $f_H(k-1) = O(k{n}^{\alpha})$.
\end{theorem}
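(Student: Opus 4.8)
The plan is to use Algorithm~\ref{kscf_framework} and argue, exactly as for \theoref{CF-framework}, in two independent parts: first that the coloring it outputs is a valid $(k-1)SCF$-coloring irrespective of the auxiliary routine, and then that under the stated hereditary hypothesis the number of iterations (hence colors) is $O(k\,n^{\alpha})$.

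For correctness, fix a hyperedge $e \in \E$ and let $r_1 > r_2 > \cdots > r_m$ be the iterations, in decreasing order, in which at least one vertex of $e$ is colored (the ``active'' rounds for $e$). Since distinct rounds use distinct final colors, it suffices to show that the highest $\min\{k-1,|e|\}$ final colors appearing on $e$ are each borne by a unique vertex. Writing $S_j := e \cap V_{r_j}$ for the part of $e$ surviving into round $r_j$, I would prove by induction on $j=1,\dots,\min\{k-1,m\}$ the claim that $|S_j|=j$, that round $r_j$ colors exactly one vertex $v_j$ of $e$, and that $v_j$ receives a color that is unique within $e$. The point is that the vertices of $S_j$ surviving round $r_j$ are precisely $v_1,\dots,v_{j-1}$ (no vertex of $e$ is colored strictly between two consecutive active rounds), so the $k$-colorful coloring $\varphi$ of $H(V_{r_j})$ assigns to the hyperedge $S_j$ at most $j$ distinct colors: one shared color for the vertices placed in the largest class $V'$, and at most $j-1$ further colors for $v_1,\dots,v_{j-1}$. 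Because $\varphi$ is $k$-colorful, $S_j$ must carry at least $\min\{|S_j|,k\}$ distinct colors, forcing $\min\{|S_j|,k\}\le j$; combined with the obvious bound $|S_j|\ge j$ (the $j-1$ survivors plus at least one freshly colored vertex) and with $j<k$, this yields $|S_j|=j$ and hence exactly one newly colored $v_j$, whose color is unique in $e$ because no other vertex of $e$ is colored in round $r_j$.

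This claim delivers precisely the $(k-1)SCF$ requirement: if $m\le k-1$ then every active round colors a single vertex of $e$, so all $|e|=m$ vertices receive distinct colors (covering $|e|<k-1$ and $|e|=k-1$ alike), while if $m\ge k$ the induction through $j=k-1$ exhibits $k-1$ uniquely colored vertices, with $|e|\ge k-1$ automatic. The base case $j=1$ is the unique-maximum argument of \theoref{CF-framework}: the last active round colors all surviving vertices of $e$ inside a single class, so $k$-colorfulness forces that survivor set to be a singleton. I expect this inductive bookkeeping---tracking that the survivors are exactly the previously peeled vertices and that the color count of $S_j$ is capped at $j$---to be the main obstacle, since it is where the $k$-colorful hypothesis is converted into $(k-1)$-fold uniqueness.

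For the color count I would reuse the pigeonhole/recursion analysis behind Theorem~\ref{th:reduce-1} and its nonlinear remark. In iteration $t$ the auxiliary routine $k$-colors $H(V_t)$ with at most $c_{H(V_t)}(k)=O(k\,n_t^{\alpha})$ colors, where $n_t=|V_t|$, so the largest class pruned has size at least $n_t/O(k\,n_t^{\alpha})=\Omega\!\left(n_t^{1-\alpha}/k\right)$. This gives the recurrence $n_{t+1}\le n_t - c'\,n_t^{1-\alpha}/k$, whose continuous surrogate $\frac{d}{dt}\,n^{\alpha}=\alpha n^{\alpha-1}\frac{dn}{dt}=-\Theta(1/k)$ shows that $n^{\alpha}$ drops by a constant-over-$k$ amount each round; hence the process terminates after $O(k\,n^{\alpha})$ rounds, and since the double increment in Algorithm~\ref{kscf_framework} only doubles the palette, $f_H(k-1)=O(k\,n^{\alpha})$. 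This quantitative step is routine and identical in spirit to the $O(n^{\alpha})$ estimate recorded in the remark following Theorem~\ref{th:reduce-1}.
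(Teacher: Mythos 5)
Your proposal is correct and follows essentially the same route as the paper: it runs Algorithm~\ref{kscf_framework} (color the largest class of a hereditary $k$-colorful coloring, prune, recurse), bounds the number of iterations by the pigeonhole/recurrence argument from the remark after Theorem~\ref{th:reduce-1}, and establishes validity of the output as a $(k-1)$SCF-coloring. The paper itself only remarks that the correctness proof is ``slightly more subtle'' and defers it to \cite{hks09}; your induction on the active rounds of a hyperedge $e$ (showing $|S_j|=j$ and one uniquely colored vertex per round, using $k$-colorfulness to cap the color count of $S_j$ at $j<k$) is a sound filling-in of exactly that omitted step.
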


As a corollary of Theorem~\ref{th:reduce} and a result of Aloupis et al. \cite{aloupis} on $k$-colorful coloring of discs or points with respect to discs we obtain the following:

\begin{theorem} [\cite{hks09}]
\label{th:kscf-discs} If $H$ is a hypergraph induced by $n$
discs in the plane or a hypergraph induced by $n$ points in the plane with respect to discs then
$f_H(k) = O(k \log n)$.
\end{theorem}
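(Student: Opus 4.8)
The plan is to derive this statement as an immediate corollary of the reduction in Theorem~\ref{th:reduce}, whose sole hypothesis is a hereditary bound on the $k$-colorful chromatic number $c_{H'}(\cdot)$. Concretely, to bound $f_H(k)$ I would apply Theorem~\ref{th:reduce} with its colorfulness parameter equal to $k+1$, so that the conclusion bounds $f_H((k+1)-1) = f_H(k)$. The single ingredient I then need to supply is a uniform estimate of the form $c_{H'}(k+1) = O(k)$ valid for every vertex-induced sub-hypergraph $H'$ of $H$.

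That estimate is exactly the content of the Aloupis et al. result on $k$-colorful colorings of discs: a hypergraph induced by a family of discs in the plane (or by points with respect to discs) admits a $k$-colorful coloring using only $O(k)$ colors, that is, $c_H(k) = O(k)$. I would state this as the key external input and treat its proof as given by \cite{aloupis}; this is the genuinely geometric and technically substantive step, resting on the linear union complexity of discs and a coloring argument in the spirit of Lemma~\ref{shallow} (indeed, the remark following Lemma~\ref{shallow} already records that its coloring makes every hyperedge of bounded size colorful).

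The remaining steps are routine. First I would check that the needed bound is hereditary: every vertex-induced sub-hypergraph $H'$ of a disc hypergraph $H$ is again induced by the corresponding subfamily of discs (respectively, by the corresponding subset of points with respect to discs), so the Aloupis bound applies verbatim to $H'$ and yields $c_{H'}(k+1) = O(k)$ with a constant independent of $H'$. Setting $\ell = O(k)$ in Theorem~\ref{th:reduce} then gives $f_H(k) \leq \log_{1+\frac{1}{\ell-1}} n = O(\ell \log n) = O(k \log n)$, which is the claimed bound; operationally this is just running Algorithm~\ref{kscf_framework} with the $k$-colorful auxiliary coloring plugged into its step~4.

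The only real obstacle is the geometric input $c_H(k) = O(k)$; once that is in hand, the passage to $(k)$-strong conflict-free coloring is purely a matter of invoking the framework packaged by Theorem~\ref{th:reduce}. I would therefore spend essentially no effort on the reduction and instead make sure the cited $k$-colorful bound is stated in precisely the form I need -- \emph{linear} in $k$ and holding for every induced subfamily -- since any weakening there, for instance an extra logarithmic factor in $k$, would propagate directly into the final bound.
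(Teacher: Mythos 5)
Your proposal is correct and is essentially identical to the paper's own proof, which likewise obtains the theorem by combining the Aloupis et al.\ bound $c_H(k) = O(k)$ (taken as a black box, valid hereditarily) with the reduction of Theorem~\ref{th:reduce}. In fact you are slightly more careful than the paper's one-line proof, since you make the index shift explicit (invoking the reduction at colorfulness parameter $k+1$ so that the conclusion reads $f_H(k)$ rather than $f_H(k-1)$), which is exactly the bookkeeping the paper leaves implicit.
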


\begin{proof}
The proof follows by combining the fact that $c_H(k) = O(k)$ \cite{aloupis} with Theorem~\ref{th:reduce}
\end{proof}

Theorem~\ref{th:results} below provides an upper
bound on the number of colors required by $kSCF$-coloring of
geometrically induced hypergraphs as a function of the
union-complexity of the regions that induce the hypergraphs.

Recall that, for a set $\R$ of $n$ simple closed planar Jordan regions,
$\mcal{U}_{\R}:\mbb{N} \rightarrow \mbb{N}$ is the function
defined in Theorem~\ref{main}.

\begin{theorem}[\cite{hks09}]
\label{th:prior-res} Let $k \geq 2$, let $0 \leq \alpha \leq 1$,
and let $c$ be a fixed constant. Let $\R$ be a set of $n$ simple
closed Jordan regions such that $\mcal{U}_{\R}(m) \leq c
m^{1+\alpha}$, for $1 \leq m \leq n$, and let $H=H(\R)$. Then
$c_H(k) = O( k n^{\alpha} )$.
\end{theorem}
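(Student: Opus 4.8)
The plan is to reduce the construction of a $k$-colorful coloring to an ordinary \emph{proper} coloring of an auxiliary graph that records only the \emph{shallow} co-occurrences of regions, and then to bound the chromatic number of that graph through the union complexity via the Clarkson--Shor technique, exactly paralleling the way Theorem~\ref{main} controls $\chi(H(\R))$. Indeed, the case $k=2$ of the statement is precisely Theorem~\ref{main}, so the goal is to generalize that mechanism to larger $k$ while keeping the dependence on $k$ linear.

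First I would set up the reduction. Call a hyperedge of $H(\R)$ \emph{small} if it has at most $k$ vertices, and let $G_k$ be the graph on $\R$ in which two regions $a,b$ are adjacent iff they occur together in some small hyperedge (equivalently, $a\cap b$ contains a point covered by at most $k$ regions of $\R$). Any proper coloring of $G_k$ makes every small hyperedge rainbow: if $|e|\le k$ then every pair inside $e$ is a $G_k$-edge, so $e$ receives $|e|=\min\{|e|,k\}$ distinct colors. To handle a hyperedge $e$ with $|e|>k$ I would invoke the standard shrinkability property of such regions, namely that every hyperedge of size greater than $k$ contains a sub-hyperedge of size exactly $k$; then $e$ contains a small rainbow hyperedge and is therefore $\min\{|e|,k\}=k$-colorful. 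Hence a proper coloring of $G_k$ is already a $k$-colorful coloring of $H$, and $c_H(k)\le\chi(G_k)$.

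Next I would bound $\chi(G_k)$ by a degeneracy argument, which needs only a \emph{hereditary} bound on the number of edges of $G_k$. For any subfamily of $m$ regions an edge of $G_k$ is a pair co-occurring at depth at most $k$; a Clarkson--Shor computation \cite{cs-arscg-89}, fed with the union-complexity input $\mcal{U}(t)\le ct^{1+\alpha}$ exactly as in Lemma~\ref{shallow} and Lemma~\ref{at most k} (the linear case), bounds the number of such shallow pairs by $O(k\,m^{1+\alpha})$. Consequently $G_k$ restricted to any $m$ regions has average degree $O(k\,m^{\alpha})$, so $G_k$ is $O(kn^{\alpha})$-degenerate, and coloring greedily in a degeneracy order uses $O(kn^{\alpha})$ colors. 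Combined with the reduction this gives $c_H(k)=O(kn^{\alpha})$. For $\alpha=0$ this recovers the $O(k)$ bound of Aloupis et al.\ \cite{aloupis} recorded in Lemma~\ref{shallow}, and for $k=2$ it recovers Theorem~\ref{main}.

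The step I expect to be the real obstacle is the quantitative Clarkson--Shor bound, specifically obtaining a dependence that is \emph{linear} in $k$. The naive estimate, namely (number of cells of depth at most $k$) times $\binom{k}{2}$, already loses a factor of $k^2$: the $(\le k)$-level has complexity $O(k^{1-\alpha}m^{1+\alpha})$, and a single deep cluster of $k$ mutually overlapping regions contributes $\binom{k}{2}$ shallow pairs at once. To avoid this loss one must count the distinct shallow \emph{pairs} directly rather than charging each small hyperedge its full pair set, which is where a careful moment form of the Clarkson--Shor bound (together with the sub-linear quantity $\mcal{U}(t)/t\le ct^{\alpha}$) is essential. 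I would also take care to justify the shrinkability property used in the reduction: it is immediate for discs and pseudo-discs, but for general Jordan regions it needs a short perturbation-and-shrinking argument of the kind used elsewhere in the paper.
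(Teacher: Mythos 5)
The survey states Theorem~\ref{th:prior-res} without proof (it is cited from \cite{hks09}), so your argument has to stand on its own, and it does not: the step you flag as "immediate for discs and pseudo-discs" is in fact false, and it is the step the whole reduction rests on. The shrinkability claim that every hyperedge of size greater than $k$ contains a sub-hyperedge of size exactly $k$ holds in the \emph{primal} setting (points with respect to discs: shrink the disc), but it does not dualize to hypergraphs induced by regions, where a hyperedge is the set $r(p)$ of regions covering a point $p$ and you cannot shrink away the \emph{other} regions covering the would-be witness. Concretely, for $k=2$ take three closed discs $A,B,C$ in Venn position and add finitely many small discs whose union covers every point lying in exactly two of $A,B,C$, while leaving the center of $A\cap B\cap C$ uncovered (cover the compact closure of the three "petals" by $\eps$-discs). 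Then $\{A,B,C\}$ is a hyperedge of $H(\R)$ with no proper sub-hyperedge, and no pair among $A,B,C$ co-occurs at any point of depth at most $2$, so $A,B,C$ form an independent set in your graph $G_2$. A proper coloring of $G_2$ may therefore color them monochromatically, and then $\{A,B,C\}$ is not $2$-colorful. What a proper coloring of the static $G_k$ does give is only the weaker property recorded in the remark after Lemma~\ref{shallow} (all hyperedges of size at most $k$ become rainbow); Definition~\ref{def:colorful}, which is what Theorem~\ref{th:prior-res} and Algorithm~\ref{kscf_framework} need, also constrains deep hyperedges, and those your static graph cannot reach. (For the same reason, the case $k=2$ is not proved by a static Delaunay-graph argument either; Theorem~\ref{main} itself needs the dynamic argument below.)

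The repair is to make the constraint graph \emph{dynamic}: iteratively remove from the current subfamily $\R'$ a region of minimum degree in $G_k(\R')$, the shallow co-occurrence graph computed with respect to depth \emph{in $\R'$}, and then color in reverse removal order, each region avoiding the colors of its $G_k(\R')$-neighbors at its removal time. Deletions only decrease depth, so new constraint edges appear as the family shrinks, and exactly these enforce colorfulness of deep hyperedges: if $e$ has witness point $p$, then when any of the last $\min\{|e|,k\}$ removed regions of $e$ is removed, the residual depth of $p$ is at most $k$, so that region is adjacent in the current $G_k$ to every not-yet-removed region of $e$ and must avoid all their colors; hence those $\min\{|e|,k\}$ regions receive pairwise distinct colors. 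Your other worry, the Clarkson--Shor count, is in fact not an obstacle: sampling each region with probability $1/k$, a pair co-occurring at depth at most $k$ survives as a depth-$2$ pair of the sample with probability $\Omega(1/k^{2})$, while the number of distinct depth-$2$ pairs among $m'$ regions is at most the complexity of the $(\le 2)$-level, i.e.\ $O(\UComp(m'))$; this bounds the number of shallow pairs in any $m$-region subfamily by $O(k^{2}\,\UComp(m/k)) = O(k^{1-\alpha}m^{1+\alpha})$, even sublinear in $k$. Feeding this hereditary bound into the dynamic greedy argument yields the claimed $O(kn^{\alpha})$; the static reduction it was meant to serve has to be discarded.
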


Combining Theorem~\ref{th:reduce} with Theorem~\ref{th:prior-res}
(for $\alpha = 0$) and Theorem~\ref{th:nonlinear} with
Theorem~\ref{th:prior-res} (for $0 < \alpha < 1$) yields the
following result:

\begin{theorem}[\cite{hks09}]
\label{th:results} Let $k \geq 2$, let $0 \leq \alpha \leq 1$,
and let $c$ be a constant. Let $\R$ be a set of $n$ simple closed
Jordan regions such that $\mcal{U}_{\R}(m)= c m^{1+\alpha}$, for
$1 \leq m \leq n$. Let $H=H(\R)$. Then:
$$
 f_H(k-1) = \left\{ \begin{array}{ll}
                   O(k \log n), \mbox{      $\alpha = 0$}, \\
                   O(kn^{\alpha}), \mbox{       $0 < \alpha \leq 1$}.
                   \end{array}
            \right.
$$
\end{theorem}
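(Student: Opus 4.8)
The plan is to derive both cases by feeding the $k$-colorful bound of Theorem~\ref{th:prior-res} into the two reduction theorems: Theorem~\ref{th:reduce} for the case $\alpha=0$ and Theorem~\ref{th:nonlinear} for the case $0<\alpha\le 1$. The one point that requires care is that both reduction theorems impose a \emph{hereditary} hypothesis: they bound $f_H(k-1)$ under the assumption that \emph{every} vertex-induced sub-hypergraph $H'\subseteq H$ satisfies the relevant bound on $c_{H'}(k)$, whereas Theorem~\ref{th:prior-res} is phrased for a single hypergraph $H(\R)$. So the first thing I would do is promote Theorem~\ref{th:prior-res} to a hereditary statement.

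To do this, I would observe that a vertex-induced sub-hypergraph of $H(\R)$ is precisely a hypergraph $H(\R')$ induced by some subfamily $\R'\subseteq\R$ (the vertices of $H(\R)$ are the regions of $\R$, so selecting vertices selects a subfamily). The crucial monotonicity is that the union-complexity function cannot increase when passing to a subfamily: since $\mcal{U}_{\R}(m)$ is defined as the maximum union complexity over all subfamilies of $\R$ of size at most $m$, and every subfamily of $\R'$ is also a subfamily of $\R$, we get $\mcal{U}_{\R'}(m)\le\mcal{U}_{\R}(m)= c\,m^{1+\alpha}$ for all $m\le n(H')$. Applying Theorem~\ref{th:prior-res} to each $\R'$ then yields $c_{H'}(k)=O\pth{k\,n(H')^{\alpha}}$ for \emph{every} induced sub-hypergraph $H'=H(\R')$, with the constant in the $O(\cdot)$ depending only on $c$ and $\alpha$ and hence uniform over all $H'$.

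With this hereditary bound established, the two cases are immediate substitutions. For $\alpha=0$ the bound reads $c_{H'}(k)=O(k)$, a fixed integer $\ell=O(k)$ independent of $H'$ (recall $k$ is fixed); plugging this $\ell$ into Theorem~\ref{th:reduce} gives $f_H(k-1)\le\log_{1+\frac{1}{\ell-1}}n=O(\ell\log n)=O(k\log n)$, the first line of the claim. For $0<\alpha\le 1$ the hereditary bound $c_{H'}(k)=O\pth{k\,n(H')^{\alpha}}$ is exactly the hypothesis of Theorem~\ref{th:nonlinear}, whose conclusion $f_H(k-1)=O(k\,n^{\alpha})$ is the second line. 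Assembling the two regimes gives the stated piecewise bound.

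I expect the hereditary promotion of Theorem~\ref{th:prior-res} to be the only genuinely non-mechanical step: once one notices that the union-complexity condition passes to every subfamily $\R'\subseteq\R$, so that the $c_{H'}(k)$ bound holds for all induced sub-hypergraphs as the reduction theorems demand, the remainder is a direct appeal to the already-proved machinery and I would not anticipate any further obstacle.
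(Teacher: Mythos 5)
Your proposal is correct and follows exactly the paper's route: the paper proves this theorem by combining Theorem~\ref{th:prior-res} with Theorem~\ref{th:reduce} for $\alpha=0$ and with Theorem~\ref{th:nonlinear} for $0<\alpha\le 1$. Your explicit justification of the hereditary step (that $\mcal{U}_{\R'}(m)\le\mcal{U}_{\R}(m)$ for every subfamily $\R'\subseteq\R$, so the colorful bound holds for all induced sub-hypergraphs) is a detail the paper leaves implicit, but it is the same argument.
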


{\noindent \bf Axis-parallel rectangles:} Consider
$kSCF$-colorings of hypergraphs induced by axis-parallel
rectangles in the plane. As mentioned before, axis-parallel rectangles
might have quadratic union-complexity. For a hypergraph $H$ induced by axis-parallel
rectangles, Theorem~\ref{th:results} states that $f_H(k-1) =O(k
n)$. This bound is meaningless, since the bound $f_H(k-1) \leq n$
is trivial. Nevertheless, the following theorem provides a
better upper bound for this case:

\begin{theorem}[\cite{hks09}]
\label{th:rects} Let $k \geq 2$. Let $\R$ be a set of $n$
axis-parallel rectangles, and let $H=H(\R)$. Then $f_H(k-1) =O(k
\log^2 n)$.
\end{theorem}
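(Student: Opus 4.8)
The plan is to reduce the statement to a bound on the $k$-colorful chromatic number via the framework of Theorem~\ref{th:reduce}, and then to establish that bound for rectangle-hypergraphs by a recursive divide-and-conquer construction that mimics the $O(\log n)$ proper coloring of Theorem~\ref{rectangles}. Concretely, the target is to show that for $H=H(\R)$ induced by a finite set $\R$ of axis-parallel rectangles one has $c_H(k) = O(k\log n)$, hereditarily.

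First I would verify the reduction. The bound $c_H(k)=O(k\log n)$ is automatically hereditary: any induced sub-hypergraph $H'\subseteq H$ is itself induced by a sub-family $\R'\subseteq\R$ of at most $n$ rectangles, so $c_{H'}(k)=O(k\log n)$ as well. Granting the claim, I apply Theorem~\ref{th:reduce} with $\ell=O(k\log n)$ to conclude $f_H(k-1) = O(\ell\log n) = O(k\log^2 n)$, which is exactly the assertion. (Note that the union-complexity route of Theorem~\ref{th:prior-res} is useless here, since rectangles can have quadratic union complexity, i.e.\ $\alpha=1$, yielding only the trivial $O(kn)$; so a rectangle-specific argument for $c_H(k)$ is unavoidable.)

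Second, and this is the heart of the matter, I would prove $c_H(k)=O(k\log n)$ by generalizing the recursion behind Theorem~\ref{rectangles}. Pick a median vertical line $\ell$ splitting $\R$ into the rectangles lying strictly left of $\ell$, those strictly right, and the set $C$ of rectangles crossing $\ell$; the left and right families each contain at most $n/2$ rectangles and are colored recursively with disjoint palettes, which accounts for the $\log n$ recursion depth. The crossing rectangles $C$ are the new ingredient: restricted to $\ell$ they project to a system of intervals (their $y$-extents), and interval systems have linear union complexity, so by Theorem~\ref{th:prior-res} with $\alpha=0$ they admit a $k$-colorful coloring using only $O(k)$ fresh colors. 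The intended invariant is that each recursion level contributes $O(k)$ colors and that every point covered by at least $k$ rectangles accumulates $k$ distinct colors --- either entirely from the crossing family at some level, or jointly across levels --- so that summing $O(k)$ over the $O(\log n)$ levels gives $c_H(k)=O(k\log n)$.

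The hard part will be the bookkeeping of colorfulness across the recursion at the crossing step. A point $p$ off $\ell$ is covered by a crossing rectangle only if that rectangle also covers the point $(x_\ell,y_p)$ on $\ell$, so the set of crossing rectangles covering $p$ is a \emph{subset} of those covering $(x_\ell,y_p)$; hence a $k$-colorful coloring of the projected interval system guarantees colorfulness for points lying on $\ell$ but not automatically for a smaller covering-set off $\ell$. Resolving this requires combining the one-dimensional $x$-dominance order among the crossing rectangles with their $y$-interval structure, so that the coloring of $C$ stays $k$-colorful for \emph{every} point, and then arguing that a deep hyperedge straddling $\ell$ inherits enough distinct colors from the level at which it is first cut. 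I expect this straddling analysis, rather than the color count, to be the delicate part; once it is in place, the arithmetic $O(k)\cdot O(\log n)$ and the final application of Theorem~\ref{th:reduce} are routine.
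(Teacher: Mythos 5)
Your first step reproduces the paper's route exactly: Theorem~\ref{th:rects} is obtained there by feeding the hereditary colorful-coloring bound of Theorem~\ref{th:colorful_rects} (namely $c_{H'}(k)\leq k\log n$ for every induced sub-hypergraph) into the reduction of Theorem~\ref{th:reduce}, and your observations that heredity is automatic for region-induced hypergraphs and that the union-complexity route of Theorem~\ref{th:prior-res} is vacuous for rectangles are both correct. But this means the entire mathematical content of the theorem lives in the colorful-coloring bound, which the survey does not prove and simply imports from \cite{hks09} --- and that is precisely the piece your proposal fails to deliver. Your divide-and-conquer skeleton is the right one, but the step that makes it work is asserted rather than proven: you first claim that the crossing family $C$ admits a $k$-colorful coloring with $O(k)$ colors by projecting to the intervals that $C$ cuts out on the splitting line $\ell$ and invoking Theorem~\ref{th:prior-res} with $\alpha=0$, and then you yourself observe that this argument is invalid, because for a point $p$ off $\ell$ the set of crossing rectangles covering $p$ is a \emph{proper subset} of the interval hyperedge at $(x_\ell,y_p)$, and $k$-colorfulness is not inherited by sub-hyperedges. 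What is needed is a bound on $c_{H(C)}(k)$ for the genuine two-dimensional hypergraph induced by $C$ (hyperedges at \emph{all} points of the plane), which is exactly the within-level content of Theorem~\ref{th:colorful_rects}; your closing sentence about ``combining the $x$-dominance order with the $y$-interval structure'' is a description of the missing lemma, not a proof of it. A natural repair starts from the fact that the rectangles of $C$ clipped to one closed half-plane of $\ell$ have boundaries pairwise crossing at most twice, hence form a pseudo-disc family with linear union complexity, so Theorem~\ref{th:prior-res} with $\alpha=0$ does apply to each side's clipped hypergraph; but even then one must produce a \emph{single} coloring of $C$ that is $k$-colorful for both sides simultaneously --- the naive product of the two side-colorings costs $O(k^2)$ colors per level and would only give $f_H(k-1)=O(k^2\log^2 n)$, weaker than the claim. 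So the proposal has a genuine gap exactly at the theorem's core.

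Two remarks on your bookkeeping, one of which matters. The cross-level accounting that you single out as the delicate part is in fact routine: if the palette used for the crossing family at each level is disjoint from all other palettes, then a hyperedge $e$ at a point $p$ decomposes as $e=\bigcup_j e_j$ over the levels of the recursion branch containing $p$, each $e_j$ is itself a hyperedge of the level-$j$ crossing hypergraph, and the number of distinct colors in $e$ is at least $\sum_j \min(\cardin{e_j},k)\geq \min(\cardin{e},k)$; no further ``straddling analysis'' is needed once the within-level lemma holds --- a deep hyperedge does \emph{not} need to collect its $k$ colors at the level where it is first cut. On the other hand, your statement that the left and right families are ``colored recursively with disjoint palettes'' must not be read literally: the two sibling subproblems have to \emph{share} one palette (which is legitimate, since no point lies in both a left and a right rectangle, so no hyperedge mixes them), and only the crossing family's $O(k)$ colors are fresh at each level. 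If siblings used mutually disjoint palettes, the recursion would produce $O(kn)$ colors rather than $O(k)\cdot O(\log n)$.
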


In order to obtain Theorem~\ref{th:rects} we need the following
theorem:

\begin{theorem} [\cite{hks09}]
\label{th:colorful_rects}
 Let $H = H(\R)$, be the
hypergraph induced by a family $\R$ of $n$ axis-parallel
rectangles in the plane, and let $k \in \mbb{N}$ be an integer,
$k \geq 2$. For every induced sub-hypergraph $H'\subseteq H$ we
have: $c_{H'}(k) \leq k \log n$.
\end{theorem}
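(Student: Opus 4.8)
The plan is to mimic, while tracking the parameter $k$, the divide-and-conquer argument behind the $O(\log n)$ proper-colouring bound of Theorem~\ref{rectangles}. Since any induced sub-hypergraph $H'\subseteq H$ is itself of the form $H(\R')$ for a subfamily $\R'\subseteq\R$ with $|\R'|\le n$, and the bound I produce is monotone in the number of rectangles, it suffices to prove $c_{H(\R)}(k)\le k\log|\R|$ for an arbitrary family $\R$; the hereditary statement then follows verbatim. I would recurse on the $x$-coordinate: choose a vertical line $\ell$ at $x=\xi$ (say $\xi$ the median of the left endpoints $a_R$) so that the family $\R_L$ of rectangles lying entirely left of $\ell$ and the family $\R_R$ lying entirely right both have size $\le|\R|/2$, and let $\R_M$ be the \emph{crossing} rectangles, whose $x$-interval contains $\xi$. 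I recurse on $\R_L$ and $\R_R$ and colour $\R_M$ separately. The key accounting device is to reserve a fresh palette for each recursion \emph{depth}, reused across all nodes of that depth but disjoint between depths: a crossing rectangle at a node has its entire $x$-extent inside that node's strip, so the nodes at a fixed depth govern pairwise-disjoint vertical strips and the reuse is harmless. As the recursion has depth $\le\log|\R|$, the whole argument reduces to colouring one crossing family with few colours so that its induced hypergraph is $k$-colorful.

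Next I would verify that per-node $k$-colorful colourings of the crossing families assemble into a global one. Fix a point $p$ and follow its search path. Each rectangle $R\ni p$ is charged to the unique node at which it becomes crossing, and that node lies on $p$'s path: while $R$ has not yet crossed it lies entirely on one side of the current line, and since $p\in R$ the point $p$ lies on the same side, so $p$ follows $R$ down the tree until $R$ crosses. Hence $r(p)$ is the disjoint union, over the depths $j$ on $p$'s path, of the sets $M_j(p)$ of depth-$j$ crossing rectangles containing $p$. If some $M_j(p)$ has $\ge k$ elements, the $k$-colorful colouring of that crossing family already exhibits $k$ distinct colours inside $r(p)$, which suffices since then $|r(p)|\ge k$. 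Otherwise each $M_j(p)$ is rainbow (small hyperedges are rainbow under a $k$-colorful colouring), and because the depth palettes are disjoint, the colours of $r(p)=\bigsqcup_j M_j(p)$ are all distinct; thus $r(p)$ sees exactly $\min(|r(p)|,k)$ colours in every case, as required.

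The heart of the proof is therefore to $k$-colorful-colour a family $\mathcal Q$ of rectangles all crossing a common line $x=\xi$. The structural observation I would exploit is that if $R\in\mathcal Q$ contains $p=(p_x,p_y)$ then, being $x$-convex and crossing $\xi$, it also contains the projected point $(\xi,p_y)$; hence the cover of any point is contained in the cover of its projection onto $\ell$, which depends only on the $y$-intervals $[c_R,d_R]$. Restricted to $\ell$, $\mathcal Q$ thus induces the one-dimensional interval hypergraph of these $y$-intervals, for which a $k$-colorful colouring with $k$ colours is standard. The real difficulty is the off-line points: a point with $p_x<\xi$ sees only the rectangles of its $y$-level whose left edge satisfies $a_R\le p_x$, i.e.\ a \emph{prefix} (ordered by left extent) of that level's cover, and symmetrically for $p_x>\xi$. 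Equivalently, replacing each crossing rectangle by its part left (resp.\ right) of $\ell$ turns $\mathcal Q$ into a family of three-sided, right-anchored (resp.\ left-anchored) rectangles, and I must colour so that every such prefix (resp.\ suffix) of size $\ge k$ remains colorful. This is precisely a polychromatic-colouring question for three-sided rectangles, whose polychromatic number is linear, so $\mathcal Q$ can be $k$-colorful-coloured with $O(k)$ colours.

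I expect this last step to be the main obstacle; the configuration of two crossing rectangles of different widths together with a third spanning both already shows that colouring by $y$-interval alone fails for off-line points and that a careless colouring can be forced to use more than $k$ colours. Everything else—the balanced split, the depth-wise palette reuse, and the charging argument—is routine. Combining the linear per-depth cost with the depth bound $\le\log|\R|$ yields $c_{H(\R)}(k)=O(k\log|\R|)$; a sharper analysis of the three-sided colouring (or of the split) recovers the stated leading constant $k\log n$, and the bound for every induced sub-hypergraph follows by monotonicity in the number of rectangles.
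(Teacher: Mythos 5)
Your overall scaffolding is sound, and it is surely the intended skeleton: the reduction to the full family, the balanced split by a median vertical line, the fresh palette per recursion depth (legitimate because same-depth nodes live in disjoint vertical strips that contain their rectangles), and the charging argument decomposing $r(p)$ into the per-depth crossing covers $M_j(p)$, with the two cases (some $\cardin{M_j(p)}\geq k$, versus all $M_j(p)$ rainbow and palettes disjoint) are all correct. I note that the survey itself does not prove Theorem~\ref{th:colorful_rects} (it is quoted from \cite{hks09}), so the comparison below is against what is needed to make any such proof work, not against an in-paper argument.

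The genuine gap is exactly in the step you flag as the heart of the matter: the claim that the crossing-family problem ``is precisely a polychromatic-colouring question for three-sided rectangles, whose polychromatic number is linear, so $\mathcal{Q}$ can be $k$-colorful-coloured with $O(k)$ colours.'' This is a non-sequitur, for two reasons. First, a $k$-colorful coloring requires every cover of size \emph{less} than $k$ to be rainbow; a polychromatic coloring imposes no constraint at all on shallow points, so it may give two rectangles sharing a depth-$2$ point the same color. Even the deep-point guarantee does not transfer: the linear bound for bottomless rectangles says that with $p$ colors every point of depth at least roughly $3p$ sees all $p$ colors, so points of depth between $k$ and $3p$ (which must see at least $k$ colors in a $k$-colorful coloring) get no guarantee for any admissible choice of $p$. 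Second, you need a \emph{single} coloring that is simultaneously good for the left (prefix) hypergraph and the right (suffix) hypergraph; citing a coloring theorem for each anchored family separately gives two colorings, and the obvious product combination costs $\Theta(k^2)$ colors per level. A further warning about your last sentence: the per-level cost cannot be pushed to exactly $k$, so ``a sharper analysis'' cannot recover the constant $1$ along your route. Indeed, the three rectangles $[-3,10]\times[0,2]$, $[-2,10]\times[1,3]$, $[-1,10]\times[0,3]$ all cross the line $x=5$, and the points $(-2,1.5)$, $(-1,0.5)$, $(-1,2.5)$ have covers exactly $\{R_1,R_2\}$, $\{R_1,R_3\}$, $\{R_2,R_3\}$; since size-$2$ covers must be rainbow, already $2$-colorful coloring of a crossing family can force $3>k$ colors. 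The good news is that your key lemma is true and repairable with a tool already stated in this survey: a family of rectangles crossing a common vertical line has \emph{hereditary linear union complexity} (in each open half-plane its union coincides with that of the line-anchored clipped rectangles, any two of whose boundaries cross at most twice, so the Kedem--Livne--Pach--Sharir bound applies), and Theorem~\ref{th:prior-res} with $\alpha=0$ then gives $c_{H(\mathcal{Q})}(k)=O(k)$ for every crossing family, including all its subfamilies. Plugging this into your (correct) recursion yields $c_{H'}(k)=O(k\log n)$, which is the substance of the theorem, though with an unspecified constant rather than the stated $k\log n$.
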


The proof of Theorem~\ref{th:rects} is therefore an easy
consequence of Theorem~\ref{th:colorful_rects} combined with
Theorem~\ref{th:reduce}

 Har-Peled and Smorodinsky \cite{HS02} proved that any
family $\R$ of $n$ axis-parallel rectangles admit a CF-coloring
with $O(\log ^2 n)$ colors. Their proof uses the probabilistic
method. They also provide a randomized algorithm for obtaining
CF-coloring with at most $O(\log ^2 n)$ colors. Later,
Smorodinsky \cite{smoro} provided a deterministic polynomial-time
algorithm that produces a CF-coloring for $n$ axis-parallel
rectangles with $O(\log^2 n)$ colors. Theorem~\ref{th:rects} thus
generalizes the results of \cite{HS02} and \cite{smoro}. The
upper bound provided in Theorem~\ref{th:results} for $\alpha =
0$ is optimal. Specifically, there
exist matching lower bounds on the number of colors
required by any $kSCF$-coloring of hypergraphs induced by (unit)
discs in the plane.

\begin{theorem}[\cite{DeBerg}]
\label{th:lower_bounds}

(i) There exist families $\R$ of $n$ (unit) discs for which
$f_{H(\R)}(k) = \Omega(k \log n)$

(ii) There exist families $\R$ of $n$ axis-parallel squares for
which $f_{H(\R)}(k) = \Omega(k \log n)$.
\end{theorem}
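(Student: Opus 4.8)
The plan is to reduce both parts to a single combinatorial lower bound on the \emph{discrete intervals hypergraph} $H_n$ on $n$ points, and then to show that any $k$SCF-coloring of $H_n$ needs $\Omega(k\log n)$ colors. For the reduction I rely on the two geometric realizations already recorded in this survey: the family of unit discs whose centers lie on a line with consecutive gaps smaller than $1/n$ induces a hypergraph isomorphic to $H_n$ (as used in \cite{ELRS} and recalled in the discussion of shallow regions), and the axis-parallel squares of Figure~\ref{fig:rectangles-dih} likewise induce a hypergraph isomorphic to $H_n$. Since the $k$SCF condition is purely combinatorial and hence preserved by hypergraph isomorphism, in both cases $f_{H(\R)}(k)=f_{H_n}(k)$, so it suffices to bound the latter from below.

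For the combinatorial core, let $g(m)$ denote the least number of colors in a $k$SCF-coloring of the intervals hypergraph on $m$ consecutive points. The heart of the argument is the recursion
\[
g(m)\ \ge\ g\!\left(\lfloor m/2\rfloor\right)+\tfrac{k}{2}\qquad\text{for } m\ge k.
\]
To prove it, I fix an optimal $k$SCF-coloring $C$ and consider the hyperedge consisting of \emph{all} $m$ points. Since $m\ge k$, the definition of $k$SCF forces at least $k$ points whose color is unique among the $m$ points (i.e.\ appears exactly once there). Splitting the points into a left half $L$ and right half $R$, among any $k$ of these once-appearing points one half, say $L$, contains at least $k/2$; because each of their colors occurs exactly once among the $m$ points, these $\ge k/2$ colors are \emph{absent} from $R$. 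I then recurse on $R$: the restriction $C|_R$ is a valid $k$SCF-coloring of the intervals hypergraph on $R$ (every sub-interval of $R$ is a hyperedge of the original, so its $k$SCF condition is inherited), so it uses at least $g(|R|)\ge g(\lfloor m/2\rfloor)$ colors, and these are disjoint from the $\ge k/2$ colors drawn from $L$.

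Unrolling gives the theorem: following a nested chain $I_0=[n]\supset I_1\supset I_2\supset\cdots$, where $I_{t+1}$ is the recursed half of $I_t$, I extract at least $k/2$ fresh colors at each level with $|I_t|\ge k$, i.e.\ for about $\log_2(n/k)$ levels, whence $f_{H_n}(k)=g(n)=\Omega(k\log(n/k))=\Omega(k\log n)$ in the intended range of $k$; this matches, and thereby shows the optimality of, the upper bound $O(k\log n)$ of Theorem~\ref{th:results} for linear-union-complexity families. The step I expect to require the most care is the \emph{disjointness bookkeeping} across levels: the $k$SCF definition only guarantees colors unique \emph{within} the current interval, so I must be sure the once-appearing colors counted at level $t$ never recur deeper. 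This is precisely where the ``appears exactly once'' property is essential—a color counted at level $t$ lives in the discarded half and is therefore absent from $I_{t+1}\supseteq I_{t'}$ for all $t'>t$, so it cannot be recounted—and it is what upgrades the per-level gain of $k/2$ into a clean multiplicative $\Theta(k\log n)$ bound rather than an additive one. A secondary point to check is that the disc and square realizations produce \emph{all} intervals, including the short ones responsible for the rainbow requirement on hyperedges of size $<k$; this is supplied by the cited constructions.
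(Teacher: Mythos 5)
Your proposal cannot be matched against an in-paper argument, because the survey does not prove Theorem~\ref{th:lower_bounds} at all: it is stated as a quoted result from the external reference \cite{DeBerg}. Judged on its own, your proof is correct, and it is in fact the natural $k$-fold generalization of the one lower-bound argument the survey \emph{does} contain, namely the Introduction's proof that $\cf(H) \geq \lfloor \log n\rfloor +1$ for the discrete intervals hypergraph: there, one uniquely colored vertex is found in the full hyperedge, its color is observed to be absent from the larger remaining side, and the argument recurses on that side. You do the same with $k$ uniquely colored vertices, at least $\lceil k/2 \rceil$ of which land in one half, and recurse on the other half; since each counted color occurs exactly once in the current interval, it cannot reappear in the recursed half, which is exactly what makes the recursion $g(m) \geq g(\lfloor m/2 \rfloor) + k/2$ additive in distinct colors (together with the easy facts that restricting a $k$SCF-coloring to a subinterval stays $k$SCF and that $g$ is monotone). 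Your two geometric reductions are also legitimate as stated: the paper explicitly asserts, citing \cite{ELRS}, that tightly packed unit discs on a line induce a hypergraph isomorphic to the discrete intervals hypergraph, and says the same for the squares of Figure~\ref{fig:rectangles-dih}; the $k$SCF condition is purely combinatorial, hence isomorphism-invariant.

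One caveat deserves to be made explicit rather than left as ``the intended range of $k$'': your unrolling gives $\Omega(k\log(n/k))$, which equals $\Omega(k\log n)$ only when $\log(n/k) = \Omega(\log n)$, e.g.\ for fixed $k$ or $k \leq n^{1-\delta}$. This loss is not an artifact of sloppiness in your argument but is inherent to the intervals hypergraph: a blocked binary-tree coloring (place $k$ distinctly colored median points at each node, use fresh colors per level, reuse them across siblings) yields a matching upper bound $O(k\log(n/k))$ on $H_n$, so no argument routed through $H_n$ can give $\Omega(k\log n)$ for $k = n^{1-o(1)}$. This is harmless for the theorem as the paper means it — $k$ is treated as a fixed integer throughout the section, and for $k$ anywhere near $n/\log n$ the claimed bound would contradict the trivial $f_H(k) \leq n$ in any case — but a careful write-up should state the regime.
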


Notice that for axis-parallel rectangles there is a logarithmic gap between the best known upper and lower bounds.

Theorems~\ref{th:reduce} and \ref{th:nonlinear} asserts that in
order to attain upper bounds on $f_H(k)$, for a hypergraph $H$,
one may concentrate on attaining a bound on $c_H(k)$. Given a
$k$-colorful coloring of $H$, Algorithm~\ref{kscf_framework} obtains a
strong-conflict-free coloring of $H$ in a constructive manner.
Here computational efficiency is not of main interest. However,
it can be seen that for certain families of geometrically induced
hypergraphs, Algorithm~\ref{kscf_framework} is efficient. In
particular, for hypergraphs induced by discs or axis-parallel
rectangles, Algorithm~\ref{kscf_framework} has a low
degree polynomial running time. Colorful-colorings of such
hypergraphs can be computed once the arrangement of the discs is
computed together with the depth of every face.

\subsection{List Colorings}
In view of the motivation for CF-coloring in the context of
wireless antennae, it is natural to assume that each antenna is
restricted to use some subset of the spectrum of frequencies and
that different antennae might have different such subsets
associated with them (depending, for example, on the physical
location of the antenna). Thus, it makes sense to study the
following more restrictive notion of coloring:

Let $H=(V,\E)$ with $V=\{v_1,\ldots,v_n\}$ be a hypergraph and let
$\L = \{L_1,\ldots,L_n\}$ be a family of subsets of the integers.
We say that $H$ admits a proper coloring from $\L$ (respectively,
a CF-coloring from $\L$, a UM-coloring from $\L$) if there exists a proper coloring
(respectively a CF-coloring, a UM-coloring) $C \colon  V \rightarrow \mathbb N$
such that $C(v_i) \in L_i$ for $i=1,\ldots,n$.

\begin{definition}
We say that a hypergraph $H=(V,\E)$ is {\em $k$-choosable}
(respectively, {\em $k$-CF-choosable, $k$-UM-choosable}) if for every family
$\L=\{L_1,\ldots,L_n\}$ such that $\cardin{L_i}\geq k$ for
$i=1,\ldots,n$, $H$ admits a proper-coloring (respectively a
CF-coloring, a UM-coloring) from $\L$.
\end{definition}

We are interested in the minimum number $k$ for which a given
hypergraph is $k$-choosable (respectively, $k$-CF-choosable, $k$-UM-choosable). We
refer to this number as the {choice-number} (respectively the
{\em CF-choice-number, UM-choice-number}) of $H$ and denote it by $\ch(H)$
(respectively $\cfch(H), \umch(H)$). Obviously, if the choice-number
(respectively, the CF-choice-number, UM-choice-number) of $H$ is $k$ then it can be
properly colored (respectively CF-colored, UM-colored) with at most $k$
colors, as one can proper color (respectively, CF-color, UM-color) $H$
from $\L=\{L_1,\ldots,L_n\}$ where for every $i$ we have $L_i =
\{1,\ldots,k\}$. Thus,

$$
\ch(H) \geq \chi(H) $$
$$
\cfch(H) \geq \CF(H).
$$
$$
\chum(H) \geq \um(H)
$$

Hence, any lower bound on the number of colors required by a proper
coloring of $H$ (respectively, a CF-coloring, a UM-coloring of $H$) is also a lower bound on the choice number (respectively, the CF-choice-number, the UM-choice-number) of $H$.

The study of choice numbers in the special case of graphs was
initiated by Vizing \cite{vizing} and by Erd\H{o}s Rubin and
Taylor \cite{ert}. The study of the CF-choice number and the UM-choice number of hypergraphs
was initiated very recently by Cheilaris, Smorodinsky and Sulovsk{\' y}
\cite{cf-choice}.

Let us return to the discrete interval
hypergraph $H_n$ with $n$ vertices, which was described in the
introduction. As was shown already, we have $\CF(H_n)= \um(H_n) = \lfloor
\log_2 n \rfloor + 1$. In particular we have the lower bound
$\cfch(H_n) \geq \lfloor \log_2 n \rfloor + 1$. Hence, the
following upper-bound is tight:

\begin{proposition}\label{proposition:intervalchcf}
For $n \geq 1$, $\cfch(H_n) \leq \lfloor \log_2 n \rfloor +1 $.
\end{proposition}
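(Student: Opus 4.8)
The plan is to prove, by induction on $k$, the slightly repackaged statement: for every $n$ with $1 \le n \le 2^k - 1$, the discrete interval hypergraph $H_n$ is $k$-CF-choosable. Since $k = \lfloor \log_2 n \rfloor + 1$ is exactly the least integer with $n \le 2^k - 1$, this immediately yields $\cfch(H_n) \le \lfloor \log_2 n \rfloor + 1$. The skeleton mirrors the divide-and-conquer proof of the non-list bound on $\CF(H_n)$: split the vertices around a median vertex, color that median so that it becomes the conflict-free witness for every interval crossing it, and recurse on the two halves. The one genuinely new ingredient, forced by the list setting where we cannot introduce a globally fresh color, is to first fix an arbitrary color for the median from its own list and then \emph{delete that color} from the lists of all remaining vertices before recursing.

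In detail, the base case $k=1$ (so $n=1$) is trivial: color the single vertex with any color from its nonempty list. For the inductive step, assume the claim for $k-1$, and let $1 \le n \le 2^k-1$ with lists $L_1,\dots,L_n$ each of size at least $k$. Choose a vertex $p$ so that the vertices to its left form an interval $A$ and those to its right form an interval $B$ with $|A|,|B| \le 2^{k-1}-1$; this is possible because $n-1 \le 2(2^{k-1}-1)$, so a balanced split leaves at most $\lceil (n-1)/2 \rceil \le 2^{k-1}-1$ vertices on each side. Set $C(p) := c$ for an arbitrary $c \in L_p$, and for every vertex $v \ne p$ form the reduced list $L_v' := L_v \setminus \{c\}$, still of size at least $k-1$. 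Since $H_A$ and $H_B$ are discrete interval hypergraphs on at most $2^{k-1}-1$ vertices, the induction hypothesis supplies CF-colorings of $A$ and of $B$ from the reduced lists; combine them with $C(p)$ to obtain $C$.

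To verify that $C$ is conflict-free on $H_n$, take any hyperedge (interval) $e$. Because $p$ lies strictly between $A$ and $B$, every interval meeting both $A$ and $B$ must contain $p$, so $e$ is of exactly one of three types. If $e \subseteq A$ (resp. $e \subseteq B$), then $e$ is a hyperedge of $H_A$ (resp. $H_B$) and the recursive coloring already exhibits a uniquely colored vertex inside $e$, a uniqueness unaffected by vertices outside $e$. If $p \in e$, then the color $c = C(p)$ appears nowhere else in $e$, since every other vertex was colored from a list excluding $c$; hence $p$ is the unique witness. This exhausts all cases and closes the induction.

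The only delicate point is the bookkeeping of list sizes: deleting the median's color must be affordable, which is precisely why size-$k$ lists suffice at level $k$ while the two halves — each of size at most $2^{k-1}-1$ — require only $(k-1)$-sized lists. The geometric fact that every interval crossing the median passes through it is what lets a single well-chosen median color act as a global witness, and forbidding that color downstream is what guarantees its uniqueness; together these replace the ``fresh unique-maximum color'' device used in the non-list argument.
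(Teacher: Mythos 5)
Your proof is correct and follows essentially the same route as the paper's: both pick a (near-)median vertex, assign it an arbitrary color from its own list, delete that color from all remaining lists (which is exactly the list-coloring substitute for a fresh unique color), and recurse on the two halves, with the same case analysis for intervals inside a half versus intervals crossing the median. The only cosmetic difference is that you handle arbitrary $n \le 2^k - 1$ with a balanced split, whereas the paper assumes without loss of generality that $n = 2^{k+1}-1$ and splits at the exact median.
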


\begin{proof}
Assume, without loss of generality, that $n=2^{k+1} -1$. We will show
that $H_n$ is $k+1$ CF-choosable. The proof is by induction on
$k$. Let $\L = \{L_i\}_{i \in [n]}$, such that $\cardin{L_i}=k+1$,
for every $i$. Consider the median vertex $p = 2^k$.
Choose a color $x \in L_p$ and assign it to $p$. Remove $x$ from
all other lists (for lists containing $x$), i.e., consider $\L' =
\{L'_i\}_{i \in [n] \setminus p}$ where $L'_i =
L_i\setminus\{x\}$. Note that all lists in $\L'$ have size at
least $k$. The induction hypothesis is that we can CF-color any
set of points of size $2^k-1$ from lists of size $k$.
Indeed, the number of vertices smaller (respectively, larger)
than $p$ is exactly $2^k -1$. Thus, we
CF-color vertices smaller than $p$ and independently vertices
larger than $p$, both using colors from the lists of $\L'$.
Intervals that contain the median vertex $p$ also have the
conflict-free property, because
color $x$ is used only in $p$.
This completes the induction step and hence the
proof of the proposition.
\end{proof}

Note that, even in the discrete interval hypergraph, it is a more difficult problem to obtain any non-trivial upper bound on the
UM-choice number. A divide and conquer approach, along the lines of the proof of
Proposition~\ref{proposition:intervalchcf} is doomed to fail.
In such an approach, some vertex close to the median must be found,
a color must be assigned to it from its list, and this color must be
deleted from all other lists.
However, vertices close to the median might have only ``low''
colors in their lists. Thus, while we are guaranteed
that a vertex close to the median is uniquely colored for intervals containing it,
such a unique color is not necessarily the maximal color for such intervals.

Instead, Cheilaris et al. used a different approach. This approach provides a general framework for UM-coloring hypergraphs from lists. Moreover, when applied to many
geometric hypergraphs, it provides
asymptotically tight bounds for the UM-choice number.

Below, we give an informal description of that approach, which is then
summarized in Algorithm~\ref{algo:umchoice_general}. It is similar in spirit to Algorithm~\ref{CF-framework}.

Start by sorting the colors in the union of all lists in increasing order.
Let $c$ denote the minimum color.
Let $V^c \subseteq V$ denote the subset of vertices containing $c$ in their lists.
Note that $V^c$ might contain very few vertices, in fact, it might be that $\cardin{V^c} = 1$.
We simultaneously color
a suitable subset $U \subseteq V^c$ of vertices in $V^c$ with $c$.
We make sure that $U$
is independent in the sub-hypergraph $H(V^c)$.
The exact way in which we choose $U$ is crucial to the performance of the algorithm and is discussed below.
Next, for the uncolored vertices in $V^c \setminus U$, we remove the color $c$ from their lists.
This is repeated for every color in the union $\bigcup_{v \in V}L_v$ in increasing order of the colors. The algorithm stops
when all vertices are colored. Notice that such an algorithm might run into a problem, when all colors
in the list of some vertex are removed before this vertex is colored. Later, we show that if we choose
the subset $U \subseteq V^c$ in a clever way and the lists are sufficiently large, then we avoid such a problem.

\begin{algorithm}[htb!]
\caption{UMColorGeneric($H$, $\L$): Unique-maximum color hypergraph
         $H = (V, \E)$ from lists of family $\L$}
\label{algo:umchoice_general}
\begin{algorithmic}[1]
  \WHILE{$V \neq \emptyset$}
  \STATE $c \leftarrow \min \bigcup_{v \in V} L_v$
  \COMMENT{$c$ is the minimum color in the union of the lists}
  \STATE $V^c \leftarrow \{v \in V \mid c \in L_v\}$
  \COMMENT{$V^c$ is the subset of remaining vertices containing $c$ in their lists}
  \STATE $U \leftarrow$ a ``good'' independent subset of
                        the induced sub-hypergraph $H(V^c)$
  \FOR[for every vertex in the independent set,]{$x\in U$}
  \STATE $f(x) \leftarrow c$
  \COMMENT{color it with color $c$}
  \ENDFOR
  \FOR[for every uncolored vertex in $V^c$,]
      {$v \in V^c\setminus U$}
  \STATE $L_v \leftarrow L_v \setminus \{c\}$
     \COMMENT{remove $c$ from its list}
  \ENDFOR
  \STATE $V \leftarrow V \setminus U$
  \COMMENT{remove the colored vertices}
  \ENDWHILE
      \STATE \bfm{Return} $f$.
\end{algorithmic}
\end{algorithm}

As mentioned,
Algorithm \ref{algo:umchoice_general}
might cause some lists to run out of colors before coloring
all vertices. However, if this does not happen, it is
proved that the algorithm produces a UM-coloring.

\begin{lemma}\cite{cf-choice}
Provided that the lists associated with the vertices do not run out of colors during the execution of Algorithm~\ref{algo:umchoice_general}, then the algorithm produces a UM-coloring
from $\L$.
\end{lemma}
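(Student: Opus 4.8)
The plan is to track two simple invariants of the execution and then read the unique-maximum property off them. First I would record that the colors are assigned in strictly increasing order and that each color is used in exactly one iteration. Indeed, at the iteration that processes a color $c = \min\bigcup_{v\in V}L_v$, every surviving vertex having $c$ in its list lies in $V^c$: those placed in $U$ are colored $c$ and pruned from $V$, while those in $V^c\setminus U$ have $c$ deleted from their lists. Hence after this iteration no surviving vertex has $c$ in its list, so $c$ can never again be the minimum and is never reused, and the processed colors strictly increase. Since colors are only ever deleted from lists, the color $f(v)$ assigned to a vertex $v$ was present in its original list $L_v$; and under the hypothesis that no list empties prematurely, every vertex is eventually colored (each iteration can prune a nonempty $U$, e.g.\ a singleton, which is always independent), so $f$ is a well-defined coloring from $\L$.

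The core step is to analyze a fixed hyperedge $e\in\E$ together with its maximum color $m = \max_{v\in e} f(v)$. Using the first invariant, at the start of the iteration that processes $m$ the surviving vertex set $V$ is exactly $\{v : f(v)\ge m\}$: every vertex with smaller final color was colored and pruned in an earlier, smaller-color iteration, while vertices with final color at least $m$ have not yet been pruned. Intersecting with $e$ and invoking the maximality of $m$ yields $e\cap V = \{v\in e : f(v) = m\}$, the set of vertices of $e$ attaining the maximum. Each such vertex is colored $m$, hence lies in the independent set $U\subseteq V^{m}$ chosen in that iteration (where $c=m$); therefore $e\cap V\subseteq U$, and since $V^{m}\subseteq V$ we get $e\cap V = e\cap V^{m}$, a hyperedge of the induced sub-hypergraph $H(V^{m})$. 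Because $U$ is independent, it contains no hyperedge of $H(V^{m})$ of cardinality at least two, forcing $\cardin{e\cap V}\le 1$. As $m$ is attained in $e$ we also have $\cardin{e\cap V}\ge 1$, so exactly one vertex of $e$ receives the maximum color, which is precisely the unique-maximum property.

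The step I expect to require the most care is pinning down the notion of independence so that it does exactly the work needed: I would take $U$ independent in $H(V^{m})$ to mean that $U$ fully contains no hyperedge of $H(V^{m})$ of cardinality at least two, and then verify that $e\cap V^{m}$ is genuinely such a hyperedge, which holds because, by definition of the induced sub-hypergraph, the trace $e\cap V^{m}$ is a hyperedge of $H(V^{m})$. The other point deserving explicit justification is the strictly increasing order of the processed colors, since the whole characterization of $V$ at the iteration processing $m$ rests on it; this in turn follows from the observation that processing $c$ deletes $c$ from the lists of all vertices that survive the iteration. I would also emphasize that the particular (``good'') choice of $U$ is irrelevant for \emph{correctness}: any independent $U$ produces a valid UM-coloring, and the goodness of $U$ only becomes important later, for guaranteeing that the lists do not run out of colors.
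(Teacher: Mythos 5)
Your proof is correct and follows essentially the same route as the paper, which omits the details by noting the argument is analogous to the validity proof of Algorithm~\ref{CF-framework}: fix a hyperedge, consider the iteration at which its maximum color is assigned, observe that the trace of the hyperedge on the surviving vertices is a hyperedge of the induced sub-hypergraph, and use the auxiliary structure (there, properness of the coloring; here, independence of $U$) to force uniqueness of the maximum. Your explicit verification of the strictly-increasing-color invariant and the identification $e\cap V = e\cap V^{m}\subseteq U$ is precisely the adaptation the paper leaves to the reader.
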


\begin{proof}
The proof is similar to the validity proof of Algorithm~\ref{CF-framework} and we omit the details.
  \end{proof}

The key ingredient, which will determine the necessary size of the
lists of $\L$, is the particular choice of the independent set in the
above algorithm. We assume that the hypergraph $H=(V,\E)$
is hereditarily $k$-colorable
for some fixed positive integer $k$. Recall that, as shown before,
this is the case in many geometric hypergraphs.
We must also put some condition on the size of the lists
in the family $\L = \{L_v\}_{v \in V}$.
With some hindsight, we require
\[ \sum_{v \in V} \lambda^{-\cardin{L_v}} < 1, \]
where $\lambda := \frac{k}{k-1}$.

\begin{theorem}\label{thm:um_lists}\cite{cf-choice}
Let $H = (V, \E)$ be a hypergraph which is hereditarily
$k$-colorable and set $\lambda := \frac{k}{k-1}$.
Let $\L = \{L_v\}_{v \in V}$, such that
$\sum_{v \in V} \lambda^{-\cardin{L_v}} < 1$.
Then, $H$ admits a UM-coloring from $\L$.
\end{theorem}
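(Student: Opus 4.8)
The plan is to run Algorithm~\ref{algo:umchoice_general} with a carefully specified independent set at each step, and to prove that under the hypothesis $\sum_{v \in V}\lambda^{-\cardin{L_v}} < 1$ no list is ever exhausted during the execution. Since the preceding lemma guarantees that the algorithm outputs a valid UM-coloring from $\L$ whenever the lists do not run out, establishing this suffices.

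The specification of the ``good'' independent set $U$ is the heart of the argument. When the algorithm processes the current minimum color $c$, the set $V^c$ of remaining vertices whose lists still contain $c$ induces a sub-hypergraph $H(V^c)$ which, by hereditary $k$-colorability, admits a proper $k$-coloring into classes $C_1,\dots,C_k$; each $C_i$ is independent in $H(V^c)$. First I would weight each remaining vertex $v$ by $\lambda^{-\cardin{L_v}}$ using its \emph{current} list size, and take $U = C_i$ to be the class maximizing $\sum_{v\in C_i}\lambda^{-\cardin{L_v}}$. Averaging over the $k$ classes, this choice guarantees
$$\sum_{v \in U}\lambda^{-\cardin{L_v}} \;\geq\; \frac1k \sum_{v \in V^c}\lambda^{-\cardin{L_v}}.$$

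Next I would track the potential $\Phi = \sum_{v \text{ uncolored}} \lambda^{-\cardin{L_v}}$, with list sizes taken to be the current ones, and show it never increases. In a single iteration the vertices of $U$ get colored and drop out of the sum, while each vertex of $V^c\setminus U$ loses exactly the color $c$ from its list, so its term is multiplied by $\lambda$; vertices outside $V^c$ are untouched. Writing $S = \sum_{v\in V^c}\lambda^{-\cardin{L_v}}$ and $S_U = \sum_{v\in U}\lambda^{-\cardin{L_v}}$, the change in $\Phi$ equals $\lambda(S - S_U) - S = (\lambda-1)S - \lambda S_U$, which is $\leq 0$ exactly when $S_U \geq (1 - 1/\lambda)S = S/k$, since $1 - 1/\lambda = 1/k$. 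This is precisely the inequality secured by the choice of $U$, so $\Phi$ is non-increasing throughout.

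Finally I would conclude by contradiction. The potential starts at $\sum_{v\in V}\lambda^{-\cardin{L_v}} < 1$ and never increases, so it stays strictly below $1$ at all times. If some uncolored vertex ever had an empty list, its term would be $\lambda^{0}=1$, forcing $\Phi \geq 1$, a contradiction. Hence no list is exhausted, the algorithm terminates with every vertex colored, and by the preceding lemma the result is a UM-coloring from $\L$. The main obstacle is making the independent-set selection and the potential interlock: the value $\lambda = k/(k-1)$ is tuned precisely so that the $1/k$ fraction delivered by pigeonhole on a proper $k$-coloring exactly offsets the factor-$\lambda$ growth incurred by the vertices that fail to be colored and lose a color from their list.
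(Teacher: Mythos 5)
Your proof is correct and follows essentially the same route as the paper's: the same potential function $\sum_{v}\lambda^{-\cardin{L_v}}$ over uncolored vertices with current list sizes, the same pigeonhole choice of the heaviest color class of a proper $k$-coloring of $H(V^c)$ as the independent set, and the same computation showing the factor-$\lambda$ growth on $V^c\setminus U$ is exactly offset so the potential is non-increasing, whence no list can empty since that would force the potential to reach $1$. No gaps; this matches the paper's argument in substance and structure.
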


Notice, that in particular for a hypergraph $H$ which is hereditarily $k$-colorable we have:
$$\chum(H) \leq \log_{\lambda} n+1 = O(k \log n)$$
Thus, Theorem~\ref{thm:um_lists} subsumes all the theorems (derived from Algorithm~\ref{CF-framework}) that are mentioned in Section~\ref{sec:geom_hyper}.

\begin{proof}
The proof of Theorem~\ref{thm:um_lists} is constructive and uses a potential method:
This method gives priority to
coloring vertices that have fewer remaining colors in their lists, when choosing the independent sets.
Towards that goal,
we define a potential function on subsets of uncolored vertices
and
we choose the independent set with the highest potential (the
potential quantifies how dangerous it is that some vertex in the
set will run out of colors in its list).
%

For an uncolored vertex $v \in V$,
let $r_t(v)$ denote the number of colors remaining
in the list of $v$ in the beginning of iteration $t$ of the algorithm.
Obviously, the value of $r_t(v)$ depends on the particular run
of the algorithm.
For a subset of uncolored vertices $X \subseteq V$
in the beginning of iteration $t$, let
$P_t(X) := \sum_{v \in X}\lambda^{-r_t(v)}$.
We define the potential in the beginning of iteration $t$ to be
$P_t := P_t(V_t)$,
where $V_t$ denotes the subset of all uncolored vertices
in the beginning of iteration $t$.
Notice that the value of the potential in the beginning of the algorithm
(i.e., in the first iteration) is
$P_1 =\sum_{v \in V} \lambda^{-\cardin{L_v}} < 1$.

Our goal is to show that, with the right choice of
the independent set in each iteration,
we can make sure that for any iteration $t$ and every vertex
$v \in V_t$ the inequality $r_t(v) > 0$
holds.
In order to achieve this, we will show that,
with the right choice of the subset of vertices colored in each iteration,
the potential function $P_t$ is non-increasing in $t$.
This will imply that for any iteration $t$
and every uncolored vertex $v \in V_t$
 we have:
$$\lambda^{-r_t(v)} \leq P_t \leq P_1 < 1 $$
and hence $r_t(v) > 0$, as required.

Assume that the potential function is non-increasing up to iteration $t$.
Let $P_t$ be the value of the potential function in the beginning
of iteration $t$
and let $c$ be the color associated with iteration $t$.
Recall that $V_t$ denotes the set of uncolored vertices that are
considered in iteration $t$,
and $V^c \subseteq V_t$
denotes the subset of uncolored vertices that
contain the color $c$ in their lists.
Put $P' = P_t(V_t\setminus V^c)$
and $P'' = P_t(V^c)$. Note that $P_t= P'+ P''$.
Let us describe
how we find the independent set of vertices to be colored at iteration $t$.
First, we find an auxiliary proper coloring of
the hypergraph $H[V^c]$ with $k$ colors
(here we use the hereditary $k$-colorability property of the
 hypergraph).
Consider the color class $U$ which
has the largest potential $P_t(U)$.
Since the vertices in $V^c$ are partitioned into
at most $k$ independent subsets $U_1, \ldots, U_k$
and $P''=\sum_{i=1}^k P_t(U_i)$,
then by the pigeon-hole principle there is an index $j$ for which
$P_t(U_j) \geq {P''}/{k}$.
We choose $U=U_j$ as the independent set to be colored at iteration $t$.
Notice that, in this case, the value $r_{t+1}(v) = r_{t}(v) - 1$ for
every vertex
$v \in V^c\setminus U$,
and all vertices in $U$ are colored.
For vertices in $V_t \setminus V^c$, there is no change in the size of their lists.
Thus, the value $P_{t+1}$ of the potential function at the end of iteration $t$ (and in the beginning of iteration $t+1$) is
$P_{t+1} \leq P' + \lambda(1-\frac{1}{k})P''$.
Since
$\lambda = \frac{k}{k-1}$,
we have that $P_{t+1} \leq P' + P'' = P_t$, as required.
\end{proof}

\subsubsection{A relation between chromatic and choice number in general hypergraphs}

Using a probabilistic argument, Cheilaris et al.
\cite{cf-choice} proved the following general theorem for
arbitrary hypergraphs and arbitrary colorings with the so-called refinement property:

\begin{definition}
We call $C'$ a \emph{refinement} of a coloring $C$ if
$C(x) \neq C(y)$ implies $C'(x) \neq C'(y)$.
A class $\cal C$ of colorings is said to have \emph{the refinement property}
if every refinement of a coloring in the class is also in the class.
\end{definition}

The class of conflict-free colorings and the class of proper colorings
are examples of classes which have the refinement property. On the
other hand, the class of unique-maximum colorings does not have this
property.

For a class $\calC$ of colorings, one can naturally extend the notions of chromatic
number $\chiC$ and choice number $\chC$ to $\calC$.

\begin{theorem}[\cite{cf-choice}] \label{thm:general}
For every class of colorings $\calC$ that has the refinement property
and every hypergraph $H$ with $n$ vertices,
$\chC(H) \leq \chiC(H) \cdot \ln n +  1$.
\end{theorem}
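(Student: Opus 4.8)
The plan is to use the probabilistic method together with the refinement property. Assume $m := \chiC(H) < \infty$ (otherwise the asserted bound is vacuous), and fix an optimal coloring $\phi \in \calC$ of $H$, with color classes $V_1, \dots, V_m$. The crucial reduction is this: by the refinement property, to produce a $\calC$-coloring from the lists it suffices to produce \emph{any} assignment $f$ with $f(v) \in L_v$ for all $v$ that refines $\phi$, since such an $f$ then automatically lies in $\calC$. Now $f$ refines $\phi$ exactly when $f(x) = f(y)$ implies $\phi(x) = \phi(y)$; equivalently, when the classes $V_1, \dots, V_m$ use \emph{pairwise disjoint} palettes of colors. This turns the opaque requirement ``$f \in \calC$'' into a concrete, checkable one.

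First I would set up the random palette assignment. Let $k := \lfloor \chiC(H) \ln n \rfloor + 1$ and suppose every list satisfies $\cardin{L_v} \ge k$; proving $H$ is $\calC$-colorable from $\L$ then yields $\chC(H) \le k \le \chiC(H) \ln n + 1$. Assign each color $c \in \bigcup_v L_v$ independently and uniformly at random to one of $m$ palettes $P_1, \dots, P_m$, where $P_i$ is reserved for the class $V_i$; note the $P_i$ are automatically pairwise disjoint. For a vertex $v$, let $A_v$ denote the bad event that $L_v \cap P_{\phi(v)} = \emptyset$. Since each color of $L_v$ lands in $P_{\phi(v)}$ independently with probability $1/m$, we have $\prob{A_v} = (1 - 1/m)^{\cardin{L_v}} \le (1 - 1/m)^{k} \le e^{-k/m}$.

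Next I would run a union bound. The probability that some $A_v$ occurs is at most $n \, e^{-k/m}$, and because $k > \chiC(H) \ln n$ we get $e^{-k/m} < e^{-\ln n} = 1/n$, so $n\,e^{-k/m} < 1$. Hence there exists an assignment of colors to palettes for which no $A_v$ occurs. For such an assignment, choose for each $v$ a color $f(v) \in L_v \cap P_{\phi(v)}$. Since the palettes are disjoint, $f(x) = f(y)$ forces $x$ and $y$ into the same palette and therefore into the same class of $\phi$, so $f$ refines $\phi$. By the refinement property $f \in \calC$, and $f$ is drawn from the lists, so $H$ admits a $\calC$-coloring from $\L$, as desired.

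The only genuinely non-routine step is the reduction in the first paragraph: recognizing that the refinement property lets us replace ``$f \in \calC$'' by ``$f$ uses disjoint palettes across the classes of a fixed optimal $\calC$-coloring.'' Once that is in hand, the random partition of the color universe and the estimate $n(1-1/m)^k < 1$ are standard, and the only care needed is in choosing $k = \lfloor \chiC(H)\ln n\rfloor + 1$ so that the natural logarithm and the additive constant $1$ come out exactly as stated.
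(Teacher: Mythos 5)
Your proposal is correct and follows essentially the same argument as the paper: fix an optimal $\calC$-coloring, randomly assign each color of the union of the lists to one of its $\chiC(H)$ classes (your palettes), apply the bound $n(1-1/k)^{k^*} \le n e^{-k^*/k} < 1$ via a union bound, and invoke the refinement property to conclude the resulting list coloring lies in $\calC$. The only difference is presentational — you make explicit the refinement step that the paper treats as obvious ("pick any color from the surviving sublists"), which is a nice clarification but not a different proof.
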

\begin{proof}
If $k = \chiC(H)$,
then there exists a $\calC$-coloring $C$ of $H$ with colors
$\{1, \dots, k\}$,
which induces a partition of $V$
into $k$ classes: $V_1 \cup V_2 \cup \dots \cup V_k$.
Consider a family $\L = \{ L_{v} \}_{v \in V}$, such that
for every $v$,  $\cardin{L_v} = k^* > k \cdot \ln n$.
We wish to find a family $\L' = \{L'_{v}\}_{v \in V}$ with the
following properties:
\begin{enumerate}
\item For every $v \in V$, $L'_v \subseteq L_v$.
\item For every $v \in V$, $L'_v \neq \emptyset$.
\item For every $i \neq j$, if $v \in V_i$ and $u \in V_j$, then
      $L'_v \cap L'_u = \emptyset$.
\end{enumerate}
Obviously, if such a family $\L'$ exists, then there exists a
$\calC$-coloring from $\L'$: For each $v \in V$, pick a color $x \in
L'_v$ and assign it to $v$.

We create the family $\L'$ randomly as follows: For
each element in $\cup \L$, assign it uniformly at random to one
of the $k$ classes of the partition $V_1 \cup \dots \cup V_k$.
For every vertex $v \in V$, say with $v \in V_i$, we create
$L'_v$, by keeping only elements of $L_v$ that were assigned
through the above random process to $v$'s class, $V_i$.

The family $\L'$ obviously has properties~1 and~3.
We will prove that with positive probability it also has property~2.

For a fixed $v$, the probability that
$L'_v = \emptyset$
is at most
\[
\left(1 - \frac{1}{k}\right)^{k^*} \leq e^{-k^*/k} < e^{-\ln n} =
\frac{1}{n}
\]
and therefore,
using the union bound,
the probability that
for at least one
vertex $v$, $L'_v = \emptyset$,
is at most
\[n \left(1 - \frac{1}{k}\right)^{k^*} <  1.\]
Thus, there is at least one family $\L'$
where property~2 also holds,
as claimed.
\end{proof}

\begin{corollary}
For every hypergraph $H$,
$$\chcf(H) \leq \chicf(H) \cdot \ln n +  1.$$
\end{corollary}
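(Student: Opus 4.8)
The plan is to derive this corollary as an immediate specialization of Theorem~\ref{thm:general}. That theorem applies to any class $\calC$ of colorings having the refinement property, so the only real work is to confirm that the class of conflict-free colorings qualifies, and then to match the notation.

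First I would verify that the class of CF-colorings has the refinement property, a fact already asserted in the text preceding Theorem~\ref{thm:general}. Let $C$ be a conflict-free coloring of $H$ and let $C'$ be a refinement of $C$, so that $C(x) \neq C(y)$ implies $C'(x) \neq C'(y)$. To see that $C'$ is still conflict-free, fix any hyperedge $e \in \E$ and let $x \in e$ be a vertex that is uniquely colored under $C$, that is, $C(x) \neq C(y)$ for every $y \in e$ with $y \neq x$. By the refinement hypothesis we then have $C'(x) \neq C'(y)$ for all such $y$, so $x$ remains uniquely colored in $e$ under $C'$. Since $e$ was arbitrary, $C'$ is conflict-free, establishing the refinement property for this class.

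Next I would instantiate Theorem~\ref{thm:general} with $\calC$ equal to the class of conflict-free colorings. For this choice, the general chromatic number $\chiC(H)$ is exactly the CF-chromatic number $\chicf(H)$, and the general choice number $\chC(H)$ is exactly the CF-choice number $\chcf(H)$. Substituting these identifications into the bound $\chC(H) \leq \chiC(H) \cdot \ln n + 1$ yields precisely $\chcf(H) \leq \chicf(H) \cdot \ln n + 1$, as claimed.

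There is no substantive obstacle in this argument; the corollary is a routine application of the general theorem. The only point demanding any care is the bookkeeping of notation---namely confirming that, for the conflict-free class, the abstract symbols $\chiC$ and $\chC$ specialize to $\chicf$ and $\chcf$ respectively---together with the (equally routine) verification of the refinement property recalled above. Because the class of unique-maximum colorings does \emph{not} have the refinement property, I would also note that this same derivation does not yield an analogous bound for $\chum$, which is exactly why the corollary is stated only for the conflict-free case.
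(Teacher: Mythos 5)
Your proposal is correct and follows exactly the paper's intended route: the corollary is an immediate instantiation of Theorem~\ref{thm:general} with $\calC$ taken to be the class of conflict-free colorings, whose refinement property the paper asserts (and you verify explicitly) in the text preceding the theorem. Your added verification that a refinement of a CF-coloring is again a CF-coloring is the only non-notational step, and it is done correctly.
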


\begin{corollary}
For every hypergraph $H$,
$$\ch(H) \leq \chi(H) \cdot \ln n +  1.$$
\end{corollary}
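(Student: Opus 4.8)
The plan is to derive this directly from Theorem~\ref{thm:general} by instantiating the class $\calC$ to be the class of all proper (non-monochromatic) colorings. The only things that need checking are that this class satisfies the hypothesis of the theorem (the refinement property) and that, for this particular class, the class-specific parameters $\chiC$ and $\chC$ reduce to the familiar $\chi(H)$ and $\ch(H)$.

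First I would verify the refinement property for proper colorings. Suppose $C$ is a proper coloring of $H$ and $C'$ is a refinement of $C$, so that $C(x)\neq C(y)$ implies $C'(x)\neq C'(y)$. Let $e\in\E$ be any hyperedge with $\cardin{e}\geq 2$. Since $C$ is proper, there exist $x,y\in e$ with $C(x)\neq C(y)$; by the refinement property, $C'(x)\neq C'(y)$, so $e$ is non-monochromatic under $C'$ as well. As $e$ was arbitrary, $C'$ is proper, which is exactly the statement that the class of proper colorings has the refinement property (as already noted in the text).

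Second, I would observe that, by definition, the chromatic number $\chiC(H)$ associated with the class of proper colorings is precisely $\chi(H)$, and the corresponding choice number $\chC(H)$ is precisely $\ch(H)$. With these two identifications in place, the conclusion of Theorem~\ref{thm:general}, namely $\chC(H)\leq\chiC(H)\cdot\ln n+1$, specializes verbatim to $\ch(H)\leq\chi(H)\cdot\ln n+1$, as claimed.

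There is essentially no hard step here: the corollary is an immediate specialization of Theorem~\ref{thm:general}, and the only point requiring a one-line argument is the refinement property for proper colorings, which follows directly from the definition of properness. (Indeed, the preceding corollary for $\chcf$ is obtained in exactly the same way, by taking $\calC$ to be the class of conflict-free colorings instead.)
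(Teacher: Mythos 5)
Your proposal is correct and matches the paper's approach exactly: the paper states this corollary as an immediate consequence of Theorem~\ref{thm:general}, relying on the observation (made in the surrounding text) that the class of proper colorings has the refinement property, with $\chiC$ and $\chC$ specializing to $\chi$ and $\ch$. Your one-line verification of the refinement property for proper colorings is the only substantive step, and it is exactly right.
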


The argument in the proof of Theorem~\ref{thm:general}
is a generalization of an argument
first given in \cite{ert}, proving that any bipartite graph
with $n$ vertices is $O(\log n)$-choosable (see
also~\cite{Alon1992probchoice}).

\section{Non-Geometric Hypergraphs}

Pach and Tardos \cite{CFPT09} investigated the CF-chromatic number
of arbitrary hypergraphs and proved that the inequality:
$$\CF(H) \leq  1/2 +\sqrt{2m + 1/4}$$
holds for every hypergraph $H$ with $m$ edges, and that this bound
is tight. Cheilaris et al. \cite{cf-choice} strengthened this bound in two ways by proving that:
$$\chum(H) \leq  1/2 +\sqrt{2m + 1/4}$$

If, in addition, every hyperedge contains at least
$2t-1$ vertices (for $t \geq 3$) then Pach and Tardos showed that:
$$\CF(H) = O(m^{\frac{1}{t}} \log m)$$
Using the Lov{\' a}sz Local Lemma, they show that the same result
holds for hypergraphs, in which the size of every edge is at
least $2t- 1$ and every edge intersects at most $m$ other edges.

\paragraph{Hypergraphs induced by neighborhoods in graphs}
A particular interest arises when dealing with hypergraphs
induced by neighborhoods of vertices of a given graph. Given a
graph $G=(V,E)$ and a vertex $v \in V$, denote by $N_G(v)= N(v)$
the set of all neighbors of $v$ in $G$ together with $v$ and
refer to it as the {\em neighborhood of $v$}. Call the set
$\dot{N}(G)=N_G(v) \setminus \{v\}$ the {\em pointed neighborhood
of $v$}. The hypergraph $H$ associated with the neighborhoods of
$G$ has its vertex set $V(H) = V$ and its edge set $E(H)=
\{N_G(v) | v \in V\}$ and the hypergraph $\dot{H}$ associated
with the pointed neighborhoods of $G$ has $V(\dot{H})=V$ and
$E(\dot{H})= \{\dot{N}_G(v)| v \in V\}$. The {\em conflict-free
chromatic parameter $\kappa_{CF}(G)$} is defined simply as
$\CF(H)$ and the {\em pointed} version of this parameter
$\dot{\kappa}_{CF}(G)$ is defined analogously as $\CF(\dot{H})$.

We start with an example taken from \cite{CFPT09} in order to
provide some basic insights into the relation between these two
parameters. Let $K'_s$ be the graph obtained from the complete
graph $K_s$ on $s$ vertices by subdividing each edge with a new
vertex. Each pair of the $s$ original vertices form the pointed
neighborhood of one of the new vertices, so all original vertices
must receive different colors in any conflict-free coloring of the
corresponding hypergraph $\dot{H}$. Thus, we have
$\dot{\kappa}_{CF}(K'_s) \geq  s$ and it is easy to see that
equality holds here. On the other hand, $K'_s$ is bipartite and
any proper coloring of a graph is also a conflict-free coloring of
the hypergraph formed by the neighborhoods of its vertices. This
shows that $\kappa_{CF}(K'_s) = 2$, for any $s \geq 2$. The
example illustrates that the pointed conflict-free chromatic
parameter of a graph cannot be bounded from above by any function
of its non-pointed variant. For many other graphs, the non-pointed
parameter can be larger than the pointed parameter. For instance,
let $G$ denote the graph obtained from the complete graph $K_4$ by
subdividing a single edge with a vertex. It is easy to check that
$\kappa_{CF}(G) = 3$, while $\dot{\kappa}_{CF}(G) = 2$. However,
it is not difficult to verify that
$$
\kappa_{CF}(G) \leq 2\dot{\kappa}_{CF}(G)
$$
for any graph $G$. This inequality holds, because in a
conflict-free coloring of the pointed neighborhoods, each
neighborhood $N(x)$ also has a vertex whose color is not repeated
in $N(x)$, unless $x$ has degree one in the subgraph spanned by
one of the color classes. One can fix this by carefully splitting
each color class into two. The following theorems were proved in
\cite{CFPT09}:

\begin{theorem}[\cite{CFPT09}]
The conflict-free chromatic parameter of any graph $G$ with $n$
vertices satisfies $\kappa_{CF}(G) = O(\log^2 n)$. The
corresponding coloring can be found by a deterministic polynomial
time algorithm.
\end{theorem}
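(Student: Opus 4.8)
The plan is to bound $\CF(H)$, where $H$ is the hypergraph on $V$ whose hyperedges are the closed neighborhoods $N(v)$, $v\in V$; this hypergraph has exactly $m=n$ hyperedges. The starting observation is that \emph{any} proper coloring of $G$ is already a conflict-free coloring of $H$: if $c$ is proper, then $c(v)\neq c(u)$ for every neighbor $u$ of $v$, so $v$ is the unique vertex of its own color inside $N(v)$, and hence each vertex can ``serve'' its own closed neighborhood. This gives $\kappa_{CF}(G)\le \chi(G)$, which is useful only when degrees are small, and motivates splitting $V$ by degree. Fix a threshold $\tau=\Theta(\log n)$, let $L$ be the vertices of degree $<\tau$ and $U$ the vertices of degree $\ge\tau$, and color $L$ and $U$ with two \emph{disjoint} palettes $P_L,P_U$. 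Because the palettes are disjoint, a vertex whose neighborhood is served within one palette stays served in the combined coloring, so it suffices to handle the two groups separately.

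For the low-degree vertices I would use the center-serving idea. The induced subgraph $G[L]$ has maximum degree $<\tau=O(\log n)$, hence is greedily properly colorable with $O(\log n)$ colors drawn from $P_L$. This already serves every $v\in L$: the neighbors of $v$ lying in $L$ receive colors different from $c(v)$ by properness of the coloring of $G[L]$, while the neighbors of $v$ lying in $U$ are colored from the disjoint palette $P_U$ and therefore also differ from $c(v)$. Thus $c(v)$ occurs exactly once in $N(v)$, and $v$ serves itself. This disposes of all low-degree vertices with $O(\log n)$ colors.

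The crux is the high-degree vertices, whose closed neighborhoods are \emph{large} (size $\ge\tau$) and for which the ``serve yourself by a proper coloring'' idea is too expensive. Here a naive random coloring fails: once an edge is much larger than the number of colors, the pigeonhole principle forces every color to repeat and no uniquely colored vertex survives. Instead I would invoke the large-edge conflict-free bound stated earlier for abstract hypergraphs. Consider the sub-hypergraph $H_{\ge}$ of $H$ consisting of the closed neighborhoods of the high-degree vertices: it has at most $n$ hyperedges, each of size at least $2t-1$, where $t=\Theta(\log n)$ is chosen so that $2t-1\le\tau$, and each hyperedge meets at most $n$ others (there being only $n$ hyperedges in all). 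Feeding these parameters into the Lov\'asz-Local-Lemma bound $\CF=O(m^{1/t}\log m)$ for hypergraphs whose edges have size $\ge 2t-1$, with $m=n$, and balancing the size threshold against the resulting color count, yields a conflict-free coloring of $H_{\ge}$ using $O(\log^2 n)$ colors from $P_U$. Combining this with the low-degree coloring gives $\kappa_{CF}(G)=O(\log^2 n)$.

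I expect the high-degree analysis to be the main obstacle, both because the Local Lemma step requires a careful estimate of the probability that a large edge contains no uniquely colored vertex and of the dependency among the bad events, and because the whole argument must be turned into a \emph{deterministic} polynomial-time procedure, as asserted in the theorem. For the latter I would replace the Local Lemma by its algorithmic Moser--Tardos version or by an explicit derandomization; since the low-degree step is already a deterministic greedy coloring, the two pieces then assemble into a single polynomial-time algorithm producing the claimed $O(\log^2 n)$-color conflict-free coloring.
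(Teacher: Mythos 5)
First, a point of reference: the survey states this theorem only as a citation to \cite{CFPT09} and gives no proof of it, so there is no in-paper argument to compare against; what follows is an assessment of your proposal on its own terms.

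Your low-degree half is fine, but the high-degree half has a genuine gap in how the two palettes interact. A conflict-free coloring of your sub-hypergraph $H_{\ge}$ must assign colors to \emph{all} vertices lying in its hyperedges, and the hyperedges $N(u)$, $u\in U$, contain vertices of $L$ as well; those vertices already carry colors from $P_L$, so they cannot also receive colors from $P_U$. If instead you color only the vertices of $U$ from $P_U$ --- which is what the disjoint-palette scheme forces --- then the hypergraph you are actually CF-coloring is the trace $\bigl(U,\{N(u)\cap U\}_{u\in U}\bigr)$, and this is exactly the closed-neighborhood hypergraph of the induced subgraph $G[U]$, since $N(u)\cap U$ is the closed neighborhood of $u$ in $G[U]$. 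Its hyperedges need not have size at least $2t-1$: a vertex of huge degree in $G$ may have nearly all of its neighbors in $L$, so $N(u)\cap U$ can be anything from a singleton upward. Hence the hypothesis of the Lov\'asz-Local-Lemma theorem of Pach and Tardos fails, and what remains is the original problem ($\kappa_{CF}$ of a graph on up to $n$ vertices) with no measure of progress --- the decomposition recurses without gaining anything. A telltale sign that something is off: if your additive combination were valid, then with $t=\Theta(\log n)$ the quoted bound $O(m^{1/t}\log m)$ evaluates to $O(\log n)$ (not $O(\log^2 n)$, as you wrote), so your total would be $O(\log n)+O(\log n)=O(\log n)$, which would resolve the open problem posed immediately after this theorem in the survey; the known lower bound is only $\Omega(\log n)$.

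The repair is to combine the two colorings \emph{multiplicatively} rather than with disjoint palettes. Let $\chi_1$ be a Local-Lemma (or Moser--Tardos) CF-coloring of the hypergraph on the \emph{full} vertex set $V$ whose hyperedges are $\{N(u):u\in U\}$; all these hyperedges have size at least $\tau+1$, so with $\tau=\Theta(\log n)$ this uses $O(\log n)$ colors. Let $\chi_2$ properly color $G[L]$ greedily with $O(\log n)$ colors and give every vertex of $U$ one further fixed color; then each $v\in L$ is uniquely $\chi_2$-colored inside $N(v)$, exactly by your self-serving argument. Now assign to each vertex the pair $\bigl(\chi_1(v),\chi_2(v)\bigr)$. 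Any hyperedge containing a vertex whose $\chi_1$-color (respectively $\chi_2$-color) is unique in it retains a uniquely colored vertex under the pair coloring --- this is the refinement property of conflict-free colorings that the survey records in the list-coloring section --- so the product coloring serves every closed neighborhood, using $O(\log n)\cdot O(\log n)=O(\log^2 n)$ colors, and it is computable in deterministic polynomial time once the LLL step is made constructive, as you suggest. Note that this multiplicative loss is precisely where the exponent $2$ in the theorem comes from; an additive combination of the two stages, as in your write-up, cannot be made to work.
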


\begin{theorem}[\cite{CFPT09}]
There exist graphs of n vertices with conflict-free chromatic
parameter $\Omega(\log n)$.
\end{theorem}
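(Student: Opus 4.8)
The natural plan is to transfer the $\Omega(\log n)$ lower bound we already established for the discrete intervals hypergraph $H_n$ (where $\CF(H_n)=\lfloor\log n\rfloor+1$) to the setting of neighborhood hypergraphs. Recall that the engine of that bound is a halving recursion: in any conflict-free coloring one exhibits a nested sequence of hyperedges $E_0 \supsetneq E_1 \supsetneq \cdots \supsetneq E_{\lfloor\log n\rfloor}$, where $E_{j+1}$ is the half of $E_j$ chosen to avoid the unique vertex $x_j$ of $E_j$; since $x_j \in E_j \setminus E_{j+1}$, the witnesses $x_0,x_1,\dots$ lie in pairwise disjoint ``shells'' and receive pairwise distinct colors, forcing $\Omega(\log n)$ colors. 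This argument needs only that both dyadic halves of every interval in the chain are themselves hyperedges, so that one may always descend into the half missing the current witness, and that the hyperedges nest as sets. So it would suffice to build a graph $G$ on $\Theta(n)$ vertices whose closed-neighborhood hypergraph contains a faithful copy of the dyadic interval hypergraph: a laminar family of closed neighborhoods $\{E_I\}$ indexed by dyadic intervals $I$, with $E_I \subseteq E_J$ whenever $I \subseteq J$ and $E_I \cap E_J = \emptyset$ whenever $I \cap J = \emptyset$.

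The hard part, and the reason this bound is delicate (only $\Omega(\log n)$ is known against an $O(\log^2 n)$ upper bound), is that closed neighborhoods resist such a faithful realization. Two obstructions must be confronted. First, \emph{every} closed neighborhood $N[v]$ contains its own center $v$, and if $v$ is given a color used nowhere else then $v$ single-handedly witnesses $N[v]$; in fact a proper coloring of $G$ already makes each center uniquely colored in its neighborhood, so $\kappa_{CF}(G)\le\chi(G)$ and naive \emph{apex} realizations (attach a vertex $c_I$ adjacent to exactly the points of $I$) collapse to $O(1)$ colors. Second, and more fundamentally, nesting to a common ancestor forces \emph{shared} witnesses: if $I\subseteq J$ then $E_I\subseteq E_J$ demands $c_I\in N[c_J]$, i.e.\ $c_I\sim c_J$, and by symmetry $c_J\in N[c_I]=E_I$ as well, so the parent center $c_J$ lies in \emph{both} children's neighborhoods. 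Pushed to the root this makes the top center adjacent to everything, a universal vertex, which again yields $\kappa_{CF}=2$. Thus a single spanning neighborhood, and indeed an exact laminar realization via one center per interval, is impossible; the construction cannot rely on one global hyperedge and must instead generate the $\log n$ distinct colors from many overlapping medium-scale neighborhoods.

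The way forward is therefore a robust, recursive (doubling) construction rather than a direct realization. One builds $G_k$ from two disjoint copies of $G_{k-1}$, joined not by a single apex but by a \emph{spread-out} linking gadget (for instance a sparse, near-independent connector) engineered so that (i) some family of neighborhoods straddling the two copies forces a color not usable inside either copy, yet (ii) no vertex of the gadget becomes dominating, so the center escape is blocked at every scale. Maintaining by induction the invariant that at each of the $\log n$ scales there is a hyperedge whose unique vertex must receive a fresh color, disjoint in the shell sense from the colors forced at coarser scales, then yields $\kappa_{CF}(G_k)=\Omega(k)=\Omega(\log n)$ by the same shell-counting as above. The main obstacle is precisely calibrating the connector to reconcile the two competing demands (i) and (ii): each new scale must cost a genuinely new color, while every candidate witness vertex must itself be pinned down by a coarser neighborhood so that it cannot be colored uniquely and reused across scales. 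It is exactly this tension — forcing a new color per scale without ever creating a cheap universal witness — that limits the construction to $\Omega(\log n)$ and leaves the gap to the $O(\log^2 n)$ upper bound open.
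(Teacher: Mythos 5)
The first thing to note is that the survey contains no proof of this statement: it is quoted from \cite{CFPT09}, so your proposal has to stand entirely on its own. Its first two paragraphs do stand: your distillation of the interval lower bound into a nested-chain (``shell'') argument is accurate, and the two obstructions you isolate are real --- a proper coloring of $G$ serves every closed neighborhood through its center, so $\kappa_{CF}(G)\le\chi(G)$, and an exact laminar realization of the dyadic intervals by closed neighborhoods forces a (near-)universal vertex, which collapses $\kappa_{CF}$ to $2$. The genuine gap is that the third paragraph, where the theorem would actually have to be proved, contains no mathematics: the ``spread-out linking gadget'' is never defined, no concrete graph family is exhibited, the properties (i) and (ii) the gadget must satisfy are never verified, and you yourself concede that ``precisely calibrating the connector'' to reconcile (i) and (ii) is the main obstacle. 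That concession is precisely the theorem; everything before it is (correct) context, and nothing after it closes the loop.

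Beyond being incomplete, the blueprint is internally inconsistent and its induction step is invalid, so it is unlikely it can be completed as stated. You propose to conclude ``by the same shell-counting as above,'' but shell-counting needs a nested chain of hyperedges $E_0\supsetneq E_1\supsetneq\cdots$ so that later witnesses lie inside earlier hyperedges and are forced to take new colors; your own second paragraph shows that such chains of closed neighborhoods force universal vertices and hence cannot exist in the graphs you want to build. So you have ruled out the very mechanism you invoke, and no substitute mechanism for forcing a fresh color at each scale is supplied. Separately, the doubling recursion has no usable induction hypothesis: a CF-coloring of the neighborhood hypergraph of $G_k$ does \emph{not} restrict to a CF-coloring of the neighborhood hypergraph of a copy of $G_{k-1}$, because vertices adjacent to the connector acquire strictly larger neighborhoods in $G_k$ and their uniqueness witnesses may lie in the connector or in the other copy; hence $\kappa_{CF}(G_{k-1})\ge k-1$ says nothing about how few colors suffice inside $G_k$, and nothing in your argument prevents the two copies from reusing the same palette. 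For comparison, the actual proof in \cite{CFPT09} takes a different route altogether: it is probabilistic, establishing the $\Omega(\log n)$ bound for random graphs, rather than via an explicit recursive gadget construction --- which is exactly how it escapes the tension between forcing new colors and avoiding cheap witnesses that your sketch identifies but does not resolve.
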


\begin{problem}
Close the gap between the last two bounds.
\end{problem}

For graphs with maximum degree $\Delta$, a slightly better
upper-bound is known:
\begin{theorem} [\cite{CFPT09}]
The conflict-free chromatic parameter of any graph $G$ with
maximum degree $\Delta$ satisfies $\kappa_{CF}(G) =
O(\log^{2+\epsilon} \Delta)$ for any $\epsilon > 0$. The
corresponding coloring can be found by a deterministic polynomial
time algorithm.
\end{theorem}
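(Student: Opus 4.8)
The plan is to give a randomized construction that is \emph{localized} by the Lov\'asz Local Lemma, so that the number of colours is governed by $\Delta$ rather than by $n$, and then to make it deterministic via the constructive version of the Local Lemma. First I would pass to pointed neighborhoods: since $\kappa_{CF}(G)\le 2\dot{\kappa}_{CF}(G)$ (as noted above), it suffices to conflict-free color the hypergraph $\dot{H}$ whose edges are the pointed neighborhoods $\dot{N}(v)$, the factor $2$ being harmless asymptotically. Next I would bucket the vertices by degree into the $O(\log\Delta)$ dyadic scales $V_i=\{v:2^i\le d(v)<2^{i+1}\}$, give each scale its own private palette, and treat the scales essentially independently. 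The point of the private palettes is that colours serving one scale can never destroy the uniqueness achieved at another, so it is enough to guarantee that for every $i$ and every $v\in V_i$ the single edge $\dot{N}(v)$ is served using scale-$i$ colours.

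For a fixed scale $s=2^i$ I would run $r$ rounds; in each round every vertex is independently made \emph{active} with probability $p=\Theta(1/s)$, each active vertex receives a colour from a constant-size palette, and a vertex is frozen at the first round in which it becomes active. For $v\in V_i$ the expected number of active vertices in $\dot{N}(v)$ is $\Theta(1)$, so in a single round some colour class inside $\dot{N}(v)$ is a singleton with probability at least a constant $\gamma$; hence the bad event $A_v$ that $v$ is never served after $r$ rounds has probability at most $(1-\gamma)^{r}$ (the smallest scales, $s=O(\log\Delta)$, I would dispose of directly, since their neighborhoods are tiny). The crucial observation is locality: $A_v$ depends only on the choices of the vertices of $\dot{N}(v)$, so $A_v$ and $A_w$ are independent unless $v$ and $w$ lie within distance two, giving a dependency degree at most $\Delta^2$. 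Taking $r=\Theta(\log\Delta)$ makes $e\,(1-\gamma)^{r}(\Delta^2+1)<1$, so the Local Lemma \cite{ALON00} furnishes a colouring serving \emph{all} of $V_i$ with only $O(\log\Delta)$ colours, and summing over the $O(\log\Delta)$ scales yields $O(\log^2\Delta)$ colours overall.

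To obtain the promised deterministic polynomial-time algorithm I would replace the existential Local Lemma by its constructive form \cite{MT09}; because the constructive/derandomised version is applied under a slightly stronger condition (a polynomial slack in the dependency degree), one over-provisions the rounds by a $\log^{\epsilon}\Delta$ factor, and this is exactly where the extra $\epsilon$ in $O(\log^{2+\epsilon}\Delta)$ enters. The step I expect to be the main obstacle, and on which I would spend the most care, is the bookkeeping that fuses the scales and rounds into one well-defined colouring: one must check that freezing each vertex at its first active round keeps every event $A_v$ dependent only on the \emph{local} randomness of $\dot{N}(v)$ — so that the dependency stays $\Delta^{O(1)}$ and not $n^{O(1)}$ — while simultaneously ensuring that once a neighborhood acquires a singleton colour that colour is never duplicated inside it by a later round or by another scale. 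Controlling the mild conditioning introduced by freezing is routine once this locality is pinned down, since total activity $rp=O(\log\Delta/s)$ is small away from the boundary scales, so freezing almost never interferes with the singleton analysis.
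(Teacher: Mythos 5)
Your plan fails at its very first step, and the counterexample is in this survey's own text, two paragraphs above the theorem. You reduce the problem to CF-coloring the pointed-neighborhood hypergraph $\dot{H}$, arguing via $\kappa_{CF}(G)\leq 2\dot{\kappa}_{CF}(G)$ that it "suffices" to serve every pointed neighborhood $\dot{N}(v)$ with $O(\log^{2+\epsilon}\Delta)$ colors. That inequality is true but useless here, because the statement you then set out to prove is false: for the subdivided complete graph $K'_s$, every pair of original vertices is the pointed neighborhood of some subdivision vertex, so $\dot{\kappa}_{CF}(K'_s)\geq s$, while $\Delta(K'_s)=s-1$; hence $\dot{\kappa}_{CF}$ admits no bound that is polylogarithmic in $\Delta$ (indeed no bound in terms of $\kappa_{CF}$ at all, as the survey notes). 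No amount of Local Lemma machinery can establish what you are attempting. The theorem is specifically about \emph{closed} neighborhoods, and this is essential: $v\in N(v)$ is exactly what makes low-degree vertices harmless (any proper coloring of $G$ is already conflict-free for closed neighborhoods, which is why $\kappa_{CF}(K'_s)=2$). Tellingly, the impossibility surfaces precisely at the step you defer: the "tiny" scales $s=O(\log\Delta)$ cannot be "disposed of directly" in the pointed setting --- in $K'_s$ all the hard hyperedges are size-two pointed neighborhoods of degree-two vertices, and serving them forces $\Delta+1$ colors. For closed neighborhoods the small scales really are easy (properly color the subgraph induced by low-degree vertices with a private palette of size $O(\log\Delta)$; each such vertex is then unique in its own closed neighborhood), but that mechanism uses the center vertex and evaporates the moment you pass to $\dot{H}$.

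There are also genuine problems inside the probabilistic core, independent of the reduction. With a constant-size palette \emph{shared} across the $r=\Theta(\log\Delta)$ rounds of a scale, your bound $\Pr[A_v]\leq(1-\gamma)^{r}$ is false: being served is not a monotone union of per-round successes, since a singleton created in round $t$ is destroyed when a later round gives the same color to another vertex of the neighborhood; over $r$ rounds the expected number of scale-$i$-colored vertices in the neighborhood is $\Theta(\log\Delta)$ spread over $O(1)$ colors, so with high probability \emph{no} color of that scale ends up appearing exactly once. Your own count of "$O(\log\Delta)$ colours per scale" indicates you really intend a fresh color (or fresh constant palette) per round --- that does repair this step, because a color used only in round $t$ stays a singleton forever --- but then you are analyzing a different algorithm from the one you described, and you still owe a concrete rule for fusing the scales (a vertex activated by several scales' processes would otherwise receive several colors; e.g., one must declare that the largest activating scale wins and fold the resulting failure terms into the events $A_v$) before $A_v$ is even well defined. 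Finally, note that this survey states the theorem without proof, quoting \cite{CFPT09}, so there is no in-paper argument to compare against; the Pach--Tardos proof necessarily works with closed neighborhoods directly. If you rebuild your scheme around closed neighborhoods --- small scales by the private proper coloring above, large scales by per-round colors, an explicit fusion rule, and the Local Lemma with dependency $\Delta^{2}$, derandomized via \cite{MT09} --- the skeleton of your argument is reasonable; as written, however, it proves nothing.
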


\paragraph{Hypergraphs induced by simple paths in graphs}
As mentioned in the introduction, a particular interest is in hypergraphs induced by simple paths in a given graph:
Recall the that given a graph $G$, we consider the hypergraph
$H=(V,E')$ where a subset $V' \subset V$ is a hyperedge in $E'$ if
and only if $V'$ is the set of vertices in some simple path of
$G$. As mentioned before, the parameter $\um(H)$ is known as the vertex ranking number of $G$ and was studied in other context in the literature (see, e.g., \cite{orderedcoloring,Schaffer89}). An interesting question arises when trying to understand the relation between the two parameters $\CF(H)$ and $\um(H)$. This line of research was pursued
in \cite{CKP10} and \cite{ChT2010ciac}. Cheilaris and T{\' o}th proved the following:

\begin{theorem}[\cite{ChT2010ciac}]
(i) Let $G$ be a simple graph and let $H$ be the hypergraph induced by paths in $G$ as above:
Then $\um(H) \leq 2^{\CF(H)} -1$.

(ii) There is is a sequence of such  hypergraphs $\{H_i\}_{i=1}^{\infty}$ induced by paths such that
$$\lim_{n \rightarrow \infty} \frac{\um(H_n)}{\CF(H_n)} = 2.$$
\end{theorem}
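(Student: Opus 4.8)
The plan for part (i) is to route everything through the longest path of $G$. Write $\ell(G)$ for the maximum number of vertices on a simple path of $G$. The first observation is that the hypergraph induced by the paths of the path graph $P_m$ is exactly the discrete intervals hypergraph on $m$ points, so by the Proposition of the Introduction $\CF(P_m)=\lfloor\log m\rfloor+1$. Next I would prove the monotonicity step: if $G$ contains a simple path $Q$ on $m$ vertices, then $\CF(H)\ge \lfloor\log m\rfloor+1$. Indeed, every set of consecutive vertices of $Q$ spans a sub-path of $Q$, hence is a hyperedge of $H$, and in any conflict-free coloring of $H$ the uniquely-colored witness of such a hyperedge lies on $Q$; thus the restriction of a CF-coloring of $H$ to $V(Q)$ is a CF-coloring of the interval hypergraph on $Q$, forcing at least $\lfloor\log m\rfloor+1$ colors. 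Consequently, if $\CF(H)=c$ then $G$ has no simple path on $2^{c}$ vertices, i.e. $\ell(G)\le 2^{c}-1$.

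The second half of part (i) bounds $\um(H)$ from above by $\ell(G)$, using the description (recalled in the Introduction) of $\um(H)$ as the vertex-ranking number of $G$. Take a DFS forest $F$ of $G$: an undirected DFS has no cross edges, so every edge of $G$ joins an $F$-ancestor to an $F$-descendant, and $F$ is an elimination forest whose height is at most the length of a longest root-to-leaf branch, which is a simple path of $G$ and so has at most $\ell(G)$ vertices. Setting $\mathrm{rank}(v):=\mathrm{height}(F)+1-\mathrm{depth}_{F}(v)$ then yields a UM-coloring: along any path $P$ the vertex of minimum $F$-depth is unique, since two minimum-depth vertices of $P$ would force a common ancestor of strictly smaller depth to lie on $P$ as well, contradicting minimality, and that unique vertex carries the unique maximum rank. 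Hence $\um(H)\le \mathrm{height}(F)\le \ell(G)\le 2^{\CF(H)}-1$, proving (i). The one genuinely clever point is to isolate $\ell(G)$ as the common hinge: it is simultaneously an upper bound for the vertex-ranking number (via DFS) and, through the intervals Proposition, a lower bound in terms of $\CF(H)$.

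For part (ii) the plan is to exhibit a recursively built family $\{G_k\}$, with $H_k$ its path hypergraph, for which $\CF(H_k)=k+O(1)$ while $\um(H_k)=2k+O(1)$, so that the ratio tends to $2$. The design principle is to separate the two parameters by exploiting the structural difference between them: a UM-coloring must make the \emph{maximum} along every path unique, which mimics the vertex-ranking recursion ``one fresh top rank per separator level'', whereas a CF-coloring needs only \emph{some} color to be unique, so a single color may be reused across two consecutive separator levels and across sibling branches. I would form $G_k$ from two (or more) copies of $G_{k-1}$ joined through a small connector gadget chosen so that: (a) deleting the connector leaves pieces of vertex-ranking number $\um(H_{k-1})$, yielding $\um(H_k)\ge \um(H_{k-1})+2$; and (b) an explicit conflict-free coloring re-uses the palette of $G_{k-1}$ in \emph{both} copies and spends only one new color on the connector, yielding $\CF(H_k)\le \CF(H_{k-1})+1$. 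A matching lower bound $\CF(H_k)\ge \CF(H_{k-1})+1$ would follow from monotonicity of $\CF$ (as in the restriction argument of part (i)) together with the connector forcing one extra color. Iterating gives $\um(H_k)/\CF(H_k)\to 2$, and part (i) certifies consistency, since $\ell(G_k)$ must then lie between $\um(H_k)$ and $2^{\CF(H_k)}-1$.

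The main obstacle lies entirely in part (ii): verifying the conflict-free upper bound. Proving that one extra color per two extra ranking levels really suffices requires checking that the re-used color resolves \emph{every} path, in particular every path that crosses the connector and enters both copies of $G_{k-1}$, where the two re-used palettes can otherwise balance out and destroy uniqueness. This is exactly the failure mode that rules out naive symmetric colorings (for instance of complete binary trees, whose $\CF$ and $\um$ in fact stay comparable, so they do \emph{not} give ratio $2$). Engineering the connector and the coloring so that some color is guaranteed to appear a unique number of times on every crossing path, while simultaneously pushing the vertex-ranking lower bound up by a full $2$ at each level, is the delicate heart of the argument; the intervals Proposition and the DFS bound from part (i) are the quantitative tools I would use to keep both sides under control.
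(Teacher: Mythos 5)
Your proof of part (i) is correct and complete, and it follows the route one would expect: both inequalities are hung on the length $\ell(G)$ of a longest path. Restricting any CF-coloring of $H$ to the vertex set of a longest path $Q$ does give a CF-coloring of the discrete interval hypergraph on $\ell(G)$ vertices (every interval of $Q$ is a hyperedge of $H$, and its uniquely colored witness lies in $Q$), so $\CF(H) \geq \lfloor \log \ell(G) \rfloor + 1$, i.e.\ $\ell(G) \leq 2^{\CF(H)}-1$; and the DFS argument for $\um(H) \leq \ell(G)$ is sound, because an undirected DFS forest has only ancestor--descendant edges, which makes the minimum-depth vertex of any path unique and an ancestor of all other vertices of that path. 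The survey states the theorem without proof, citing \cite{ChT2010ciac}, but this longest-path/vertex-ranking argument is the standard one and I see no gap in your version of it.

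Part (ii) is where the genuine gap lies: you have not proved it, and you say so yourself. What you give is a design template --- two copies of $G_{k-1}$ joined by an unspecified ``connector gadget'' so that $\um$ grows by $2$ per level while $\CF$ grows by $1$ --- but no component of the template is carried out. You never exhibit the gadget; you never prove the ranking lower bound $\um(H_k) \geq \um(H_{k-1})+2$ (this is not automatic: ranking lower bounds require that the deletion of \emph{every} single vertex, not just of the connector, leaves a component of large ranking number); and you explicitly concede that the CF upper bound --- reusing one palette in both copies and across consecutive levels, in particular on paths that cross the connector and enter both copies --- is ``the delicate heart of the argument'' which you cannot verify. Since part (ii) is exactly the assertion that such a family exists, a plan whose crucial step is acknowledged to be missing establishes nothing.

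Compounding this, your parenthetical dismissal of complete binary trees (that their $\CF$ and $\um$ ``stay comparable, so they do not give ratio $2$'') is an unproven assertion, and it points in the wrong direction. For the complete binary tree $T_h$ one does have $\um = h+1$ exactly: deleting any single vertex leaves an intact copy of $T_{h-1}$ in some component, so the ranking number increases by one per level. But nothing you prove bounds $\CF(T_h)$ from below by anything larger than roughly $\log h$ (that is all the longest-path argument, or your own part (i), yields), and the failure of one naive symmetric two-level coloring does not show that $\CF(T_h)$ is anywhere near $h$. Recursively built trees of exactly this kind, colored with a staggered rather than symmetric reuse of one color over two consecutive levels, are precisely how the ratio-$2$ separation of \cite{ChT2010ciac} is obtained; so the family you rule out at the end of your proposal is the one you should have been analyzing, and the obstacle you describe is the thing the actual proof overcomes rather than evidence that the approach fails.
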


Narrowing the gaps between the two parameters for such hypergraphs is an interesting open problem:
\begin{problem}
Let $f(k)$ denote the function of the least integer such that for every hypergraph $H$ induced by path in a graph $G$
we have that $\um(H) \leq f(\CF(H))$. Find the asymptotic behavior of $f$.
\end{problem}

\section{Algorithms}
Until now we were mainly concerned with the combinatorial problem
of obtaining  bounds on the CF-chromatic number of various
hypergraphs. We now turn our attention to the computational
aspect of the corresponding optimization problem. Even et al.
\cite{ELRS} proved that given a finite set $D$ of discs in the
plane, it is NP-hard to compute an optimal CF-coloring for $H(D)$;
namely, a CF-coloring of $H(D)$ using a minimum number of colors.
This hardness result holds even if all discs have the same radius.
However, as mentioned in the introduction, any set $D$ of $n$
discs admits a CF-coloring that uses $O(\log n)$ colors and such
a coloring can be found in deterministic polynomial time (in fact
in $O(n \log n)$ time). This trivially implies that such an
algorithm serves as an $O(\log n)$ approximation algorithm for
the corresponding optimization problem.

\subsection{Approximation Algorithms}
Given a finite set $D$ of discs in the plane, the \emph{size
ratio} of $D$ denoted by $\rho = \rho(D)$ is the ratio between the
maximum and minimum radii of discs in $D$. For simplicity, we may assume that the smallest radius is $1$.
For each $i \geq 1$, let $D^i$ denote the subset of
discs in $D$ whose radius is in the range $[2^{i-1}, 2^i)$.
Let $\phi_{2^i}(D^i)$ denote the maximum
number of centers of discs in $D^i$ that are contained in a $2^i \times 2^i$ square.
Refer to $\phi_{2^i}(D^i)$ as the \emph{local density} of $D^i$ (with respect to $2^i \times 2^i$ square).
For a set of points $X$ in $\Re^2$ let $D_r(X)$ denote the set of
$\cardin{X}$ discs with radius $r$ centered at the points of $X$.
The following algorithmic results were provided in \cite{ELRS}.
\begin{theorem}[\cite{ELRS}] \label{thm:ub}
\begin{enumerate}
\item Given a  finite set $D$
  of discs with size-ratio $\rho$, there exists a polynomial-time algorithm
that compute a CF-coloring of $D$ using $O\left(\min\{(\log
\rho) \cdot \max_{i}\{\log \phi_{2^i}(D^i)\},\log|D|\}\right)$ colors.

\item Given a finite set of centers $X \subset \mathbb{R}^2$,
 there exists a polynomial-time algorithm
that computes a UM-coloring $\chi$ of the hypergraph induced $X$ with respect to all discs using $O(\log|X|)$ colors.
This is equivalent to the following: If we color $D_r(X)$ by assigning each disc $d \in
D_r(X)$ the color of its center then this is a valid UM-coloring
of the hypergraph $H(D_r(X))$ for every radius $r$.
\end{enumerate}
\end{theorem}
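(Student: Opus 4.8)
The plan is to treat part~(2) first, since it underlies part~(1), and then bootstrap the size-ratio bound from it. For part~(2), I would first argue that the hypergraph $H_{\D}(X)$ of points with respect to all discs is hereditarily $4$-colorable: for any $X' \subseteq X$, every disc containing at least two points of $X'$ contains an edge of the Delaunay graph $G(H_{\D}(X'))$, which is planar and hence $4$-colorable, so $\chi(H_{\D}(X')) \le 4$. Feeding this hereditary bound into Theorem~\ref{th:reduce-1} with $k=4$ yields a UM-coloring of $H_{\D}(X)$ with $\log_{4/3}|X| = O(\log|X|)$ colors, computed in polynomial time by Algorithm~\ref{CF-framework} using the planar four-coloring as the auxiliary step. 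The ``equivalence'' claim is then a duality observation: for a fixed radius $r$, a disc $d_x \in D_r(X)$ centered at $x$ contains a point $p$ iff $x$ lies in the radius-$r$ disc $B_r(p)$ centered at $p$. Hence, under the identification $d_x \leftrightarrow x$, the hyperedges of $H(D_r(X))$ are exactly the sets $X \cap B_r(p)$, i.e.\ the hyperedges of the sub-hypergraph of $H_{\D}(X)$ induced by the radius-$r$ discs. A UM-coloring of $H_{\D}(X)$ therefore restricts to a valid UM-coloring of $H(D_r(X))$ simultaneously for every $r$.

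For part~(1), I would decompose $D$ by radius. Normalizing the smallest radius to $1$, the classes $D^i$ (radii in $[2^{i-1},2^i)$) number $O(\log\rho)$. Within a single class $D^i$ I would impose a grid of cells of side $2^i$ and classify each cell by its coordinates modulo $3$, giving $9$ cell-classes, and assign each disc to the cell containing its center. The key geometric fact is that two discs of $D^i$ whose centers lie in distinct cells of the same cell-class cannot cover a common point: any two discs of $D^i$ covering a common point have centers within distance $2\cdot 2^i$ (each center is within $2^i$ of the point), whereas centers in distinct cells of the same cell-class are at distance at least $3\cdot 2^i$. Consequently, the discs of $D^i$ lying in any one cell-class interact only within a single cell, so I can CF-color the at most $\phi_{2^i}(D^i)$ discs of each cell independently and reuse the same palette across all cells of that class. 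Since each cell carries at most $\phi_{2^i}(D^i)$ discs, Theorem~\ref{cf-discs} colors it with $O(\log \phi_{2^i}(D^i))$ colors, and with $9$ disjoint palettes the whole class $D^i$ uses $O(\log \phi_{2^i}(D^i))$ colors.

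Using disjoint palettes across the $O(\log\rho)$ radius-classes as well gives a total of $O(\log\rho \cdot \max_i \log\phi_{2^i}(D^i))$ colors; taking whichever is smaller between this coloring and the global $O(\log|D|)$ coloring of Theorem~\ref{cf-discs} produces the claimed $\min$. The delicate step, which I would write out most carefully, is the verification that the pieced-together coloring is conflict-free across all three levels of palette disjointness (radius-class, cell-class, and within-cell). Fix a point $p$ covered by some disc, and choose a radius-class $D^i$ and a cell-class meeting $p$. All discs of that radius- and cell-class covering $p$ lie in one cell $C$, by the separation fact, so the within-$C$ coloring supplies a disc $d^*$ whose color is unique among the $C$-discs covering $p$; disjointness of palettes across the other radius- and cell-classes then forces $d^*$ to be uniquely colored among all discs of $D$ covering $p$.

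The main obstacle is getting the modulus and side-length bookkeeping right so that same-class cells are provably non-interacting for the entire radius range $[2^{i-1},2^i)$, which is exactly what pins down the constant $3$ above and guarantees that each ``active'' cell-class contributes a single cell per covered point. Once this separation is nailed down, the conflict-free verification is forced, and every step is polynomial-time, since the whole procedure only partitions $D$ geometrically and invokes Algorithm~\ref{CF-framework} on subfamilies of $D$ of size at most $\phi_{2^i}(D^i)$.
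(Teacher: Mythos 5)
A preliminary remark: the survey states Theorem~\ref{thm:ub} as an imported result of \cite{ELRS} and contains no proof of it, so there is no in-paper argument to compare yours against; I can only judge the proposal on its own terms. On those terms it is essentially correct, and it is in fact built exactly from the ingredients the survey makes available: part~(2) follows from the point--disc duality ($p \in d_x$ iff $x \in B_r(p)$) together with the hereditary $4$-colorability of points with respect to discs (planarity of the Delaunay graph plus Theorem~\ref{th:reduce-1}, computed by Algorithm~\ref{CF-framework}); part~(1) follows from a two-level decomposition into $O(\log\rho)$ radius classes and a constant number of shifted grid classes, coloring each cell by Theorem~\ref{cf-discs} with palettes disjoint across classes. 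Your conflict-free verification (all discs of a fixed radius class and cell class covering $p$ lie in a single cell $C$; CF-ness inside $C$ produces the witness $d^*$; palette disjointness eliminates every other class) is the right argument, and taking the better of the two colorings legitimately yields the stated minimum. In part~(2) it would be worth saying explicitly that a UM-coloring remains UM when one passes to a sub-collection of hyperedges; that is the precise reason the single coloring of $H_{\D}(X)$ works simultaneously for $H(D_r(X))$ for every $r$.

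One slip should be fixed: your separation constant is wrong. Two distinct cells of the same mod-$3$ class in a side-$2^i$ grid are at distance (just over) $2\cdot 2^i$ from each other, not $3\cdot 2^i$; the figure $3\cdot 2^i$ is the distance between corresponding corners, not between the cells. Fortunately the argument survives with the correct constant: two discs of $D^i$ covering a common point have centers at distance at most $r_1+r_2 < 2\cdot 2^i$, \emph{strictly}, because radii in $D^i$ are strictly below $2^i$, while centers in distinct same-class cells are at distance at least $2\cdot 2^i$; these two bounds are still incompatible, which is all you need. You should restate the key geometric fact this way, with the strictness made explicit, since the constant is doing real work: a mod-$2$ classification (separation only $2^i$) genuinely fails. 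A last cosmetic point: the number of radius classes is $\lceil\log\rho\rceil+1$, so your count should be read as $O((\log\rho+1)\cdot\max_i\log\phi_{2^i}(D^i))$; the theorem's own statement has the same degeneracy at $\rho=1$, so this is not a defect of your proof.
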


The tightness of Theorem~\ref{thm:ub} follows from the fact that
for any integer $n$, there exists a set $D$ of $n$ unit discs with
$\phi_1(D)=n$ for which $\Omega(\log n)$ colors are necessary in
every CF-coloring of $D$.

In the first part of Theorem~\ref{thm:ub} the discs are not
necessarily congruent. That is, the size-ratio $\rho$ may be
bigger than $1$.  In the second part of Theorem~\ref{thm:ub}, the
discs are congruent (i.e., the size-ratio equals $1$). However,
the common radius is not determined in advance. Namely, the order
of quantifiers in the second part of the theorem is as follows:
Given the locations of the disk centers, the algorithm computes a
coloring of the centers (of the discs) such that this coloring is
conflict-free {\em for every\/} radius $r$.

Building on Theorem~\ref{thm:ub}, Even et al. \cite{ELRS} also
obtain two bi-criteria CF-coloring algorithms for discs having
the same (unit) radius. In both cases the algorithm uses only few
colors. In the first case this comes at a cost of not serving a
small area that is covered by the discs (i.e., an area close to
the boundary of the union of the discs). In the second case, all
the area covered by the discs is served, but the discs are assumed
to have a slightly larger radius. A formal statement of these
bi-criteria results is as follows:

\begin{theorem}[\cite{ELRS}]\label{thm:bi}
  For every $0<\eps<1$ and every finite set of centers $X \subset
  \mathbb{R}^2$, there exist polynomial-time algorithms that compute
  colorings as follows:
  \begin{enumerate}
  \item A coloring $\chi$ of $D_1(X)$ using $O\left(\log
      \frac{1}{\eps}\right)$ colors for which the following holds: The
    area of the set of points in $\bigcup D_1(X)$ that are not
    served with respect to $\chi$ is at most an $\eps$-fraction of the
    total area of $D_1(X)$.
  \item A coloring of $D_{1+\eps}(X)$ that uses $O\left(\log
      \frac{1}{\eps}\right)$ colors such that every point in
    $\bigcup \D_1(X)$ is served.
  \end{enumerate}
\end{theorem}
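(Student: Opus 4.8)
The plan is to reduce both parts to the bounded-local-density case handled by Theorem~\ref{thm:ub}(1), via a single uniform device: snap the centers to a grid of resolution $\Theta(\eps)$ and keep one representative disc per grid cell. Fix an axis-parallel grid with cell side $s$ (to be chosen as a small multiple of $\eps$), and for each nonempty cell pick one center of $X$ lying in it; let $X' \subseteq X$ be the resulting set of representatives. Two discs of radius at most $1+\eps$ can cover a common point only if their centers are within distance $4$, so any window of side $4$ contains only $O((1/s)^2) = O(1/\eps^2)$ representatives; hence the local density of $D_r(X')$ is $\phi = O(1/\eps^2)$ for $r \in \{1, 1+\eps\}$. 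Theorem~\ref{thm:ub}(1), applied in the congruent-disc case (where it colors a family of local density $\phi$ with $O(\log \phi)$ colors), then yields a CF-coloring $\chi'$ of $D_r(X')$ with $O(\log \phi) = O(\log \tfrac{1}{\eps})$ colors.

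First I extend $\chi'$ to all of $D_r(X)$ at the cost of a single extra color: representatives keep their colors in $\{1,\dots,t\}$ with $t = O(\log\tfrac1\eps)$, and every non-representative disc receives a fresh color $0$. I claim every point $p$ covered by at least one representative disc is served. Indeed, let $S$ be the set of radius-$r$ discs covering $p$ and $S' \subseteq S$ the representative ones. Since $\chi'$ is conflict-free on $D_r(X')$ and $S' \neq \emptyset$, some $d^\ast \in S'$ carries a color unique within $S'$; this color lies in $\{1,\dots,t\}$, so it differs from $0$ and therefore remains unique within all of $S$. Thus $p$ is served, with $O(\log\tfrac1\eps)$ colors in total.

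For part (2) I take $s = \eps/\sqrt{2}$ and $r = 1+\eps$. Every $c \in X$ shares its cell with its representative $c'$, so $|c - c'| \le \eps$ and $D_1(c) \subseteq D_{1+\eps}(c')$; hence $\bigcup D_1(X) \subseteq \bigcup D_{1+\eps}(X')$. Consequently every $p \in \bigcup D_1(X)$ is covered by a radius-$(1+\eps)$ representative disc and, by the previous paragraph, is served. This proves part (2).

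For part (1) I take $r = 1$ and $s = c\,\eps$ for a small absolute constant $c$. The served region now contains $\bigcup D_1(X')$, so the unserved area is at most the area of $\bigl(\bigcup D_1(X)\bigr) \setminus \bigl(\bigcup D_1(X')\bigr)$. Writing $U = \bigcup D_1(X')$ and $w = s\sqrt{2}$, every original center lies within $w$ of its representative, so $\bigcup D_1(X) \subseteq U \oplus B_w$ (Minkowski sum with a disc of radius $w$); the unserved region is thus contained in the outer parallel layer $(U \oplus B_w) \setminus U$ of width $w$ around $U$. The main technical point—and the step I expect to be the real obstacle—is to show this layer has area at most an $\eps$-fraction of $\mathrm{area}(U) \le \mathrm{area}(\bigcup D_1(X))$. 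By a Steiner-type estimate the layer area is $O\bigl(w \cdot \mathrm{perimeter}(U)\bigr)$, so it suffices to prove $\mathrm{perimeter}(U) = O(\mathrm{area}(U))$ for a union of unit discs. This is where I would need to be careful: the boundary of $U$ consists of $O(|X'|)$ unit-circle arcs (linear union complexity of discs \cite{KLPS}), and the natural route is to charge each boundary arc to the annular sector of $U$ lying just inside it, the delicate issue being to control the overlap of these sectors (equivalently, the pointwise multiplicity of the charging) so that their total area stays within a constant factor of $\mathrm{area}(U)$; a random-shift argument bounding the \emph{expected} unserved area is a plausible alternative. Once this perimeter-to-area bound is in hand, choosing $c$ small enough forces the unserved area below $\eps \cdot \mathrm{area}(\bigcup D_1(X))$, completing part (1).
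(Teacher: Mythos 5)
The paper you were asked to match contains no proof of Theorem~\ref{thm:bi}: it is a survey, and this result is quoted from \cite{ELRS} with only the remark that it is obtained by ``building on Theorem~\ref{thm:ub}''. Your architecture is precisely a reduction of that kind --- snap the centers to a grid of side $\Theta(\eps)$, keep one representative per cell, color the representative discs via the congruent, bounded-local-density case of Theorem~\ref{thm:ub}(1) with $O(\log\frac{1}{\eps})$ colors, and spend one extra color on all non-representative discs --- and your part~(2) is complete and correct: the observation that a color unique among the representative discs covering $p$ remains unique after adjoining discs of the single junk color is valid, and $s=\eps/\sqrt{2}$ indeed gives $D_1(c)\subseteq D_{1+\eps}(c')$ for every center $c$ and its representative $c'$.

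Part~(1), however, has a genuine gap, and it is exactly the one you flagged: the claim $\mathrm{perimeter}(U)=O(\mathrm{area}(U))$ for a union $U$ of unit discs (equivalently, that inflating the radii from $1$ to $1+w$ grows the union area by a factor $1+O(w)$). As written this is an unproven assertion on which all of part~(1) rests, so the proposal is incomplete. The good news is that the lemma is true and your own charging scheme closes it, with multiplicity exactly one, so no random-shift fallback is needed. For a.e.\ $p\in\partial U$ there is a unique center $c_p$ with $p\in\partial D_1(c_p)$; map $(p,t)\mapsto p+t(c_p-p)$ for $t\in(0,\tfrac12]$. Suppose a point $x$ had two preimages coming from distinct centers $c,c'$, with boundary points $p=c+u$, $p'=c'+u'$, where $u=(x-c)/|x-c|$ and $u'=(x-c')/|x-c'|$. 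Since $p,p'\in\partial U$, neither is interior to the other disc, so $|p-c'|\geq 1$ and $|p'-c|\geq 1$; expanding the squares, these are equivalent to $(c'-c)\cdot u\leq\tfrac12|c-c'|^2$ and $(c-c')\cdot u'\leq\tfrac12|c-c'|^2$. Multiplying the first by $|x-c|$, the second by $|x-c'|$, and adding, the left side telescopes to $(c'-c)\cdot\bigl((x-c)-(x-c')\bigr)=|c-c'|^2$, giving $|c-c'|^2\leq\tfrac12|c-c'|^2\bigl(|x-c|+|x-c'|\bigr)$, i.e.\ $|x-c|+|x-c'|\geq 2$ --- impossible, because $|x-c|=1-t<1$ and $|x-c'|=1-t'<1$. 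Injectivity together with the Jacobian factor $1-t\geq\tfrac12$ of the inward radial map yields $\mathrm{area}(U)\geq\tfrac38\,\mathrm{perimeter}(U)$. Combined with your Steiner-type estimate --- or, more cleanly, with the coarea inequality $\frac{d}{dr}\,\mathrm{area}\bigl(\bigcup D_r(X')\bigr)=\mathrm{perimeter}\bigl(\bigcup D_r(X')\bigr)\leq\frac{8}{3r}\,\mathrm{area}\bigl(\bigcup D_r(X')\bigr)$ integrated over $r\in[1,1+w]$ --- this bounds the unserved layer by $O(w)\cdot\mathrm{area}\bigl(\bigcup D_1(X')\bigr)$, and taking the grid side a sufficiently small multiple of $\eps$ finishes part~(1). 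So: right reduction, right charging idea, but the decisive geometric lemma was left as a hope rather than a proof, and until it is supplied part~(1) is not established.
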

In other words, in the first case, the portion of the total area
that is not served is an exponentially small fraction as a
function of the number of colors. In the second case, the
increase in the radius of the discs is exponentially small as a
function of the number of colors.

The following problem seems like a non-trivial challenge.
\begin{problem}
Is there a constant factor approximation algorithm for finding an
optimal CF-coloring for a finite set of discs in the plane?
\end{problem}

{\noindent \bf Remark:} In the special case that all discs are
congruent (i.e., have the same radius) Lev-Tov and Peleg
\cite{Lev-TovP09} have recently provided a constant-factor
approximation algorithm.

\subsubsection{An $O(1)$-Approximation
for CF-Coloring of Rectangles and Regular Hexagons}
Recall that Theorem~\ref{cf-rectangles} states that every set of
$n$ axis-parallel rectangles can be CF-colored with $O(\log^2 n)$
colors and such a coloring can be found in polynomial time.

Let $\Rcal$ denote a set of axis-parallel rectangles. Given a
rectangle $R\in \Rcal$, let $w(R)$ ($h(R)$, respectively) denote
the width (height, respectively) of $R$. The \emph{size-ratio} of
$\Rcal$ is defined by $\max\left\{\frac{w(R_1)}{w(R_2)} ,
\frac{h(R_1)}{h(R_2)} \right\}_{R_1,R_2\in \Rcal}$.

The size ratio of a collection of regular hexagons is simply the
ratio of the longest side length and the shortest side length.

\begin{theorem} [\cite{ELRS}]\label{thm:squares} \label{thm:hexagons}
  Let $\Rcal$ denote either a set of axis-parallel rectangles or a set
  of homothets of a regular hexagons.  Let $\rho$ denote the size-ratio
  of $\Rcal$ and let $\chiopt(\Rcal)$ denote an optimal CF-coloring
  of $\Rcal$.
  \begin{enumerate}
  \item If $\Rcal$ is a set of rectangles, then there exists a
    polynomial-time algorithm that computes a CF-coloring $\chi$ of $\Rcal$
    such that $|\chi(\Rcal)| =O((\log \rho+1)^2 \cdot |\chiopt(\Rcal)|)$.
  \item If $\Rcal$ is a set of hexagons, then there exists a polynomial-time
    algorithm that computes a CF-coloring $\chi$ of $\Rcal$ such that
    $|\chi(\Rcal)| =O((\log \rho+1) \cdot |\chiopt(\Rcal)|)$.
  \end{enumerate}
\end{theorem}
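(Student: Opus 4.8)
The plan is to reduce to the near-congruent case by decomposing $\Rcal$ into size scales, to give a local (per-scale) coloring whose cost is governed by a local density, and to match it against a local lower bound on $\chiopt(\Rcal)$. First I would partition $\Rcal$ by size. For hexagons there is a single size parameter (the side length), so grouping those whose side length lies in $[2^{t},2^{t+1})$ gives $O(\log\rho+1)$ classes; for rectangles there are two size parameters (width and height), so grouping by the scale of each gives $O((\log\rho+1)^2)$ classes. Inside one class all regions are congruent up to a factor of $2$ in each dimension, and this count of classes is exactly what produces the $\log\rho$ (hexagons) versus $\log^2\rho$ (rectangles) in the statement.

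Next I would CF-color a single near-congruent class with a number of colors proportional to $\log\phi$, where $\phi$ is the local density of the class (the largest number of its regions covering a common point). The engine is low union complexity: translates of a fixed convex body are pseudo-discs and hence have linear union complexity \cite{KLPS}, so by Theorem~\ref{th:union-UMcoloring} (via the framework of Theorem~\ref{main}) such a family on $m$ regions is CF-colorable with $O(\log m)$ colors. To replace $m$ by $\phi$ I would overlay a grid whose cells have the size of the regions, $O(1)$-color the cells so that regions with centers in equally colored cells never cover a common point, and solve each cell independently; each cell meets at most $O(\phi)$ regions, yielding $O(\log\phi)$ colors per class. This is best possible for an isolated class: a dense cluster of $\phi$ congruent regions along a line realizes the discrete interval hypergraph on $\Omega(\phi)$ vertices (as in Figure~\ref{fig:rectangles-dih}), which needs $\Omega(\log\phi)$ colors, so on a single class the coloring is a constant-factor approximation.

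The main obstacle is that CF-coloring is \emph{not} monotone under passing to a sub-family, so one cannot simply bound the optimum of a scale by $\chiopt(\Rcal)$. Indeed, a dense cluster at a small scale can be entirely covered (``blanketed'') by one larger region, and then the optimum serves the whole cluster for free by giving that single region a private color: e.g.\ $m$ congruent unit squares realizing the discrete interval hypergraph, together with one larger square covering them all, has $\chiopt=2$ while the small scale alone needs $\Omega(\log m)$ colors. Hence a naive per-scale lower bound $\chiopt(\Rcal)=\Omega(\max_t \log\phi_t)$ is false, disjoint palettes across scales do not deliver the claimed ratio, and the algorithm must be adaptive.

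To handle this I would process the scales from largest to smallest, at each scale coloring (with a fresh palette of $O(\log\phi')$ colors) only the residual part not already served by the already-colored larger regions. A residual dense cluster is, by construction, not blanketed by any larger region, and the heart of the argument is a charging scheme showing that the cost $\log\phi'$ paid at each scale is simultaneously a lower bound on $\chiopt(\Rcal)$: the residual discrete-interval witnesses cannot be served by any larger region, and any attempt by the optimum to serve them using still-smaller regions itself produces a dense configuration that is charged at a later scale. Summing the per-scale costs over the $O(\log\rho+1)$ (hexagons) resp.\ $O((\log\rho+1)^2)$ (rectangles) scales then gives $|\chi(\Rcal)|=O((\log\rho+1)\,\chiopt(\Rcal))$ resp.\ $O((\log\rho+1)^2\,\chiopt(\Rcal))$. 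All ingredients---the grid, the $O(1)$ cell coloring, the per-cell CF-coloring, and the arrangement and depth computations---run in polynomial time, yielding the stated algorithms; the cross-scale charging argument is the step I expect to be the real difficulty.
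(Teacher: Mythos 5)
Your decomposition into $O(\log\rho+1)$ size classes for hexagons and $O((\log\rho+1)^2)$ classes for rectangles, and the per-class $O(\log\phi)$ coloring via a shifted grid, are the natural first half of an argument, and you correctly isolate the real difficulty: $\cf$ is not monotone under passing to subfamilies, so per-class density does not lower-bound $\chiopt(\Rcal)$, and your blanketing example is valid. (One smaller slip: two rectangles whose widths and heights each lie in $[1,2)$ can cross at four boundary points, so a near-congruent rectangle class is \emph{not} a pseudo-disc family; you need linear union complexity of comparable-size fat boxes, which is true but needs its own argument.) Note that the survey states Theorem~\ref{thm:squares} only with a citation to \cite{ELRS} and gives no proof, so what must be judged is whether your proposal closes the difficulty you identified; it does not.

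The cross-scale argument you defer (``the step I expect to be the real difficulty'') \emph{is} the theorem, and the framework you propose for it---process classes from largest to smallest, color only the residual not served by larger regions, and charge deferred costs to later scales---provably cannot deliver the bound, because the optimum can serve a dense cluster with \emph{smaller} regions just as well as with larger ones. Concretely, take the $m$ unit squares of Figure~\ref{fig:rectangles-dih} (they share a common point and induce the discrete interval hypergraph) and add a grid of axis-parallel squares of side $1/2$ covering their union. The size ratio is $2$; coloring the grid squares with four colors by coordinate parity and all unit squares with a fifth color is conflict-free, since the at most four grid squares containing any point have pairwise distinct colors, so $\chiopt(\Rcal)=O(1)$. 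Yet in your scheme nothing larger than the unit squares exists, so the whole cluster is residual at its own scale, and serving its witness points within the unit class alone forces a CF-coloring of the discrete interval hypergraph, i.e.\ $\Omega(\log m)$ colors---an unbounded ratio where the theorem promises $O((\log\rho+1)^2)=O(1)$. Reversing the order does not help (your own blanketing example kills smallest-first), and planting two clusters, one served by a larger blanket and one by a smaller grid with the scales crossed, defeats \emph{every} processing order: whichever class is colored first pays $\Omega(\log m)$ for a cluster whose servers lie in the not-yet-colored class. What is missing is an instance-wise lower bound on $\chiopt(\Rcal)$ that accounts for serving by both coarser and finer scales simultaneously (equivalently, a per-class coloring whose cost is certified against the optimum of the whole family); neither the deferred charging nor any other mechanism in the proposal supplies it.
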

For a constant size-ratio $\rho$, Theorem~\ref{thm:squares}
implies a constant approximation algorithm.


\subsection{Online CF-Coloring}
Recall the motivation to study CF-coloring in the context of cellular antanae.
To capture a dynamic scenario where antennae can be added to the
network, Chen et al. \cite{cf7} introduced an online version of
the CF coloring problem. As we shall soon see, the online version
of the problem is considerably harder, even in the
one-dimensional case, where the static version (i.e., CF-coloring the discrete intervals hypergraph) is trivial and
fully understood.

\subsubsection{Points with respect to intervals}
Let us start with the simplest possible example where things
become highly non-trivial in an online setting. We start with the dynamic extension of the discrete interval hypergraph case. Thats is,
we deal with coloring of points on the line, with respect to interval ranges. We maintain
a finite set $P\subset\reals$. Initially, $P$ is empty, and an adversary
repeatedly insert points into $P$, one point at a time. We denote
by $P(t)$ the set $P$ after the $t$th point has been inserted.
Each time a new point $p$ is inserted, we need to assign a color $c(p)$
to it, which is a positive integer. Once the color has been
assigned to $p$, it cannot be changed in the future. The coloring
should remain a valid CF-coloring at all times. That is, as in the static case,
for any interval $I$ that contains points of $P(t)$, there is a
color that appears exactly once in~$I$.

We begin by examining a natural, simple, and obvious coloring
algorithm (referred to as the UniMax greedy algorithm) which
might be inefficient in the worst case. Chen et al. \cite{cf7}
presented an efficient 2-stage
variant of the UniMax greedy algorithm and showed that the maximum
number of colors that it uses is $\Theta(\log^2n)$.

As in the case in most CF-coloring of hypergraphs that were
tackled so far, we wish to maintain the unique maximum
invariant. At any given step $t$ the coloring of $P(t)$ is a UM-coloring.

The following simple-minded algorithm for coloring an inserted
point $p$ into the current set $P(t)$ is used. We say that the newly inserted point $p$ {\em sees\/} a
point $x$ if all the colors of the points between $p$ and $x$
(exclusive) are smaller than $c(x)$. In this case we also say
that $p$ sees the color $c(x)$. Then $p$ gets the smallest color
that it does not see. (Note that a color can be seen from $p$
either to the left or to the right, but not in both directions;
see below.) Refer to this algorithm as the {\em Unique Maximum
Greedy\/} algorithm, or the UniMax greedy algorithm, for short.

Below is an illustration of the coloring rule of the UniMax
greedy algorithm. The left column gives the colors (integers in
the range $1,2, \ldots,6$) assigned to the points in the current
set $P$ and the location of the next point to be inserted
(indicated by a period). The right column gives the colors
``seen'' by the new point. The colors seen to the left precede
the $\cdot$, and those seen to the right succeed the~$\cdot$.
\[
\begin{array}{ll}
1 \cdot &
  [1 \cdot ] \\
1 \cdot 2 &
  [1 \cdot 2 ] \\
1 \cdot 3 2 &
  [1 \cdot 3 ] \\
1 2 \cdot 3 2 &
  [2 \cdot 3 ] \\
1 2 1 \cdot 3 2 &
  [2 1 \cdot 3 ] \\
1 2 1 \cdot 4 3 2 &
  [2 1 \cdot 4 ] \\
1 2 1 \cdot 3 4 3 2 &
  [2 1 \cdot 3 4 ] \\
1 2 1 5 \cdot 3 4 3 2 &
  [5 \cdot 3 4 ] \\
1 2 1 5 \cdot 1 3 4 3 2 &
  [5 \cdot 1 3 4 ] \\
1 2 1 5 2 \cdot 1 3 4 3 2 &
  [5 2 \cdot 1 3 4 ] \\
1 2 1 5 2 6 \cdot 1 3 4 3 2 &
  [6 \cdot 1 3 4 ]
\end{array}
\]

{\it Correctness}. The correctness of the algorithm is
established by induction on the insertion order. First, note that
no color can be seen twice from $p$: This is obvious for two
points that lie both to the left or both to the right of $p$. If
$p$ sees the same color at a point $u$ to its left and at a point
$v$ to its right, then the interval $[u,v]$, before $p$ is
inserted, does not have a unique maximum color; thus this case is
impossible, too. Next, if $p$ is assigned color $c$, any interval
that contains $p$ still has a unique maximum color: This follows
by induction when the maximum color is greater than $c$. If the
maximum color is $c$, then it cannot be shared by another point
$u$ in the interval, because then $p$ would have seen the nearest
such point and thus would not be assigned color $c$. It is also
easy to see that the algorithm assigns to each newly inserted
point the smallest possible color that maintains the invariant of
a unique maximum color in each interval. This makes the algorithm
{\em greedy\/} with respect to the unique maximum condition.

{\it Special insertion orders}. Denote by $C(P(t))$ the sequence
of colors assigned to the points of $P(t)$, in left-to-right
order along the line.

The {\em complete binary tree sequence} $S_k$ of order $k$ is
defined recursively as $S_1=(1)$ and $S_k = S_{k-1} \| (k) \|
S_{k-1}$, for $k>1$, where $\|$ denotes concatenation.  Clearly,
$|S_k|=2^k-1$.

For each pair of integers $a<b$, denote by $C_0(a,b)$ the
following special sequence. Let $k$ be the integer satisfying
$2^{k-1}\le b<2^k$. Then $C_0(a,b)$ is the subsequence of $S_k$
from the $a$th place to the $b$th place (inclusive). For example,
$C_0(5,12)$ is the subsequence $(1,2,1,4,1,2,1,3)$ of
$(1,2,1,3,1,2,1,4,1,2,1,3,1,2,1)$.

\begin{lemma}\label{right}
{\rm(a)} If each point is inserted into $P$ to the right of all
preceding points, then $C(P(t)) = C_0(1,t)$.

{\rm(b)} If each point is inserted into $P$ to the left of all
preceding points, then $C(P(t)) \allowbreak = C_0(2^k-t${\rm,
}$2^k-1)$, where $k$ satisfies\/ $2^{k-1}\le t<2^k$.

\end{lemma}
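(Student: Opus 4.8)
The plan is to prove (a) by induction on $k$ and then obtain (b) from (a) by a left--right reflection. Two structural facts about the sequences $S_k$ will be used. First, since $S_k=S_{k-1}\,\|\,(k)\,\|\,S_{k-1}$, the block $S_{k-1}$ is a prefix of $S_k$; hence the strings $C_0(1,t)$ are consistent and form the prefixes of a single infinite sequence, which lets me phrase the induction hypothesis $P(k)$ as: for every $t$ with $1\le t\le 2^k-1$, after $t$ right-insertions the color string equals the length-$t$ prefix of $S_k$. Second, an easy unfolding of the recursion shows that each $S_k$ is a palindrome; this is needed only for (b).

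The engine of (a) is the observation that a point inserted to the right of all others sees precisely the colors at the right-to-left record positions (the suffix maxima) of the current string; these colors strictly increase from right to left, so they are distinct, and the new point is assigned the least positive integer not among them. Unfolding $S_{k-1}=S_{k-2}\,\|\,(k-1)\,\|\,S_{k-2}$ from the right shows that the set of colors seen from the right end of the \emph{complete} block $S_{k-1}$ is exactly $\{1,\dots,k-1\}$: the right copy of $S_{k-2}$ contributes $\{1,\dots,k-2\}$ by induction, the central entry $k-1$ is a fresh record, and the left copy is then entirely shielded. Hence, once $S_{k-1}$ has been assembled (by $P(k-1)$), the next point, in position $2^{k-1}$, receives color $k$.

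The main obstacle is the shielding step, which shows that after this color-$k$ point is placed every later right-insertion behaves as though the process had just started, so that the entries in positions $2^{k-1}+1,\dots,t$ reproduce the length-$(t-2^{k-1})$ prefix of $S_{k-1}$. For any later rightmost point $p$: each point at or to the left of the barrier carries a color smaller than $k$ and is separated from $p$ by the barrier, so it is invisible; the barrier itself (color $k$) is always visible because every color to its right is smaller than $k$; therefore the visible colors below $k$ depend only on the segment strictly to the right of the barrier. By $P(k-1)$ this segment evolves exactly like a fresh run and never reaches color $k$, giving $S_k=S_{k-1}\,\|\,(k)\,\|\,S_{k-1}$ at the level of prefixes and establishing $P(k)$; the range $t\le 2^{k-1}-1$ is immediate from $P(k-1)$ by the prefix-consistency noted above.

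To deduce (b), I would use that both the seeing relation and the ``least unseen color'' rule are invariant under the reflection $x\mapsto -x$ of the line. Thus inserting each new point to the left in the original process coincides, under this reflection, with inserting it to the right in the reflected process. By (a) the reflected string after $t$ steps is the length-$t$ prefix of $S_k$, where $2^{k-1}\le t<2^k$, so the original string is its reverse. As $S_k$ is a palindrome of length $2^k-1$, the reverse of its length-$t$ prefix is its length-$t$ suffix, i.e.\ the entries in positions $2^k-t$ through $2^k-1$, which is exactly $C_0(2^k-t,\,2^k-1)$, as claimed.
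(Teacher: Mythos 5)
The paper offers no proof to compare against: Lemma~\ref{right} is explicitly ``left as an exercise to the reader,'' so your proposal must stand on its own merits, and it does. Your induction on $k$ is sound: the reduction of ``seen'' points to right-to-left records (suffix maxima) is exactly what the definition of the UniMax greedy rule gives for a rightmost insertion; the unfolding of $S_{k-1}=S_{k-2}\,\|\,(k-1)\,\|\,S_{k-2}$ correctly yields that the seen set after a complete block is $\{1,\dots,k-1\}$, forcing color $k$ next; and the shielding step is the genuinely delicate point, which you handle correctly. The one place a reader must slow down is that after the barrier is placed, every later rightmost point sees the barrier's color $k$ \emph{in addition to} the records of the segment to its right, so the assigned color is the smallest color absent from $\sigma\cup\{k\}$ rather than from $\sigma$ alone; these two values agree precisely because, as you note, the fresh-run colors of the segment (a prefix of $S_{k-1}$, by $P(k-1)$ applied for segment length at most $2^{k-1}-1$) never reach $k$, so the smallest missing color stays below $k$ and the extra seen color $k$ is harmless. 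Making that ``smallest-missing-color agreement'' explicit, as a one-line inner induction on the segment's length, would make the argument airtight, but the idea is present in your text. Part (b) via reflection invariance of the seeing relation plus the palindromic symmetry of $S_k$ (which converts the length-$t$ prefix into the length-$t$ suffix, i.e.\ positions $2^k-t$ through $2^k-1$) is clean and matches the claimed indexing, including the requirement that $k$ be determined by $2^{k-1}\le t<2^k$.
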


\begin{proof}
The proof is easy and is left as an exercise to the
reader.\qquad\end{proof}

Unfortunately, the UniMax greedy algorithm might be very
inefficient as was shown in \cite{cf7}:
\begin{theorem} [\cite{cf7}] \label{lb:sqrt}
The UniMax greedy algorithm may require\/ $\Omega(\sqrt{n})$
colors in the worst case for a set of $n$ points.
\end{theorem}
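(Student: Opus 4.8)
The plan is to exhibit a single adversarial insertion sequence and to reduce the whole theorem to a counting claim: it suffices to construct, for every $k$, an online sequence of $N(k)=O(k^2)$ point-insertions on which the UniMax greedy algorithm eventually assigns the color $k$ to some point. Once this is done, feeding the algorithm the sequence with $n:=N(k)$ points forces it to use at least $k=\Omega(\sqrt n)$ colors, which is exactly the statement of Theorem~\ref{lb:sqrt}. The starting observation is a reformulation of the greedy rule: when a point $p$ is inserted it receives color $k$ precisely when the set of colors it \emph{sees} contains $\{1,\dots,k-1\}$ but not $k$, and the colors seen to the left (resp.\ right) of $p$ are exactly the strictly increasing sequence of successive maxima encountered while scanning leftward (resp.\ rightward), the two sequences being disjoint. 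Thus to manufacture a high color we must arrange two increasing ``staircases'' flanking a gap whose union is an initial segment $\{1,\dots,k-1\}$. The essential point — and the reason the answer is $\sqrt n$ rather than $\log n$ — is that the two flanking staircases may \emph{interleave}: one side can supply a sparse increasing subsequence of the colors and the other side the complementary colors, so that neither side needs to be a full consecutive run of length $k$.

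The core of the construction is an inductive gadget. I would maintain a configuration $G_k$ centered on a gap, together with an exact description (not merely the color set) of the two blocks flanking the gap, so that the leftward and rightward successive-maxima sequences into the gap are two increasing sequences whose disjoint union is exactly $\{1,2,\dots,k\}$; inserting a point into the gap then forces color $k+1$. The inductive step builds $G_{k+1}$ from $G_k$ by (i) inserting one point into the gap to realize the new top color $k+1$, and then (ii) ``repairing'' the neighborhood with a further $O(k)$ insertions, placed on alternating sides, so that a fresh gap is created whose two flanking successive-maxima sequences again interleave to cover $\{1,\dots,k+1\}$. Crucially, one of the two blocks can be \emph{reused} across levels (it continues to present the same sparse set of seen colors), so that only $O(k)$ new points are spent to advance from $k$ colors to $k+1$. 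Summing, $N(k)=\sum_{j\le k}O(j)=O(k^2)$, which yields the claimed bound. The behavior of the repairing insertions near an endpoint is governed by the complete-binary-tree sequences $C_0(a,b)$ of Lemma~\ref{right}, which I would invoke to predict the colors produced by blocks of consecutive one-sided insertions.

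The main obstacle is the rigidity of the greedy coloring: because colors are determined by the algorithm and cannot be assigned freely, the inductive invariant must pin down the precise color sequence of each flanking block, and I must verify two things at every level. First, the newly inserted forcing point actually sees $\{1,\dots,k\}$ and nothing equal to $k+1$, so that it receives the intended color; this requires checking that the interleaving of the two successive-maxima sequences is exactly an initial segment with no gaps and no premature occurrence of the target color. Second, the $O(k)$ repair insertions must reconstitute a gap with the desired flanking structure \emph{without} perturbing the seen-sets of the already-placed high-colored points — in particular a newly placed point must not block, or be blocked in a way that corrupts, the successive maxima that later points rely on. I would discharge both obligations by a careful induction on the insertion order, exactly in the style of the correctness argument already given for the UniMax greedy (``no color is seen twice, and the unique-maximum invariant is preserved''), tracking the left-to-right maxima explicitly. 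Establishing that the repair phase costs only $O(k)$ and that it leaves the reusable block intact is the delicate accounting step on which the quadratic (rather than exponential) bound hinges.
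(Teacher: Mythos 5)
A preliminary remark: this survey states Theorem~\ref{lb:sqrt} \emph{without proof} (it is quoted from \cite{cf7}), so your proposal can only be measured against the known construction and against its own internal logic. Your outer skeleton is right, and it is how any proof must begin: reduce the theorem to the claim that color $k$ can be forced with $N(k)=O(k^2)$ insertions, describe what an inserted point sees as two disjoint increasing sequences of successive maxima, and observe that a point receives color $k+1$ exactly when the two flanking seen-sets interleave to cover $\{1,\dots,k\}$.

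The genuine gap is in the inductive mechanism you hang everything on, and it is fatal as stated. You claim one flanking block can be ``reused across levels (it continues to present the same sparse set of seen colors), so that only $O(k)$ new points are spent'' per level. Look at what absorption does to a flank: if a flank currently presents the seen-set $S$ and a new point of color $c$ lands at its gap end, the flank thereafter presents $\{c\}\cup\{s\in S: s>c\}$, i.e.\ every visible color below $c$ is permanently blocked; and you do not get to choose $c$ --- the greedy always emits the \emph{smallest} unseen color. So a flank can keep its old colors only by absorbing new ones in decreasing order, while the greedy hands them out in increasing order. Rebuilding a flank so that it simultaneously presents a target set $T$, against a frozen opposite flank, is therefore a binary-counter process: each emission of a color above the flank's current minimum wipes the smaller entries, which must then be regenerated, for a total of roughly $2^{|T|}$ insertions rather than $O(|T|)$. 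This is precisely the content of Lemma~\ref{right}, which you cite as a tool but which is in fact the obstruction: one-sided accumulation produces the complete binary tree sequence, i.e.\ exponential cost. Consequently, if the reused block presents a fixed finite set $S$, then at level $k$ the other flank must present $\{1,\dots,k\}\setminus S$, a set of size $\Theta(k)$, and your ``repair'' costs $2^{\Theta(k)}$, not $O(k)$; distributing the repairs over the two sides alternately does not help, since two counters fed by a single mex stream still fill exponentially slowly. The construction that actually works keeps \emph{nothing} frozen: the debris of earlier rounds must be organized as a nested, self-similar hierarchy of staircase-like structures (a structure presenting $\{1,\dots,a\}$ contains, beyond its top element, a sub-structure presenting a shorter initial segment, and so on), and each round increments \emph{every} level of this hierarchy, with $O(1)$ insertions placed deep inside the old structure per level and $O(k)$ levels --- that cascade is where the linear per-round cost comes from. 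Even the eleven-insertion trace reproduced in this survey, forcing colors $1$ through $6$, has both flanks of the working gap changing in every round; that is not an accident of the example but a necessity your invariant contradicts.
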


\begin{problem}
Obtain an upper bound for the maximum number of colors that the
algorithm uses for $n$ inserted points. It is conjectured that the
bound is close to the $\Omega(\sqrt{n})$ lower bound. At the
moment, there is no known sub-linear upper bound.
\end{problem}
\paragraph{Related algorithms}\mbox{}

{\it The First-Fit algorithm---another greedy strategy}. The
UniMax greedy algorithm is greedy for maintaining the unique
maximum invariant. Perhaps it is more natural to
consider a greedy approach in which we want only to enforce the
standard CF property. That is, we want to assign to each newly
inserted point the {\em smallest\/} color for which the CF
property continues to hold. There are cases where this {\em
First-Fit\/} greedy algorithm uses fewer colors than the UniMax
greedy algorithm: Consider an insertion of five points in the
order $(1\;3\;2\;4\;5)$. The UniMax greedy algorithm produces the
color sequence $(1\;3\;2\;1\;4)$, whereas the First-Fit algorithm
produces the coloring $(1\;3\;2\;1\;2)$. Unfortunately, Bar-Noy et
al.~\cite{NCS06} have shown that there are
sequences with $2i+3$ elements that force the algorithm to use
$i+3$ colors, and this bound is tight.

{\it CF coloring for unit intervals}. Consider the special case
where we want the CF property to hold only for {\em unit
intervals}. In this case, $O(\log n)$ colors suffice: Partition
the line into the unit intervals $J_i = [i,i+1)$ for $i\in\ZZ$.
Color the intervals $J_i$ with even $i$ as white, and those with
odd $i$ as black. Note that any unit interval meets only one
white and one black interval. We color the points in each $J_i$
independently, using the same set of ``light colors'' for each
white interval and the same set of ``dark colors'' for each black
interval. For each $J_i$, we color the points that it contains
using the UniMax greedy algorithm, except that new points
inserted into $J_i$ between two previously inserted points get a
special color, color~0.  It is easily checked that the resulting
coloring is CF with respect to unit intervals. Since we
effectively insert points into any $J_i$ only to the left or to
the right of the previously inserted points, Lemma \ref{right}(c)
implies that the algorithm uses only $O(\log n)$ (light and dark)
colors. We remark that this algorithm satisfies the unique
maximum color property for unit-length intervals.

We note that, in contrast to the static case (which can always be
solved with $O(1)$ colors), $\Omega(\log n)$ colors may be needed
in the worst case. Indeed, consider a left-to-right insertion of
$n$ points into a sufficiently small interval. Each contiguous
subsequence $\sigma$ of the points will be a suffix of the whole
sequence at the time the rightmost element of $\sigma$ is
inserted. Since such a suffix can be cut off the current set by a
unit interval, it must have a unique color. Hence, at the end of
insertion, {\em every\/} subsequence must have a unique color,
which implies (see \cite{ELRS,SmPHD}) that $\Omega(\log n)$ colors
are needed.

\paragraph{An efficient online deterministic algorithm for points with respect to intervals}

We describe an efficient online algorithm for coloring points
with respect to intervals that was obtained in \cite{cf7}. This is
done by modifying the UniMax greedy algorithm into a deterministic
2-stage coloring scheme. It is then shown that it uses only
$O(\log^2n)$ colors. The algorithm is referred to as the {\em
leveled UniMax greedy algorithm}.

Let $x$ be the point which we currently insert.  We assign a
color to $x$ in two steps.  First we assign $x$ to a {\em level},
denoted by $\ell(x)$.  Once $x$ is assigned to level $\ell(x)$ we
give it an actual color among the set of colors dedicated to
$\ell(x)$. We maintain the invariant that each color is used by
at most one level. Formally, the colors that we use are pairs
$(\ell(x),c(x))\in\ZZ^2$, where $\ell(x)$ is the level of $x$ and
$c(x)$ is its integer color within that level.

Modifying the definition from the UniMax greedy algorithm, we say
that point $x$ {\em sees\/} point $y$ (or that point $y$ is {\em
visible\/} to $x$) if and only if for every point $z$ between $x$
and $y$, $\ell(z) < \ell(y)$.  When $x$ is inserted, we set
$\ell(x)$ to be the smallest level $\ell$ such that either to the
left of $x$ or to the right of $x$ (or in both directions) there
is no point $y$ visible to $x$ at level~$\ell$.

To give $x$ a color, we now consider only the points of level
$\ell(x)$ that $x$ can see. That is, we discard every point $y$
such that $\ell(y) \not= \ell(x)$, and every point $y$ such that
$\ell(y) = \ell(x)$ and there is a point $z$ between $x$ and $y$
such that $\ell(z) > \ell(y)$.  We apply the UniMax greedy
algorithm so as to color $x$ with respect to the sequence $P_x$
of the remaining points, using the colors of level $\ell(x)$
only.  That is, we give $x$ the color $(\ell(x),c(x))$, where
$c(x)$ is the smallest color that ensures that the coloring of
$P_x$ maintains the unique maximum color condition.  This
completes the description of the algorithm. See Figure~\ref{2stalg}
for an illustration.

\begin{figure}[t!]
\centering
\input{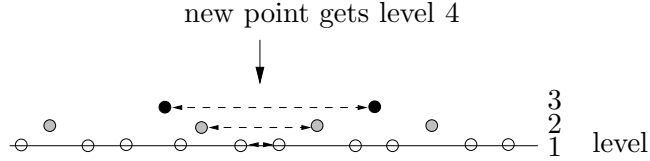}
\caption{Illustrating the\/ {\rm2}-stage deterministic algorithm.
An insertion order that realizes the depicted assignment of
levels to points is to first insert all level-\/{\rm1} points
from left to right, then insert the level-\/{\rm2} points from
left to right, and then the level-\/{\rm3} points.}\label{2stalg}
\end{figure}

We begin the analysis of the algorithm by making a few
observations on its performance.

(a) Suppose that a point $x$ is inserted and is assigned to level
$i>1$. Since $x$ was not assigned to any level $j<i$, it must see
a point $\ell_j$ at level $j$ that lies to its left, and another
such point $r_j$ that lies to its right. Let $E_j(x)$ denote the
interval $[\ell_j,r_j]$. Note that, by definition, these
intervals are {\em nested}, that is, $E_j(x)\subset E_k(x)$ for
$j<k<i$. See Figure~\ref{2stalg}.

(b) We define a {\em run\/} at level $i$ to be a maximal sequence
of points $x_1<x_2<\cdots<x_k$ at level $i$, such that all points
between $x_1$ and $x_k$ that are distinct from
$x_2,x_3,\ldots,x_{k-1}$ are assigned to levels smaller than $i$.
Whenever a new point $x$ is assigned to level $i$ and is inserted
into a run of that level, it is always inserted either to the left
or to the right of all points in the run. Moreover, the actual
color that $x$ gets is determined solely from the colors of the
points already in the run. See Figure~\ref{2stalg}.

(c) The runs keep evolving as new points are inserted. A run may
either grow when a new point of the same level is inserted at its
left or right end (note that other points at smaller levels may
separate the new point from the former end of the run) or split
into two runs when a point of a higher level is inserted
somewhere between its ends.

(d) As in observation~(a), the points at level $i$ define {\em
intervals}, called {\em $i$-intervals}. Any such interval $E$ is
a contiguous subsequence $[x,y]$ of $P$, so that $x$ and $y$ are
both at level $i$ and all the points between $x$ and $y$ have
smaller levels. $E$ is formed when the second of its endpoints,
say $x$, is inserted. We say that $x$ {\em closes\/} the interval
$E$ and refer to it as a {\em closing point}. Note that, by
construction, $x$ cannot close another
interval.\enlargethispage*{2pt}

(e) Continuing observation~(a), when $x$ is inserted, it {\em
destroys\/} the intervals $E_j(x)$, for $j<i$, into which it is
inserted, and only these intervals. That is, each of these
intervals now contains a point with a level greater than that of
its endpoints, so it is no longer a valid interval. We charge $x$
to the set of the closing endpoints of all these intervals.
Clearly, none of these points will ever be charged again by
another insertion (since it is the closing endpoint of only one
interval, which is now destroyed). We maintain a forest $F$,
whose nodes are all the points of $P$. The leaves of $F$ are all
the points at level~1. When a new point $x$ is inserted, we make
it a new root of $F$, and the parent of all the closing points
that it charges. Since these points have smaller levels than $x$,
and since none of these points becomes a child of another parent,
it follows that $F$ is indeed a forest.

Note that the nonclosing points can only be roots of trees of $F$.
Note also that a node at level $i$ has exactly $i-1$ children,
exactly one at each level $j<i$. Hence, each tree of $F$ is a
{\em binomial tree\/} (see~\cite{CLR}); if its root has level $i$,
then it has $2^i$~nodes.

This implies that if $m$ is the maximal level assigned after $n$
points have been inserted, then we must have $2^m\le n$, or
$m\le\log n$. That is, the algorithm uses at most $\log n$ levels.

We next prove that the algorithm uses only $O(\log n)$ colors at
each level. We recall the way runs evolve: They grow by adding
points at their right or left ends, and split into prefix and
suffix subruns, when a point with a larger level is inserted in
their middle.

\begin{lemma}\label{tree-seq}
At any time during the insertion process, the colors assigned to
the points in a run form a sequence of the form $C_0(a,b)$.
Moreover, when the $j$th smallest color of level $i$ is given to
a point $x$, the run to which $x$ is appended has at least\/
$2^{j-2}+1$ elements (including~$x$).
\end{lemma}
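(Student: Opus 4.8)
The plan is to prove both assertions simultaneously by induction on the insertion steps, maintaining for every run the invariant that its left-to-right color sequence is a contiguous window $C_0(a,b)$ of some $S_k$. It is convenient to record that, by the recursive definition, the value at position $p$ of $S_k$ equals $\nu_2(p)+1$, where $\nu_2$ denotes the exponent of $2$ in $p$; thus a color $c$ occupies exactly the positions divisible by $2^{c-1}$ but not by $2^c$. By observations (b) and (c), a run is born as a single point, grows only by appending a point at its extreme left or right end, or is cut by a higher-level insertion into two contiguous pieces. A newly born run is the single color $1=C_0(1,1)$, and cutting a window $C_0(a,b)$ yields two contiguous subsequences, each again of the form $C_0(a',b')$; so only the append step requires work.

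For the append step I would establish a localized version of Lemma~\ref{right}: if the colors of a run form $C_0(a,b)$ and a point is appended at its right (resp.\ left) end, then the UniMax greedy rule colors it with the value of position $b+1$ (resp.\ $a-1$) of $S_k$, re-embedding $S_k$ into $S_{k+1}$ whenever the window abuts an endpoint of $S_k$. The key point is that, for a point appended on the right, the colors it \emph{sees} are exactly the values of the right-to-left maxima of $C_0(a,b)$ (a run point $y$ is visible iff every run point to its right has smaller color), so the greedy color is the least positive integer that is not such a value; a short computation with the valuation description of $S_k$ identifies this least missing value with $\nu_2(b+1)+1$, the value at position $b+1$. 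The left case is identical after reflecting, using that each $S_k$ is a palindrome. This preserves the $C_0$ invariant and proves the first assertion.

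For the quantitative bound, suppose $x$ is appended on the right (the left case is symmetric by the palindrome property) to a run $C_0(a,b)$ and receives the $j$-th color, with $j\ge 2$. By the previous paragraph the greedy color $j$ is the least value absent from the right-to-left maxima of $C_0(a,b)$, hence the values $\{1,\dots,j-1\}$ all occur among those maxima. It therefore suffices to prove the purely combinatorial claim that any window of $S_k$ whose right-to-left maxima contain $\{1,\dots,c\}$ has length at least $2^{c-1}$; applying it with $c=j-1$ gives $b-a+1\ge 2^{j-2}$, and counting $x$ itself yields the asserted $2^{j-2}+1$. (For $j=1$ the bound is immediate, since a run that $x$ joins is nonempty.)

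I would prove the combinatorial claim by induction on $c$ in the sharpened form: if the right-to-left maxima of a window contain $\{1,\dots,c\}$ and $q$ denotes the position of the value-$c$ maximum, then the window extends at least to position $q+2^{c-1}-1$. Since $q$ is a value-$c$ maximum, every position to its right carries a value $<c$, so the maxima realizing $1,\dots,c-1$ lie strictly to the right of $q$ and remain maxima of the sub-window there. The inductive hypothesis applied to that sub-window places the value-$(c-1)$ maximum at some $q_1\ge q+2^{c-2}$ --- the first multiple of $2^{c-2}$ beyond $q$, which is itself a multiple of $2^{c-1}$ --- and forces the window to reach $q_1+2^{c-2}-1\ge q+2^{c-1}-1$. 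The doubling in this step is the crux of the whole lemma: a naive induction yields only length $2^{c-2}+1$, and it is precisely the fact that $q$ is divisible by $2^{c-1}$ rather than merely by $2^{c-2}$ that supplies the missing factor of two.
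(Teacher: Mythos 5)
Your overall architecture matches the paper's: induction through the insertion steps, with births and splits being the easy cases (a split of a window $C_0(a,b)$ indeed yields two windows) and an append lemma carrying all the weight; and your reduction of the quantitative bound to the combinatorial claim about right-to-left maxima, with its sharpened induction, is correct — indeed more explicit than anything the paper writes down. The gap is in the append lemma itself. You claim that the least value absent from the right-to-left maxima of a window $[a,b]$ is $\nu_2(b+1)+1$. That identification is false in general: writing $M$ for the maximum value occurring in the window, the set of right-to-left maxima values is $\{i+1 : \text{bit } i \text{ of } b \text{ is } 1,\ i\le M-1\}$, so the least missing value is $\min\bigl(\nu_2(b+1)+1,\, M+1\bigr)$. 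The two differ exactly when the visible colors saturate $\{1,\dots,M\}$ while $b$ has more than $M$ trailing ones, and this case genuinely arises in the algorithm. Concretely: the level-$1$ run $(1,2,1,3,1,2,1)=C_0(1,7)$ is split by a level-$2$ insertion after its fourth point into $C_0(1,4)$ and $C_0(5,7)$; a point appended at the right end of the fragment $C_0(5,7)=(1,2,1)$ sees the colors $\{1,2\}$ and is greedily colored $3$, whereas your rule (position $b+1=8$, after re-embedding $S_3$ as a prefix of $S_4$) predicts color $4$. So your induction step, as stated, asserts something false about what the greedy rule does, and the invariant-maintenance collapses precisely there.

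What you are missing is exactly the delicate clause in the paper's observation: when a point is appended to $\sigma$, the extended sequence is a window of the same host $S_{k-1}$ \emph{except} in the saturated case (the paper's condition that $\sigma$ contains $S_{k-2}\|(k-1)$ as a prefix, or its mirror for appends on the other side), and only then does the host grow to $S_k$. The correct conclusion is existential — the extended sequence is again \emph{some} window — not positional; in the saturated case the required re-embedding is a translation of the window by a multiple of $2^{M}$ (bringing its maximum to position $2^{M-1}$, so that the new color $M+1$ lands at position $2^{M}$), not the prefix embedding of $S_k$ into $S_{k+1}$ that you propose, which leaves the predicted value wrong. Alternatively you could repair the proof by maintaining the \emph{leftmost} embedding of every run as part of the induction data (for the leftmost embedding one can check that $\nu_2(b+1)\le M$ always holds for right appends, and its reflection for left appends) — but then the split step must re-canonicalize both fragments, which is precisely the bookkeeping your write-up omits. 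Everything downstream of this point (that the greedy color $j$ forces $\{1,\dots,j-1\}$ among the maxima, and the $2^{j-2}$ length bound) is fine once the window form is actually established.
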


\begin{proof}
 The proof proceeds by induction through the sequence of insertion steps
 and is based on the following observation.
 Let $\sigma$ be a contiguous subsequence
 of the complete binary tree sequence $S_{k-1}$, and let $x$ be a point
 added, say, to the left of $\sigma$.
 If we assign to $x$ color $c(x)$, using the UniMax greedy algorithm, then
 $(c(x))\| \sigma$ is a contiguous subsequence of either $S_{k-1}$ or
 $S_{k}$. The latter happens only if $\sigma$ contains $S_{k-2} \| (k-1)$
 as a prefix. Symmetric properties hold when $x$ is inserted to the right
 of $\sigma$. We omit the straightforward proof of this observation.\qquad\end{proof}

As a consequence we have.

\begin{theorem}[\cite{cf7}]\label{good}
{\rm(a)} The algorithm uses at most\/ $(2+\log n)\log n$ colors.

{\rm(b)} At any time, the coloring is a valid CF-coloring\@.

{\rm(c)} In the worst case the algorithm may be forced to use\/
$\Omega(\log^2n)$ colors after $n$ points are inserted.
\end{theorem}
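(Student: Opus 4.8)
For part~(a) the plan is to multiply two independent bounds. First, the number of distinct \emph{levels} is at most $\log n$: by observation~(e) the forest $F$ is a disjoint union of binomial trees, and a tree whose root sits at level $i$ has exactly $2^i$ nodes, so if $m$ is the largest level used then $2^m \le n$, giving $m \le \log n$. Second, the number of distinct \emph{colors used within a single level} is at most $2 + \log n$: by the second assertion of Lemma~\ref{tree-seq}, when the $j$th smallest color of a level is assigned to a point, the run it is appended to already has at least $2^{j-2}+1$ elements; since a run never has more than $n$ points, $2^{j-2}+1 \le n$, whence $j \le 2 + \log n$. Multiplying the at most $\log n$ levels by the at most $2+\log n$ colors per level yields the bound $(2+\log n)\log n$.

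For part~(b) I would fix a time $t$ and an interval $I$ meeting $P(t)$, and let $i$ be the maximum level occurring among the points of $I$. The level-$i$ points lying in $I$ are consecutive level-$i$ points of the whole configuration: any point strictly between two of them lies in $I$ (as $I$ is an interval), hence has level $\le i$, and level exactly $i$ is excluded by consecutiveness, so all intermediate points have level $< i$. By the definition of a run these level-$i$ points therefore form a contiguous block of a single run, and by Lemma~\ref{tree-seq} the colors along that run are a sequence of the form $C_0(a,b)$, i.e.\ a contiguous subword of some complete binary tree sequence $S_k$. A contiguous subword of $S_k$ has a unique maximum entry, so among the level-$i$ points of $I$ there is a single point $x^{*}$ carrying the largest level-$i$ color $c^{*}$. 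Its pair-color $(i,c^{*})$ is unique in $I$: every other point of $I$ either has level below $i$ (different first coordinate) or has level $i$ and, by uniqueness of the maximum, a color different from $c^{*}$. Hence $I$ has a uniquely colored point and the coloring is conflict-free. (In fact $(i,c^{*})$ is the lexicographically largest pair in $I$, so the coloring is even unique-maximum.)

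For part~(c) I would exhibit a single adversarial insertion order forcing $\Omega(\log^2 n)$ colors. Take $n = 2^m-1$ points at the integer positions $1,\dots,2^m-1$ and insert them in the order prescribed by the caption of Figure~\ref{2stalg}: first all points destined for level $1$ (the odd positions) from left to right, then all level-$2$ points (positions $\equiv 2 \pmod 4$) from left to right, and so on, level by level. I would verify by induction on the level that this order assigns the point at position $p$ the level $1+\nu_2(p)$, where $\nu_2$ is the $2$-adic valuation: for a level-$i$ point with $i\ge 2$ the two immediate neighbours are already-placed level-$1$ points, so level $1$ is blocked, every level $2,\dots,i-1$ is visible on both sides, and its own level is still open on the side toward which the sweep proceeds. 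The key point is that when the level-$i$ sweep runs, no point of level $\ge i$ other than the level-$i$ points has yet been inserted, and between two consecutive level-$i$ positions only lower-level points are present; hence all $2^{m-i}$ level-$i$ points form one run into which we insert strictly left to right. By Lemma~\ref{right}(a) the colors along this run are $C_0(1,2^{m-i})$, which uses at least $m-i$ distinct colors. As colors at different levels are distinct pairs, the total number of colors is at least $\sum_{i=1}^{m-1}(m-i) = \binom{m}{2} = \Omega(m^2) = \Omega(\log^2 n)$.

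The main obstacle is the bookkeeping in part~(c): one must check that the prescribed sweep really realizes the level assignment $1+\nu_2(p)$ — in particular that every lower level is simultaneously \emph{visible} on both sides of each newly inserted point, and that the level-$i$ points are never split into several runs before they are all colored. Both facts follow from the self-similar structure of $S_m$ (the levels to the left and right of a level-$i$ point reproduce a smaller copy of the pattern), but they require the inductive verification sketched above. Parts~(a) and~(b) are, by contrast, essentially immediate consequences of Lemma~\ref{tree-seq} together with the binomial-tree count of observation~(e).
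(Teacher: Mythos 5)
Your proposal is correct and takes essentially the same approach as the paper: part (a) combines the binomial-tree bound on the number of levels with the second assertion of Lemma~\ref{tree-seq} exactly as the paper does, part (b) is the paper's argument (maximum level in $I$, the level-$\ell$ points form a contiguous piece of a run, and a $C_0(a,b)$ sequence has a unique maximum), and part (c)'s insertion order by $2$-adic valuation is precisely the paper's level-by-level batch construction written in explicit coordinates. The only difference is cosmetic bookkeeping (your per-level count of $m-i$ colors versus the paper's $k-i+1$), which does not affect the $\Omega(\log^2 n)$ bound.
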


\begin{proof}
(a) We have already argued that the number of levels is at most
$\log n$. Within a level $i$, the $k$th smallest color is
assigned when a run contains at least $2^{k-2}$ points. Hence
$2^{k-2}\le n$, or $k\le 2+\log n$, and (a)~follows.

To show~(b), consider an arbitrary interval $I$. Let $\ell$ be
the highest level of a point in $I$. Let $\sigma=(y_1 , y_2
,\ldots , y_j)$ be the sequence of the points in $I$ of level
$\ell$. Since $\ell$ is the highest level in $I$, $\sigma$ is a
contiguous subsequence of some run, and, by Lemma~\ref{tree-seq},
the sequence of the colors of its points is also of the form
$C_0(a',b')$. Hence, there is a point $y_i \in \sigma$ which is
uniquely colored among $y_1, y_2, \ldots ,  y_j$ by a color of
level~$\ell$.\enlargethispage*{4pt}

To show~(c), we construct a sequence $P$ so as to force its
coloring to proceed level by level. We first insert
$2^{k-1}$~points from left to right, thereby making them all be
assigned to level~1 and colored with $k$ different colors of that
level. Let $P_1$ denote the set of these points. We next insert a
second batch of $2^{k-2}$~points from left to right. The first
point is inserted between the first and second points of $P_1$,
the second point between the third and fourth points of $P_1$,
and so on, where the $j$th new point is inserted between the
$(2j-1)$th and $(2j)$th points of $P_1$. By construction, all
points in the second batch are assigned to level~2, and they are
colored with $k-1$ different colors of that level. Let $P_2$
denote the set of all points inserted so far. $P_2$ is the
concatenation of $2^{k-2}$~triples, where the levels in each
triple are $(1,2,1)$. We now insert a third batch of
$2^{k-3}$~points from left to right. The first point is inserted
between the first and second triples of $P_2$, the second point
between the third and fourth triples of $P_2$, and so on, where
the $j$th new point is inserted between the $(2j-1)$th and
$(2j)$th triples of $P_2$. By construction, all points in the
third batch are assigned to level~3, and they are colored with
$k-2$ different colors of that level.

The construction is continued in this manner. Just before
inserting the $i$th batch of $2^{k-i}$ points, we have a set
$P_{i-1}$ of $2^{k-1}+\cdots + 2^{k-i+1}$ points, which is the
concatenation of $2^{k-i+1}$ tuples, where the sequences of
levels in each of these tuples are all identical and equal to the
``complete binary tree sequence'' $C_0(1,2^{i-1}-1)$, as defined
above (whose elements now encode levels rather than colors). The
points of the $i$th batch are inserted from left to right, where
the $j$th point is inserted between the $(2j-1)$th and $(2j)$th
tuples of $P_{i-1}$. By construction, all points in the $i$th
batch are assigned to level $i$ and are colored with $k-i+1$
different colors of that level. Proceeding in this manner, we end
the construction by inserting the $(k-1)$th batch, which consists
of a single point that is assigned to level $k$. Altogether we
have inserted $n=2^k-1$ points and forced the algorithm to use
$k+(k-1)+\cdots +1 = k(k+1)/2 = \Omega(\log^2n)$ different
colors.\qquad\end{proof}

%

 Given that the only known lower bound
for this online CF-coloring problem is $\Omega(\log n)$ which
holds also in the static problem, its a major open problem to
close the gap with the $O(\log^2 n)$ upper bound provided by the
algorithm above.

\begin{problem}
Find a deterministic online CF-coloring for coloring points with
respect to intervals which uses $o(\log^2 n)$ colors in the worst
case or improve the $\Omega(\log n)$ lower bound.
\end{problem}

\paragraph{Other Online models}
For the case of online CF-coloring points with respect to intervals, other models of a weaker adversary were studied in \cite{BarNoy2}. For example, a natural assumption is that the adversary reveals, for a newly inserted point, its final position among the set of all points in the end of the online input. This is referred to as the {\em online absolute positions model}. In this model an online CF-coloring algorithm that uses at most $O(\log n)$ colors is presented in \cite{BarNoy2}.

\subsubsection{points with respect to halfplanes or unit discs}
In \cite{cf7} it was shown that the two-dimensional variant of
online CF-coloring a given sequence of inserted points with
respect to arbitrary discs is hopeless as there exists sequences
of $n$ points for which every CF-coloring requires $n$ distinct
colors. However if we require a CF-coloring with respect to
congruent discs or with respect to half-planes, there is some
hope. Even though no efficient deterministic online algorithms
are known for such cases, some efficient randomized algorithms
that uses expected $O(\log n)$ colors are provided in \cite{cf6,BarNoy3}
under the assumption that the adversary is oblivious to the
random bits used by the algorithm.

Chen Kaplan and Sharir \cite{cf6} introduced an $O(log^3 n)$
deterministic algorithm for online CF-coloring any $n$
nearly-equal axis-parallel rectangles in the plane.

\subsubsection{Degenerate hypergraphs}
Next, we describe the general framework of \cite{BarNoy3} for
online CF-coloring any hypergraph. This framework is used to
obtain efficient randomized online algorithms for hypergraphs
provided that a special parameter referred to as the {\em
degeneracy} of the underlying hypergraph is small. This notion
extends the notion of a degenerate graph to that of a hypergraph:

\begin{definition}\label{defn:hypergraphdegenerate}
Let $k > 0$ be a fixed integer and let $H=(V,E)$ be a hypergraph
on the $n$ vertices $v_1$, \ldots, $v_n$. For a permutation $\pi
\colon \oneton \to \oneton$ define the $n$ partial sums, indexed
by $t=1,\dots,n$,
\[
S^\pi_t = \sum_{j=1}^{t} d(v_{\pi(j)}),
\]
where
\[
d(v_{\pi(j)}) =
 \bigl|\bigl\{ i < j \mid
   \{v_{\pi(i)},v_{\pi(j)}\} \in
      G(H(\{v_{\pi(1)},...,v_{\pi(j)}\}))
 \bigr\}\bigr|,
\]
that is, $d(v_{\pi(j)})$ is the number of neighbors of
$v_{\pi(j)}$ in the Delaunay graph of the hypergraph induced by
$\{v_{\pi(1)},...,v_{\pi(j)}\}$. Assume that for all permutations
$\pi$ and for every $t \in \oneton$ we have
\begin{equation}
S^\pi_t \leq k t.  \label{eq:kdegeneracy}
\end{equation}
Then, we say that $H$ is $k$-\emph{degenerate}.
\end{definition}

Let $H=(V,E)$ be any hypergraph. We define a framework that colors
the vertices of $V$ in an online fashion, i.e., when the vertices
of $V$ are revealed by an adversary one at a time. At each time
step $t$, the algorithm must assign a color to the newly revealed
vertex $v_t$. This color cannot be changed in future times $t' >
t$. The coloring has to be conflict-free for all the induced
hypergraphs $H(V_t)$ with $t=1,\ldots,n$, where $V_t \subseteq V$
is the set of vertices revealed by time $t$.

For a fixed positive integer $h$, let $A=\{a_1,\dots,a_{h}\}$ be a
set of $h$ \emph{auxiliary} colors. This auxiliary colors set
should not be confused with the set of \emph{main} colors used
for the conflict-free coloring: \{1, 2, \dots\}.
Let $f \colon \Positives \rightarrow A$ be some fixed function.
In the following, we define the framework that depends on the
choice of the function $f$ and the parameter $h$.

A table (to be updated online) is maintained with row entries
indexed by the variable $i$ with range in $\Positives$. Each row
entry $i$ at time $t$ is associated with a subset $V^i_t
\subseteq V_t$ in addition to an auxiliary proper
non-monochromatic coloring of $H(V^i_t)$ with at most $h$ colors. The subsets $V^i_t$ are nested. Namely, $V^{i+1}_t \subset V^i_t$ for every $i$. Informally, we think of a newly inserted vertex as trying to reach its final entry
by some decision process. It starts with entry $1$ and continue ``climbing" to higher levels as long as it does not
succeed to get its final color.
We say that $f(i)$ is the auxiliary color that {\em represents}
entry $i$ in the table. At the beginning all entries of the table
are empty. Suppose all entries of the table are updated until
time $t-1$ and let $v_t$ be the vertex revealed by the adversary
at time $t$.
The framework first checks if an auxiliary color can be assigned
to $v_t$ such that the auxiliary coloring of $V^1_{t-1}$ together
with the color of $v_t$ is a proper non-monochromatic coloring of
$H(V^1_{t-1} \cup \{v_t\})$. Any (proper non-monochromatic)
coloring procedure can be used by the framework. For example a
first-fit greedy method in which all colors in the order $a_1$,
\ldots, $a_h$ are checked until one is found.
If such a color cannot be found for $v_t$, then entry $1$ is left
with no changes and the process continues to the next entry.
If however, such a color can be assigned, then $v_t$ is added to
the set $V^1_{t-1}$. Let $c$ denote such an auxiliary color
assigned to $v_t$. If this color is the same as $f(1)$ (the
auxiliary color that represents entry $1$), then the final color
in the online conflict-free coloring of $v_t$ is $1$ and the
updating process for the $t$-th vertex stops. Otherwise, if an
auxiliary color cannot be found or if the assigned auxiliary
color is not the same as $f(1)$, then the updating process
continues to the next entry. The updating process stops at the
first entry $i$ for which $v_t$ is both added to $V^i_t$ and the
auxiliary color assigned to $v_t$ is the same as $f(i)$.
Then, the main color of $v_t$ in the final conflict-free coloring
is set to $i$. See Figure~\ref{fig:table} for an illustration.

 \begin{figure}[htbp]
   \begin{center}
      \includegraphics[width=0.5\textwidth ]{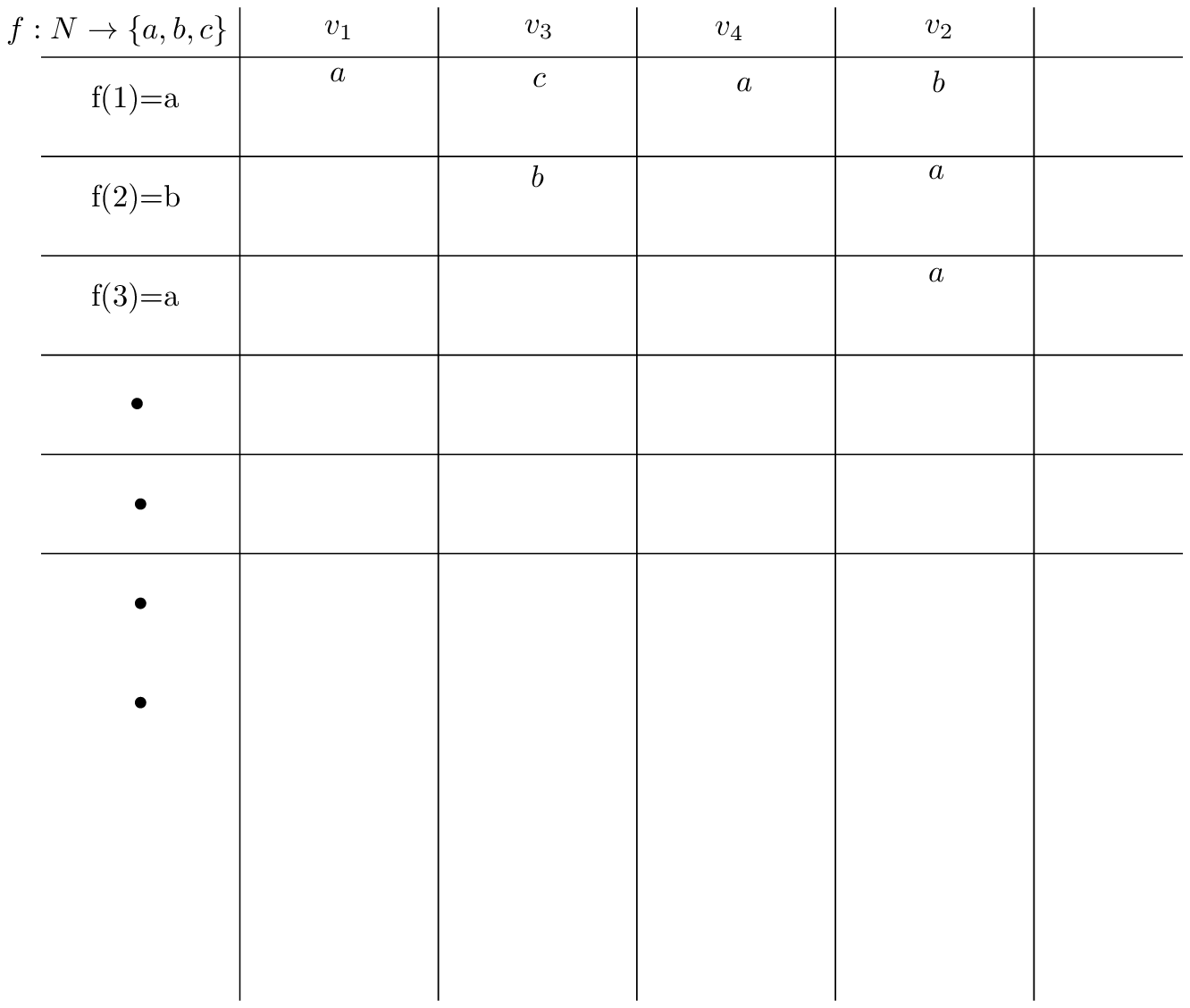}
      \caption{An example of the updating process of the table for the hypergraph induced by points with respect to intervals. $3$ auxiliary colors denoted $\{a,b,c\}$ are used. In each line $i$ the auxiliary coloring is given. It serves as a proper coloring for the hypergraphs $H(V_t^i)$ induced by the subset $V_t^i$ of all points revealed up to time $t$ that reached line $i$. The first point $v_1$ is inserted to the left. The second point $v_2$ to the right and the third point $v_3$ in the middle, etc. For instance, at the first entry (i.e., line) of the table, the auxiliary color of $v_2$ is $b$. In the second line it is $a$ and in the third line it is $a$. Since $f(3)=a$, the final color of $v_2$ is $3$. Similarly, the final color of $v_1$ is $1$, of $v_3$ is $2$, and of $v_4$ is $1$.}
            \label{fig:table}
   \end{center}
  \end{figure}

It is possible that $v_t$ never gets a final color. In this case
we say that the framework does not halt.
However, termination can be guaranteed by imposing some
restrictions on the auxiliary coloring method and the choice of
the function $f$. For example, if first-fit is used for the
auxiliary colorings at any entry and if $f$ is the constant
function $f(i)=a_1$, for all $i$, then the framework is
guaranteed to halt for any time $t$.
Later, a randomized online algorithm based on this framework is
derived under the oblivious adversary model. This algorithm always
halts, or to be more precise halts with probability 1, and
moreover it halts after a ``small'' number of entries with high
probability. We prove that the above framework produces a valid
conflict-free coloring in case it halts.

\begin{lemma}\label{lemma:framework_correctness}
If the above framework halts for any vertex $v_t$ then it produces
a valid online conflict-free coloring of $H$.
\end{lemma}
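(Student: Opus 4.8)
The plan is to mirror the correctness argument for Algorithm~\ref{CF-framework} given in \secref{general}, using the main colors (the entry indices) in place of the iteration indices there. The statement to prove is that at every time $t$ the main coloring is conflict-free for $H(V_t)$, so I fix an arbitrary time $t$ and an arbitrary hyperedge $S = V_t \cap r \in H(V_t)$ and exhibit a uniquely colored vertex of $S$. Let $i$ be the largest main color assigned to a vertex of $S$; I will show that exactly one vertex of $S$ carries color $i$, which gives the conflict-free property.

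First I would record two structural facts that follow directly from the description of the updating process. (i) A vertex $v$ receives final main color $i$ precisely when $i$ is the first entry at which $v$ is added to the set $V^i$ and the auxiliary color assigned to $v$ at that entry equals $f(i)$; in particular $v \in V^i_t$ and its entry-$i$ auxiliary color is $f(i)$. (ii) If a vertex $w$ lies in $V^j_t$ but its auxiliary color at entry $j$ differs from $f(j)$, then the stopping condition was not met at entry $j$, so $w$ continued to a strictly higher entry; since the framework halts, the final main color of $w$ is strictly greater than $j$. (Here I use that being added to entry $j$ forces $w$ to have passed every lower entry without stopping, so its final color cannot be below $j$ either.)

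Next comes the core argument, by contradiction. Suppose two distinct vertices $x,y \in S$ both receive main color $i$. By fact (i), $x,y \in V^i_t$ and both are assigned auxiliary color $f(i)$ at entry $i$. Consider the restriction $S' := S \cap V^i_t = V^i_t \cap r$. Since $V^i_t \subseteq V_t$, the set $S'$ is a genuine hyperedge of the induced sub-hypergraph $H(V^i_t)$, and it contains the two distinct vertices $x,y$, so $\cardin{S'} \geq 2$. By the invariant maintained by the framework, the auxiliary coloring stored at entry $i$ is a proper (non-monochromatic) coloring of $H(V^i_t)$; hence $S'$ cannot be monochromatic. As $x$ and $y$ share the auxiliary color $f(i)$, there must be a third vertex $z \in S'$ whose entry-$i$ auxiliary color differs from $f(i)$. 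By fact (ii) applied with $j = i$, the final main color of $z$ exceeds $i$. But $z \in S' \subseteq S$, contradicting the maximality of $i$ in $S$. Therefore at most one vertex of $S$ has color $i$, and since $i$ is attained it is attained exactly once.

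The step I expect to require the most care is verifying the invariant implicit in fact (i) and in the use of properness above: namely that, throughout the online insertions, the data stored at each entry $i$ is always a proper non-monochromatic coloring of the sub-hypergraph $H(V^i_t)$ induced by the vertices that have reached entry $i$ up to time $t$, and that membership in $V^i_t$ coincides with having been added to entry $i$. This is exactly what the checking rule for each entry guarantees, since a vertex is added to an entry only together with an auxiliary color that keeps the coloring of that entry proper; but it must be phrased as an invariant maintained over the whole insertion sequence rather than simply assumed. Once this invariant is established, the remainder is the verbatim analogue of the \theoref{CF-framework} argument, and since the reasoning is uniform in $t$, it yields conflict-freeness simultaneously for every $H(V_t)$.
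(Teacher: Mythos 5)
Your proposal is correct and follows essentially the same argument as the paper's own proof: assume two vertices of a hyperedge $S$ share the maximal main color $i$, note both must lie in $V^i_t$ with auxiliary color $f(i)$, invoke properness of the entry-$i$ auxiliary coloring on $S \cap V^i_t$ to produce a third vertex with a different auxiliary color, and conclude that this vertex's final color exceeds $i$, contradicting maximality. The extra bookkeeping you add (facts (i) and (ii) and the explicit invariant about the table entries) is exactly what the paper uses implicitly, so there is no substantive difference.
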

\begin{proof}
Let $H(V_t)$ be the hypergraph induced by the vertices already
revealed at time $t$. Let $S$ be a hyperedge in this hypergraph
and let $j$ be the maximum integer for which there is a vertex
$v$ of $S$ colored with $j$. We claim that exactly one such
vertex in $S$ exists. Assume to the contrary that there is
another vertex $v'$ in $S$ colored with $j$. This means that at
time $t$ both vertices $v$ and $v'$ were present at entry $j$ of
the table (i.e., $v,v' \in V^j_t$) and that they both got an
auxiliary color (in the auxiliary coloring of the set $V^j_t$)
which equals $f(j)$. However, since the auxiliary coloring is a
proper non-monochromatic coloring of the induced hypergraph at
entry $j$, $S\cap V^j_t$ is not monochromatic so there must exist
a third vertex $v'' \in S \cap V^j_t$ that was present at entry
$j$ and was assigned an auxiliary color different from $f(j)$.
Thus, $v''$ got its final color in an entry greater than $j$, a
contradiction to the maximality of $j$ in the hyperedge $S$. This
completes the proof of the lemma.\qquad
\end{proof}

The above algorithmic framework can also describe some well-known
deterministic algorithms. For example, if first-fit is used for
auxiliary colorings and $f$ is the constant function, $f(i)=a_1$,
for all $i$, then, for the hypergraph induced by points on a line
with respect to intervals, the algorithm derived from the
framework becomes identical to the UniMax greedy algorithm
described above.

\paragraph{An online randomized conflict-free coloring algorithm}
We devise a randomized online conflict-free coloring algorithm in
the oblivious adversary model. In this model, the adversary has
to commit to a permutation according to the order of which the
vertices of the hypergraph are revealed to the algorithm. Namely,
the adversary does not have access to the random bits that are
used by the algorithm. The algorithm always produces a valid
coloring and the number of colors used is related to the
degeneracy of the underlying hypergraph in a manner described in
the following theorem.

\begin{theorem}[\cite{BarNoy3}]
\label{degenerate_main} Let $H=(V,E)$ be a $k$-degenerate
hypergraph on $n$ vertices. Then, there exists a randomized online
conflict-free coloring algorithm for $H$ which uses at most
$O(\log_{1+\frac{1}{4k+1}}n) = O(k \log n)$ colors with high
probability against an oblivious adversary.
\end{theorem}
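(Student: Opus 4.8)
The plan is to instantiate the general online framework developed above with a suitable number $h$ of auxiliary colors and a \emph{randomly} chosen representative function $f$, and then to show that, against an oblivious adversary, this instantiation halts and reaches only $O(k\log n)$ entries with high probability. Correctness is free: by Lemma~\ref{lemma:framework_correctness}, whenever the process halts it produces a valid online conflict-free coloring, and the number of \emph{main} colors used is exactly the largest entry index reached by any vertex. Thus the whole task reduces to bounding this largest entry. Concretely, I would set $h=\Theta(k)$ (the exact constant tuned below), let the auxiliary non-monochromatic colorings at every entry be produced by first-fit on the Delaunay graph of the induced sub-hypergraph, and draw each value $f(i)$ independently and uniformly from the $h$ auxiliary colors. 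Since the adversary is oblivious, $f(i)$ is independent of the adversary's insertion order and of all choices made at entries $1,\dots,i-1$.

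The heart of the argument is a one-step contraction estimate for the sizes of the nested active sets. Let $R_i$ denote the set of all inserted vertices that ever reach entry $i$, and write $X_i=\cardin{R_i}$, so that $X_1=n$. A vertex in $R_i$ \emph{stops} at entry $i$ precisely when it receives an auxiliary color equal to $f(i)$; otherwise it proceeds to $R_{i+1}$. I would split $R_i$ into the vertices $V^i$ that succeed in getting an auxiliary color at entry $i$ and the vertices that \emph{fail} (all $h$ auxiliary colors already appear among their Delaunay neighbors). Here the two defining features of the problem enter separately. First, $k$-degeneracy (Definition~\ref{defn:hypergraphdegenerate}) bounds the failures: a vertex can fail only if its online Delaunay back-degree is at least $h$, and since for the adversary's order the back-degrees summed over $R_i$ are at most $k\cardin{R_i}$ by \eqref{eq:kdegeneracy}, at most an $O(k/h)$-fraction of $R_i$ can fail. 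Second, the freshly-drawn, independent color $f(i)$ guarantees that each successfully colored vertex stops with probability exactly $1/h$, so by linearity of expectation a $\frac{1}{h}\bigl(1-O(k/h)\bigr)$-fraction of $R_i$ stops on average. Optimizing the choice $h=\Theta(k)$ makes this fraction at least $\frac{1}{4k+1}$, yielding $\Ex[X_{i+1}\mid \mathcal F_i]\le\bigl(1-\frac{1}{4k+1}\bigr)X_i$, where $\mathcal F_i$ records everything up to entry $i$.

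Iterating the contraction gives $\Ex[X_{i+1}]\le\bigl(1-\frac{1}{4k+1}\bigr)^{i}n$. Choosing $i=\Theta\bigl(\log_{1+\frac{1}{4k+1}}n\bigr)=\Theta(k\log n)$ drives this expectation below $n^{-c}$ for any desired constant $c$, so by Markov's inequality $\Pr[X_{i+1}\ge 1]$ is polynomially small; that is, with high probability \emph{no} vertex ever reaches entry $i+1$, and the algorithm halts using at most $O\bigl(\log_{1+\frac{1}{4k+1}}n\bigr)=O(k\log n)$ colors. I expect the main obstacle to be the contraction estimate itself, and within it the failure bound: one must argue cleanly that the high-back-degree (failing) vertices are controlled by the averaged bound~\eqref{eq:kdegeneracy} even though the auxiliary colorings act on the colored subset $V^i$ rather than on all of $R_i$, and one must verify that the independence of $f(i)$ survives the conditioning on $\mathcal F_i$ so that the $1/h$ stopping probability is exact. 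Balancing these two effects to extract the specific base $1+\frac{1}{4k+1}$ is the delicate quantitative step; the passage from expectation to a high-probability statement is then routine.
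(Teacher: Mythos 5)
Your proposal is correct and takes essentially the same route as the paper: you instantiate the online framework with $\Theta(k)$ auxiliary colors (the paper uses exactly $2k+1$), a uniformly random representative color $f(i)$ per entry, and first-fit auxiliary colorings, and your one-step contraction estimate is precisely the paper's Lemma~\ref{expectations} --- degeneracy bounds the fraction of vertices that fail to receive an auxiliary color, obliviousness gives each colored vertex an exact $1/h$ stopping probability, and Markov's inequality turns the iterated expectation bound into the high-probability statement, exactly as in the paper's concentration computation. The only deviations are immaterial constants (the paper's contraction factor is $\frac{4k+1}{4k+2}$, matching $\log_{1+\frac{1}{4k+1}}n = \log_{\frac{4k+2}{4k+1}}n$), and the subtlety you flag about applying the averaged degeneracy bound to the colored subset $V^i$ rather than to all vertices reaching entry $i$ is handled at the same level of detail in the paper itself.
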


The algorithm is based on the  framework presented above. In order
to define the algorithm, we need to state what is (a)~the set of
auxiliary colors of each entry, (b)~the function $f$, and (c)~the
algorithm we use for the auxiliary coloring at each entry.
We use the set of auxiliary colors $A = \{a_1, \dots,
a_{2k+1}\}$. For each entry $i$, the representing color $f(i)$ is
chosen uniformly at random from $A$. We use a first-fit algorithm
for the auxiliary coloring.

Our assumption on the hypergraph $H$ (being $k$-degenerate)
implies that at least half of the vertices up to time $t$ that
\emph{reached} entry $i$ (but not necessarily added to entry $i$),
denoted by $X^t_{\geq i}$, have been actually given some auxiliary
color at entry $i$ (that is, $\cardin{V^i_t} \geq
\frac{1}{2}\cardin{X^t_{\geq i}}$). This is due to the fact that
at least half of those vertices $v_t$ have at most $2k$ neighbors
in the Delaunay graph of the hypergraph induced by $X^{t-1}_{\geq
i}$ (since the sum of these quantities is at most
$k\cardin{X^t_{\geq i}}$ and since $V^i_t \subseteq X^t_{\geq
i}$). Therefore, since we have $2k+1$ colors available, there is
always an available  color to assign to such a vertex. The
following lemma shows that if we use one of these available
colors then the updated coloring is indeed a proper
non-monochromatic coloring of the corresponding induced
hypergraph as well.

\begin{lemma}
\label{auxiliary} Let $H=(V,E)$ be a $k$-degenerate hypergraph
and let $V^j_t$ be the subset of $V$ at time $t$ and at level $j$
as produced by the above algorithm. Then, for any $j$ and $t$ if
$v_t$ is assigned a color distinct from all its neighbors in the
Delaunay graph $G(H(V^j_t))$ then this color together with the
colors assigned to the vertices $V^j_{t-1}$ is also a proper
non-monochromatic coloring of the hypergraph $H(V^j_t)$.
\end{lemma}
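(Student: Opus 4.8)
The plan is to prove this by induction on the insertion time $t$, where the inductive hypothesis is precisely the invariant that the algorithm maintains at each entry: namely, that the auxiliary coloring restricted to $V^j_{t-1}$ is already a proper non-monochromatic coloring of $H(V^j_{t-1})$. Under this reading the lemma is exactly the inductive step, so it suffices to show that extending such a coloring by giving $v_t$ a color distinct from all its neighbors in $G(H(V^j_t))$ keeps every hyperedge of size at least two non-monochromatic (here $V^j_t = V^j_{t-1}\cup\{v_t\}$, since if $v_t$ is not added to level $j$ there is nothing to prove).

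The key structural observation I would record first concerns induced sub-hypergraphs: every hyperedge $e$ of $H(V^j_t)$ has the form $e = S\cap V^j_t$ for some $S\in\E$, and therefore $e\setminus\{v_t\} = S\cap V^j_{t-1}$ is a hyperedge of $H(V^j_{t-1})$. In words, deleting the newly inserted vertex from any hyperedge of the larger hypergraph always produces a (possibly empty or singleton) hyperedge of the smaller one, which is what allows the induction hypothesis to be applied.

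With this in hand I would fix an arbitrary hyperedge $e$ of $H(V^j_t)$ with $\cardin{e}\ge 2$ and split into three cases. If $v_t\notin e$, then $e$ is itself a hyperedge of $H(V^j_{t-1})$ of size at least two, hence non-monochromatic by the induction hypothesis. If $v_t\in e$ and $\cardin{e\setminus\{v_t\}}\ge 2$, then by the observation $e\setminus\{v_t\}$ is a hyperedge of $H(V^j_{t-1})$ of size at least two, so it is non-monochromatic by induction, and therefore so is the larger set $e$. The only remaining possibility is $v_t\in e$ with $\cardin{e}=2$, say $e=\{v_t,u\}$; but then $\{v_t,u\}$ is a size-two hyperedge of $H(V^j_t)$, i.e.\ an edge of the Delaunay graph $G(H(V^j_t))$, so $u$ is a neighbor of $v_t$ there and by hypothesis receives a color different from that of $v_t$. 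In all cases $e$ is non-monochromatic, completing the step.

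The single delicate point — and the only place where the Delaunay-graph hypothesis is actually used — is the case $\cardin{e}=2$ with $v_t\in e$: deleting $v_t$ then leaves a singleton, which carries no non-monochromaticity constraint, so the induction hypothesis alone cannot rescue such edges. This is exactly the gap that the assumption on $v_t$'s neighbors in $G(H(V^j_t))$ is designed to close. I would finally remark that the $k$-degeneracy of $H$ plays no role in this correctness statement; it is invoked only in the surrounding analysis of Theorem~\ref{degenerate_main}, to bound the number of main colors used and to guarantee that the framework halts.
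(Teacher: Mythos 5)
Your proof is correct, and it is genuinely different from what the paper offers --- in fact it supplies an argument the paper essentially omits. The paper's entire proof of this lemma is the single line ``Follows from Lemma~\ref{lemma:framework_correctness}'', which is more a pointer than a proof, and arguably points in the wrong direction: Lemma~\ref{lemma:framework_correctness} (correctness of the whole framework) itself \emph{presupposes} that the auxiliary coloring kept at each entry is a proper non-monochromatic coloring of the induced hypergraph, and the present lemma is exactly what is needed to certify that this invariant survives when the algorithm vets $v_t$ only against its neighbors in the Delaunay graph. Your induction on $t$ fills that hole: the restriction identity $e \setminus \{v_t\} = S \cap V^j_{t-1}$ shows that hyperedges of $H(V^j_t)$ project to hyperedges of $H(V^j_{t-1})$; the inductive invariant then disposes of every hyperedge that either avoids $v_t$ or meets $V^j_{t-1}$ in at least two vertices; and the only remaining hyperedges --- those of cardinality two containing $v_t$ --- are precisely the Delaunay edges covered by the hypothesis. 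You are also right on both side remarks: the $k$-degeneracy of $H$ plays no role in this correctness statement (it enters only in the proof of Theorem~\ref{degenerate_main}, to guarantee that a palette of $2k+1$ auxiliary colors suffices for at least half of the vertices reaching an entry), and the Delaunay hypothesis is needed exactly once, for the size-two case. The paper's one-liner buys brevity; your version buys an actual verification and makes transparent where each hypothesis is (and is not) used.
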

\begin{proof}
Follows from Lemma~\ref{lemma:framework_correctness}
\end{proof}

We also prove that for every vertex $v_t$, the algorithm always
halts, or more precisely halts with probability 1.

\begin{proposition}
For every vertex $v_t$, the algorithm halts with probability 1.
\end{proposition}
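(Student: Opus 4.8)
The plan is to prove the statement by induction on the insertion time $t$, with the inductive hypothesis that each of $v_1,\dots,v_{t-1}$ halts with probability $1$. The base case $t=1$ is immediate: the first point is alone at every entry, so first-fit assigns it the auxiliary color $a_1$ at each entry $i$, and it halts at the first $i$ with $f(i)=a_1$; since the values $f(i)$ are independent and uniform on $A$, such an $i$ exists almost surely.

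For the inductive step, observe first that in any run of the algorithm that actually reaches the processing of $v_t$, the earlier points $v_1,\dots,v_{t-1}$ must already have received final colors. By the inductive hypothesis the event that all of them halt has probability $1$, and on this event the maximum entry $M$ occupied by $v_1,\dots,v_{t-1}$ is finite almost surely (a union bound over the finitely many earlier points gives $\prob{M=\infty}=0$). The key observation is then that for every entry $i>M$ the nested set $V^i_{t-1}$ is empty, because a point is added only to entries not exceeding its final entry. Consequently, once $v_t$ climbs to such an entry it is alone there, has no neighbour in the relevant Delaunay graph, is assigned the color $a_1$ by first-fit, and halts at entry $i$ precisely when $f(i)=a_1$ (halting at some entry $\le M$ only helps). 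Thus it suffices to show that almost surely there is some $i>M$ with $f(i)=a_1$.

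To make the independence explicit I would condition on $\{M=m\}$ for each fixed $m\in\Positives$. The event $\{M=m\}$ is determined by $f(1),\dots,f(m)$, since the trajectories of the earlier points only involve those entries, whereas $\{f(i)=a_1\}_{i>m}$ depends only on the independent coordinates $f(m+1),f(m+2),\dots$; hence the two are independent. Since
\[
\prob{f(i)\neq a_1 \text{ for all } i>m}=\lim_{N\to\infty}\Bigl(1-\tfrac{1}{2k+1}\Bigr)^{N}=0,
\]
we obtain $\prob{v_t \text{ never halts},\,M=m}=0$ for every $m$, and summing over the countably many values of $m$ (together with $\prob{M=\infty}=0$) yields $\prob{v_t \text{ never halts}}=0$, completing the induction.

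The main obstacle I anticipate is precisely this independence bookkeeping: one must verify that $M$ is a stopping-time-like quantity, measurable with respect to the ``early'' coordinates $f(1),\dots,f(M)$, so that the tail coordinates remain i.i.d.\ uniform conditionally on $M$; once this is set up, the remainder is a routine geometric estimate. A secondary point worth recording is that the $k$-degeneracy hypothesis is not actually needed for mere halting---degeneracy is what keeps the number of colors small in Theorem~\ref{degenerate_main}, but halting follows solely from the eventual isolation of $v_t$ at high entries and the randomness of $f$.
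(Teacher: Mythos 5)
Your proof is correct and rests on the same core mechanism as the paper's: beyond the finitely many occupied entries, $v_t$ is alone at every entry it reaches, is assigned $a_1$ by first-fit, and halts there independently with probability $\frac{1}{2k+1}$, so the failure probability is an infinite product of factors $\left(1-\frac{1}{2k+1}\right)$, which vanishes. The paper compresses this into a single chain of (in)equalities over ``empty entries,'' whereas you add the induction on $t$ and the explicit conditioning on the maximal occupied entry $M$ that make the independence claim rigorous --- worthwhile bookkeeping of what the paper leaves implicit, but not a different argument.
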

\begin{proof}

\begin{gather*}
  \Pr[\text{algorithm does not halt for $v_t$}] = \\
  \Pr[\text{algorithm does not assign a main color to $v_t$
            in any entry}] \leq \\
  \Pr[\text{algorithm does not assign a main color to $v_t$
            in any empty entry}] = \\
  \Pr[\bigcap_{i\colon \text{empty entry}}
        (\text{algorithm does not assign a main color to
               $v_t$ in entry $i$})] =
                                                \displaybreak[0]\\
  \prod_{i\colon \text{empty entry}}
      \Pr[\text{algorithm does not assign a main color to
                $v_t$ in entry $i$}] =
                                                \displaybreak[0]\\
  \prod_{i\colon \text{empty entry}} (1-h^{-1}) =
  \lim_{j \to \infty} (1-h^{-1})^j = 0
\end{gather*}
and therefore \(\Pr[\text{algorithm halts for $v_t$}] = 1\).\qquad
\end{proof}

We proceed to the analysis of the number of colors used by the
algorithm, proving theorem~\ref{degenerate_main}.

\begin{lemma}
\label{expectations} Let $H=(V,E)$ be a hypergraph and let $C$ be
a coloring produced by the above algorithm on an online input
$V=\{v_t\}$ for $t=1,\ldots,n$. Let $X_i$ (respectively $X_{\geq
i}$) denote the random variable counting the number of points of
$V$ that were assigned a final color at entry $i$ (respectively a
final color at some entry $\geq i$). Let $\Ex_i = \Ex[X_i]$ and
$\Ex_{\geq i} = \Ex[X_{\geq i}]$ (note that $X_{\geq i+1} =
X_{\geq i} -X_i$). Then:
$$
\Ex_{\geq i} \leq   \left(\frac{4k+1}{4k+2} \right)^{i-1}n .
$$
\end{lemma}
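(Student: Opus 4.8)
The plan is to reduce the bound to a one-step multiplicative recursion and then iterate. Concretely, I would prove that for every $i \geq 1$,
$$\Ex_{\geq i+1} \leq \frac{4k+1}{4k+2}\,\Ex_{\geq i},$$
and combine this with the base case $\Ex_{\geq 1} = n$ (every vertex reaches entry $1$, so $X_{\geq 1}=n$ deterministically) to obtain $\Ex_{\geq i} \leq \left(\frac{4k+1}{4k+2}\right)^{i-1} n$ by induction on $i$. Writing $X_{\geq i+1} = X_{\geq i} - X_i$, the recursion amounts to showing that, in expectation, a $\frac{1}{4k+2}$-fraction of the vertices reaching entry $i$ stop there, i.e. receive their final color $i$.

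To prove the recursion I would condition on the history $\mathcal{F}_{<i} := (f(1),\dots,f(i-1))$ of representing colors assigned to the lower entries, and use two facts. First, by the degeneracy property established just before Lemma~\ref{auxiliary}, at least half of the vertices reaching entry $i$ actually get an auxiliary color there, i.e. $\cardin{V^i} \geq \frac{1}{2}X_{\geq i}$; this holds for every realization of the randomness, since it only uses the $k$-degeneracy of $H$ together with the $2k+1$ available auxiliary colors and first-fit. Second---and this is the crux---conditioned on $\mathcal{F}_{<i}$, the set $V^i$ of vertices colored at entry $i$, together with the auxiliary color $\mathrm{aux}(v)$ of each $v \in V^i$, is completely determined: the sub-sequence of vertices that reach entry $i$ is fixed by which vertices stopped at the lower entries (a function of $\mathcal{F}_{<i}$ alone), and the first-fit auxiliary coloring applied to them at entry $i$ does not consult $f(i)$---the representing color $f(i)$ only decides which of the already-colored vertices stop at entry $i$. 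Since $f(i)$ is drawn uniformly from the $2k+1$ auxiliary colors independently of $\mathcal{F}_{<i}$, for each fixed $v \in V^i$ we get $\Pr[\,\mathrm{aux}(v) = f(i) \mid \mathcal{F}_{<i}\,] = \frac{1}{2k+1}$.

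A vertex of $V^i$ stops at entry $i$ precisely when its auxiliary color equals $f(i)$, so by linearity of expectation
$$\Ex[X_i \mid \mathcal{F}_{<i}] = \frac{\cardin{V^i}}{2k+1} \geq \frac{X_{\geq i}}{2(2k+1)} = \frac{X_{\geq i}}{4k+2}.$$
Because $X_{\geq i}$ is $\mathcal{F}_{<i}$-measurable, this gives $\Ex[X_{\geq i+1} \mid \mathcal{F}_{<i}] = X_{\geq i} - \Ex[X_i \mid \mathcal{F}_{<i}] \leq \frac{4k+1}{4k+2}\,X_{\geq i}$, and taking expectations over $\mathcal{F}_{<i}$ yields the recursion. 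The step I expect to demand the most care is the measurability-and-independence claim in the second fact: one must verify that the entire auxiliary coloring produced at entry $i$ is a function of $\mathcal{F}_{<i}$ only, so that $f(i)$ is a genuinely fresh uniform choice matching each $\mathrm{aux}(v)$ with probability exactly $\frac{1}{2k+1}$. Once this is pinned down, the rest is the routine induction above.
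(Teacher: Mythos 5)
Your proof is correct and takes essentially the same route as the paper's: the same decomposition $X_{\geq i+1}=X_{\geq i}-X_i$, the same two key facts (at least half of the vertices reaching entry $i$ receive an auxiliary color, by $k$-degeneracy and the $2k+1$ available colors; each such vertex matches $f(i)$ with probability exactly $\frac{1}{2k+1}$, by the oblivious-adversary assumption), and the same resulting recursion $\Ex_{\geq i+1}\leq\frac{4k+1}{4k+2}\Ex_{\geq i}$ iterated from $\Ex_{\geq 1}=n$. The only difference is technical: the paper conditions directly on $X_{\geq i}$ and asserts $\Ex[X_i \mid X_{\geq i}]\geq\frac{1}{2}\cdot\frac{X_{\geq i}}{2k+1}$, whereas you condition on the history $(f(1),\dots,f(i-1))$, which is a cleaner justification of that same inequality since it makes explicit why $f(i)$ is a fresh uniform draw independent of the set $V^i$ and its auxiliary coloring.
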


\begin{proof}
By induction on $i$. The case $i=1$ is trivial. Assume that the
statement holds for $i$. To complete the induction step, we need
to prove that $\Ex_{\geq i+1} \leq (\frac{4k+1}{4k+2})^i n$. By
the conditional expectation formula, we have for any two random
variables $X$, $Y$ that $\Ex[X] = \Ex[\Ex[X \mid Y]]$. Thus,
$$
\Ex_{\geq i+1} = \Ex[\Ex[X_{\geq i+1} \mid X_{\geq i}]] =
\Ex[\Ex[X_{\geq i} - X_i \mid X_{\geq i}]] = \Ex[X_{\geq i} -
\Ex[X_i \mid X_{\geq i}]].
$$

It is easily seen that $\Ex[X_i \mid X_{\geq i}] \geq
\frac{1}{2}\frac{X_{\geq i}}{2k+1}$ since at least half of the
vertices of $X_{\geq i}$ got an auxiliary color by the above
algorithm. Moreover each of those elements that got an auxiliary
color had probability $\frac{1}{2k+1}$ to get the final color $i$.
This is the only place where we need to assume that the adversary
is oblivious and does not have access to the random bits. Thus,
\begin{multline*}
\Ex[X_{\geq i} - \Ex[X_i \mid  X_{\geq i}]] \leq \Ex[X_{\geq i} -
\frac{1}{2(2k+1)}X_{\geq i}] = \frac{4k+1}{4k+2}\Ex[X_{\geq i}] \leq
\left(\frac{4k+1}{4k+2}\right)^i n ,
\end{multline*}
by linearity of expectation and by the induction hypotheses. This
completes the proof of the lemma.\qquad
\end{proof}

\begin{lemma}
\label{expected_colors} The expected number of colors used by the
above algorithm is at most $\log_{\frac{4k+2}{4k+1}}n +1$.
\end{lemma}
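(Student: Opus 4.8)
The plan is to express the number of colors as a tail sum of indicator variables and then feed in the geometric decay of $\Ex_{\geq i}$ supplied by Lemma~\ref{expectations}. Write $\beta=\frac{4k+1}{4k+2}$, so the claimed base is $1/\beta=\frac{4k+2}{4k+1}$. First I would argue that the number of colors the algorithm ever uses is at most the largest entry index reached by some point, namely $N:=\max\{i : X_{\geq i}\geq 1\}$ (the count of \emph{distinct} final colors is at most this maximum index, so bounding $N$ suffices). Since the sets $V^i_t$ are nested, the variables $X_{\geq i}$ are non-increasing in $i$, and by the preceding proposition every vertex receives a final color with probability one; hence $N$ is finite almost surely and $\mathbf 1[X_{\geq i}\geq 1]=1$ precisely for $i\leq N$. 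This yields the clean identity $N=\sum_{i\geq 1}\mathbf 1[X_{\geq i}\geq 1]$.

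Next I would pass to expectations via the layer-cake formula, obtaining $\Ex[N]=\sum_{i\geq 1}\Pr[X_{\geq i}\geq 1]$. Because each $X_{\geq i}$ is a non-negative integer, Markov's inequality gives $\Pr[X_{\geq i}\geq 1]\leq \Ex_{\geq i}$, and Lemma~\ref{expectations} bounds this by $\beta^{\,i-1}n$; trivially $\Pr[X_{\geq i}\geq 1]\leq 1$ as well. Combining these,
\[
\Ex[N]\;\leq\;\sum_{i\geq 1}\min\bigl(1,\;\beta^{\,i-1}n\bigr).
\]
The strategy is then to split the sum at the threshold $i^\star$ where $\beta^{\,i-1}n$ crosses $1$, i.e.\ $i^\star-1\approx\log_{1/\beta}n$. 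For $i\leq i^\star$ I use the trivial bound $1$, contributing at most $\log_{1/\beta}n+1$ terms, which is exactly the leading term $\log_{\frac{4k+2}{4k+1}}n+1$ of the statement. For $i>i^\star$ I use the estimate $\beta^{\,i-1}n$ and sum the resulting geometric series.

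The hard part will be controlling that geometric tail cleanly. Its common ratio $\beta=\frac{4k+1}{4k+2}$ is very close to $1$, so $\sum_{i>i^\star}\beta^{\,i-1}n\leq \beta^{\,i^\star-1}n\cdot\frac{1}{1-\beta}=\beta^{\,i^\star-1}n\,(4k+2)$, and since $\beta^{\,i^\star-1}n\leq 1$ this only shows the tail is $O(k)$ rather than $O(1)$. Thus the naive split proves $\Ex[N]\leq\log_{\frac{4k+2}{4k+1}}n+O(k)$, and the surplus $O(k)$ lies strictly below the leading term, which is itself $\Theta(k\log n)$; hence it is harmless for the $O(k\log n)$ conclusion of Theorem~\ref{degenerate_main}. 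To land exactly on the sharp additive constant $+1$ as written, I would either absorb the lower-order $O(k)$ into the leading term for $n$ large, or refine the estimate of $\Pr[X_{\geq i}\geq 1]$ beyond Markov (e.g.\ a second-moment or concentration bound) so that the tail contributes $o(1)$. This tail accounting is the only genuinely delicate point; everything else is the same telescoping argument already used for Algorithm~\ref{CF-framework}.
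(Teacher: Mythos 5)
Your decomposition (indicator variables per entry, split at the threshold $b(k,n)=\log_{\frac{4k+2}{4k+1}}n$) follows the same skeleton as the paper's proof, but your handling of the tail is genuinely lossy and, as you yourself concede, yields only $\Ex[N]\leq \log_{\frac{4k+2}{4k+1}}n+1+O(k)$ rather than the stated bound. This is a real gap: the lemma claims the sharp additive constant $1$, and neither of your suggested repairs (absorbing the $O(k)$ for large $n$, or a second-moment refinement of Markov) is what is needed. The structural flaw is that you bound the tail by $\sum_{i>i^\star}\Pr[X_{\geq i}\geq 1]\leq\sum_{i>i^\star}\Ex_{\geq i}$: a point whose final color is at entry $j>i^\star$ is counted once for every $i$ with $i^\star<i\leq j$, i.e.\ up to $j-i^\star$ times, and this overcounting is exactly where the factor $\frac{1}{1-\beta}=4k+2$ enters your geometric series.

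The paper avoids this by working with the indicators $I_i$ of the \emph{exact} entry counts ($I_i=1$ iff some point receives final color $i$), not the cumulative events $\{X_{\geq i}\geq 1\}$, and by bounding the entire tail sum by a single random variable, pointwise: $\sum_{i>b(k,n)} I_i\leq\sum_{i>b(k,n)} X_i=X_{\geq b(k,n)+1}$, since each distinct color used at a tail entry requires at least one point colored there, and the exact counts $X_i$ over the tail partition the tail points (each point counted once, at its own entry). Taking expectations and applying Lemma~\ref{expectations} once gives a tail contribution of at most $\Ex_{\geq b(k,n)+1}\leq\left(\frac{4k+1}{4k+2}\right)^{b(k,n)}n=1$, so $\Ex[Y]\leq b(k,n)+1$ with no geometric series at all. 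In short, the missing idea is to bound the number of distinct tail colors by the number of tail \emph{points} rather than by a sum of per-level Markov bounds; with that one change your argument goes through and lands exactly on $\log_{\frac{4k+2}{4k+1}}n+1$.
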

\begin{proof}
Let $I_i$ be the indicator random variable for the following
event: some points are colored with a main color in entry $i$. We
are interested in the number of colors used, that is $Y :=
\sum_{i=1}^{\infty} I_i$. Let $b(k,n)=\log_{\frac{4k+2}{4k+1}} n$.
Then,
\[
\Ex[Y]
   =     \Ex[\sum_{1 \le i} I_i]
   \leq  \Ex[\sum_{1 \le i \le b(k,n)} I_i]
       + \Ex[X_{\geq b(k,n)+1}]
 \le b(k,n) + 1  ,
\]
by Markov's inequality  and lemma~\ref{expectations}.\qquad
\end{proof}

We notice that:
\[
b(k,n) =
  \frac{\ln n}{\ln\frac{4k+2}{4k+1}} \leq
  (4k+2)\ln n =
  O( k \log n) .
\]

We also have the following concentration result:
\begin{multline*}
\Pr[\mbox{more than $c \cdot b(k,n)$ colors are used}] = \Pr[X_{\geq c \cdot b(k,n) + 1} \geq 1] \leq \Ex_{\geq c \cdot
b(k,n) + 1}
 \leq \frac{1}{n^{c-1}} ,
\end{multline*}
by Markov's inequality and by lemma~\ref{expectations}.

This completes the performance analysis of the algorithm.

\paragraph{Remark}
In the above description of the algorithm, all the random bits
are chosen in advance (by deciding the values of the function $f$
in advance). However, one can be more efficient and calculate the
entry $f(i)$ only at the first time we need to update entry $i$,
for any $i$. Since at each entry we need to use $O(\log k)$
random bits
and we showed that the number of entries used is $O(k \log n)$
with high probability then the total number of random bits used
by the algorithm is $O(k \log k \log n)$ with high probability.

\subsubsection*{Acknowledgments.}
I would like to thank the two anonymous referees for providing very valuable comments and
suggestions.

\bibliographystyle{abbrv}
\bibliography{survey}

\end{document}